 \newtheorem{thm}{Theorem}[section]
\newtheorem{theorem}[thm]{Theorem}
\newtheorem{prop}[thm]{Proposition}
\newtheorem{lem}[thm]{Lemma}
\newtheorem{lemma}[thm]{Lemma}
\newtheorem{cor}[thm]{Corollary}
\newtheorem{lem-def}[thm]{Lemma-Definition}
\theoremstyle{definition}
\newtheorem{defn}[thm]{Definition}
\newtheorem{dfn}[thm]{Definition}
\newtheorem{rmk}[thm]{Remark}
\newtheorem{rem}[thm]{Remark}
\newtheorem{remark}[thm]{Remark}
 \newtheorem{construction}[thm]{Construction}
\newtheorem{exam}[thm]{Example}
\newtheorem{notation}[thm]{Notation}
\newtheorem{Notation}[thm]{Notation}
\newtheorem{convention}[thm]{Convention}
\numberwithin{equation}{section}
\newcommand{\Zp}{{\bZ_p}}
\newcommand{\Ainf}{{\mathrm{A_{inf}}}}
\newcommand{\AAinf}{{\bA_{\mathrm{inf}}}}            
\newcommand{\BBdRp}{{\bB_{\mathrm{dR}}^+}}           
\newcommand{\BBdR}{{\bB_{\mathrm{dR}}}}              
\newcommand{\colim}{{\mathrm{colim}}}       
\newcommand{\dlog}{{\mathrm{dlog}}}         
\newcommand{\End}{{\mathrm{End}}}           
\newcommand{\Fil}{{\mathrm{Fil}}}           
\newcommand{\Gr}{{\mathrm{Gr}}}             
\newcommand{\Hom}{{\mathrm{Hom}}}           
\newcommand{\id}{{\mathrm{id}}}             
\newcommand{\Ker}{{\mathrm{Ker}}}           
\newcommand{\pr}{{\mathrm{pr}}}             
\newcommand{\Rep}{{\mathrm{Rep}}}           
\newcommand{\Sh}{{\mathrm{Sh}}}             
\newcommand{\Spa}{{\mathrm{Spa}}}           
\newcommand{\Spf}{{\mathrm{Spf}}}           
\newcommand{\Vect}{{\mathrm{Vect}}}          
\newcommand{\GL}{{\mathrm{GL}}}             
\newcommand{\an}{{\mathrm{an}}}             
\newcommand{\dR}{{\mathrm{dR}}}             
\newcommand{\et}{{\mathrm{\text{\'e}t}}}    
\newcommand{\geo}{{\mathrm{geo}}}           
\newcommand{\pd} {{\mathrm{pd}}}                           
\newcommand{\perf}{\mathrm{perf}}           
\newcommand{\proet}{{\mathrm{pro\text{\'e}t}}}
\DeclareSymbolFontAlphabet{\mathbb}{AMSb} 
\DeclareSymbolFontAlphabet{\mathbbl}{bbold}
\newcommand{\Prism}{{\mathlarger{\mathbbl{\Delta}}}} 
\newcommand{\prism}{{\mathlarger{\mathbbl{\Delta}}}} 
 \newcommand{\pris}{{\mathlarger{\mathbbl{\Delta}}}} 
\newcommand{\okpris}{{(\mathcal{O}_K)_\prism}}
\newcommand{\okprisast}{{(\mathcal{O}_K)_{\prism, \ast}}}
\newcommand{\okprisperf}{(\calO_K)^{\perf}_{\Prism}}
 \newcommand{\Strat}{\mathrm{Strat}}
 \newcommand{\strat}{\mathrm{Strat}}
\newcommand{\ya}{{\rangle}}
\newcommand{\za}{{\langle}}
 \newcommand{\ghblue}[1]{{\color{blue}#1}}
\newcommand \into {\hookrightarrow }
\renewcommand \to {\rightarrow}
\newcommand \onto {\twoheadrightarrow}
\renewcommand{\projlim}{\varprojlim}
\newcommand{\vect}{\mathrm{Vect}}
\def\Mat{\mathrm{Mat}}
\newcommand{\nil}{\mathrm{nil}}
\newcommand{\rep}{{\mathrm{Rep}}}
\def\cont{\mathrm{cont}}
\def\inf{{\mathrm{inf}}}
\newcommand{\gal}{{\mathrm{Gal}}}
\newcommand{\fil}{{\mathrm{Fil}}}
\newcommand{\spa}{{\mathrm{Spa}}}
\newcommand{\spf}{{\mathrm{Spf}}}
\def\an{\mathrm{an}}
\def\perf{\mathrm{perf}}
\def\et{{\mathrm{\acute{e}t}}}
\def\proet{{\mathrm{pro\acute{e}t}}}
\def\geo{{\mathrm{geo}}}
\newcommand{\Lie}{\mathrm{Lie}}
\newcommand{\VB}{\mathrm{VB}}
\newcommand{\hatotimes}{\widehat{\otimes}}
\newcommand{\la}{{\mathrm{la}}}
\newcommand{\dan}{\text{$\mbox{-}\mathrm{an}$}}
\newcommand{\dla}{\text{$\mbox{-}\mathrm{la}$}}
\renewcommand{\log}{\mathrm{log}}
 \newcommand{\hatl}{{\widehat{L}}}
\newcommand{\hatg}{{\hat{G}}}
\newcommand{\bdrplusm}{{\bfb_{\dR, m}^+}}
\newcommand{\bdrpluslm}{{\bfb_{\dR, L, m}^+}}
\newcommand{\kpinfty}{{K_{p^\infty}}}
\newcommand{\kinfty}{{K_{\infty}}}
\newcommand{\hatkinfty}{{\widehat{K_{\infty}}}}
\newcommand{\gammak}{{\Gamma_K}}
\newcommand{\gk}{{G_K}}
\newcommand\dr{{\mathrm{dR}}}
\newcommand\HT{{\mathrm{HT}}}
\newcommand\Sen{{\mathrm{Sen}}}
\newcommand\sen{{\mathrm{Sen}}}
\newcommand{\ainf}{{\mathbf{A}_{\mathrm{inf}}}}
\newcommand{\bdrplus}{{\mathbf{B}^+_{\mathrm{dR}}}}
\newcommand{\bdr}{{\mathbf{B}_{\mathrm{dR}}}}
\newcommand{\bnht}{{\mathbf{B}_{\mathrm{nHT}}}}
\newcommand{\nht}{{\mathrm{nHT}}}
\newcommand{\ndR}{{\mathrm{ndR}}}
\newcommand{\bdrplusl}{{\mathbf{B}^+_{\dR,  L}}}
\newcommand{\mic}{\mathrm{MIC}} 
\newcommand{\bm}{\mathbb{M}}
\newcommand{\bbdrplus}{{\mathbb{B}_{\mathrm{dR}}^{+}}}
\newcommand*{\wt}[1]{\widetilde{#1}}
\def \ok {{\mathcal{O}_K}}
\def \oc {{\mathcal{O}_C}}
\newcommand{\ocflat}{{\mathcal{O}_C^\flat}}
\newcommand{\rg}{\mathrm{R}\Gamma}
 \newcommand{\bbl}{{\mathbb{L}}}
\newcommand{\oxet}{{\mathcal{O}_{X_\et}}}
\newcommand{\xet}{{X_\et}}
\newcommand{\xproet}{{X_\proet}}
\newcommand{\rC}{{\mathrm C}}
\newcommand{\rD}{{\mathrm D}}
\newcommand{\rd}{{\mathrm d}}
\newcommand{\rH}{{\mathrm H}}
\newcommand{\rL}{{\mathrm L}}
\newcommand{\rR}{{\mathrm R}}
\newcommand{\rW}{{\mathrm W}}
\newcommand{\bA}{{\mathbb A}}
\newcommand{\bB}{{\mathbb B}}
\newcommand{\bC}{{\mathbb C}}
\newcommand{\bM}{{\mathbb M}}
\newcommand{\bN}{{\mathbb N}}
\newcommand{\bQ}{{\mathbb Q}}
\newcommand{\bW}{{\mathbb W}}
\newcommand{\bZ}{{\mathbb Z}}
  \newcommand{\bbz}{{\mathbb{Z}}}
\newcommand{\zp}{{\mathbb{Z}_p}}
\newcommand{\qp}{{\mathbb{Q}_p}}
\newcommand{\calA}{{\mathcal A}}
\newcommand{\calB}{{\mathcal B}}
\newcommand{\calC}{{\mathcal C}}
\newcommand{\calE}{{\mathcal E}}
\newcommand{\calF}{{\mathcal F}}
\newcommand{\calJ}{{\mathcal J}}
\newcommand{\calK}{{\mathcal K}}
\newcommand{\calM}{{\mathcal M}}
\newcommand{\calO}{{\mathcal O}}
\newcommand{\calS}{{\mathcal S}}
\newcommand{\calX}{{\mathcal X}}
 \renewcommand{\o}{{{\mathcal{O}}}}
\newcommand{\cale}{{\mathcal{E}}}
\newcommand{\cm}{{\mathcal{M}} }
\newcommand{\calf}{\mathcal{F}}
\newcommand{\calm}{\mathcal{M}}
\newcommand{\calx}{{\mathcal X}}
\newcommand{\cx}{{\mathcal X}}
\newcommand{\frakS}{{\mathfrak S}}
\newcommand{\frakX}{{\mathfrak X}}
\newcommand{\gs}{{\mathfrak{S}}}
\newcommand{\fkt}{{\mathfrak{t}}}
\newcommand{\fkx}{{\mathfrak{X}}}
\newcommand{\barK}{{\overline{K}}}
 \newcommand{\bfa}{\mathbf{A}}
\newcommand{\bfb}{\mathbf{B}}
\newcommand{\bbdrplusm}{{\mathbb{B}_{\mathrm{dR}, m}^{+}}}
 \newcommand{\MIC}{{\mathrm{MIC}}} 
\newcommand{\bfB}{\mathbf{B}}
\newcommand{\en}{\mathrm{en}}
\newcommand{\fkxastpris}{\mathfrak{X}_{\pris, \ast}}
\newcommand{\fkxastprisperf}{\mathfrak{X}_{\pris, \ast}^\perf}
\newcommand{\rella}{{\mathrm{rel}\dla}}
\newcommand{\rla}{{\mathrm{rla}}}
\newcommand{\aastninf}{{\mathbf{A}_{\ast\mbox{-}\mathrm{ninf}}}}
\newcommand{\bastnht}{\mathbf{B}_{\ast\mbox{-}\nht}}
\newcommand{\bastndrm}{\mathbf{B}_{\ast\mbox{-}\ndR, m}}
\newcommand{\kmun}{{K(\mu_n)}}
\newcommand{\kpin}{{K(\pi_n)}}
\author[]{Hui Gao}   \address{Department of Mathematics and Shenzhen International Center for Mathematics, Southern University of Science and Technology, Shenzhen 518055, China}   \email{gaoh@sustech.edu.cn}
\author[]{Yu Min}
\address{Department of Mathematics, Imperial College London, London SW7 2RH}
\email{y.min@imperial.ac.uk}
\author[]{Yupeng Wang}
\address{Beijing International Center of Mathematics research, Peking University, Yiheyuan 5, Beijing, 100190, China.}
\email{2306393435@pku.edu.cn}  
\begin{document}
\title[]{Prismatic crystals and $p$-adic Riemann--Hilbert correspondence}
 \subjclass[2010]{Primary  14F30,11S25}
 
\begin{abstract}  \normalsize{ 
We systematically study relative and absolute $\mathbbl{\Delta}_{\mathrm{dR}}^+$-crystals on the (log-) prismatic site of a smooth (resp.~ semi-stable) formal scheme.
Using explicit computation of stratifications, we classify  (local) relative  crystals by certain nilpotent connections, and classify (local)  absolute  crystals by certain enhanced connections.
By using a $p$-adic Riemann--Hilbert functor and   an infinite dimensional Sen theory over the Kummer tower, we globalize the results  on absolute crystals and further classify them by certain small (global)  $\mathbb{B}_{\mathrm{dR}}^+$-local systems.
}
\end{abstract}

\date{\today}
\maketitle
\setcounter{tocdepth}{1}
\tableofcontents

\newcommand{\bw}{{\mathbb{W}}}
\newcommand{\bbw}{{\mathbb{W}}}
 \newcommand{\DR}{{\mathrm{DR}}}
 
 \newcommand{\bndr}{{\mathbf{B}_{\mathrm{ndR}}}}
 \newcommand{\ndr}{{\mathrm{ndR}}}
  \newcommand{\gsdr}{{\gs^+_{\dR}}}
 \newcommand{\gsdrm}{{\gs^+_{\dR, m}}}
 \newcommand{\barx}{{\bar{x}}}
 
\newcommand{\bastndrlm}{\bfb_{\ast\mathrm{-ndR},L,m}}
\newcommand{\kmus}{{K(\mu_s)}}
 \newcommand{\kpis}{{K(\pi_s)}}
\newcommand{\prisdrm}{{\pris_{\dR, m}^+}}
\newcommand{\prisdr}{{\pris_{\dR}^+}}


\newpage
\section{Introduction}

 \subsection{Main theorem on absolute crystals}
 In this paper, we study certain vector bundles on the prismatic site, and relate them with  differential objects on the \'etale site and  representation-theoretic objects on the pro-\'etale site. To state the main Theorem \ref{thm: intro main thm}, we quickly set up some notations, particularly the many ``de Rham" rings/sheaves. 
Let $\ok$ be a mixed characteristic complete discrete valuation ring with perfect residue field $k$. Let $\pi$ be a fixed uniformizer, and let $E=E(u) \in W(k)[u]$ be its Eisenstein polynomial over $W(k)$.
 Let $\mathfrak X$ be a quasi-compact smooth (resp. semi-stable) formal scheme over $\ok$, set  $\ast =\emptyset$ (resp. $\ast= \log$) accordingly; let $X$ be its rigid generic fiber.  Let $\fkxastpris$ be the absolute (log-) prismatic  site; let $\fkxastprisperf$ be the subsite consisting of perfect (log-) prisms.  
Let $1\leq m <\infty$. Consider the following ``de Rham" rings/sheaves.
\begin{itemize}
\item  Define the prismatic de Rham  period sheaf of level $m$  on $\fkxastpris$ and $\fkxastprisperf$  by
\[\pris_{\dR, m}^+: =\o_\pris[1/p]/\mathcal{I}_\pris^m.\]
When $m=1$, this becomes $\prism_\HT$, the rational Hodge--Tate period sheaf.

\item Following \cite{Sch13}, define the pro-\'etale  de Rham period sheaf of level $m$ on $\xproet$  by
\[ \bbdrplusm: = W(\hat{\o}_{X^\flat}^+)[1/p]/(\ker\theta)^m.\]
 When $m=1$, this is the pro-\'etale structure sheaf $\hat{\o}_X$.
 
 \item  Define Fontaine's de Rham period ring of level $m$ by 
$$\bdrplusm:  =W(\o_C^\flat)[1/p]/(\ker\theta)^m. $$
Define the (arithmetic) de Rham period ring of level $m$ by 
 $\gsdrm: =K[[E]]/E^m$.
 When $m=1$, they are  $C=\widehat{\barK}$ and $K$.
 
\item  Define the \'etale  de Rham (base change) sheaf  of level $m$  on $\xet$ by
\[\o_{X, \gs^+_{\dR}, m}:=\o_{X, \et}\otimes_K \gs^+_{\dR, m}. \] 
When $m=1$, this is precisely $\o_{X, \et}$.

\item Using a certain   subcategory of  $X_{C, \et}$, Liu--Zhu \cite{LZ17}   defines  an \'etale  de Rham ``base change" sheaf of level $m$ on $X_C$ by
\[ \o_{X_C, \bdrplus, m}: = ``\o_{X_C, \et} \hatotimes \bdrplusm". \]
Here the right hand side is only meant to be suggestive as there is no canonical map $C\to \bdrplus$; cf. Notation \ref{nota: ringed space} for details. When $m=1$, this is precisely $\o_{X_C, \et}$.
\end{itemize}
We need to define many  module categories to state our main theorem. In general, for $\mathbb F$   a sheaf of rings on a site $\calS$, define
   \begin{equation*}\label{Dfn-A crystal}  \Vect(\calS,\mathbb F):= \projlim_{\calA\in \calS} \vect(\mathbb F(\calA))
   \end{equation*}
   where $\vect(\mathbb F(\calA))$ is the category of finite projective $\mathbb F(\calA)$-modules.
We also introduce a constant used throughout the paper. Let $E'(\pi)=\frac{d}{du}(E)|_\pi$. 
Recall $*\in\{\emptyset,\log\}$. 
Let \begin{equation*}
a=
\begin{cases}
  -E'(\pi), &  \text{if } \ast=\emptyset \\
 -\pi E'(\pi), &  \text{if } \ast=\log.
\end{cases}
\end{equation*}  

\begin{defn}[Pro-\'etale objects] 
 An object $\bbw \in \vect(\xproet, \bbdrplusm)$ is called a \emph{$\bbdrplus$-local system} of level $m$, cf. \cite[\S 7]{Sch13}; when $m=1$, it is often called a \emph{generalized representation}.  For $x:\spa   L \to X$ a classical point where $L/K$ is a finite extension, the specialization $\bbw_x$ gives rise to a $\bdrplusm$-semi-linear representation of $G_L=\gal(\barK/L)$. Say $\bbw_x$ is $a$-small if the Sen weights of the $C$-representation $\bbw_x/t\bw_x$ are in the range
\[ \bbz + a^{-1}\mathfrak{m}_{\o_C} \] 
(Caution: the constant $a$ depends only on $K$, and not on $L$). Say $\bw$ is $a$-small if $\bbw_x$ is $a$-small for all classical points $x$.
 Let 
\[ \vect^{a}(\xproet, \bbdrplusm)\]
be the subcategory  of $a$-small objects, cf. \S \ref{sec: a small bdrplus loc sys}.  
\end{defn}

\begin{defn}[$\gk$-equivariant \'etale objects] Let $\MIC_{\gk}(X_{C, \et}, \o_{X_C, \bdrplus, m})$ denote the category where an object consists of  a $\calm \in \vect(X_{C, \et},   \o_{X_C, \bdrplus, m})$ equipped with a (semi-linear) $\gk$-action, together with a  $\gk$-equivariant  integrable connection
\[ \nabla: \calm \to \calm \otimes_{\o_{X_C}} \Omega^1_{X_C}(-1).\]
\end{defn}

\begin{defn}[Enhanced \'etale objects]
\begin{enumerate}
\item An (arithmetic) $E$-connection on $(X_\et,\o_{X, \gs^+_{\dR}, m})$ consists of 
 a $M \in \vect(X_\et,\o_{X, \gs^+_{\dR}, m})$ equipped with   an $\o_X$-linear   map $\phi: M\to M$  satisfying Leibniz rule with respect to $E\frac{d}{dE}:\frakS_{\dR}^+\to\frakS_{\dR}^+$. Say an $E$-connection $(M,\phi)$ is \emph{$a$-small} if  
     \[\lim_{n\to+\infty}a^n\prod_{i=0}^{n-1}(\phi-i) = 0.\]

\item Let $\mathrm{MIC}^a_{\en}(X_\et,\o_{X, \gs^+_{\dR}, m})$ denote the category of \emph{$a$-small enhanced connections}, where an object is an $M \in \vect(X_\et,\o_{X, \gs^+_{\dR}, m})$ equipped with a 
topologically nilpotent integrable connection  
      \[\nabla_M:M\to M\otimes_{\o_{X, \gsdr}}\Omega^1_{X, \gsdr}\{-1\},\]
         and an $a$-small  $E$-connection (as in Item (1)) $\phi:M\to M$ such that
        the following diagram is commutative:      
        \[
    \begin{tikzcd}
M \arrow[d, "\phi"] \arrow[r, "\nabla"] & M\otimes_{\o_{X, \gsdr}}\Omega^1_{X, \gsdr}\{-1\} \arrow[d, " \phi_M\otimes 1 +1\otimes \phi_{(E)^{-1}}  "] \\
M \arrow[r, "\nabla"]                   & M\otimes_{\o_{X, \gsdr}}\Omega^1_{X, \gsdr}\{-1\}.                              
\end{tikzcd}
\] 
 Here: $\Omega^1_{X, \gsdr}$ is the ``base change" of $\Omega_X^1$ to $\gsdr$ with $\{-1\}$ signifying the Breuil--Kisin twist, over which there is an $\o_X$-linear endomorphism $\phi_{(E)^{-1}}$, cf. Def \ref{defn: abs enhanced conn} for full details, cf. also Notation \ref{nota: enhanced conn local} for the explicit local case. When $m=1$, the right vertical arrow becomes $\phi_M-1$.
 (For now, the unfamiliar  readers could simply assume the definition says that there is some ``(twisted)-commutativity relation" between $\nabla$ and $\phi$).
\end{enumerate}
\end{defn}
 

The following is our main theorem, which studies relations between modules over various ``de Rham  sheaves" defined above. 
(Throughout the paper, we use $\into$ resp. $\simeq$ in commutative diagrams to signify a functor is fully faithful  resp. an equivalence.)

\begin{theorem} \label{thm: intro main thm}
We have a commutative diagram of fully faithful tensor functors, where all horizontal arrows are equivalences:
\begin{equation} \label{diag: intro main thm}
\begin{tikzcd}
{ \mathrm{MIC}_{\en}^{a}(X_\et,\o_{X, \gs^+_{\dR}, m})} \arrow[d, hook] &  & {\vect(\fkxastpris, \pris_{\dR,m}^+) } \arrow[ll, "\simeq"'] \arrow[rr, "\simeq"] \arrow[d, hook] &  & {\vect^{a}(\xproet, \bbdrplusm)} \arrow[d, hook] \\
{\MIC_{\gk}(X_{C, \et}, \o_{X_C, \bdrplus, m})}                                     &  & {\vect(\fkxastprisperf, \pris_{\dR,m}^+) } \arrow[rr, "\simeq"] \arrow[ll, "\simeq"']        &  & { \vect(\xproet, \bbdrplusm)}.                         
\end{tikzcd}
\end{equation}
Let $\bm \in \vect(\fkxastpris, \pris_{\dR,m}^+)$, and denote the corresponding objects  by
\[
\begin{tikzcd}
M \arrow[d, mapsto]  &  & \bm \arrow[ll, mapsto] \arrow[rr, mapsto] \arrow[d, mapsto] &  & \mathbb W \arrow[d, mapsto] \\
{\calm} &  & \bm^\perf \arrow[ll, mapsto] \arrow[rr, mapsto]     &  & \mathbb W          
\end{tikzcd}
\]
then we have functorial quasi-isomorphisms of cohomologies
\[ 
\begin{tikzcd}
{\rg(X_\et, \rg(\phi, \DR(M)))} \arrow[d, "\simeq"] &  & {\rg(\fkxastpris, \bm)} \arrow[ll, "\simeq"'] \arrow[rr, "\simeq"] \arrow[d, "\simeq"] &  & {\rg(\xproet, \bW)} \arrow[d, "="] \\
{\rg(X_{C, \et},\rg(\gk, \DR(\calm)))}           &  & {\rg(\fkxastprisperf, \bm^\perf)} \arrow[ll, "\simeq"'] \arrow[rr, "\simeq"]           &  & {\rg(\xproet, \bW)}.               
\end{tikzcd} \]
Here, on the left column,  $\rg(\phi, \DR(M))$ is the ``$\phi$-enhanced" de Rham complex   associated to $(M, \nabla)$, and  $\rg(\gk, \DR(\calm))$ is the $\gk$-cohomology of the de Rham complex associated to $\cm$.
\end{theorem}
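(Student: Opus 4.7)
The plan is to build the diagram by first establishing the two horizontal equivalences on each row locally via explicit stratification computations, then globalising by descent, and finally promoting the fully-faithful vertical restrictions to the $a$-small characterisation on the top row. The cohomology comparisons are obtained by computing each side as the \v{C}ech--Alexander complex of the same cover.

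First, work locally on $\fkX=\Spf R$ with a standard (log-)framing. The relevant absolute (log-)prism is a Breuil--Kisin style object $\gs_R$, and $\pris_{\dR,m}^+$ evaluated on it is essentially $R\otimes_{W(k)}\gsdrm$. By Drinfeld's stratification description, a crystal $\bm\in\Vect(\fkxastpris,\pris_{\dR,m}^+)$ is equivalent to a finite projective module $M$ over this ring with a stratification along the self-product of $\gs_R$ in the absolute prismatic site. I would compute this self-product explicitly (as in the authors' earlier work) and split the stratification into its ``geometric'' and ``arithmetic'' directions: the geometric directions yield a topologically nilpotent integrable connection $\nabla$ on $M$ valued in $\Omega^1_{X,\gsdr}\{-1\}$, while the single arithmetic direction (deforming in the Eisenstein parameter) yields the endomorphism $\phi$ satisfying the Leibniz rule for $E\tfrac{d}{dE}$. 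The twisted commutativity square in Theorem~\ref{thm: intro main thm} is exactly the cocycle condition relating the two stratifications, with $\phi_{(E)^{-1}}$ arising from the Breuil--Kisin twist of $\Omega^1_X$ under the $E$-derivation. Convergence of the stratification produces both the topological nilpotence of $\nabla$ and the $a$-smallness condition $\lim a^n\prod_{i<n}(\phi-i)=0$, after keeping track of the constant $a=-E'(\pi)$ (resp.~$-\pi E'(\pi)$). Standard descent arguments, independence of framing, and the log-smooth version of the local picture give the global left-hand equivalence of the top row.

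For the bottom row, restrict to perfect prisms: objects of $\fkxastprisperf$ correspond to $\gk$-equivariant affinoid perfectoid covers of $X$, and on such covers $\pris_{\dR,m}^+$ coincides with $\bdrplusm$. The right horizontal equivalence is then essentially Scholze's comparison: perfect prisms sit inside the pro-\'etale site and a $\pris_{\dR,m}^+$-crystal becomes a $\bbdrplusm$-local system with no extra condition. The left horizontal equivalence is the $p$-adic Riemann--Hilbert functor used throughout the paper, which associates to a $\bbdrplusm$-local system its $\gk$-equivariant $\obdrplus_{,m}$-module with integrable connection over $X_C$.

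Next, the vertical restrictions are fully faithful because a crystal on $\fkxastpris$ is determined by its restriction to any cover, in particular to $\fkxastprisperf$, and morphisms likewise descend. The key point is to identify precisely the image in the bottom row. Under the right-hand horizontal equivalences, the $a$-smallness of Sen weights (valued in $\bbz+a^{-1}\mathfrak m_{\o_C}$) is exactly the Sen-theoretic shadow of the topological nilpotence condition on $\phi$: both are the convergence of the same formal series in the arithmetic direction. Matching these characterisations (using the infinite-dimensional Sen theory over the Kummer tower developed in the paper) identifies the essential image of the top row inside the bottom row with $\Vect^a(\xproet,\bbdrplusm)$ and $\MIC_{\gk}(X_{C,\et},\o_{X_C,\bdrplus,m})$ respectively, giving the commutativity of the two squares.

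Finally, for the cohomology statement, I would compute $\rg(\fkxastpris,\bm)$ as the \v{C}ech--Alexander complex of the cover by $\gs_R$; unwinding the stratification, this double complex splits as the de Rham complex $\DR(M)$ of the geometric connection together with the complex $[\phi]$ for the arithmetic endomorphism, yielding $\rg(X_\et,\rg(\phi,\DR(M)))$ on the upper left. On the perfect side, the same cover after pro-\'etale localisation gives $\rg(X_{C,\et},\rg(\gk,\DR(\calm)))$ and, via the Riemann--Hilbert comparison, $\rg(\xproet,\bw)$. The main obstacle will be the careful bookkeeping of Breuil--Kisin twists needed to define $\phi_{(E)^{-1}}$ and to match the twisted commutativity with the cocycle condition, together with verifying that $a$-smallness on Sen weights and topological nilpotence of $\phi$ really do correspond under the horizontal equivalences, especially in the semi-stable case where $a$ acquires an extra factor of $\pi$.
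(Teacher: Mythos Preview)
Your local story is essentially right and matches \S\ref{sec: loc abs pris}--\S\ref{sec: coho abs local} of the paper: the stratification splits into geometric and arithmetic directions and yields $(M,\nabla,\phi)$ with the stated conditions. But your globalisation and your argument for full faithfulness of the vertical arrows both contain a genuine gap, and this gap is the entire content of the paper's second half.

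You write that ``the vertical restrictions are fully faithful because a crystal on $\fkxastpris$ is determined by its restriction to any cover, in particular to $\fkxastprisperf$.'' This is false: the perfect prismatic site is a \emph{subsite}, not a cover in the prismatic topology, so there is no automatic descent. The paper flags this explicitly (Remark~\ref{rem: donotknowyet}): they do \emph{not} know how to prove directly that restriction to perfect prisms is fully faithful. Likewise, you claim ``standard descent arguments, independence of framing \dots\ give the global left-hand equivalence of the top row.'' The paper again says this is exactly what they cannot do directly (Construction~\ref{cons: roadmap}(2)): the local equivalence $\Vect(R_{\Prism,\ast},\Prism_{\dR,m}^+)\simeq\MIC_\en^a(R_{\gsdrm})$ depends on the chart and the Breuil--Kisin prism, and there is no obvious way to glue.

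The paper's workaround is the following detour. First, the bottom row is globalised independently (\S\ref{sec: global RH}, \S\ref{sec: crystal perf pris}). Then the crucial step (\S\ref{sec: analytic sen kummer}--\S\ref{sec: M to W}) is to prove a \emph{global} equivalence $\MIC_\en^a(X_\et,\o_{X,\gsdr,m})\simeq\MIC_\gk^a(X_{C,\et},\o_{X_C,\bdrplus,m})$ directly, with no prismatic input. The forward direction is base change; the inverse is built by taking $(\gamma=1,\tau\text{-analytic})$ vectors inside $\calm$, and showing this lands over $\gsdrm$ (not just $\kinfty[[E]]$) requires the \emph{analytic} (not merely locally analytic) Sen theory over the Kummer tower and the period ring $\bastndrm$. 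Once this equivalence is in hand, the composite $\Vect(\fkxastpris,\Prism_{\dR,m}^+)\to\Vect(\fkxastprisperf,\Prism_{\dR,m}^+)\simeq\MIC_\gk^a\simeq\MIC_\en^a$ is a global functor, and its being an equivalence can now be checked Zariski-locally, where it reduces to the local computation you described. You invoke the infinite-dimensional Sen theory only to match $a$-smallness conditions, but its real role is to construct the quasi-inverse that makes globalisation possible.
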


\begin{remark}
One can define similar categories when $m=\infty$ (by taking limits), then the above theorem still holds. In the main  text, unless specifically mentioned, we always assume $m <\infty$ for brevity: for example, then all relevant representations are \emph{Banach} spaces (instead of Fr\'echet spaces which will require some small extra argument).
\end{remark}

\begin{rem}[Known cases] \label{rem: known cases}
    We   summarize and discuss previously known cases of Theorem \ref{thm: intro main thm}. First, consider the \emph{bottom} row of diagram \eqref{diag: intro main thm}, which is regarded as the (relatively) easier part of the main theorem. These equivalences were known by \cite{MW22} when $\fkx$ is smooth, $\ast=\emptyset$ and $m=1$.  
The general case follows similar argument, cf.  \S \ref{sec: crystal perf pris} (where the log-prismatic case builds on discussions of perfect log prisms in \S \ref{sec log perf prism}). 
We now thus focus on the top row equivalences
 \begin{equation} \label{diag: intro top}
 \mathrm{MIC}_{\en}^{a}(X_\et,\o_{X, \gs^+_{\dR}, m})  \xleftarrow{F_\et} \vect(\fkxastpris, \pris_{\dR,m}^+)  \xrightarrow{F_\proet} \vect^{a}(\xproet, \bbdrplusm).
\end{equation}  
\begin{enumerate}
   \item 
Consider the special case $\fkx=\spf \ok$,  $m=1$; that is, the case of Hodge--Tate   crystals on $\ok$. The top row becomes
\[ \End^a(K) \simeq \vect(\okprisast, \pris_\HT) \simeq \rep_\gk^a(C). \]
When $\ast=\emptyset$ (i.e., the prismatic case), the equivalence $ \End^a(K) \simeq \vect(\okpris, \pris_\HT)$ (indeed the derived version) is proved by  Bhatt--Lurie  \cite{BL-a, BL-b} using the Hodge--Tate stack. Building on this stacky approach, and using a certain period ring ``$B_\en$", \cite{AHLB1} further  proves  $\vect(\okpris, \pris_\HT) \simeq \rep_\gk^a(C)$. Independently, these results are also proved in \cite{GMWHT}  using a site-theoretic approach and a locally analytic Sen theory over the Kummer tower, which works uniformly for $\ast \in \{\emptyset, \log\}$. We refer to the introduction of  \cite{GMWHT} for some extensive comparisons about these approaches.

\item Again for $\fkx=\spf \ok$, but with $m$ general; that is, the case of $\prisdrm$-crystals. These results are proved in \cite{GMWdR}, again following the site-theoretic approach of \cite{GMWHT}. Some partial (full faithful) results are also independently  obtained in \cite{Liu23}; recently, Liu further develops a stacky approach \cite{Liustack} (for $\ast=\emptyset$), which reproves these results and generalizes to the derived version.

\item Consider  the general case where $\fkx$ is a (quasi-compact) smooth formal scheme over $\ok$, but still with $\ast=\emptyset$ and $m=1$. 
Using a stacky approach, the  equivalence   $F_\et$ in \eqref{diag: intro top} is proved in \cite{BL-b}, and   equivalence  $F_\proet$ is proved in \cite{AHLB3} (building on geometric results in \cite{AHLB2}). Independently, \cite{MW22} proves full faithfulness of these functors, using a site-theoretic approach. 
\end{enumerate}
\end{rem}

We also briefly discuss results on crystals on the \emph{relative} (log-) prismatic site.

\begin{theorem}[{cf. \S   \ref{sec: rel pris smooth}, \S \ref{SubSec-Log-Rel}}] \label{thm:intro rel crystal} 
Let $\fkx=\spf R$ where $R$ is a small smooth (resp. semi-stable) $\ok$-algebra, and consider the \emph{relative} prismatic (resp. log-prismatic) site $(R/(\gs, (E), \ast))_{\Prism, \ast}$ where $\ast=\emptyset$ (resp. $\log$) and where $(\gs, (E), \ast)$ is the (arithmetic) Breuil--Kisin (log-) prism as in Notation \ref{nota: BK prism}. 
Let $(\gs(R), (E), \ast)$ be the  relative Breuil--Kisin (log-) prism. Then there is an equivalence of categories 
 \[\Vect((R/(\gs, (E), \ast))_{\Prism, \ast},\prism^+_{\dR, m})\xrightarrow{\simeq}{\rm MIC}_{E}^{\nil}(\gs(R)_{\dR,m}^+).\]
Here the LHS is the category of $\prism^+_{\dR, m}$-crystals on the relative site, and the RHS is the category of nilpotent $E$-connections on $\gs(R)_{\dR,m}^+ =\prism^+_{\dR, m}((\gs(R), (E), \ast))$. For corresponding objects $\bm \mapsto (M, \nabla_M)$, there is a cohomology comparison
\[ \rg((R/(\gs, (E), \ast))_{\Prism, \ast}, \bm) \simeq \DR(M, \nabla_M).\]
\end{theorem}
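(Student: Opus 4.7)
The plan is to realize the (relative) Breuil--Kisin (log-)prism $(\gs(R),(E),\ast)$ as a weakly final cover of the final object of the relative (log-)prismatic topos, and then classify $\prism_{\dR,m}^+$-crystals by descent/stratification data on this cover. First I would verify that $(\gs(R),(E),\ast)$ is indeed weakly initial in $(R/(\gs,(E),\ast))_{\Prism,\ast}$: given any object $(A,I,\ast)$ of the site together with its structure map $R \to \bar A/\ast$, the (log-)smoothness of $R/\ok$ lifts (after a thickening) to a (log-)prism map out of $\gs(R)$ by the formal smoothness/log-smoothness of $\gs(R)$ relative to $\gs$. This is the standard argument and uses that $\gs(R)$ is the image of a fixed framing chart.

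Second, I would compute the self-products $(\gs(R),(E),\ast)^{(i+1)}$ in the site: these are the (log-)prismatic envelopes of the diagonals $R^{\otimes (i+1)} \to \gs(R)^{\otimes (i+1)}$ over $\gs$. After quotienting by $\mathcal{I}^m$ and inverting $p$, one obtains an explicit power series extension of $\gs(R)_{\dR,m}^+$ in variables $z_j = 1\otimes x_j - x_j \otimes 1$ (in the smooth case, with the obvious logarithmic variant when $\ast = \log$). A stratification, i.e.\ a compatible family of isomorphisms on these self-products satisfying the cocycle condition, then unfolds in the usual Grothendieck-style calculation into an integrable connection $\nabla_M: M \to M \otimes_{\gs(R)_{\dR,m}^+}\Omega^1_{\gs(R)_{\dR,m}^+/\gs_{\dR,m}^+}$. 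The topological nilpotency of this connection is forced by (and equivalent to) the convergence of the stratification modulo powers of $E$ combined with the $E$-adic completion implicit in $\prism_{\dR,m}^+$. This gives the equivalence
\[ \Vect((R/(\gs,(E),\ast))_{\Prism,\ast},\prism_{\dR,m}^+) \xrightarrow{\simeq} \MIC_{E}^{\nil}(\gs(R)_{\dR,m}^+). \]

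Third, for the cohomology comparison, I would identify the prismatic cohomology with the \v{C}ech--Alexander complex associated to the cover $(\gs(R),(E),\ast)$. After tensoring the crystal $\bm$ against this cosimplicial object, each level is $M \otimes_{\gs(R)_{\dR,m}^+}\gs(R)_{\dR,m}^+[[z]]$ (with the appropriate log-variant), and the usual Poincar\'e-lemma-style filtration argument — comparing the \v{C}ech differentials with $\nabla_M$ on the associated graded — gives the quasi-isomorphism with the de Rham complex $\DR(M,\nabla_M)$. The smooth versus semi-stable cases are handled uniformly once the log-structure on $\gs(R)$ is set up correctly.

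The main obstacle I expect is the precise computation of the self-products in the \emph{log} setting: handling the log structures on $\gs(R)^{\otimes (i+1)}$ consistently requires carefully adapting the prismatic envelope construction to pre-log rings, and tracking that the resulting $z$-variables (including the logarithmic ones in direction of the uniformizer and the boundary divisor) produce precisely $\Omega^1_{\gs(R)_{\dR,m}^+/\gs_{\dR,m}^+}$ with its log-component. Once this local computation is in hand, both the equivalence of categories and the cohomology comparison follow by the same machinery as in the absolute case treated earlier in the paper, and the ``nilpotent" condition matches the convergence condition automatically imposed by working modulo $E^m$.
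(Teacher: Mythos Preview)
Your overall strategy matches the paper's: cover by the Breuil--Kisin (log-)prism, compute the \v{C}ech nerve explicitly, translate stratifications into integrable connections, and compare cohomology via the \v{C}ech--Alexander complex. Two points deserve correction, however.

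First, the self-products after applying $\prism_{\dR,m}^+$ are not ``power series extensions'' in the differences $z_j$; they are \emph{pd-polynomial rings}. Concretely, the paper shows (Proposition~\ref{Prop-Structure-Rel} and its log analogue Proposition~\ref{Prop-Structure-Rel-Log}) that
\[
\gs(R)^{n,+}_{\ast,\dR,m} \cong (\gs(R)/E^m)\{\underline Y_1,\dots,\underline Y_n\}^{\wedge}_{\pd}[1/p]
\]
with $\underline Y_i = \frac{\underline T_0 - \underline T_i}{E \underline T_0}$ (or the multiplicative variant $\frac{1-\underline T_i/\underline T_0}{-E}$ in the log case). The divided-power structure is what makes the stratification expand as $\varepsilon = (1-E\underline Y_1)^{-E^{-1}\underline\nabla}$ and forces the nilpotency condition $\nabla_{\underline n}\to 0$ in the $p$-adic (not $E$-adic) topology on $M$; your sketch of where nilpotency comes from is not quite right.

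Second, for the cohomology comparison the paper does not run a direct Poincar\'e-lemma filtration. Instead it builds a bicosimplicial de Rham complex $\rD\rR(\bM(\gs(R)^\bullet)\otimes\widehat\Omega^*,\rd_M)$ (Construction~\ref{Construction-Bicosimplicial-Rel}), into which both $\bM(\gs(R)^\bullet)$ and $\rD\rR(M,\nabla_M)$ map, and then proves both maps are quasi-isomorphisms by derived Nakayama modulo $E$, reducing to the $m=1$ case already established by Tian. Your filtration idea may well work, but it is not what the paper does, and the reduction-to-$m=1$ trick is considerably cleaner. Finally, note the logical order: the relative case is treated \emph{before} the absolute case in the paper, so you cannot appeal to ``the same machinery as in the absolute case treated earlier.''
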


\begin{remark} \label{rem known relative case}
Theorem \ref{thm:intro rel crystal} is known when $\fkx$ is small smooth (thus $\ast=\emptyset$) and $m=1$, by \cite{Tia23}.
\end{remark}

 \begin{remark}[What is new] \label{rem:what is new}
 In conclusion to  Remark  \ref{rem: known cases} resp.   Remark \ref{rem known relative case}, Theorem  \ref{thm: intro main thm} resp. Theorem   \ref{thm:intro rel crystal} is new  only when the relative dimension of $\fkx/\spf \ok$ is nonzero. More precisely:
\begin{itemize}
\item  when $\fkx$ only has semi-stable reduction, it is new for any $m \geq 1$;
\item when $\fkx$ is smooth, it is new for $m \geq 2$.
\end{itemize}
As we shall see from the main text, the semi-stable case follows from similar ideas as  the smooth case, once the following results (of independent interests) are established:
\begin{itemize}
\item We compute explicitly the (semi-stable) co-simplicial Breuil--Kisin prisms and their (de Rham) variants: it turns out their behaviour parallels the smooth case, cf. \S \ref{SubSec-Log-Rel} for the relative case and \S \ref{sec: loc abs pris} for the absolute case;
\item We show (with appropriate set-up) that the category of perfect log-prisms  is equivalent to the category of perfect prisms; thus the discussions on the perfect prismatic site are the same for smooth and semi-stable case, cf.  \S \ref{sec log perf prism}.
\end{itemize} 
 \end{remark}

 With Remark \ref{rem:what is new} at hand, we focus on discussing new ideas when $m \geq 2$ in the following.
 Indeed, we shall only discuss the \emph{absolute} case as this is the most interesting case and is related with $p$-adic Riemann--Hilbert correspondence and Sen theory.
We now sketch the core steps   for Theorem \ref{thm: intro main thm}.
 
\begin{construction}[The main roadmap for Theorem \ref{thm: intro main thm}]
\label{cons: roadmap}
Recall again the functors
 \begin{equation*} \label{diag: intro top another}
 \mathrm{MIC}_{\en}^{a}(X_\et,\o_{X, \gs^+_{\dR}, m})  \xleftarrow{F_\et} \vect(\fkxastpris, \pris_{\dR,m}^+)  \xrightarrow{F_\proet} \vect^{a}(\xproet, \bbdrplusm).
\end{equation*}  
\begin{enumerate}
\item The \emph{global} functor $F_\proet$ is precisely the composition:
\[ F_\proet: \vect(\fkxastpris, \pris_{\dR,m}^+)  \to \vect(\fkxastprisperf, \pris_{\dR,m}^+) \simeq \vect(\xproet, \bbdrplusm) \]
where the first functor is induced by restriction to the perfect subsite, and the second equivalence is the relatively easy fact mentioned in beginning of Rem \ref{rem: known cases}. However it is difficult  to directly prove full faithfulness of $F_\proet$ (equivalently, of the restriction functor).
 
 \item Using Breuil--Kisin cosimplicial prisms, we  can explicitly construct the \emph{local} version of $F_\et$ (this is done for $m=1, \ast=\emptyset$ in \cite{MW22} where all operators are \emph{linear}; the computation becomes more complicated here essentially because we now work on the ``deformation" $\gs_\dR$ of $K$); we then can prove the \emph{local} $F_\et$ is an equivalence.  However, it is difficult to \emph{globalize} the equivalence $F_\et$.
 
\item In some sense, the difficulty in above two items comes from difficulty of \emph{prismatic cohomology}.
Two further ingredients are needed to save the situation.
Firstly, using a \emph{geometric}  Riemann--Hilbert functor  (generalizing work of Liu--Zhu \cite{LZ17}), we can prove the \emph{global} equivalence:
\[ \vect(\xproet, \bbdrplusm) \simeq \MIC_{\gk}(X_{C, \et}, \o_{X_C, \bdrplus, m})\]
which then formally induces an equivalence of subcategories consisting of   $a$-small objects:
\[ \vect^{a}(\xproet, \bbdrplusm) \simeq \MIC^{a}_{\gk}(X_{C, \et}, \o_{X_C, \bdrplus, m}).\]
The next ingredient, which is a key innovation in this paper, is that we can prove a \emph{global} equivalence of categories of \emph{\'etale/coherent} objects:
\[\mathrm{MIC}_{\en}^{a}(X_\et,\o_{X, \gs^+_{\dR}, m})  \simeq \MIC^a_{\gk}(X_{C, \et}, \o_{X_C, \bdrplus, m}). \]
 One can easily write out an \emph{explicit} functor from left to right (roughly, base change); the difficult direction is from right to left (i.e., \emph{descent} or \emph{decomplete}), whose construction hinges on an \emph{arithmetic} infinite dimensional \emph{(locally) analytic}  Sen theory over the Kummer tower. This Sen theory will be discussed in more detail in Rem \ref{rem: new Sen}. 
 
 \item The global functor $F_\proet$ in Item (1) composed with global equivalences in Item (3) leads to a \emph{global} functor:
\[ \vect(\fkxastpris, \pris_{\dR,m}^+) \to \mathrm{MIC}_{\en}^{a}(X_\et,\o_{X, \gs^+_{\dR}, m}).\]
To prove equivalence, it suffices to work locally as both sides are Zariski stacks: but the local case is already proved in Item (2), concluding the roadmap.
\end{enumerate}
\end{construction}

\begin{construction}[Searching for a Sen operator] \label{rem: new Sen}
As a short summary, this paper is a continuation of the series \cite{GMWHT, GMWdR, MW22}, but additionally incorporating an idea inspired by the series \cite{AHLB1, AHLB3} (with a technical tool supplied by \cite{RC22}). 
In the following, we first discuss why either approach \emph{alone} would not suffice for our purpose; then we discuss how to combine the power  of the two approaches.
\begin{enumerate}
\item For  $\fkx=\spf \ok$, the work \cite{GMWdR} generalizes \cite{GMWHT} (where $m=1$) to $m \geq 2$ case.
A key argument there is a d\'evissage argument (cf. \cite[\S 12]{GMWdR})  inspired by \cite{Fon04}; unfortunately, this method breaks for a general  $\fkx$, essentially because  the naive  d\'evissage argument can only work with one matrix (corresponding to  one Sen operator):   for general $\fkx$, there are $(\dim \fkx/\ok +1)$ operators (one Sen operator plus $\dim \fkx/\ok$-many geometric connection operators).
\item One could also try to generalize the methods of \cite{AHLB3} to $m\geq 2$ case, but will quickly meet a serious  trouble.
\begin{enumerate}
\item  When $m=1$, Sen theory can be \emph{extended $C$-linearly} without  losing  too much \emph{linear-algebra theoretic information}; for example in the classical paper \cite{Sen81}, one could define a \emph{Sen operator} over the cyclotomic field $\kpinfty$ (cf. Notation \ref{notafields}), which could then be \emph{linearly extended} to a $C$-linear endomorphism;
\item When $m \geq 2$, there is no such ``linear extension". For example, in \cite{Fon04}, Fontaine generalizes \cite{Sen81} to representations over $\bdrplusm$ and defines a \emph{$t$-connection} operator on a module over the ring $\kpinfty[[t]]/t^m$; this connection can \emph{not}  extend over $\bdrplusm$ (unless $m=1$) as that operator does not even act on $\bdrplusm$.
\item Indeed, a key observation for this paper comes from the ring $B_\en$ used in \cite{AHLB1}; it is equipped with a \emph{$C$-linear} ``Sen operator" $\phi_{B_\en}$ and a $\gk$-action. 
This ring is used as a  \emph{period ring} in  \cite{AHLB1} to prove
\[\End^a(K)  \simeq \rep_\gk^a(C) \]
with functors being
\[ M \mapsto W=(M\otimes_K B_\en)^{\phi_M\otimes 1+1\otimes \phi_{B_\en}},\]
\[W \mapsto M=(W\otimes_C B_\en)^{\gk=1}.\]
We point out one can easily construct an analogue of $B_\en$ when $m \geq 2$: but it is  a $\bdrplusm$-algebra, and hence   no longer admits ``Sen operator"!
 \end{enumerate}

\item A key observation in this paper is a \emph{(locally) analytic} interpretation of the ring $B_\en$ (denoted as $\bnht$ in this paper):
\begin{itemize}
\item  we prove $\bnht$ is a \emph{relatively locally analytic representation} (over $C$) in the sense of Rodr\'{\i}guez Camargo \cite{RC22}.  This makes it legitimate to consider \emph{infinite dimensional} Sen theory on this ring;
\item We then observe the ``correct" Sen theory to study  $\bnht$ is the Sen theory over the \emph{Kummer tower} $K_\infty$ (in contrast to  cyclotomic tower $\kpinfty$, cf. Notation \ref{notafields}) first developed in \cite{GMWHT, GMWdR} and now generalized to infinite dimensional case;
\item Now both items above could be generalized to study the $m \geq 2$ case, i.e., the ring   $\bfb_{\ndr, m}$  which is  a \emph{relatively locally analytic representation} (over $\bdrplusm$); its corresponding Sen module (over Kummer tower) now admits a Sen operator, making it possible to function again as a \emph{period ring}! That is: with this Sen theory, we can again use techniques of differential equations.

 \item Above bullets discuss a  (relatively) \emph{locally analytic} Sen theory; it turns out for our purpose, we shall need a further refined \emph{analytic} (not just locally analytic) Sen theory. This is a key new discovery in this paper, which unlike the \emph{locally analytic} version, (in general) does \emph{not} work for the classical cyclotomic tower. That is: we \emph{have to} use the Kummer tower here. Magically, the $\tau$-operator (unlike the $\gamma$-operator in classical cyclotomic tower case) behave more like a ``geometric" operator. \footnote{We expect possible connection between this analytic-geometric operator and  the theory of  analytic prismatization (work in progress by  by Ansch\"{u}tz, Le Bras, Rodr\'{\i}guez Camargo and Scholze).}
\end{itemize} 
\end{enumerate}
\end{construction}

 \subsection{Structure of the paper}
 We divide the paper into several parts.
 \begin{enumerate}
 \item Part One:   \emph{relative} local prismatic crystals. 
 \begin{itemize}
  \item  From \S \ref{sec: rel pris smooth} to \S \ref{SubSec-Log-Rel}, we study \emph{relative} local prismatic crystals. The smooth case generalizes results of \cite{Tia23} from $m=1$ case to general case.
\S \ref{SubSec-Log-Rel} separately treats semi-stable case to contain the length of \S \ref{sec: rel pris smooth}.
   Some computations here are useful for subsequent computations in the  local \emph{absolute} case. 
\end{itemize}

\item Part Two: \emph{absolute} local prismatic crystals.
\begin{itemize}
\item  \S \ref{sec: arith infinite dim} quickly generalizes   previous work \cite{GMWdR} (where $\fkx=\spf \ok$) to the \emph{infinite} rank case: that is, we need to deal with infinite rank prismatic crystals as they will naturally appear as certain period rings.

\item \S \ref{sec: loc abs pris} forms a first major technical core of this paper, which computes explicitly stratifications related with absolute prismatic crystals on a local chart. Although we are guided by our previous works, this case still poses several challenging technical difficulties: most notably, unlike \cite{MW22} where all operators are \emph{linear}, now all operators are connections. 

\item \S \ref{sec: coho abs local} computes cohomology of stratifications from last section; it is separated from previous section to contain the length.
\end{itemize}

\item Part Three: geometric Riemann--Hilbert correspondence.
\begin{itemize}
\item  \S \ref{sec: bdr loc sys} collects basic facts on $\bbdrplus$-local systems. We then in \S \ref{sec: global RH} establish Riemann--Hilbert correspondence for them. The main results mostly follow d\'evissage argument using results on Simpson correspondence in \cite{MW22}.
We also collect  some local explicit formula in \S \ref{subsec local RH} for subsequent use.
\end{itemize}

\item Part Four: relating primatic crystals with RH correspondence.
\begin{itemize}
\item In \S \ref{sec: crystal perf pris}, we prove  $\pris_\dR^+$-crystals  on the \emph{perfect} prismatic site are   classified by    $\bB_{\dR}^+$-local systems on the pro-\'etale site. 

\item Thus, in particular, from a   $\pris_\dR^+$-crystals on the (whole) prismatic site, we can restrict to the perfect subsite, and thus pass to a $\bB_{\dR}^+$-local system; the explicit local formulae for these relations will be presented in \S \ref{sec: m to cm}.
This section serves as an interlude, concludes the \emph{geometric} discussions, and points out remaining questions. In particular, the explicit formula invites, and will be used in the discussion of \emph{arithmetic} Sen theory in the following part.
\end{itemize}

\item Part Five: (analytic) arithmetic  Sen theory.
\begin{itemize}
\item In \S \ref{sec: infinite Sen}, we develop Sen theory for \emph{infinite dimensional relatively locally analytic} $\bdrplus$-representations, building on the work of Rodriguez-Camargo \cite{RC22}, and then pass to a Sen theory over the Kummer tower, using ideas of \cite{GMWHT, GMWdR}.

\item  In \S \ref{sec: decomp RH}, we decomplete the RH correspondence in \S \ref{sec: global RH}, using the infinite dimensional Sen theory from  \S \ref{sec: infinite Sen}.  This makes it possible to define $a$-smallness.

\item \S \ref{sec: analytic sen kummer} is a key innovation in this paper. We develop an \emph{analytic} Sen theory over the Kummer tower, for certain $a$-small Galois representations. This is inspired by some explicit computations in \cite{AHLB1}, but now will be developed using locally analytic tool in \cite{RC22} and the Kummer tower construction in  \cite{GMWHT, GMWdR}. This gives us a \emph{correct} analytic tool to study prismatic crystals.

\item In \S \ref{sec: M to W}, we use analytic Sen theory from \S \ref{sec: analytic sen kummer} to build the \emph{global} equivalence from enhanced connections to $a$-small $\gk$-equivariant connections, mentioned in Cons \ref{cons: roadmap}(3). 
\end{itemize}

\item Part Six: global theorems.
\begin{itemize}
\item Previously, we can prove \emph{local} equivalence between prismatic crystals with enhanced connections; but it is hard to globalize. We also have \emph{global} functors from prismatic crystals to  $\gk$-equivariant connections, but it is  difficult to even prove  full-faithfulness. With the equivalence in \S \ref{sec: M to W} (built using analytic Sen theory), we can finally \emph{combine} the benefit of both constructions: that is, the local equivalences are now globalized using   passage to  $\gk$-equivariant connections.  
\end{itemize}
 
 \end{enumerate}

\subsection*{Acknowledgment} 
We thank 
Johannes Ansch\"utz, 
Ben Heuer, 
Arthur-C\'esar Le Bras, 
Ruochuan Liu, 
Lue Pan, 
and Juan Esteban Rodr\'{\i}guez Camargo 
 for valuable discussions and feedback. 
 Hui Gao is partially
supported by the National Natural Science Foundation of China under agreements   NSFC-12071201, NSFC-12471011.  
Yu Min has received funding from the European Research Council (ERC) under the European Union's Horizon 2020 research and innovation programme (grant agreement No. 884596).
Yupeng Wang is partially supported by CAS Project for Young Scientists in Basic Research, Grant No. YSBR-032.

\newpage
\section{Notations, conventions and tool-kits}

 In this section, we record some notations/conventions used throughout the paper. In particular, in Convention \ref{conv: consistency notation}, we  list our choice of notations for sheaves on different geometric settings; we hope this could help the readers to distinguish them throughout the paper. Other notations are mostly standard; the readers could skip for now and come back when needed.

\subsection{Notation styles}

\begin{convention}[Notations of sheaves] \label{conv: consistency notation}
 In this paper, we study many sheaves  on prismatic, \'etale and pro-\'etale sites. We try to be \emph{consistent} with our choice of notations.
\begin{enumerate}
\item Prismatic notations $\bm$.
\begin{itemize}
\item We use blackboard font   $\mathbb M$ (possibly with decorations, e.g., $\bm^\perf$; similar for following) to denote  sheaves on (absolute/relative) prismatic sites. 

\end{itemize}

\item Pro-\'etale notations $\bbw, W, V, D$.
\begin{itemize}
\item We use $\mathbb W$ to denote sheaves on pro-\'etale sites.

\item We use $W$ to denote representation-theoretic (local)  objects related with   pro-\'etale sheaves. These representations can be ``decompleted", which will be denoted using notations such as $V$ or $D$.
\end{itemize}

 \item \'{E}tale/coherent notations $M, \cm$.
 \begin{itemize}
\item We use $M$ to denote   coherent objects (e.g., connections with extra structures) related with  \emph{prismatic sheaves}. 
 \item We use $\calM$ to denote   coherent  objects (sheaves or modules) related with  \emph{pro-\'etale sheaves}.
 \end{itemize}

\item  Sen operators $\nabla_i, \phi$ (on coherent objects). 
\begin{itemize}
\item We use $\nabla_i$ to denote \emph{geometric} Sen/connection operators; 
\item We use $\phi$ to denote \emph{arithmetic} Sen/connection operators.
\end{itemize}
\end{enumerate}
\end{convention}

\subsection{Notations: fields, groups, locally analytic vectors} \label{subsec lav nota}

\begin{notation} \label{notafields}
 We   introduce some field notations. 
\begin{itemize}
\item Let $\mu_1$ be a primitive $p$-root of unity, and inductively fix $\mu_n$ so that $\mu_n^p=\mu_{n-1}$. Let $K_{p^\infty}=  \cup _{n=1}^\infty
K(\mu_{n})$.

\item Let $\pi_0=\pi$ be a fixed uniformizer of $\ok$, and inductively fix some $\pi_n$ so that $\pi_n^p=\pi_{n-1}$. 
Let $K_{\infty}   = \cup _{n = 1} ^{\infty} K(\pi_n)$.
When $p\geq 3$, \cite[Lem. 5.1.2]{Liu08} implies $\kpinfty \cap \kinfty=K$; when $p=2$, by \cite[Lem. 2.1]{Wangxiyuan}, we can and do choose some $\pi_n$ so that $\kpinfty \cap \kinfty=K$.
\end{itemize}  \
Let $L=\kpinfty \kinfty$. 
Let $$G_{\kinfty}:= \gal (\overline K / K_{\infty}), \quad G_{\kpinfty}:= \gal (\overline K / K_{p^\infty}), \quad G_L: =\gal(\overline K/L).$$
Further define $\Gamma_K, \hat{G}$ as in the following diagram:
\[
\begin{tikzcd}
                                       & L                                                                                             &                             \\
\kpinfty \arrow[ru, "\langle \tau\rangle", no head] &                                                                                               & \kinfty \arrow[lu, no head]. \\
                                       & K \arrow[lu, "\Gamma_K", no head] \arrow[ru, no head] \arrow[uu, "\hat{G}"', no head, dashed] &
\end{tikzcd}
\]
Here we let $\tau \in \gal(L/K_{p^\infty})$ be \emph{the} topological generator such that $\tau(\pi_i)=\pi_i\mu_i$ for each $i$.
\end{notation}

We shall freely use the notion of locally analytic vectors in this paper; their relevance in $p$-adic Hodge theory is first discussed in \cite{BC16}. 

\begin{Notation}  \label{nota tau la}
\begin{enumerate}
\item Given a $\hat{G}$-representation $W$, we use
$$W^{\tau=1}, \quad W^{\gamma=1}$$
to mean $$ W^{\gal(L/K_{p^\infty})=1}, \quad
W^{\gal(L/K_{\infty})=1}.$$
And we use
$$
W^{\tau\dla}, \quad W^{\tau_n\dan}, \quad  W^{\gamma\dla} $$
to mean
$$
W^{\gal(L/K_{p^\infty})\dla}, \quad
W^{\gal(L/(K_{p^\infty}(\pi_n)))\dan}, \quad
W^{\gal(L/K_{\infty})\dla}.  $$


\item  For an element $g$ (in a group), we define following (formal) expression:
\[ \log g: =(-1)\cdot \sum_{k \geq 1} (1-g)^k/k.\]
  Given a $\hat{G}$-locally analytic representation $W$, the following two Lie-algebra operators (acting on $W$) are well-defined:
\begin{itemize}
\item  for $g\in \gal(L/\kinfty)$ enough close to 1, one can define $\nabla_\gamma := \frac{\log g}{\log(\chi_p(g))}$;
\item for $n \gg 0$ hence $\tau^{p^n}$ enough close to 1, one can define $\nabla_\tau :=\frac{\log(\tau^{p^n})}{p^n}$.
\end{itemize}
These two Lie-algebra operators form a $\qp$-basis of $\Lie(\hat{G}$).
\end{enumerate}
\end{Notation}


\subsection{Notations: prisms and prismatic sheaves}
\begin{notation}[Breuil--Kisin log prism] \label{nota: BK prism}
  Let $E(u)\in \frakS = \rW(k)[[u]]$ be the Eisenstein polynomial of $\pi$. 
 \begin{enumerate}
     \item  This gives rise to the Breuil--Kisin prism (with respect to choice of $\pi$)
 \[ (\gs, (E)) \in \okpris. \]
 
 \item     The map $\bN\xrightarrow{1\mapsto \pi}\calO_K$ induces a log structure, which we use to define the log-prismatic site $(\ok)_{\pris, \log}$ in the sense of \cite{Kos21}.  
 The map $\bN\xrightarrow{1\mapsto u}\frakS$ induces a log structure  $M_{\frakS}$; by requiring $\delta(u) = \delta_{\log}(1) = 0$, we obtain a log prism
 \[ (\frakS,(E),M_{\frakS}) \in (\ok)_{\pris, \log}.\]
 
 \item Recall in Notation \ref{notafields}, we defined a compatible sequence of $p^n$-th root of unity $\mu_n$; the sequence $(1, \mu_1, \cdots, \mu_n, \cdots)$ defines an element $\epsilon \in \ocflat$. Let $[\epsilon] \in \ainf$ be its Teichm\"uller lift. Define $\xi := \frac{[\epsilon]-1}{[\epsilon]^{\frac{1}{p}}-1}$, which is a generator of the kernel of $\theta: \ainf \to \oc$. 
Then we have the Fontaine prism $(\ainf, (\xi))$.
 There is a natural morphism of   prisms
  $$\iota: (\frakS,(E(u)) )\xrightarrow{u\mapsto [\pi^{\flat}]}(\Ainf,(\xi) ).$$ 
 
 \item  
  The map $\bN\xrightarrow{1\mapsto[\pi^{\flat}]}\Ainf$ induces a log structure, leading to the  Fontaine log prism  $(\Ainf,(\xi),M_{\Ainf})$.
  There is a natural morphism of log prisms
  $$\iota: (\frakS,(E(u)),M_{\frakS})\xrightarrow{u\mapsto [\pi^{\flat}]}(\Ainf,(\xi),M_{\Ainf}).$$ 
 \end{enumerate} 
 \end{notation}

\begin{notation}[Prismatic sheaf $\prism^+_{\dR, m}$] \label{nota: de Rham sheaf}
 Let $1 \leq m \leq \infty$. We define the  prismatic sheaf $\prism^+_{\dR, m}$ of level $m$, (uniformly) for all the relative/absolute (log)-prismatic sites as follows.
\begin{enumerate}
\item 
For a  prism $(P, Q)$ or a log-prism $(P, Q, M_P)$, and for $m <\infty$ define
\[ P^+_{\dR, m}: = P[1/p]/Q^m. \]
The $p$-adic topology induces a Banach $\qp$-algebra structure on $ P^+_{\dR, m}$.
 When $m=\infty$,   write
\[ P^+_{\dR}: =P^+_{\dR, \infty} :=\projlim_{1\leq m <\infty} P^+_{\dR, m} \]
and equip it with Fr\'echet topology. 
For example, for Breuil--Kisin (log-) prism in Notation \ref{nota: BK prism}, we can write 
\[\frakS_{\dR}^+ := \frakS[{1}/{p}]^{\wedge}_E =K[[E]], \quad \frakS_{\dR,m}^+ = \frakS_{\dR}^+/E^m\frakS_{\dR}^+.\]
 
 \item  Let $\ast \in \{\emptyset, \log\}$.   Consider the relative (log-) prismatic site $(\mathfrak{X}/(A, I, \ast))_{\prism, \ast}$ (where $(A, I, \ast)$ is a $\ast$-prism, and $\mathfrak{X}\to \spf(A/I)$ a formal scheme), or  the absolute (log-) prismatic site $\mathfrak{Y}_{\prism, \ast}$ (where $\mathfrak{Y}$ is a formal scheme).  
 Define  the sheaf $\prism^+_{\dR, m}$ on each of these sites, by specifying for $(B, J, \ast)$   an object in one of the (four) sites,
\[ \prism^+_{\dR, m}((B, J, \ast)) :=B^+_{\dR, m}. \] 

 \item It is   customary to write \[ \prism^+_{\dR}: =\prism^+_{\dR, \infty} \]
and
\[ \prism_{\HT}: =\prism^+_{\dR, 1}. \]
\end{enumerate}
\end{notation}

   \begin{notation}[Breuil--Kisin twist] \label{nota: BK twist rel pris}
Let $(A, I)$ be a prism.
    Let $I^{-n}:=\Hom_A(I^n, A)$, which admits a canonical morphism $A \to I^{-n}$. 
    As $(A, I)$ is a prism, $A$ is $I$-torsion free and $I^{-n}$ is a finite projective $A$-module of rank 1.
    For any $A^+_{\dR, m}$-module $P$, denote $P\{-n\}: =P\otimes_A I^{-n}$.  This notation comes from the fact that $P\{-n\}$ coincides with the Breuil--Kisin twist  by $-n$  with respect to the prism $(A,I)$ (cf. \cite{BL-a}), because we are inverting $p$ and quotient out $I^m$.  
\end{notation}

\subsection{Conventions: crystals and stratifications}
 
 \begin{convention} \label{conv: def of cosimplicial rings}
    Let $A^{\bullet}$ be a cosimplicial ring.
     For any $0\leq i\leq n+1$, let $p_i:A^n\to A^{n+1}$
     be the $i$-th face map   induced by the order-preserving map $[n]\to[n+1]$ whose image does not contain $i$. For any $0\leq i\leq n$, let $\sigma_i:A^{n+1}\to A^n$ be the $i$-th degeneracy map induced by the order-preserving map $[n+1]\to[n]$ such that the preimage of $i$ is $\{i,i+1\}$. For any $0\leq i\leq n$, let $q_i:A^0\to A^n$ be the structure map induced by the map $[0]\to [n]$ sending $0$ to $i$. 
\end{convention}

\begin{dfn} \label{def: stratifications}
For a cosimplicial ring $A^{\bullet}$, a \emph{stratification with respect to $A^{\bullet}$} is a pair $(M,\varepsilon)$ consisting of a finite projective $A^0$-module $M$ and an $A^1$-linear isomorphism
 \[\varepsilon: M\otimes_{A^0,p_0}A^1\to M\otimes_{A^0,p_1}A^1,\]
 such that the following are   satisfied:
 \begin{enumerate}
 \item  $p_2^*(\varepsilon)\circ p_0^*(\varepsilon) = p_1^*(\varepsilon): M\otimes_{A^0,q_2}A^2\to M\otimes_{A^0,q_0}A^2$, which is called the \emph{cocycle condition};
 \item   $\sigma_0^*(\varepsilon) = \id_M$.
 \end{enumerate}
Write $\mathrm{Strat}(A^\bullet)$ for the category of stratifications (satisfying cocycle conditions) with respect to $A^\bullet$.
\end{dfn}

\subsection{Some other notations}
\begin{notation}[Some ringed spaces]
\label{nota: ringed space}
 We introduce several ringed spaces. Let $X$ be a smooth rigid space over $K$.
\begin{enumerate}
\item  Consider a valuation ring \[Q \in \{ \frakS_{\dR}^+=K[[E]], \kpinfty[[t]], \kinfty[[E]] \}. \]
Let $I \subset Q$ be the maximal ideal, and let $Q_m:=Q/I^m$.
Define the \'etale sheaves     
\[   \o_{X, Q, m}:= \calO_X  \otimes_{K} Q_m \]
and  
\[ \o_{X, Q, \infty}:= \varprojlim_{1\leq m<\infty} \calO_X  \otimes_{K} Q_m. \] 
Then for any $1\leq m\leq \infty$, we   can consider ringed spaces
\[ (X, \o_{X, Q, m}). \]

\item See \cite[\S 3.1]{LZ17} for full details. Let $\calB$ be the full subcategory of  $X_{C,\et}$ whose objects consist of those \'etale maps to $X_{C}$ that are the base changes of standard étale morphisms $Y\to X_{K^{\prime}}$ defined over some finite extension  $K\subset K' \subset C$ in $L$ where $Y$ is also
  required to admit a toric chart after some finite extension of $K^{\prime}$.
   For   $m<\infty$, the presheaf
  \[(Y = \Spa(A,A^+)\to X_{K^{\prime}})\in \calB\mapsto A\widehat \otimes_{K^{\prime}} \bdrplus/t^m\]
  is actually a sheaf of $\bdrplus/t^m$-algebras on $X_{C,\et}$; denote the sheaf by $\calO_{X_{C},\bdrplus, m}$. Define the sheaf 
  \[\calO_{X_{C}, \bdrplus, \infty}:=\varprojlim_m \calO_{X_{C}, \bdrplus, m}.\]
  Then for   $1\leq m\leq \infty$, we have ringed spaces
\[\calx_m:= (X_C, \o_{X_C, \bdrplus, m}). \]
Note $\calx_\infty$ is exactly $\calx^+$ in \cite[Def 3.5]{LZ17}, whence our choice of notation.
 \end{enumerate}
\end{notation}

\begin{defn}[$b$-Leibniz rule] \label{def: b-conn}
Let $B$ be a ring equipped with a differential operator $\partial: B \to B$; let $b \in B$. Let $M$ be a $B$-module. Say a map $\nabla: M\to M$ is a \emph{$b$-connection} (or it satisfies ``$b$-Leibniz rule") with respect to $\partial$ if for $f\in B, x\in M$, we have
 $$\nabla(fx)=f\nabla(x)+b\cdot \partial(f) x.$$ 
\end{defn}

  \begin{defn}[Semi-linear representations]\label{defsemilinrep}
 Suppose $\mathcal G$ is a topological group that acts continuously on a topological ring $R$. We use $\rep_{\mathcal G}(R)$ to denote the category where an object is a finite projective $R$-module $M$ (topologized via the topology on $R$) with a continuous and \emph{semi-linear} $\mathcal G$-action in the usual sense that
$$g(rx)=g(r)g(x), \forall g\in \mathcal G, r \in R, x\in M.$$ 
 \end{defn}

 \subsection{Formal identities} \label{subsec:formal identity}
 
 We record some formal identities that will be  repeatedly used.
 \begin{lemma} \label{lem expansion identity}
Consider the ring $R=\bQ[[P_1, P_2,Q_1,Q_2, Z^{\pm 1}]]$.  
For 
$$Q\in \{Q_1, Q_2, Q_1+Q_2-ZQ_1Q_2\},  \quad \text{and } P \in \{P_1, P_2, P_1+P_2\}, $$
define an element
\begin{equation}\label{eq: expansion formal}
     (1-ZQ)^{-\frac{P}{Z}}: =\sum_{n\geq 0}\left(\prod_{i=0}^{n-1}(P+iZ)\right) \cdot Q^{[n]}. 
\end{equation}
(caution: the case for $Q=Q_1+Q_2-ZQ_1Q_2$ still lands inside $R$ and is well-defined.) 
Then we have
\begin{enumerate}
  \item $ (1-ZQ_1)^{-\frac{P}{Z}} \cdot (1-ZQ_2)^{-\frac{P}{Z}}=
(1- Z(Q_1+Q_2-ZQ_1Q_2))^{-\frac{P}{Z}}$,
  \item $ (1-ZQ)^{-\frac{P_1}{Z}}  \cdot (1-ZQ)^{-\frac{P_2}{Z}}= (1-ZQ)^{-\frac{P_1+P_2}{Z}}$.  
\end{enumerate}
     \end{lemma}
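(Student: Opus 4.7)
The plan is to recognize the series in \eqref{eq: expansion formal} as a formal exponential $\exp(P\cdot L(Q))$ for a suitable $\mathbb{Q}$-linear expression $L(Q)$, and then to reduce both identities to the standard additivity properties of $\exp$ and $\log$ over $\mathbb{Q}$. First I will check that all series involved are well-defined elements of $R$: the $n$-th summand of \eqref{eq: expansion formal} is a polynomial in $P, Z$ multiplied by $Q^n/n!$, so I need only see that for each monomial in $Q_1, Q_2$ only finitely many $n$ contribute. This is clear when $Q \in \{Q_1, Q_2\}$ and, for $Q = Q_1+Q_2-ZQ_1Q_2$, it follows by expanding $(Q_1+Q_2-ZQ_1Q_2)^n$ and observing that every resulting monomial has total $(Q_1, Q_2)$-degree at least $n$.

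Next I will introduce
\[ L(Q) := -\frac{1}{Z}\log(1-ZQ) = \sum_{n\geq 1}\frac{Z^{n-1}Q^n}{n}, \]
which by the same degree argument lies in $R$ with no constant term (and, importantly, involves no negative powers of $Z$, since the $Z^{-1}$ in $-1/Z$ is absorbed by the factor $ZQ$ inside the logarithm). The key identity I plan to verify is the formal binomial formula
\[ \exp\bigl(P\cdot L(Q)\bigr) \;=\; \sum_{n\geq 0}\left(\prod_{i=0}^{n-1}(P+iZ)\right)Q^{[n]}, \]
which is nothing but the classical $(1-x)^s = \exp(s\log(1-x))$ rewritten with $s = -P/Z$ and $x = ZQ$; the right-hand side is the substitution of $s = -P/Z$ into $\sum_n \binom{-s}{n}(-x)^n$ after clearing $Z^n$ from numerator and denominator. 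This is routine to unwind by expanding $\exp$ and collecting powers of $Q$.

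With this identification, both claims become immediate. For (1), the multiplicativity of $1-ZQ$ under the product $Q_1, Q_2 \mapsto Q_1+Q_2-ZQ_1Q_2$ translates via $\log$ into
\[ L(Q_1) + L(Q_2) \;=\; L(Q_1+Q_2-ZQ_1Q_2), \]
so multiplying the two exponentials and applying $\exp(a)\exp(b) = \exp(a+b)$ gives the claim. For (2), I will simply use
\[ \exp\bigl(P_1\cdot L(Q)\bigr)\cdot \exp\bigl(P_2\cdot L(Q)\bigr) \;=\; \exp\bigl((P_1+P_2)\cdot L(Q)\bigr). \]

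The main obstacle is not conceptual but one of bookkeeping: I must be careful to carry out all manipulations inside the $(Q_1, Q_2)$-adic completion of $\mathbb{Q}[P_1, P_2, Z^{\pm 1}][[Q_1, Q_2]]$, so that $\exp$ is applied only to series without constant term and $\log$ only to elements of the form $1 + (\text{element of the maximal ideal})$. Once these conventions are fixed, the formal exp–log calculus in characteristic zero produces both identities without further difficulty.
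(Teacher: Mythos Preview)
Your proposal is correct and is essentially the same approach as the paper's own proof, just spelled out in more detail. The paper's argument consists of a single line observing that the definition \eqref{eq: expansion formal} is exactly the formal binomial expansion $\sum_{n\geq 0}\binom{-P/Z}{n}(-ZQ)^n$, after which the two identities are declared ``completely formal''; your exp--log rewriting $\exp(P\cdot L(Q))$ with $L(Q)=-Z^{-1}\log(1-ZQ)$ is precisely the standard justification underlying that one-liner.
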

 \begin{proof}
The proof is completely formal, by noticing the \emph{definition} of Eqn. \eqref{eq: expansion formal} comes from the following ``formal expansion":
\[ (1-ZQ)^{-\frac{P}{Z}}=\sum_{n\geq 0}\binom{-P/Z}{n}(-ZQ)^n=\sum_{n\geq 0}(-Z)^n\prod_{i=0}^{n-1}(-\frac{P}{Z}-i)Q^{[n]}=\sum_{n\geq 0}\prod_{i=0}^{n-1}(P+iZ)Q^{[n]}.\]
\end{proof}

\newpage
\addtocontents{toc}{\ghblue{Relative prismatic crystals}}
 
\section{Local relative $\mathbbl{\Delta}_\dR^+$-crystals: smooth case} \label{sec: rel pris smooth}
 
  Throughout this section, we fix an orientable base prism $(A,I)$ and denote $\overline A:=A/I$. Fix a generator $\beta$ of $I$.  
  For an $\overline{A}$-algebra $R$, let $(R/(A,I))_{\Prism}$ be the relative prismatic site. Recall the  sheaf $\Prism_{\dR,m}^+$ is defined  in Notation \ref{nota: de Rham sheaf}.
  The goal of this section is Thm \ref{Thm-dRCrystalasXiConnection-Rel}, where we show for $R$ small smooth, there is an equivalence of categories
     \[\Vect((R/(A,I))_{\Prism},\prism^+_{\dR, m})\xrightarrow{\simeq}{\rm MIC}_{\beta}^{\nil}(A(R)_{\dR,m}^+),\]
 together with   cohomology comparison results.  The $m=1$ case is known by \cite{Tia23}.
For general $m$ treated in this section, we need to set up the correct notion of connections; in addition, we shall use the formal identity in Lem \ref{lem expansion identity} to streamline some computations.

The structure of this section is as follows.
We review relevant (explicit) prismatic rings in \S \ref{SubSec-COmmAlg-Rel}; these are used to explicitly compute stratifications in \S \ref{subsec: ana strat rel pris}. We introduce the category of  connections in \S \ref{subsec: cat beta conn rel}, which we also translate as more concrete $\beta$-connections; these are related to prismatic crystals in \S \ref{subsec: from rel cry to conn}. Finally, we prove cohomology comparisons in \S \ref{subsec: rel pris coho compa}.

\subsection{Structure of prismatic rings} \label{SubSec-COmmAlg-Rel}
\begin{defn} \label{defn: chart rel pris}
    For a formally smooth $\overline A$-algebra $R$ of relative dimension $d$, we say it is \emph{small smooth} if there is a $p$-completely \'etale morphism 
    $$\Box:\overline A\za T_1^{\pm 1},\cdots, T_d^{\pm 1}\ya\to R.$$
     We call such a morphism $\Box$ a \emph{chart} on $R$. We will not emphasize the chart when the context  is clear.
\end{defn}

\begin{construction}[Cover of final object]  \label{constr: cover rel pris}
Let $R$ be a small smooth $\overline A$-algebra of relative dimension $d$ with a chart $\Box$. 
  Denote
 \[ A\za\underline T^{\pm 1}\ya := A\za T_1^{\pm 1}, \cdots, T_d^{\pm 1}\ya, \]
 and equip it with a $\delta$-structure by setting $\delta(T_i) = 0$ for all $i$; 
 thus $(A\za \underline T^{\pm 1}\ya,IA\za \underline T^{\pm 1}\ya)$ is a prism over $(A,I)$.
 By \cite[Lem. 2.18]{BS22}, $R$ admits a lifting $A(R)$ over $A$ and there is a unique lifting of $\Box$ as in top row of the following diagram
 \[
 \begin{tikzcd}
A\za \underline T^{\pm 1}\ya \arrow[rr, dashed] \arrow[d]  &  & A(R) \arrow[d] \\
\overline{A}\za \underline T^{\pm 1}\ya \arrow[rr, "\Box"] &  & R .            
\end{tikzcd}
\]
This makes $(A(R),IA(R))$ a prism over $(A\za \underline T^{\pm 1}\ya,IA\za \underline T^{\pm 1}\ya)$. It is well-known that $(A(R),IA(R))$ is a cover of the final object of $\Sh((R/(A,I))_{\Prism})$ (cf. \cite[Lem. 4.2]{Tia23}).   Let $(A(R)^{\bullet},IA(R)^{\bullet})$ be the \v Cech nerve of this cover.
   \end{construction}

 
 \begin{construction}[Cosimplicial structure of $A(R)^{\bullet}$] \label{const: cosimp BK rel pris} 
 For any $n\geq 0$, we have
 \[A(R)^n = A(R)^{\widehat \otimes_A(n+1)}\{\frac{\underline T_0-\underline T_1}{\beta},\cdots,\frac{\underline T_0-\underline T_n}{\beta}\}^{\wedge}_{\delta},\]
 in the sense of \cite[Prop. 3.13]{BS22}.  
 Here, $A(R)^{\widehat \otimes_A(n+1)}$ denotes the $(p,I)$-complete tensor products of $(n+1)$ copies of $A(R)$ over $A$, $\underline T_i = (\underline T)_i=(T_{1,i},\dots, T_{d,i})$ denotes the coordinate on the $(i+1)$-th factor of $A(R)^{\widehat \otimes_A(n+1)}$  and $$\frac{\underline T_0-\underline T_i}{\beta}: = \frac{ T_{1,0}-  T_{1,i}}{\beta},\dots,\frac{T_{d,0}- T_{d,i}}{\beta}.$$
 Let $p_i:A(R)^n\to A(R)^{n+1}$ and $\sigma_j: A(R)^{n+1}\to A(R)^n$ denote the   face morphism and degeneracy morphism as in \ref{conv: def of cosimplicial rings}, respectively.  
They are determined by 
 \begin{equation}\label{Equ-SimplicialStructure-Rel}
     \begin{split}
         & p_i(\underline T_j) = \left\{
           \begin{array}{rcl}
               \underline T_{j+1}-\underline T_1, & 0=i<j; \\
               \underline T_{j+1}, & 0\neq i\leq j \\
               \underline T_j, & i>j;
           \end{array}\right.\\
         & \sigma_i(\underline T_j) = \left\{
           \begin{array}{rcl}
               \underline T_{j-1}, & i<j \\
               \underline T_j, & i\geq j.
           \end{array}\right.
     \end{split}
 \end{equation} 
 Here, the expression such as ``$p_i(\underline T_j) = \underline T_{j+1}-\underline T_1$" means  $p_i( T_{s,j}) =  T_{s,j+1}-  T_{s,1}$ for each $1\leq s \leq d$; similar for other occurrences in this paper.
\end{construction}

 
 The structure of mod $I$ reduction of  $A(R)^{\bullet}$ turns out to be very explicit.
 
 \begin{lem}[\emph{\cite[Prop. 5.7 \& (5.7.1)]{Tia23}}]\label{Lem-Tian-Structure}~  
  Use notations in Construction \ref{const: cosimp BK rel pris}. For any $n\geq 0$, there is an isomorphism 
   \[R\{\underline Y_1,\dots,\underline Y_n\}^{\wedge}_{\pd}\xrightarrow{\cong}A(R)^n/IA(R)^n\]
   induced by sending $\underline Y_i$ to $\frac{\underline T_0-\underline T_i}{\beta}$,
   where $R\{Y_1,\dots,Y_n\}^{\wedge}_{\pd}$ denotes the $p$-completion of the free pd-polynomial ring generated by free variables $\underline Y_1,\dots,\underline Y_n$ over $R$ and $\underline Y_i$ denotes $Y_{1,i},\dots,Y_{d,i}$.\footnote{Caution: our variables are slightly different from those in \cite[(5.7.1)]{Tia23}. Indeed, the $\frac{\beta_{i,j}}{d}$ appearing in loc.cit. is exactly $\frac{Y_{i,j-1}-Y_{i,j}}{\beta }$ above.}
 \end{lem}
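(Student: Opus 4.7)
The plan is to reduce to the standard computation of the mod-$I$ reduction of a prismatic envelope, following the strategy of \cite[Prop.~5.7]{Tia23}, now carried out cosimplicially with the conventions of Construction~\ref{const: cosimp BK rel pris}.

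First I would perform the linear change of variables $\underline U_i := \underline T_0 - \underline T_i$ for $1 \leq i \leq n$ in $A(R)^{\widehat\otimes_A (n+1)}$. Since each $\underline T_j$ is a $d$-tuple of free ``polynomial'' variables over $A$ (with trivial $\delta$-structure on the $\underline T_j$), this change of variables produces an $A$-algebra isomorphism
\[ A(R)^{\widehat\otimes_A (n+1)} \;\cong\; A(R)\,\widehat\otimes_A\, A\langle \underline U_1, \ldots, \underline U_n\rangle^{\wedge}, \]
where on the right the factor $A(R)$ records the $\underline T_0$-coordinate and each $\underline U_i$ is a $d$-tuple of free variables. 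Under this identification, $A(R)^n$ is exactly the $(p,I)$-completed prismatic envelope obtained by universally adjoining $\underline V_i := \underline U_i/\beta$ (for $i=1,\dots,n$) in the $\delta$-sense.

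Next I would invoke the structure theorem \cite[Prop.~3.13]{BS22} for prismatic envelopes of regular sequences. Since $(\beta, \underline U_1,\ldots,\underline U_n)$ is a regular sequence in the polynomial $\delta$-ring $A\langle \underline U\rangle^{\wedge}$, the envelope $A\langle \underline U\rangle^{\wedge}\{\underline U/\beta\}^{\wedge}_\delta$ is $(p,I)$-completely flat over $A$ and can be identified with the free $(p,I)$-complete $\delta$-$A$-algebra on the new variables $\underline V_i$ (subject only to $\beta \underline V_i = \underline U_i$). Taking the mod-$I$ reduction converts the $\delta$-structure on the $\underline V_i$ into a PD-structure in the classical sense: the iterated Frobenius lifts $\delta^{(k)}(\underline V_i) \bmod I$ realize the divided powers $\underline V_i^{[p^k]}$, and combined with ordinary $p^k$-th powers this yields all divided powers. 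Hence the mod-$I$ reduction of the envelope is the $p$-completed PD-polynomial ring over $\overline A$ on the variables $\underline V_i$. Base-changing along $A(R)/IA(R) \cong R$ (which kills $\underline U_i = \beta \underline V_i$) yields
\[ A(R)^n/IA(R)^n \;\cong\; R\{\underline V_1, \ldots, \underline V_n\}^{\wedge}_{\pd}. \]
Renaming $\underline Y_i := \underline V_i = (\underline T_0 - \underline T_i)/\beta$ gives exactly the claimed isomorphism.

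The main obstacle is the passage from the $\delta$-envelope to a PD-polynomial ring mod $I$: one must verify both that there are no extra relations (flatness) and that the $\delta$-powers generate \emph{precisely} the divided powers (not a proper sub- or super-structure). This is where regularity of $(\beta, \underline U_1,\ldots,\underline U_n)$ and the fine properties of $\delta$-rings over prisms in \cite[\S 2--3]{BS22} enter. In practice, since this computation is independent of the base $(A,I)$ being orientable with generator $\beta$ and of the choice of chart, one can simply reduce to the special case treated in \cite[Prop.~5.7 \& (5.7.1)]{Tia23} (where only $m=1$, i.e. mod $I$, is used), so no new input is needed beyond the references already invoked.
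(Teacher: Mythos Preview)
The paper does not give its own proof of this lemma; it is stated purely as a citation of \cite[Prop.~5.7 \& (5.7.1)]{Tia23}. Your proposal attempts to reconstruct the argument, and the overall shape (prismatic envelope of a regular sequence becomes a PD-envelope mod $I$) is right, but there is a genuine gap in your second step.

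The claimed isomorphism
\[ A(R)^{\widehat\otimes_A (n+1)} \;\cong\; A(R)\,\widehat\otimes_A\, A\langle \underline U_1, \ldots, \underline U_n\rangle^{\wedge} \]
is false in general. The $\underline T_j$ are \emph{not} free polynomial variables in $A(R)$: by Construction~\ref{constr: cover rel pris}, the ring $A(R)$ is only $(p,I)$-completely \'etale over $A\langle \underline T^{\pm 1}\rangle$, not equal to it. If for instance $R$ is a nontrivial finite \'etale cover of the torus, then $A(R)^{\widehat\otimes_A(n+1)}$ is strictly larger than $A(R)\langle \underline U_1,\ldots,\underline U_n\rangle$ as a module over $A\langle \underline T_0^{\pm 1},\ldots,\underline T_n^{\pm 1}\rangle$. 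So your linear change of variables does not eliminate the extra $n$ copies of the \'etale extension.

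What is actually needed, and what Tian's argument supplies, is that \'etaleness of $\Box$ forces those extra copies to collapse onto the first one \emph{after} passing to the envelope and reducing mod $I$. Concretely, one must check that the quotient $A(R)^{\widehat\otimes_A(n+1)}/(I,\underline T_0-\underline T_1,\ldots,\underline T_0-\underline T_n)$ is just $R$ via the first factor (this is unramifiedness of the diagonal for the \'etale map $\Box$), and that $(\underline T_0-\underline T_i)_i$ is $(p,I)$-completely regular relative to $A(R)$ in the full tensor product. With these two inputs your invocation of \cite[Prop.~3.13]{BS22} and the $\delta$-to-PD passage mod $I$ becomes correct. Your final paragraph gestures at deferring to \cite{Tia23} for exactly this point, which is the honest thing to do; but the explicit reduction you wrote above it does not stand on its own.
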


 
 Denote   $A_{\dR,m}^+=\Prism_{\dR,m}^+((A,I))$.
Define the cosimplicial $A_{\dR,m}^+$-algebra
\[  A(R)^{\bullet,+}_{\dR,m}: =\prism^+_{\dR, m}(A(R)^{\bullet},IA(R)^{\bullet}).\]
 When $m = \infty$, we also put $A(R)_{\dR}^{\bullet,+}:=A(R)_{\dR,\infty}^{\bullet,+}$. 
 Clearly, to describe the cosimplicial structure of these rings, it suffices to treat the case $m=\infty$.
 


 
 \begin{prop}\label{Prop-Structure-Rel}
   For any $0\leq i\leq n$, put $\underline Y_i = \frac{\underline T_0-\underline T_i}{\beta\underline T_0} \in A(R)^n$. 
   Then identifying $A(R)$ with the first component of $A(R)^n$ induces an isomorphism of $A_{\dR}^+$-algebras 
   \[\varprojlim_m((A(R)/I^mA(R))\{\underline Y_1,\dots,\underline Y_n\}^{\wedge}_{\pd}[\frac{1}{p}])\cong A(R)_{\dR}^{n,+}\]
   such that for any $m\geq 1$, the ring $(A(R)/I^mA(R))\{\underline Y_1,\dots,\underline Y_d\}^{\wedge}_{\pd}$ is the $p$-adic completion of the free pd-polynomial ring with free variables $\underline Y_1,\dots,\underline Y_n$ over $A(R)/I^mA(R)$.
   Moreover, via these isomorphisms for all $n$, the face morphisms $p_i:A(R)_{\dR}^{n,+}\to A(R)_{\dR}^{n+1,+}$ and the degeneracy morphisms $\sigma_i:A(R)_{\dR}^{n+1,+}\to A(R)_{\dR}^{n,+}$ are determined by (\ref{Equ-Face-R}) and (\ref{Equ-Degeneracy-R}) below, respectively.
   \begin{equation}\label{Equ-Face-R}
        \begin{split}
            &p_i(\underline Y_{j}) = \left\{\begin{array}{rcl}  
                (\underline Y_{j+1}-\underline Y_{1})(1-\beta \underline Y_{1})^{-1}, & i=0 \\
                \underline Y_{j}, & j<i\\
                \underline Y_{j+1}, & 0<i\leq j;
            \end{array}\right.\\
            &p_i(\underline T) = \left\{\begin{array}{rcl}  
                \underline T_1 = \underline T(1-\beta \underline Y_1), & i=0 \\
                \underline T, &i > 0;
            \end{array}\right.
        \end{split}
    \end{equation}
    \begin{equation}\label{Equ-Degeneracy-R}
        \begin{split}
            &\sigma_i(\underline Y_{j})=\left\{\begin{array}{rcl}
               0, &  i=0,j=1\\
               \underline Y_{j}, & j\leq i\\
               \underline Y_{j-1}, & j>i;
           \end{array}\right.\\
           &\sigma_i(\underline T_0) = \underline T_0.
        \end{split}
    \end{equation} 
     \end{prop}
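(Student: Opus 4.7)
The plan is to deduce the ring isomorphism from Lemma \ref{Lem-Tian-Structure} by upgrading along the $I$-adic filtration, and then to read off the cosimplicial structure by a direct change of variables.

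For the ring isomorphism, note that $\underline T_0$ is a unit in $A(R)^n$ (it comes from the Laurent variables of the chart $\Box$), so $\underline Y_i=\underline T_0^{-1}\cdot(\underline T_0-\underline T_i)/\beta$ differs from Tian's generator $(\underline T_0-\underline T_i)/\beta$ by the unit $\underline T_0^{-1}$. Modulo $I$, where $\underline T_0$ reduces to the unit $\underline T\in R$, this is a harmless change of variables, and Lemma \ref{Lem-Tian-Structure} already gives
\[R\{\underline Y_1,\dots,\underline Y_n\}^{\wedge}_{\pd}\xrightarrow{\simeq}A(R)^n/IA(R)^n.\]
To extend this to mod $I^m$, I would use that $(A(R)^n,IA(R)^n)$ is itself a prism (the $(n+1)$-fold self-coproduct of $(A(R),IA(R))$ in the prismatic site), so $\beta$ acts on it as a non-zero-divisor. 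The $\beta$-adic filtration on $A(R)^n/I^m$ then has graded pieces isomorphic to $A(R)^n/I\cong R\{\underline Y\}^{\wedge}_{\pd}$, and the analogous $\beta$-filtration on $(A(R)/I^m)\{\underline Y\}^{\wedge}_{\pd}$ has the same graded pieces. An induction on $m$ using the five-lemma then lifts the mod-$I$ isomorphism to
\[(A(R)/I^m)\{\underline Y_1,\dots,\underline Y_n\}^{\wedge}_{\pd}\xrightarrow{\simeq}A(R)^n/I^m.\]
Inverting $p$ and taking the inverse limit over $m$ then produces the claimed identification.

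For the cosimplicial structure, one substitutes $\underline T_i=\underline T_0(1-\beta\underline Y_i)$ into the formulas on the $\underline T_i$ coming from (\ref{Equ-SimplicialStructure-Rel}). Reading those formulas in literal coordinates, the face map $p_0$ shifts every index by one (so $p_0(\underline T_j)=\underline T_{j+1}$ for all $j\geq 0$), while for $i>0$ the map $p_i$ is an insertion that fixes $\underline T_0$; all degeneracies $\sigma_i$ also fix $\underline T_0$. The delicate case is $p_0$: writing $\underline T_1=\underline T_0(1-\beta\underline Y_1)$ and $\underline T_{j+1}=\underline T_0(1-\beta\underline Y_{j+1})$ in the target, a direct computation gives
\[p_0(\underline Y_j)=\frac{\underline T_1-\underline T_{j+1}}{\beta\underline T_1}=\frac{\underline T_0\,\beta(\underline Y_{j+1}-\underline Y_1)}{\beta\underline T_0(1-\beta\underline Y_1)}=(\underline Y_{j+1}-\underline Y_1)(1-\beta\underline Y_1)^{-1},\]
where $(1-\beta\underline Y_1)^{-1}$ is a legitimate element of $\prism_{\dR,m}^+$ because $\beta$ is nilpotent there. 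The remaining cases of (\ref{Equ-Face-R}) and all of (\ref{Equ-Degeneracy-R}) follow similarly but without any denominator, since $\underline T_0$ is preserved.

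The main technical obstacle is the inductive passage from mod $I$ to mod $I^m$: one must verify that on both sides the $p$-adic divided-power completion and the $\delta$-ring completion are compatible at each step of the $\beta$-filtration, so that the graded pieces really match. A subsidiary subtlety absent in \cite{Tia23} (where $m=1$) is the unit factor $\underline T_0^{-1}$ in the definition of $\underline Y_i$: this factor is invisible modulo $I$ but is precisely what produces the denominator $(1-\beta\underline Y_1)^{-1}$ in the $p_0$ face formula, so care is needed to track it throughout.
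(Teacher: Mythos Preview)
Your strategy is the same as the paper's: reduce the isomorphism to the $m=1$ case via Lemma~\ref{Lem-Tian-Structure}, then read off the cosimplicial formulas from (\ref{Equ-SimplicialStructure-Rel}) by the change of variables $\underline T_i=\underline T_0(1-\beta\underline Y_i)$. Your computation of $p_0(\underline Y_j)$ is exactly right, and the remaining cases are as routine as you say.

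One wrinkle worth flagging: you phrase the inductive step as an \emph{integral} isomorphism $(A(R)/I^m)\{\underline Y\}^{\wedge}_{\pd}\xrightarrow{\simeq}A(R)^n/I^m$ and only invert $p$ afterward. But to even write down a ring map out of the free pd-polynomial ring you need the images of the $\underline Y_i$ to admit divided powers in the target; modulo $I$ this is Tian's lemma, but modulo $I^m$ for $m>1$ it is not a priori clear that $A(R)^n/I^m$ carries the requisite pd-structure integrally. The paper avoids this by working rationally from the outset: once $p$ is inverted, divided powers are automatic, the map exists, and then the reduction to $m=1$ is packaged as a one-line appeal to the derived Nakayama lemma (your five-lemma induction on the $\beta$-filtration is exactly the unpacked version of this). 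If you simply insert $[\frac{1}{p}]$ before running your filtration argument, the ``technical obstacle'' you flag disappears and your proof goes through cleanly.
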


 \begin{proof}
  By derived Nakayama  lemma, we are reduced to show  that
   \[A(R)/IA(R)\{\underline Y_1,\dots,\underline Y_n\}^{\wedge}_{\pd}[\frac{1}{p}]\cong A(R)_{\dR,1}^{n,+},\]
   which follows from Lemma \ref{Lem-Tian-Structure} by noting that $T_{i,0}$'s are invertible.
   The equations (\ref{Equ-Face-R}) and (\ref{Equ-Degeneracy-R}) can be deduced from (\ref{Equ-SimplicialStructure-Rel}) combined with the definition of $\underline Y_i$'s.
 \end{proof}

 
\subsection{Analysis of stratifications} \label{subsec: ana strat rel pris}

By  \cite[Prop. 2.7]{BS23},  evaluation on  $(A(R)^{\bullet},IA(R)^{\bullet})$ induces an equivalence of categories: 
\[\Vect((R/(A,I))_{\Prism}, \prism^+_{\dR, m}) \xrightarrow{\simeq} {\rm Strat}(A(R)_{\dR,m}^{\bullet,+}) \]
where the right hand side is the category of stratifications, cf. Def. \ref{def: stratifications}.
In this subsection, we study the structure of these stratifications.

 \begin{construction}[Analysis of stratification data]  \label{constr: Analysis of rel pris strat} 
   Let $(M,\varepsilon) \in {\rm Strat}(A(R)_{\dR,m}^{\bullet,+})$.   
   Identify $M$ with a sub-$A(R)_{\dR,m}^+$-module of $M\otimes_{A(R)_{\dR,m}^+,p_0}A(R)_{\dR,m}^{1,+}$ via the map $x\mapsto x\otimes 1$. The stratification data is determined by   the restriction of $\varepsilon$ to $M$.
 According to Proposition \ref{Prop-Structure-Rel}, for any $x\in M$, we can write
 \begin{equation}\label{Equ-Strat-Rel}
     \varepsilon(x) = \sum_{\underline n\in\bN^d}\nabla_{\underline n}(x)\underline Y_1^{[\underline n]},
 \end{equation}
 where $\nabla_{\underline n}$'s are $A$-linear \footnote{$\nabla_{\underline n}$ is not necessarily  $A(R)_{\dR,m}^+$-linear since $p_0\neq p_1$ on $A(R)_{\dR,m}^+$.} endomorphisms of $M$ for all $\underline n\in\bN^d$ such that
 \begin{equation}\label{Equ-StratNil-Rel}
     \lim_{|\underline n|\to+\infty}\nabla_{\underline n} = 0
 \end{equation}
 with respect to the topology on $M$. 
  By (\ref{Equ-Face-R}) and (\ref{Equ-Degeneracy-R}), it is easy to carry out the following three computations:
 \begin{equation}\label{Equ-P2P0-Rel}
     \begin{split}
        p_2^*(\varepsilon)\circ p_0^*(\varepsilon)(x) & = \sum_{\underline l,\underline m,\underline n\in\bN^d}\nabla_{\underline l}(\nabla_{\underline m+\underline n}(x))(1-\beta\underline Y_1)^{-\underline m-\underline n}(-1)^{|\underline m|}\underline Y_1^{[\underline l]}\underline Y_1^{[\underline m]}\underline Y_2^{[\underline n]}\\
        & = \sum_{\underline l,\underline m,\underline n\in\bN^d}\nabla_{\underline l}(\nabla_{\underline m+\underline n}(x))(1-\beta\underline Y_1)^{-\underline m-\underline n}(-1)^{|\underline m|}\binom{\underline l+\underline m}{\underline m}\underline Y_1^{[\underline l+\underline m]}\underline Y_2^{[\underline n]}, 
     \end{split}
 \end{equation}
 and
 \begin{equation}\label{Equ-P1-Rel}
     p_1^*(\varepsilon)(x) = \sum_{\underline n\in\bN^d}\nabla_{\underline n}(x)Y_2^{[\underline n]},
 \end{equation}
 and   
 \begin{equation}\label{Equ-Sigma-Rel}
     \sigma_0^*(\varepsilon)(x) = \nabla_{\underline 0}(x).
 \end{equation}
 Here, for any $\underline l = (l_1,\dots,l_d),\underline m =(m_1,\dots,m_d)\in\bN^d$, $\binom{\underline l+\underline m}{\underline m} = \prod_{i=1}^d\binom{l_i+m_i}{m_i}$ and $|\underline m| = m_1+\cdots+m_d$.
 We deduce that $(M,\varepsilon)$ satisfies the cocycle condition in Def. \ref{def: stratifications} if and only if:
\begin{itemize}
\item $\nabla_{\underline 0} = \id_M$, and
\item for any $\underline n\in\bN^r$, 
 \begin{equation}\label{Equ-StratCondition-Rel}
     \nabla_{\underline n}(x) = \sum_{\underline l,\underline m\in\bN^d}\nabla_{\underline l}(\nabla_{\underline m+\underline n}(x))(1-\beta\underline Y)^{-\underline m-\underline n}(-1)^{|\underline m|}\binom{\underline l+\underline m}{\underline m}\underline Y^{[\underline l+\underline m]}.
 \end{equation}  
 and $ \nabla_{\underline n}(x) \to 0$  as $|\underline n| \to \infty$.
\end{itemize}    
 \end{construction}

 \begin{notation}\label{nota: any stratification}  
 Let $M$ be an $A(R)_{\dR,m}^+$-module. For each $\underline n\in\bN^d$, let $\nabla_{\underline n}$  be an $A$-linear endomorphism of $M$. This induces an $A(R)_{\dR,m}^{1,+}$-linear map:
 \[ \varepsilon:  M\otimes_{A(R)_{\dR,m}^+,p_0}A(R)_{\dR,m}^{1,+} \to M\otimes_{A(R)_{\dR,m}^+,p_1}A(R)_{\dR,m}^{1,+} \]
via 
\[  \varepsilon(x) = \sum_{\underline n\in\bN^d}\nabla_{\underline n}(x)\underline Y_1^{[\underline n]}. \]
 For simplicity,  given $1\leq i\leq d$, denote $\nabla_i:= \nabla_{\underline 1_i}$, where $\underline 1_i = (0,\dots, 1,\dots, 0)\in\bN^d$ with $1$ appearing exactly at the $i$-th component. 
 \end{notation}

 \begin{lem}\label{Lem-Technique-Rel}
Use Notation \ref{nota: any stratification}.  
   Then the following conditions are equivalent.
   \begin{enumerate}
       \item    
    $(M,\varepsilon)$ satisfies the cocycle condition; equivalently, $\nabla_{\underline 0} = \id_M$ and \eqref{Equ-StratCondition-Rel}  is true for all $\underline n$.
   
        \item  The maps $\nabla_i$'s commute with each other, and for any $\underline n=(n_1,\dots,n_d)\in\bN^d$,
   \[\nabla_{\underline n} = \prod_{i=1}^d\prod_{j=1}^{n_i-1}(\nabla_i+\beta j),\]
   which tends to zero as $|\underline n| = n_1+\cdots+n_d\to+\infty$; here as convention this includes the condition $\nabla_{\underline 0} = \id_M$.
   \end{enumerate}
   Moreover, if the above equivalent conditions hold true, then use notation as in Lemma \ref{lem expansion identity}, we have
   \begin{equation}\label{Equ-Stratification-Rel-I}
       \varepsilon = (1-\beta\underline Y_1)^{-\beta^{-1}\underline \nabla} = \prod_{i=1}^d(1-\beta Y_{i,1})^{-\beta\nabla_i} = \sum_{\underline n=(n_1,\dots,n_d)\in\bN^d} \left(\prod_{i=1}^d\prod_{j=1}^{n_i-1}(\nabla_i+\beta j) \right)\underline Y_1^{[\underline n]}.
   \end{equation}
 \end{lem}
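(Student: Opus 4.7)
The plan is to reduce the lemma to the formal identities of Lemma \ref{lem expansion identity} by recognising that the stratification data $\varepsilon$ assembles into the formal power series $(1-\beta\underline Y_1)^{-\beta^{-1}\underline\nabla}$, at which point the cocycle condition becomes a consequence of the multiplicativity identity in that lemma. Concretely, one introduces commuting formal variables $P_i = \nabla_i$, observes (provisionally, assuming commutativity) that the product formula of (2) is the coefficient extraction
\[ \prod_{i=1}^d(1-\beta Y_{i,1})^{-\beta^{-1}P_i} = \sum_{\underline n\in\bN^d}\Bigl(\prod_{i=1}^d\prod_{j=0}^{n_i-1}(P_i+\beta j)\Bigr)\underline Y_1^{[\underline n]}, \]
and then checks that under the face maps in \eqref{Equ-Face-R} this product transforms exactly the way required by the identity in Lemma \ref{lem expansion identity}(1), applied coordinate-wise.

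For the direction (2)$\Rightarrow$(1), I would first verify $\sigma_0^*\varepsilon = \id_M$ from the vanishing of $\underline Y_1$ under $\sigma_0$, then prove the cocycle identity as follows. Since $p_1^*Y_{i,1} = Y_{i,2}$ while $p_0^*Y_{i,1} = (Y_{i,2}-Y_{i,1})(1-\beta Y_{i,1})^{-1}$ and $p_2^*Y_{i,1} = Y_{i,1}$, Lemma \ref{lem expansion identity}(1) with $Z=\beta$, $Q_1=Y_{i,1}$, $Q_2 = (Y_{i,2}-Y_{i,1})(1-\beta Y_{i,1})^{-1}$ (so that $Q_1+Q_2 - \beta Q_1 Q_2 = Y_{i,2}$) gives the coordinate-wise identity $p_2^*\varepsilon \cdot p_0^*\varepsilon = p_1^*\varepsilon$. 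Commutativity of the $\nabla_i$ is essential here to make sense of the factored product, and it is used to reorder variables when passing between the $i$-th and $j$-th coordinates in the expansion.

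For the direction (1)$\Rightarrow$(2), I would compare coefficients of $\underline Y_1^{[\underline s]}\underline Y_2^{[\underline n]}$ in \eqref{Equ-StratCondition-Rel}. Setting $\underline n = \underline 0$ (together with $\nabla_{\underline 0} = \id_M$) and looking at the coefficient of $Y_{i,1}^{[1]}Y_{j,1}^{[1]}$ for $i\neq j$ will produce the commutator relation $\nabla_i\nabla_j = \nabla_j\nabla_i$. Then one runs an induction on $|\underline n|$: knowing the formula holds for all $\underline m$ with $|\underline m| < |\underline n|$, extract from \eqref{Equ-StratCondition-Rel} (with $\underline s = \underline 0$) the recursion expressing $\nabla_{\underline n}$ as a polynomial in $\nabla_1,\dots,\nabla_d$ with $\beta$-coefficients, and check by direct expansion that the polynomial is exactly $\prod_{i}\prod_{j=0}^{n_i-1}(\nabla_i+\beta j)$. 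The convergence $\nabla_{\underline n}\to 0$ is just a repetition of \eqref{Equ-StratNil-Rel}.

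The main obstacle will be the bookkeeping of divided-power binomial coefficients in \eqref{Equ-StratCondition-Rel}: the left side has no $Y_2$-dependence but the right side is a sum weighted by $\binom{\underline l+\underline m}{\underline m}(1-\beta\underline Y_1)^{-\underline m - \underline n}(-1)^{|\underline m|}$, so one must verify that after telescoping these factors collapse to exactly the product $\prod_i\prod_j(\nabla_i+\beta j)$ predicted by Lemma \ref{lem expansion identity}. The cleanest way I know to handle this is to avoid coefficient-chasing altogether by reformulating everything as an identity of formal series in $\underline Y_1, \underline Y_2$ valued in $\End(M)[[\beta]]$ and invoking Lemma \ref{lem expansion identity}(1) \emph{twice} (once on each side of the cocycle equation); the final equation \eqref{Equ-Stratification-Rel-I} is then read off directly.
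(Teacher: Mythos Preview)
Your argument for (2)$\Rightarrow$(1) is correct and matches the paper's: once $\varepsilon$ is written as $\prod_i(1-\beta Y_{i,1})^{-\beta^{-1}\nabla_i}$, the cocycle condition follows from Lemma~\ref{lem expansion identity}(1) applied coordinatewise, exactly as you describe.

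Your (1)$\Rightarrow$(2), however, has a concrete gap. Setting $\underline n = \underline 0$ in \eqref{Equ-StratCondition-Rel} and extracting the coefficient of $Y_{i,1}^{[1]}Y_{j,1}^{[1]}$ (for $i\neq j$) does \emph{not} give $\nabla_i\nabla_j = \nabla_j\nabla_i$: the four contributing terms on the right come from $(\underline l,\underline m)\in\{(\underline 1_i+\underline 1_j,\underline 0),(\underline 1_i,\underline 1_j),(\underline 1_j,\underline 1_i),(\underline 0,\underline 1_i+\underline 1_j)\}$ and yield only the symmetrised identity $2\nabla_{\underline 1_i+\underline 1_j} = \nabla_i\nabla_j + \nabla_j\nabla_i$. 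Your induction step is also empty as stated: the constant term ($\underline s=\underline 0$) of the right side of \eqref{Equ-StratCondition-Rel} is just $\nabla_{\underline 0}(\nabla_{\underline n}(x)) = \nabla_{\underline n}(x)$, so no recursion is produced. The fix, and what the paper actually does, is to extract the coefficient of a \emph{single} $Y_{i,1}^{[1]}$ (all other $Y_{j,1}$ set to zero) for \emph{arbitrary} $\underline n$: this gives the one-step recursion
\[
\nabla_{\underline n + \underline 1_i} = (\nabla_i + n_i\beta)\,\nabla_{\underline n}.
\]
Commutativity then follows by computing $\nabla_{\underline 1_i+\underline 1_j}$ in two ways, and the product formula by iterating the recursion.
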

 \begin{proof}
   The idea is similar to  proof of \cite[Lem. 4.16]{GMWdR} and hence we will proceed as there. 

   For ``Item (1) $\Rightarrow$ Item (2)'': 
   Comparing the coefficients of $Y_{i}$ on both sides of (\ref{Equ-StratCondition-Rel}), we see that for any $\underline n = (n_1,\dots,n_d)$ and any $1\leq i\leq d$,
   \begin{equation}\label{Equ-StratIter-Rel}
       \nabla_{\underline n+\underline 1_i} = (\nabla_i+n_i\beta)\nabla_{\underline n}
   \end{equation}
   yielding that
   \[\nabla_i\nabla_j = \nabla_{\underline 1_i+\underline 1_j} = \nabla_j\nabla_i,\]
   as desired.
   Using \eqref{Equ-StratIter-Rel}, by iteration, we get 
   \[\nabla_{\underline n}  = \prod_{i=1}^r\prod_{j=1}^{n_i-1}(\nabla_i+\beta j) \]
   which tends to zero as desired by (\ref{Equ-StratNil-Rel}).

   For ``Item (2) $\Rightarrow$ Item (1)'': In this case, the formula of $\varepsilon$ defined by \eqref{Equ-Stratification-Rel-I} is well-defined.   
   It suffices to verify the cocycle condition, which we do in a \emph{formal}  way (using the formal   Lemma \ref{lem expansion identity}, cf. Rem \ref{rem why formal OK} below for explanation).
   Indeed, by Proposition \ref{Prop-Structure-Rel}, we have
\begin{equation} \label{eqpivar29} p_1^*(\varepsilon) = (1-\beta p_1(\underline Y_1))^{-\beta^{-1}\underline \nabla} = (1-\beta \underline Y_2)^{-\beta^{-1}\underline \nabla}.
\end{equation} 
Thus, we can formally compute
\begin{equation*}
       \begin{split}
           p_2^*(\varepsilon)\circ p_0^*(\varepsilon)  =& (1-\beta p_2(\underline Y_1))^{-\beta^{-1}\underline \nabla}\cdot (1-\beta p_0(\underline Y_1))^{-\beta^{-1}\underline \nabla}\\
           =& (1-\beta \underline Y_1)^{-\beta^{-1}\underline \nabla}\cdot (1-\beta (1-\beta\underline Y_1)^{-1}(\underline Y_2-\underline Y_1))^{-\beta^{-1}\underline \nabla}\\
=& (1-\beta \underline Y_2)^{-\beta^{-1}\underline \nabla}=p_1^*(\varepsilon) 
 \end{split}
   \end{equation*} 
 \end{proof}

 \begin{rem} \label{rem why formal OK}
    The formula Eqn \eqref{eqpivar29} \emph{concretely} means $\prod_{i=1}^d(1-\beta Y_{i,2})^{-\beta\nabla_i}$ where each $(1-\beta Y_{i,2})^{-\beta\nabla_i}$ is \emph{concretely} defined by  Lemma \ref{lem expansion identity}.
    Similar for the formulae in the computation of $p_2^*(\varepsilon)\circ p_0^*(\varepsilon)$. The fact that  these \emph{formal} computations are actually \emph{valid} follows from  Lemma \ref{lem expansion identity}. 
 \end{rem}

 \subsection{Category of $\beta$-connections} \label{subsec: cat beta conn rel}
 ~
 We introduce the category of $\beta$-connections.

 \begin{defn}[$\beta$-connections]
 \label{Dfn-BetaConnection} 
 \begin{enumerate}
 \item 
 For notation simplicity, let $\widehat \Omega^1$   denote the module of $(p,I)$-continuous differentials of $A(R)_{\dR,m}^+$ over $A_{\dR,m}^+$, and let
 $\widehat \Omega^1\{-1\}$ denote the   twist by $-1$ as in Notation \ref{nota: BK twist rel pris}.
  We abuse notation, and use $\mathrm{d}$ to denote the composite
   $$\mathrm{d}: A(R)_{\dR,m}^+   \to  \widehat \Omega^1 \to \widehat \Omega^1\{-1\}$$
 where the first map is  the usual differential, and the second map is induced by the map $A \to I^{-1}$.
 

 \item 
A connection over $A(R)_{\dR,m}^+$ \emph{with respect to $\mathrm{d}$} is   a finite projective $A(R)_{\dR,m}^+$-module $M$ equipped with an $A_{\dR,m}^+$-linear additive map
 \[\nabla_M:M\to M\otimes_{A(R)_{\dR,m}^+}  \widehat \Omega^1\{-1\} \]
satisfying Leibniz rule with respect to $\rm d$. Say $(M,\nabla_M)$ is \emph{integrable} (or \emph{flat}), if $\nabla_M\wedge\nabla_M = 0$.


 \item 
The elements $\frac{\rm d \log T_i}{\beta}$ form an $A(R)_{\dR,m}^+$-basis of $\widehat \Omega^1\{-1\}$.
Using this basis, a connection $(M,\nabla_M)$ can be written as
\[ \nabla_M = \sum_{i=1}^d \nabla_i \otimes \frac{\rm d \log T_i}{\beta}\]
where for each $i$, $\nabla_i: M \to M$ 
is an $A^+_\dR$-linear map satisfying ``$\beta$-Leibniz rule" with respect to $\frac{d}{d\log T_i}=T_i\frac{d}{dT_i}$ (cf. Def \ref{def: b-conn}), in the sense that
\[ \nabla_i(fx) =f\nabla_i(x) + \beta T_i\frac{d}{dT_i}(f) x, \quad \forall f \in A(R)_{\dR,m}^+ \text{ and } x \in M. \]
The connection $(M,\nabla_M)$ is integrable if and only if
$[\nabla_i,\nabla_j] = 0$ for all $i,j$.

 \item 
Because of above equivalent translations, we use ${\rm MIC}_{\beta}(A(R)_{\dR,m}^+)$ to denote the category of integrable connections with respect to $\mathrm{d}$, and simply  call these objects \emph{integrable $\beta$-connections}.

  \item An integrable $\beta$-connection $(M,\nabla_M)$ is called \emph{topologically nilpotent}, if for $\nabla_M= \sum_{i=1}^d\nabla_i\otimes\frac{\dlog T_i}{\beta}$, each $\nabla_i$ is topologically nilpotent; this condition is independent of choices of charts for $R$.
   Let ${\rm MIC}_{\beta}^{\nil}(A(R)_{\dR,m}^+)$ be the full subcategory of such objects.  
  
 
 \end{enumerate}
 \end{defn}

  \begin{defn}
   Let $(M,\nabla_M) \in {\rm MIC}_{\beta}^{\nil}(A(R)_{\dR,m}^+)$. We denote by $\rD\rR(M)=\rD\rR(M,\nabla_M)$ the induced de Rham complex
  \[M\xrightarrow{\nabla_M} M\otimes_{A(R)_{\dR,m}^+}  \widehat \Omega^1\{-1\} \xrightarrow{\nabla_M} 
 \cdots   \xrightarrow{\nabla_M}  M\otimes_{A(R)_{\dR,m}^+} \wedge^d(  \widehat \Omega^d\{-d\}) ,\]
 where for any $n\geq 0$, $\widehat \Omega^n\{-n\}:=\wedge^n_{A(R)_{\dR,m}^+}\widehat \Omega^1\{-1\}$.
  
 \end{defn}

 
 

 \subsection{From crystals to connections} \label{subsec: from rel cry to conn}
 
\begin{lemma} \label{Cor-pdStructure-Rel}
Let $\calK$ be the kernel of the degeneracy morphism $\sigma_0:A(R)_{\dR,m}^{1,+}\to A(R)_{\dR,m}^+$.  
For any $j\geq 1$,  let $\calK^{[j]}$ be the $(p,I)$-complete $j$-th pd-power of $\calK$.
\begin{enumerate}
\item Then $\calK$ is the closed pd-ideal generated by $\{Y_{1,1}^{[n_1]}\cdots Y_{d,1}^{[n_d]}\mid n_1+\cdots+n_d\geq 1, n_1,\dots,n_d\geq 0\}$ via the isomorphisms in Proposition \ref{Prop-Structure-Rel}. 
\item  The map sending $Y_{i, 1}$ to $\frac{\rm d \log T_i}{\beta}$ induces an isomorphism of $A(R)_{\dR,m}^+$-modules 
\[\calK/\calK^{[2]} \simeq \widehat \Omega^1\{-1\}.\]
\end{enumerate} 
\end{lemma}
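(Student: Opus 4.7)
The plan is to read off both assertions directly from the explicit description of $A(R)_{\dR,m}^{1,+}$ in Proposition \ref{Prop-Structure-Rel}, together with the standard ``kernel of augmentation modulo its pd-square gives differentials'' principle.

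\textbf{Step 1 (Part (1)).} By Proposition \ref{Prop-Structure-Rel} (applied for $n=1$, with $A(R)$ identified with the first factor via $p_0$), the ring $A(R)_{\dR,m}^{1,+}$ is the $p$-adically completed free pd-polynomial ring on the variables $\underline Y_1 = (Y_{1,1},\dots,Y_{d,1})$ over $A(R)_{\dR,m}^+$. Equation \eqref{Equ-Degeneracy-R} gives $\sigma_0(Y_{i,1})=0$ and $\sigma_0(T_{0})=T_0$, so $\sigma_0\circ p_0 = \mathrm{id}_{A(R)_{\dR,m}^+}$, and $\sigma_0$ is the augmentation that kills each $Y_{i,1}$. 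Hence $\calK$ is exactly the closed pd-ideal generated by $Y_{1,1},\dots,Y_{d,1}$, which gives the claim.

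\textbf{Step 2 (Part (2)).} From Step 1, the $A(R)_{\dR,m}^+$-module $\calK/\calK^{[2]}$ is free on the classes $\overline{Y_{i,1}}$, since all pd-monomials of total pd-degree $\geq 2$ lie in $\calK^{[2]}$. To identify this free module with $\widehat\Omega^1\{-1\}$, I would use the universal derivation point of view: the $A_{\dR,m}^+$-linear map
\[ d_{\mathrm{univ}}: A(R)_{\dR,m}^+ \longrightarrow \calK/\calK^{[2]}, \qquad f \longmapsto p_1(f)-p_0(f) \bmod \calK^{[2]}, \]
is a continuous derivation over $A_{\dR,m}^+$. Applying it to the coordinate $T_i$ and using $p_1(T_i)-p_0(T_i)=T_{i,1}-T_{i,0}=-\beta T_{i,0}Y_{i,1}$ (from Proposition \ref{Prop-Structure-Rel}), we get $d_{\mathrm{univ}}(T_i) = -\beta T_i\,\overline{Y_{i,1}}$, i.e.\ $\overline{Y_{i,1}} = -\frac{1}{\beta T_i}d_{\mathrm{univ}}(T_i)$ up to sign. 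By the universal property of $\widehat\Omega^1$ (applied to $A(R)_{\dR,m}^+$ over $A_{\dR,m}^+$, whose module of continuous differentials is free on $dT_1,\dots,dT_d$ by the étaleness in Construction \ref{constr: cover rel pris}), $d_{\mathrm{univ}}$ factors uniquely as $A(R)_{\dR,m}^+\xrightarrow{d}\widehat\Omega^1\to \calK/\calK^{[2]}$ sending $dT_i\mapsto -\beta T_i\overline{Y_{i,1}}$. Since dividing by $\beta$ (a generator of $I$) is exactly the Breuil--Kisin twist by $-1$, the map
\[ \widehat\Omega^1\{-1\} \longrightarrow \calK/\calK^{[2]}, \qquad \frac{d\log T_i}{\beta} = \frac{dT_i}{\beta T_i} \longmapsto -\overline{Y_{i,1}}, \]
is an $A(R)_{\dR,m}^+$-linear isomorphism (it matches a basis to a basis, up to sign). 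Absorbing the sign (or, equivalently, using the opposite convention $p_0-p_1$ for $d_{\mathrm{univ}}$) gives the isomorphism as stated.

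\textbf{Main obstacle.} Neither step involves a genuine difficulty once Proposition \ref{Prop-Structure-Rel} is available: Part (1) is combinatorial bookkeeping of the pd-structure, and Part (2) is the standard identification of ``augmentation-ideal mod square'' with K\"ahler differentials, the only bookkeeping being to track the generator $\beta$ of $I$ and thereby the twist $\{-1\}$. The one point to be careful with is the topology: one must check that $\calK^{[2]}$ is closed so that the quotient $\calK/\calK^{[2]}$ carries the right $(p,I)$-adic topology matching that on $\widehat\Omega^1\{-1\}$, which follows from the $p$-adic completeness built into Proposition \ref{Prop-Structure-Rel}.
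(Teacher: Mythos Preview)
Your proposal is correct and follows essentially the same approach as the paper: both read Part (1) directly off the explicit pd-polynomial description in Proposition \ref{Prop-Structure-Rel} together with the fact that $\sigma_0$ annihilates the $Y_{i,1}$'s, and both deduce Part (2) by matching the free basis $\{\overline{Y_{i,1}}\}$ of $\calK/\calK^{[2]}$ with the basis $\{\tfrac{\rd\log T_i}{\beta}\}$ of $\widehat\Omega^1\{-1\}$. Your Step 2 in fact anticipates the content of Construction \ref{construction:p_1-p_0=d}, where the paper verifies that $p_1-p_0$ realizes the derivation $\rd$; one small slip to fix is that in the paper's conventions it is $p_1$ (equivalently $q_0$), not $p_0$, that identifies $A(R)$ with the first factor, so $p_1(T_i)-p_0(T_i)=T_{i,0}-T_{i,1}=\beta T_{i,0}Y_{i,1}$ and no sign needs to be absorbed.
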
 
\begin{proof}
 Let $\calJ$ denote  the closed pd-ideal of $A(R)_{\dR,m}^{1,+}$ generated by 
   \[\{Y_{1,1}^{[n_1]}\cdots Y_{d,1}^{[n_d]}\mid n_1+\cdots+n_d\geq 1, n_1,\dots,n_d\geq 0\}.\] 
   Then by (\ref{Equ-Degeneracy-R}), $\calJ\subset\calK$. 
   We claim $\calJ = \calK$. When $m=1$,   it is easy to see $\calJ = \calK$, since by (\ref{Equ-Degeneracy-R}), $\sigma_0$ is nothing but the $R$-linear map
   \[R\{Y_{1,1},\dots,Y_{d,1}\}^{\wedge}_{\pd}[\frac{1}{p}]\to R_K\]
   sending $Y_{i,1}$'s to $0$. 
      In general, by modulo $I$, we see that $\calK\subset \calJ+I\calK$. 
   Then the desired equality follows from the (classical) $I$-completeness of $A(R)_{\dR,m}^+$.
   
    Now note $\calK^{[j]}$ is the closed ideal generated by $\{Y_{1,1}^{[n_1]}\cdots Y_{d,1}^{[n_d]}\mid n_1+\cdots+n_d\geq j, n_1,\dots,n_d\geq 0\}$. 
   This directly implies Item (2).
\end{proof}

\begin{construction}\label{construction:p_1-p_0=d}
Consider $p_0, p_1: A(R)_{\dR,m}^+ \to A(R)_{\dR,m}^{1,+}$. Then the image of $p_1-p_0$ falls inside $\mathcal K$. Thus we can define a map
\[ \tilde{d}: A(R)_{\dR,m}^+ \xrightarrow{p_1-p_0} \calK \longrightarrow \calK/\mathcal{K}^{[1]} \]
One readily checks that the following diagram is commutative:  
\[
\begin{tikzcd}
{A(R)_{\dR,m}^+} \arrow[r, "\tilde d"] \arrow[d, "\mathrm{id}"] & {\calK/\mathcal{K}^{[1]}} \arrow[d, "\simeq"] \\
{A(R)_{\dR,m}^+} \arrow[r, "\mathrm{d}"]                                 & \widehat \Omega^1\{-1\}                    
\end{tikzcd}
\]
where the right vertical map is the  isomorphism in Lem. \ref{Cor-pdStructure-Rel}. Thus, in the following we will also use $\mathrm{d}$ to denote $\tilde{d}$.

\end{construction}

 \begin{defn}[From stratifications to $\beta$-connections] \label{def: from strat to conn}    Let $(M,\varepsilon) \in {\rm Strat}(A(R)_{\dR,m}^{\bullet,+})$. 
 Define 
   \[\nabla_M:M\to M\otimes_{A(R)_{\dR,m}^+}\widehat \Omega^1_{A(R)_{\dR,m}^+/A_{\dR,m}^+}\{-1\}\]
   by setting 
   \[\nabla_M: = \sum_{i=1}^d-\nabla_i \otimes\frac{\dlog T_i}{\beta};\]
    here $\nabla_i:= \nabla_{\underline 1_i}$ as in Notation \ref{nota: any stratification}.
Because of Lem \ref{Lem-Technique-Rel}, this induces a functor
\[ {\rm Strat}(A(R)_{\dR,m}^{\bullet,+}) \to {\rm MIC}_{\beta}^{\nil}(A(R)_{\dR,m}^+).\]
 \end{defn}
 
   \begin{remark}[Intrinsic construction of connections]   \label{Construction-Intrinsic-Rel}
   One can also intrinsically define the functor in Def \ref{def: from strat to conn}, using the ``intrinsic" definition in Def \ref{Dfn-BetaConnection}(2). 
  Let $(M,\varepsilon) \in {\rm Strat}(A(R)_{\dR,m}^{\bullet,+})$. 
  \begin{enumerate}
      \item   Since $\sigma_0^*(\varepsilon) = \id_M$, we  have
   \[(\id_M-\varepsilon)(M) \subset  M\otimes_{A(R)_{\dR,m}^+,p_1}A(R)_{\dR,m}^{1,+}\cdot\calK\]
   which induces a map 
   \[\overline{\id_M-\varepsilon}:M\to  M\otimes_{A(R)_{\dR,m}^+}\calK/\calK^{[2]}. \]

   \item If one identifies $\calK/\calK^{[2]}$ with $\Omega_{A(R)_{\dR,m}^+/A_{\dR,m}^+}^1\{-1\}$ via Lem  \ref{Cor-pdStructure-Rel}, one quickly checks that this gives rise to the (explicit) connection in Def \ref{def: from strat to conn}.
  \end{enumerate} 
 \end{remark}

   The main theorem in this section is the following.

 \begin{thm}\label{Thm-dRCrystalasXiConnection-Rel}
   Assume $R$ is small smooth as in Def \ref{defn: chart rel pris}.
      \begin{enumerate}
       \item 
      There are equivalences of categories 
   \[\Vect((R/(A,I))_{\Prism}, \prism^+_{\dR, m}) \xrightarrow{\simeq} {\rm Strat}(A(R)_{\dR,m}^{\bullet,+}) \xrightarrow{\simeq} {\rm MIC}_{\beta}^{\nil}(A(R)_{\dR,m}^+),\]
   where the first functor is via evaluation on  $(A(R)^{\bullet},IA(R)^{\bullet})$,  and the second functor is   in Def \ref{def: from strat to conn} (or Rem \ref{Construction-Intrinsic-Rel}).   
    The equivalences preserve  ranks, tensor products and dualities. 
   
       \item Let $\bM \in \Vect((R/(A,I))_{\Prism},\prism^+_{\dR, m})$, and let $(M,\nabla_M) \in {\rm MIC}_{\beta}^{\nil}(A(R)_{\dR,m}^+)$ be the associated connection. There exists a quasi-isomorphism
   \[\rD\rR(M,\nabla_M)\simeq \rR\Gamma((R/(A,I))_{\Prism},\bM)\]
   which is functorial in $\bM$.
   \end{enumerate}
 \end{thm}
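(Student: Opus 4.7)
The plan splits naturally according to the two items.

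For Item (1), the first equivalence $\Vect((R/(A,I))_{\Prism}, \prism^+_{\dR, m}) \simeq \mathrm{Strat}(A(R)_{\dR,m}^{\bullet,+})$ is the general crystal-stratification equivalence obtained by evaluation on the \v Cech nerve of the cover $(A(R), IA(R))$ of the final object, which is the content of \cite[Prop. 2.7]{BS23} as recalled at the start of \S \ref{subsec: ana strat rel pris}. The second equivalence $\mathrm{Strat}(A(R)_{\dR,m}^{\bullet,+}) \xrightarrow{\simeq} \mathrm{MIC}^{\nil}_{\beta}(A(R)_{\dR,m}^+)$ is essentially Lemma \ref{Lem-Technique-Rel}: a stratification $\varepsilon$ expands as $\sum_{\underline n} \nabla_{\underline n}(\cdot) \underline Y_1^{[\underline n]}$ where the cocycle condition forces the $\nabla_{\underline n}$ to be built from commuting, topologically nilpotent single-variable operators $\nabla_i = \nabla_{\underline 1_i}$. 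The $\beta$-Leibniz rule for each $\nabla_i$ is forced by the $A(R)_{\dR,m}^{1,+}$-linearity of $\varepsilon$ together with the face map formula \eqref{Equ-Face-R} for $p_0(\underline T)$. Conversely, formula \eqref{Equ-Stratification-Rel-I} produces a stratification from any such collection, with convergence guaranteed by topological nilpotency. Compatibility with ranks, tensor products, and duals follows from the intrinsic description via Construction \ref{Construction-Intrinsic-Rel}.

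For Item (2), the \v Cech computation via the cover yields
\[
\mathrm{R}\Gamma((R/(A,I))_{\Prism}, \bM) \simeq \mathrm{Tot}\bigl(M \otimes_{A(R)_{\dR,m}^+, p_0} A(R)_{\dR,m}^{\bullet,+}\bigr),
\]
with coface maps $p_i$ on the $n$-th term induced by the stratification $\varepsilon$. The plan is to construct an explicit comparison quasi-isomorphism to $\rD\rR(M,\nabla_M)$ as follows. The quotient by higher pd-powers gives natural projections $A(R)_{\dR,m}^{n,+} \twoheadrightarrow \wedge^n(\calK/\calK^{[2]}) \simeq \wedge^n \widehat\Omega^1\{-1\}$ coming from Lemma \ref{Cor-pdStructure-Rel}, extracting the multilinear degree-one part in the variables $Y_{i,j}$. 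Tensoring with $M$ yields maps $M \otimes A(R)_{\dR,m}^{n,+} \to M \otimes \wedge^n \widehat\Omega^1\{-1\}$. That these assemble into a morphism of complexes reduces, via the alternating-sum of face maps, to the $n=1$ computation already encapsulated in Construction \ref{construction:p_1-p_0=d} and Definition \ref{def: from strat to conn} (the assertion $\mathrm{d} = p_1-p_0 \bmod \calK^{[2]}$), combined with a direct check on higher pd-terms.

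For the quasi-isomorphism, I would filter the totalization by the pd-ideal filtration $\calK^{[\bullet]}$ on each $A(R)_{\dR,m}^{n,+}$. The associated graded identifies with a Koszul complex in the formal variables $Y_{i,j}$ over the base, and a pd-Poincar\'e-type lemma shows that this Koszul complex is quasi-isomorphic to $M \otimes \wedge^* \widehat\Omega^1\{-1\}$ concentrated in the appropriate degrees, giving degeneration on the first page of the induced spectral sequence. The main technical obstacle will be verifying that the cross-terms appearing in the expansion \eqref{Equ-Stratification-Rel-I} (i.e., the products $\bigl(\prod_{j=1}^{n_i-1}(\nabla_i+\beta j)\bigr) Y_1^{[\underline n]}$ for $|\underline n|\geq 2$) contribute only to strictly deeper filtration steps, so that in the associated graded the differential reduces to the Koszul differential induced by the $\nabla_i$'s; the formal identities of Lemma \ref{lem expansion identity} and topological nilpotency should control this bookkeeping. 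Functoriality of the quasi-isomorphism in $\bM$ is then automatic from the naturality of the construction at every step.
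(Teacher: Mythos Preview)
Your treatment of Item (1) matches the paper: the first equivalence is \cite[Prop.~2.7]{BS23} and the second is a direct translation of Lemma \ref{Lem-Technique-Rel}.

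For Item (2) your plan has a genuine gap. The projection $A(R)_{\dR,m}^{n,+}\twoheadrightarrow \wedge^n(\calK/\calK^{[2]})$ you invoke does not exist as stated: $A(R)_{\dR,m}^{n,+}$ is a pd-polynomial ring over $A(R)_{\dR,m}^+$ in the $nd$ variables $Y_{i,j}$ with $1\le i\le d$ and $1\le j\le n$ (Proposition \ref{Prop-Structure-Rel}), and its multilinear piece of degree $(1,\dots,1)$ in $\underline Y_1,\dots,\underline Y_n$ is $(\calK/\calK^{[2]})^{\otimes n}\cong\widehat\Omega^1\{-1\}^{\otimes n}$, not the exterior power. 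No antisymmetrization is built into the cosimplicial structure, so there is no map of complexes from the \v Cech--Alexander complex to $\rD\rR(M,\nabla_M)$ obtained ``by projection''; producing one compatibly with the alternating face-map differential is precisely the difficulty. The pd-filtration spectral-sequence argument you sketch inherits this problem and additionally leaves open convergence across unbounded cosimplicial degree.

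The paper takes a different and more economical route. It introduces (Construction \ref{Construction-Bicosimplicial-Rel}) an enlarged de Rham complex of cosimplicial $A(R)_{\dR,m}^{\bullet,+}$-modules, built from differential modules $\widehat\Omega^1_{A(R)^{n,+}_{\dR,m}}$ that contain both the $\frac{\dlog T_i}{\beta}$ and the $\rd Y_{i,j}$, and checks (Lemma \ref{Lem-PreserveCosimplicial-Rel}) that the resulting differential $\rd_M$ respects the cosimplicial structure. Both $\bM(A(R)^{\bullet},IA(R)^{\bullet})$ and $\rD\rR(M,\nabla_M)$ then map \emph{into} this intermediary via the natural inclusions \eqref{Equ-CompareCoho-Rel-I} and \eqref{Equ-CompareCoho-Rel-II}. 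To see that both maps are quasi-isomorphisms, one reduces modulo $I$ by derived Nakayama and lands in the $m=1$ case already established by Tian \cite[Thm.~5.14]{Tia23}; no spectral-sequence computation for general $m$ is required.
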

\begin{proof} 
Consider Item (1). The first   equivalence follows from  \cite[Prop. 2.7]{BS23}.
The second   equivalence is a translation of Lemma \ref{Lem-Technique-Rel}.  Cohomology comparison will be proved in \S \ref{subsec: rel pris coho compa}.
 \end{proof}

 \begin{exam}[Short exact sequence]~ 
   Assume $1\leq m<n\leq \infty$. Let $\bM$ be a truncated $\Prism_{\dR}^+$-crystal of level $n$ on $(R/(A,I))_{\Prism}$ and $(M,\nabla_M)$ be its associated $\beta$-connection over $A(R)_{\dR,n}^+$. Then $I^m\bM$ and $\bM/I^m\bM$ are truncated $\Prism_{\dR}^+$-crystal of levels $n-m$ and $m$, whose associated $\beta$-connections are $(\beta^mM,\nabla_{M\mid_{\beta^mM}})$ and $(M/\beta^mM,\overline \nabla_M)$, respectively. Here, $\nabla_{M\mid_{\beta^mM}}$ and $\overline \nabla_M$ denotes the restriction to $\beta^mM$ and reduction modulo $\beta^mM$ of $\nabla_M$, respectively. For the sake of simplicity, we denote these $\beta$-connections by $(\beta^mM,\nabla_{M})$ and $(M/\beta^mM,\nabla_M)$. Then we have an isomorphism of truncated $\Prism_{\dR}^+$-crystals 
   \[\bM/I^{n-m}\bM \xrightarrow{\times\beta^m}I^m\bM\]
   which identifies $(M/\beta^{n-m}M,\nabla_M)$ with $(\beta^mM,\nabla_M)$ via the multiplication by $\beta^m$. Then the short exact sequence of sheaves
   \[0\to \bM/I^{n-m}\bM\xrightarrow{\times\beta^m}\bM\to\bM/I^m\bM\to 0\]
   induces an exact sequence of $\beta$-connections
   \[0\to (M/\beta^{n-m}M,\nabla_M)\xrightarrow{\times\beta^m}(M,\nabla_M)\to(M/\beta^{m}M,\nabla_M)\to 0\]
   and a fortiori an exact sequence of de Rham complexes
   \[0\to \rD\rR(M/\beta^{n-m}M,\nabla_M)\xrightarrow{\times\beta^m}\rD\rR(M,\nabla_M)\to\rD\rR(M/\beta^{m}M,\nabla_M)\to 0.\]
 \end{exam}

  \subsection{Local cohomology comparison:  relative crystal vs.    connections} \label{subsec: rel pris coho compa}
 We focus on the proof of Theorem \ref{Thm-dRCrystalasXiConnection-Rel}(2) in this subsection. The strategy is as follows. 
Using \v Cech-Alexander complex, we shall   construct an explicit (natural) morphism 
 \[ \DR(M,\nabla_M) \to \rR\Gamma((R/(A,I))_{\Prism},\bM).\]
 Once constructed, it must be a quasi-isomorphism: indeed, by  derived Nakayama lemma, one can mod $I$ and reduces to the case $m=1$, which is the case already proved by Tian \cite[Thm. 5.14]{Tia23}.

 \begin{lem}\label{Lem-CechAlex-Rel}
  Let $\bM\in\Vect((R/(A,I))_{\Prism},\prism^+_{\dR, m})$. There exists a quasi-isomorphism 
   \[\rR\Gamma((R/(A,I))_{\Prism},\bM) \simeq \bM(A(R)^{\bullet},IA(R)^{\bullet}),\]
   which is functorial in $\bM$.
 \end{lem}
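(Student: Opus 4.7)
Construction~\ref{constr: cover rel pris} shows that $(A(R), IA(R))$ covers the final object of $\Sh((R/(A,I))_{\Prism})$, with \v{C}ech nerve $(A(R)^\bullet, IA(R)^\bullet)$. Via the standard hypercover machinery one obtains a natural augmentation, functorial in $\bM$,
\[\alpha_\bM \colon \bM(A(R)^\bullet, IA(R)^\bullet) \longrightarrow \rR\Gamma((R/(A,I))_{\Prism}, \bM),\]
where the left-hand side is understood as the totalization of the displayed cosimplicial module. My task is to prove $\alpha_\bM$ is a quasi-isomorphism, which I plan to do by d\'evissage along the $I$-adic filtration of $\prism^+_{\dR, m}$, reducing to the Hodge--Tate case $m = 1$ already handled by Tian.

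\textbf{D\'evissage.} The short exact sequence of prismatic sheaves $0 \to I\prism^+_{\dR, m} \to \prism^+_{\dR, m} \to \prism_{\HT} \to 0$, tensored with the vector bundle $\bM$, produces the short exact sequence of crystals
\[0 \to I\bM \to \bM \to \bM \otimes_{\prism^+_{\dR,m}} \prism_{\HT} \to 0,\]
in which the quotient is a Hodge--Tate crystal and, via multiplication by the fixed generator $\beta \in I$, the subobject $I\bM$ is identified with a $\prism^+_{\dR, m-1}$-crystal (a Breuil--Kisin twist). Because $\bM$ evaluated on each $(A(R)^n, IA(R)^n)$ is a finite projective $A(R)^n_{\dR, m}$-module, the \v{C}ech--Alexander functor is term-wise exact, and $\rR\Gamma$ sends short exact sequences of sheaves to distinguished triangles. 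I thereby obtain a morphism of distinguished triangles with $\alpha_\bM$ in the middle, and the quasi-isomorphism statement for $\alpha_\bM$ follows by induction on $m$ from the statements for the two outer terms.

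\textbf{Hodge--Tate base case and obstacle.} For $m = 1$, the sheaf $\bM$ is a Hodge--Tate crystal on the relative prismatic site of the small smooth $\overline{A}$-algebra $R$, and $\alpha_\bM$ being a quasi-isomorphism is precisely \cite[Thm.~5.14]{Tia23}. Thus the only substantial input is Tian's Hodge--Tate \v{C}ech--Alexander theorem, which is already available in the literature; the remainder of the argument is formal, and functoriality in $\bM$ is inherited from that of the hypercover augmentation and the d\'evissage.
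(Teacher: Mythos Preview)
Your argument is correct but takes a different route from the paper. The paper proceeds via the \v{C}ech-to-derived spectral sequence together with a separate vanishing lemma (Lemma~\ref{Lem-CechAlex-Rel-I}): for every object $\calB$ of the site and every $i\geq 1$, one has $\rH^i(\calB,\bM)=0$; this vanishing is shown by derived Nakayama, reducing to $m=1$, where it follows directly from the crystal property and the fact that $B\to C^\bullet$ is a homotopy equivalence for any cover. Your approach instead runs the d\'evissage at the level of the global comparison map $\alpha_{\bM}$ along the $I$-adic filtration of $\bM$. Both are valid and morally close (each reduces to $m=1$), but the paper's route isolates the object-wise vanishing as a standalone fact, which is then reused verbatim in the absolute and log-prismatic cohomology comparisons later in the paper. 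One minor imprecision: the $m=1$ base case you need is the \v{C}ech--Alexander comparison for Hodge--Tate crystals, which is an ingredient \emph{inside} Tian's proof rather than the headline statement of \cite[Thm.~5.14]{Tia23} (which is the Higgs--de Rham comparison); the fact you want is certainly available there, but the citation should point to the relevant intermediate step.
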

 \begin{proof}
   This follows from \v Cech-to-derived spectral sequence together with the following lemma.
 \end{proof}
  \begin{lem}\label{Lem-CechAlex-Rel-I}
      For any $\calB = (B,IB)\in (R/(A,I))_{\Prism}$ and any $i\geq 1$, $\rH^i(\calB,\bM) = 0$.
 \end{lem}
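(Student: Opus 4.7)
The plan is to use faithfully flat descent for the $\prism^+_{\dR, m}$-crystal $\bM$, mirroring the vanishing of higher cohomology of $\OPrism$-crystals on a single prism established in \cite[Prop.~2.7]{BS23}.

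First, I would reduce to a \v Cech computation. To show $\rH^i(\calB, \bM) = 0$ for $i \geq 1$, it suffices to show that for every $(p, I)$-completely faithfully flat covering $\calB \to \calC$ in the slice site $(R/(A,I))_{\Prism, /\calB}$, the augmented Čech complex $\bM(\calB) \to \bM(\calC^\bullet)$ is a quasi-isomorphism, where $\calC^\bullet$ denotes the Čech nerve of $\calB \to \calC$ computed in the prismatic site (so that each $\calC^n$ is the $(n{+}1)$-fold prismatic self-coproduct of $\calC$ over $\calB$). Such covers are cofinal in the site of covers of $\calB$, so this standard reduction computes the higher cohomology.

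Next, using the crystal property of $\bM$, I identify $\bM(\calC^n) \simeq \bM(\calB) \otimes_{B^+_{\dR, m}} (C^n)^+_{\dR, m}$, where $C^n$ is the underlying ring of $\calC^n$. The key input is that prismatic coproducts preserve $(p, I)$-completely faithful flatness, so the maps $B \to C^n$ are all $(p, I)$-completely faithfully flat; inverting $p$ and reducing modulo $I^m$ then yields classical faithful flatness of the ring maps $B^+_{\dR, m} \to (C^n)^+_{\dR, m}$. Since $\bM(\calB)$ is finite projective over $B^+_{\dR, m}$, the Amitsur complex of $B^+_{\dR, m} \to C^+_{\dR, m}$ with coefficients in $\bM(\calB)$ is exact by classical faithfully flat descent, delivering the desired vanishing.

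The main obstacle is the technical verification that prismatic coproducts of $(p, I)$-completely faithfully flat maps remain $(p, I)$-completely faithfully flat, and that applying $[\tfrac{1}{p}]/I^m$ to such a map produces a \emph{classically} faithfully flat map of Banach $\bQ_p$-algebras; both facts are standard consequences of the framework in \cite{BS22, BS23}, but they must be invoked carefully to ensure that finiteness of projectivity of $\bM(\calB)$ translates into an acyclic tensor-product Amitsur complex. Once these facts are in place, the argument is entirely formal.
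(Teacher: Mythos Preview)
Your reduction to the \v{C}ech complex and the identification $\bM(\calC^n)\simeq \bM(\calB)\otimes_{B^+_{\dR,m}}(C^n)^+_{\dR,m}$ via the crystal property are fine, but the final step contains a genuine gap. You assert that the resulting complex is the Amitsur complex of $B^+_{\dR,m}\to C^+_{\dR,m}$, and then invoke classical faithfully flat descent. This identification is false in general: the prismatic self-coproduct $C^n$ is the $(p,I)$-completed tensor product $C^{\widehat\otimes_B(n+1)}$, so $(C^n)^+_{\dR,m}=\big((C^{\otimes_B(n+1)}/I^m)^{\wedge}_p\big)[1/p]$, whereas the $n$-th term of the Amitsur complex of $B^+_{\dR,m}\to C^+_{\dR,m}$ is $\big(C^{\otimes_B(n+1)}/I^m\big)[1/p]$ with no $p$-completion. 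These differ as soon as $C^{\otimes_B(n+1)}/I^m$ fails to be $p$-complete, which is the typical situation. Hence the classical Amitsur argument does not apply to the complex you actually have.

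The paper avoids this trap by a different bookkeeping. It first reduces to $m=1$ by derived Nakayama, and then rewrites $\bM(\calC^\bullet)\cong \bM(\calB)\otimes_B C^\bullet$, tensoring over $B$ rather than over $B^+_{\dR,m}$. The point is that the augmented cosimplicial $B$-module $B\to C^\bullet$ is a homotopy equivalence (after base change to $C$ it acquires an extra degeneracy, and one descends via $(p,I)$-complete faithful flatness); a homotopy equivalence is preserved by the additive functor $\bM(\calB)\otimes_B(-)$, regardless of any flatness of $\bM(\calB)$ over $B$. Your argument can be repaired along these lines: replace the Amitsur identification by the observation that $(C^\bullet)^+_{\dR,m}=C^\bullet\otimes_B B^+_{\dR,m}$, and then use the homotopy equivalence $B\to C^\bullet$ directly.
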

 \begin{proof}
      By derived Nakayama Lemma, we are reduced to the case for $m=1$. We need to show that for any covering $\calB\to\calC = (C,IC)$ with induced \v Cech nerve $\calC^{\bullet} = (C^{\bullet},IC^{\bullet})$, the canonical map
      \[\bM(\calB)\to\bM(\calC^{\bullet})\cong\bM(\calB)\otimes_{B/IB[\frac{1}{p}]}C^{\bullet}/IC^{\bullet}[\frac{1}{p}] \cong \bM(\calB)\otimes_BC^{\bullet}\]
      is a homotopy equivalence, where the first isomorphism above follows from that $\bM$ is a crystal.
      Then we can conclude as $B\to C^{\bullet}$ is a homotopy equivalence.
 \end{proof}

 The following construction is inspired by Tian \cite[\S 5.8]{Tia23}; in particular, when $m=1$, everything is exactly the same with the construction in \cite[\S 5.8]{Tia23}, especially the paragraph below \cite[Lem. 5.15]{Tia23}.

 \begin{construction}[de Rham complex of $\bM(A(R)^{\bullet},IA(R)^{\bullet})$]\label{Construction-Bicosimplicial-Rel}
 \begin{enumerate}
 \item      For  any $n\geq 0$, define 
   \[\widehat \Omega^1_{A(R)^{n,+}_{\dR,m}}:= \left( A(R)^{n,+}_{\dR,m}\otimes_{A(R)^+_{\dR,m}}\widehat \Omega^1_{A(R)_{\dR,m}^+/A_{\dR,m}^+}\cdot\beta^{-1} \right) \oplus\widehat \Omega^1_{A(R)^{n,+}_{\dR,m}/A(R)_{\dR,m}^+}.\]
    By Proposition \ref{Prop-Structure-Rel}, we get an isomorphism
   \[\widehat \Omega^1_{A(R)^{n,+}_{\dR,m}}\cong (\bigoplus_{i=1}^dA(R)^{n,+}_{\dR,m}\cdot\frac{\dlog T_i}{\beta})\oplus(\bigoplus_{j=1}^nA(R)^{n,+}_{\dR,m}\cdot\rd\underline Y_j)\]
   where $A(R)^{n,+}_{\dR,m}\cdot\rd\underline Y_j:=\bigoplus_{i=1}^dA(R)^{n,+}_{\dR,m}\cdot\rd Y_{i,j}$ and we identify $\underline T$ with $\underline T_0$.
   Then $\widehat \Omega^1_{A(R)^{\bullet,+}_{\dR,m}}$ carries a structure of cosimplicial $A(R)_{\dR,m}^{\bullet,+}$-module induced by (\ref{Equ-Face-R}) and (\ref{Equ-Degeneracy-R}).

 \item    For any $r\geq 0$, define 
   \[\widehat \Omega^r_{A(R)_{\dR,m}^{n,+}}: = \wedge^r\widehat \Omega^1_{A(R)^{n,+}_{\dR,m}}.\]
   Then $\widehat \Omega^r_{A(R)_{\dR,m}^{\bullet,+}}$ is a cosimplicial $A(R)_{\dR,m}^{\bullet,+}$-module as well and taking derivation:
   \[\rd: A(R)_{\dR,m}^{\bullet,+} \to \widehat \Omega^1_{A(R)_{\dR,m}^{\bullet,+}}\]
   induces a de Rham complex of cosimplicial $A(R)_{\dR,m}^{\bullet,+}$-modules $\rD\rR(A(R)_{\dR,m}^{\bullet,+},\rd)$:
   \[A(R)_{\dR,m}^{\bullet,+}\xrightarrow{\rd}\widehat \Omega^1_{A(R)_{\dR,m}^{\bullet,+}}\xrightarrow{\rd}\widehat \Omega^2_{A(R)_{\dR,m}^{\bullet,+}}\to\cdots.\]
   
  \item   In general, for any $\bM\in\Vect((R/(A,I))_{\Prism},\prism^+_{\dR, m})$ with induced $\beta$-connection $(M,\nabla_M)$, we get cosimplicial $A(R)_{\dR,m}^{\bullet,+}$-modules
   \begin{equation}\label{Equ-Q0Isom-Rel}
       \bM(A(R)^{\bullet},IA(R)^{\bullet})\otimes_{A(R)_{\dR,m}^{\bullet,+}}\widehat \Omega^r_{A(R)_{\dR,m}^{\bullet,+}}\cong M\otimes_{A(R)_{\dR,m}^+,q_0}\widehat \Omega^r_{A(R)_{\dR,m}^{\bullet,+}},
   \end{equation}
   where the isomorphism comes from the canonical isomorphism \[M\otimes_{A(R)_{\dR,m}^+,q_0}A(R)_{\dR,m}^{\bullet,+}\cong\bM(A(R)^{\bullet},IA(R)^{\bullet})\]
   induced by the injection $\{0\} \xrightarrow{\subset }\{0,1,\dots,\bullet\}$. Then $\nabla_M$ induces a morphism 
   \[\bM(A(R)^{\bullet},IA(R)^{\bullet})\otimes_{A(R)_{\dR,m}^{\bullet,+}}\widehat \Omega^r_{A(R)_{\dR,m}^{\bullet,+}}\to\bM(A(R)^{\bullet},IA(R)^{\bullet})\otimes_{A(R)_{\dR,m}^{\bullet,+}}\widehat \Omega^{r+1}_{A(R)_{\dR,m}^{\bullet,+}}\]
   via the isomorphism (\ref{Equ-Q0Isom-Rel}) such that for any $x\in M$ and any $\omega\in \widehat \Omega^r_{A(R)_{\dR,m}^{\bullet,+}}$, it is determined by 
   \begin{equation}\label{Equ-CosimplicialdR-Rel}
       x\otimes\omega\mapsto \nabla_M(x)\wedge\omega+x\otimes \rd(\omega).
   \end{equation}
   Denote this morphism by $\rd_M$.
   It will be shown by Lemma \ref{Lem-PreserveCosimplicial-Rel} that $\rd_M$ preserves cosimplicial structures and hence induces a de Rham complex of cosimplicial $A(R)_{\dR,m}^{\bullet,+}$-modules 
   $$\rD\rR(\bM(A(R)^{\bullet},IA(R)^{\bullet})\otimes_{A(R)_{\dR,m}^{\bullet,+}}\widehat \Omega^*_{A(R)_{\dR,m}^{\bullet,+}},\rd_M)$$ 
   which is defined as
   \begin{equation}\label{Equ-CosimplicialdRComplex-Rel}
       \bM(A(R)^{\bullet},IA(R)^{\bullet})\xrightarrow{\rd_M}\bM(A(R)^{\bullet},IA(R)^{\bullet})\otimes_{A(R)_{\dR,m}^{\bullet,+}}\widehat \Omega^1_{A(R)_{\dR,m}^{\bullet,+}}\xrightarrow{\rd_M}\cdots.
   \end{equation}
   Note   the above constructions are functorial in $\bM$.
 \end{enumerate}
 \end{construction}

 Now, we are able to finish the proof of Theorem \ref{Thm-dRCrystalasXiConnection-Rel}.

 \begin{proof}[Proof of Theorem \ref{Thm-dRCrystalasXiConnection-Rel}(2):] By (\ref{Equ-CosimplicialdRComplex-Rel}), we have a natural morphism
   \begin{equation}\label{Equ-CompareCoho-Rel-I}
       \bM(A(R)^{\bullet},IA(R)^{\bullet}) \to \rD\rR(\bM(A(R)^{\bullet},IA(R)^{\bullet})\otimes_{A(R)_{\dR,m}^{\bullet,+}}\widehat \Omega^r_{A(R)_{\dR,m}^{\bullet,+}},\rd_M).
   \end{equation}
   On the other hand, the de Rham complex $\rD\rR(\bM(A(R)^{\bullet},IA(R)^{\bullet})\otimes_{A(R)_{\dR,m}^{\bullet,+}}\widehat \Omega^r_{A(R)_{\dR,m}^{\bullet,+}},\rd_M)$ contains $\rD\rR(M,\nabla_M)$ as a sub-complex at the degree $0$ (i.e. the term with $\bullet = 0$). This induces a natural morphism 
   \begin{equation}\label{Equ-CompareCoho-Rel-II}
       \rD\rR(M,\nabla_M)\to \rD\rR(\bM(A(R)^{\bullet},IA(R)^{\bullet})\otimes_{A(R)_{\dR,m}^{\bullet,+}}\widehat \Omega^r_{A(R)_{\dR,m}^{\bullet,+}},\rd_M).
   \end{equation}
   By Lemma \ref{Lem-CechAlex-Rel}, it is enough to show both (\ref{Equ-CompareCoho-Rel-I}) and (\ref{Equ-CompareCoho-Rel-II}) are quasi-isomorphisms.    
   By derived Nakayama Lemma, we are reduced to the case for $m = 1$; that is, $\bM$ is a Hodge--Tate crystal. Then the result follows from the same argument for the proof of \cite[Thm. 5.14]{Tia23}. 
 \end{proof}


The following Lem \ref{Lem-PreserveCosimplicial-Rel} is claimed in Construction \ref{Construction-Bicosimplicial-Rel}(3).
  
 \begin{lem}\label{Lem-PreserveCosimplicial-Rel}
    Keep notations as above. Then the $\rd_M$ determined by (\ref{Equ-CosimplicialdR-Rel}) preserves cosimplicial structures.
 \end{lem}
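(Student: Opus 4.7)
The strategy is to check directly that $\rd_M$ commutes with each face map $p_i^*$ and each degeneracy map $\sigma_i^*$, using the explicit descriptions (\ref{Equ-Face-R})--(\ref{Equ-Degeneracy-R}) together with the closed-form expression (\ref{Equ-Stratification-Rel-I}) for the stratification.

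First I reduce the bulk of the checks to a trivial case. Under the isomorphism (\ref{Equ-Q0Isom-Rel}), I identify the $n$-th term of the cosimplicial module with $M \otimes_{A(R)_{\dR,m}^+, q_0} \widehat \Omega^r_{A(R)_{\dR,m}^{n,+}}$. Inspecting (\ref{Equ-Face-R}) and (\ref{Equ-Degeneracy-R}), $p_i(\underline T) = \underline T$ for every $i \geq 1$ and $\sigma_i(\underline T_0) = \underline T_0$ for every $i \geq 0$. By the crystal property of $\bM$, this implies that for these indices the induced cosimplicial structure map on $M \otimes_{q_0} \widehat \Omega^r_\bullet$ is simply $\mathrm{id}_M$ tensored with the ring-theoretic structure map on $\widehat\Omega^r$. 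Since $\nabla_M(x)$ lies in the image of $\widehat \Omega^1_{A(R)_{\dR,m}^+/A_{\dR,m}^+}\{-1\} \hookrightarrow \widehat \Omega^1_{A(R)^{n,+}_{\dR,m}}$ (the $q_0$-piece), it is unaffected by these structure maps; combined with the fact that $\rd$ commutes with all cosimplicial ring maps, we obtain $p_i^* \rd_M = \rd_M p_i^*$ for all $i \geq 1$ and $\sigma_i^* \rd_M = \rd_M \sigma_i^*$ for all $i \geq 0$.

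The only non-trivial case is the face map $p_0$: here $p_0(\underline T) = \underline T(1-\beta \underline Y_1)$, so $p_0$ does not preserve the $q_0$-embedding. Consequently, the induced map on $M \otimes_{q_0} \widehat \Omega^r_{A(R)^{n,+}_{\dR,m}}$ now carries a non-trivial action on the $M$-factor, given by the stratification $\varepsilon$ of Lemma \ref{Lem-Technique-Rel}. Checking $p_0^* \rd_M = \rd_M p_0^*$ reduces, by $A(R)^{n+1,+}_{\dR,m}$-linearity and the Leibniz rule, to a comparison of operators $M \to M \otimes \widehat \Omega^1_{A(R)^{n+1,+}_{\dR,m}}$. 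Both sides expand as pd-series in the $Y_{i,j}$'s; matching coefficients requires two inputs: (i) the identity $p_0^*(\dlog T_i) = \dlog T_i + \dlog(1-\beta Y_{i,1})$, which supplies the ``extra'' differential terms, and (ii) the formal identities of Lemma \ref{lem expansion identity}, which, combined with the commutativity of the $\nabla_i$'s from Lemma \ref{Lem-Technique-Rel}, collapse the resulting series into the desired form. These are precisely the ingredients already used to verify the cocycle condition for $\varepsilon$ in Lemma \ref{Lem-Technique-Rel}.

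The main obstacle is the bookkeeping in the $p_0$ case, where one must balance the ``geometric'' correction $\dlog(1-\beta Y_{i,1})$ coming from $p_0^*$ against the ``module'' correction from $\varepsilon = (1-\beta\underline Y_1)^{-\beta^{-1}\underline\nabla}$. At $m=1$ this recovers the calculation in \cite[\S 5.8]{Tia23}; the general-$m$ case is no harder, since the $\beta$-corrections are uniformly encoded by Lemma \ref{lem expansion identity} and the $\beta$-Leibniz rule from Definition \ref{Dfn-BetaConnection}.
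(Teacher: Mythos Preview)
Your proposal is correct and follows essentially the same approach as the paper: the easy cases ($\sigma_i$ for all $i$, and $p_i$ for $i\ge 1$) are handled via $q_0$-compatibility, and the nontrivial case $p_0$ is handled via the stratification $\varepsilon=(1-\beta\underline Y_1)^{-\beta^{-1}\underline\nabla}$ together with the identity $\dlog p_0(T_i)=\dlog T_i-\beta(1-\beta Y_{i,1})^{-1}\rd Y_{i,1}$. One small remark: the paper's $p_0$ computation does not invoke Lemma~\ref{lem expansion identity} per se but rather the chain-rule identity $\frac{\partial}{\partial Y_{i,1}}\varepsilon(x)=(1-\beta Y_{i,1})^{-1}\nabla_i(\varepsilon(x))$ and the commutation $\varepsilon\circ\nabla_i=\nabla_i\circ\varepsilon$; these are exactly what your ingredients (i) and (ii) encode, so the difference is purely expository.
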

 \begin{proof}
   We first show that $\rd_M$ commutes with the degeneracy morphisms. 
   For any $0\leq i\leq n$, fix an $x\in M$ and an $\omega\in \widehat \Omega^r_{A(R)_{\dR,m}^{n+1,+}}$, and denote by $\sigma_i$ the $i$-th degeneracy map.
   Then we have
   \begin{equation*}
       \begin{split}
           \rd_M(\sigma_i(x\otimes\omega)) & = \rd_M(x\otimes\sigma_i(\omega)), ~\text{as}~\sigma_i\circ q_0 = q_0\\
           & = \rd_M(x)\wedge\sigma_i(\omega)+x\otimes \rd\sigma_i(\omega),~\text{by (\ref{Equ-CosimplicialdR-Rel})}\\
           & = \nabla_M(x)\wedge\sigma_i(\omega)+x\otimes \sigma_i(\rd\omega),~\text{as}~\rd~\text{preserves cosimplicial structure}\\
           & = \sigma_i(\nabla_M(x)\wedge\omega+x\otimes\rd\omega),~\text{as}~\sigma_i\circ q_0=q_0\\
           & = \sigma_i(\rd_M(x\otimes\omega)),~\text{by (\ref{Equ-CosimplicialdR-Rel})},
       \end{split}
   \end{equation*}
   which is exactly what we want.
   
   It remains to show $\rd_M$ commutes with the face morphisms. For any $0\leq i\leq n+1$, fix an $x\in M$ and an $\omega\in \widehat \Omega^r_{A(R)_{\dR,m}^{n,+}}$, and denote by $p_i$ the $i$-th face map. 
   Assume $i\geq 1$. 
   As $p_i\circ q_0 = q_0$, the above calculation still works for replacing $\sigma_i$ by $p_i$. This implies $\nabla_M$ commutes with $p_i$ for $i\geq 1$. 
   Now assume $i = 0$ and then $p_0\circ q_0 = q_1$, which is induced by the map $\{0\}\xrightarrow{0\mapsto 1}\{0,1,\dots,\bullet\}$. 
   In this case, we have to use the stratification $(M,\varepsilon)$ induced by $\bM$ to identify 
   \[\varepsilon=(1-\beta\underline Y_1)^{-\frac{\underline \nabla}{\beta}}: M\otimes_{A(R)_{\dR,m}^+,q_1}\widehat \Omega^r_{A(R)_{\dR,m}^{\bullet,+}}\xrightarrow{\cong}M\otimes_{A(R)_{\dR,m}^+,q_0}\widehat \Omega^r_{A(R)_{\dR,m}^{\bullet,+}}.\]
   Then we have
   \begin{equation*}
       \begin{split}
           \rd_M(p_0(x\otimes\omega)) = & \rd_M(\varepsilon(x)\otimes p_0(\omega)) = \rd_M((1-\beta \underline Y_1)^{-\frac{\underline \nabla}{\beta}}(x)\otimes p_0(\omega))\\
           = & -\sum_{i=1}^d\nabla_i(\varepsilon(x))\otimes\frac{\dlog T_i}{\beta}\wedge p_0(\omega) + \sum_{i=1}^d\frac{\partial}{\partial Y_{i,1}}((1-\beta \underline Y_1)^{-\frac{\underline \nabla}{\beta}}(x))\otimes\rd Y_{i,1}\wedge p_0(\omega) + \varepsilon(x)\otimes\rd p_0(\omega)\\
           = & -\sum_{i=1}^d\nabla_i(\varepsilon(x))\otimes\frac{\dlog T_i}{\beta}\wedge p_0(\omega) + \sum_{i=1}^d\nabla_i(\varepsilon(x))\otimes(1-\beta Y_{i,1})^{-1}\rd Y_{i,1}\wedge p_0(\omega) + \varepsilon(x)\otimes\rd p_0(\omega)\\
           = & -\sum_{i=1}^d\varepsilon(\nabla_i(x))\otimes(\frac{\dlog T_i}{\beta}-(1-\beta Y_{i,1})^{-1}\rd Y_{i,1})\wedge p_0(\omega) + \varepsilon(x)\otimes\rd p_0(\omega)
       \end{split}
   \end{equation*}
   By Proposition \ref{Prop-Structure-Rel}, for any $1\leq l\leq d$, we have that
   \begin{equation*}
       \begin{split}
           \dlog (p_0(T_l)) & = \dlog T_{l,1},\text{ as we identify }\underline T\text{ with }\underline T_0\\
           & = \dlog((1-\beta Y_{l,1})T_l),\text{ by \eqref{Equ-Face-R}}\\
           & = -\beta(1-\beta Y_{l,1})^{-1}\rd Y_{l,1}+\dlog T_l.
       \end{split}
   \end{equation*}
   Therefore, we see that
   \begin{equation*}
       \begin{split}
           \rd_M(p_0(x\otimes\omega)) = & -\sum_{i=1}^d\varepsilon(\nabla_i(x))\otimes\frac{\dlog p_0(T_i)}{\beta}\wedge p_0(\omega) + \varepsilon(x)\otimes\rd p_0(\omega)\\
           = & p_0(-\sum_{i=1}^d\nabla_i(x)\otimes\frac{\dlog T_i}{\beta}\wedge\omega + x\otimes\rd\omega)\\
           = & p_0(\rd_M(x\otimes \omega)).
       \end{split}
   \end{equation*}
   In other words, we have that $\rd_M$ commutes with $p_0$ as desired.
 \end{proof}

 \newpage 
 \section{Local relative $\mathbbl{\Delta}_\dR^+$-crystals: semi-stable case} \label{SubSec-Log-Rel}

 In this section, we study local $\prism^+_{\dR}$-crystals on the relative log-prismatic site.  We only consider the case where the ``base log-prism" is the Breuil--Kisin log prism $(\frakS,(E),M_{\frakS})$; this is harmless for applications since we only consider semi-stable formal schemes over $\ok=\gs/E$.
 The main Theorem \ref{Thm-dRCrystalasXiConnection-RelLog}  classifies these $\prism^+_{\dR}$-crystals by   connections. 
  Indeed, the results are parallel to the relative prismatic case in previous  \S \ref{sec: rel pris smooth}, once we explicitly compute the relative Breuil--Kisin cosimplicial log-prisms (which are of independent interest). We have chosen to separate the semi-stable case in this section to contain the length of  \S \ref{sec: rel pris smooth}.

\subsection{Structure of prismatic rings}

 \begin{defn}\label{Convention-Rel-Log}
   We say an $\calO_K$-algebra $R$ of relative dimension $d$ is \emph{small semi-stable} if for some $0\leq r\leq d$, there is a $p$-completely \'etale morphism 
   \[\Box:\calO_K\za T_0,\dots,T_r,T_{r+1}^{\pm 1},\cdots, T_d^{\pm 1}\ya/(T_0\cdots T_r-\pi)\to R.\] 
   We call such a morphism   a \emph{chart} on $R$. We equip $R$ with the log structure $M_R$ induced by $$P_r:=\bN^{r+1}\cong \bigoplus_{i=0}^r\bN\cdot e_i\xrightarrow{e_i\mapsto T_i,~\forall~i}R.$$
   Then $(R,M_R)$ is formally log smooth over $(\calO_K,M_{\calO_K})$. 
   We denote by $(R/(\frakS,(E)))_{\Prism,\log}$ the relative logarithmic prismatic site of $(R,M_R)$ over $(\frakS,(E),M_{\frakS})$ in the sense of \cite{Kos21}.
 \end{defn}

 \begin{construction} \label{constr: cover of final obj rel log pris}
   Put $\frakS^{\Box}:=\frakS\za T_0,\dots,T_r,T^{\pm 1}_{r+1}, T_d^{\pm}\ya/(T_0\cdots T_r-u)$ and let $M_{\frakS^{\Box}}$ be the log structure on $\frakS^{\Box}$ induced by $$\bN^{r+1}\cong P_r:= \bigoplus_{i=0}^r\bN e_i\xrightarrow{e_i\mapsto T_i}\frakS^{\Box}.$$ Then by letting $\delta(T_j) = \delta_{\log}(e_i) = 0$ for all $0\leq i\leq r$ and $0\leq j\leq d$, we get a log prism $(\frakS^{\Box},(E),M_{\frakS^{\Box}})$ over $(\frakS,(E),M_{\frakS})$. 
   Using the \'etaleness of $\Box$, by \cite[Lem. 2.18]{BS22} and Lemma \ref{lem:rigidity of log-structrue}, $R$ admits a unique lifting $\frakS(R)$ over $\frakS^{\Box}$ which is compatible with the chart $\Box$ modulo $E$ such that the $\delta_{\log}$ structure on $\frakS^{\Box}$ extends to $\frakS(R)$ uniquely. 
That is, we have a diagram
\[
\begin{tikzcd}
\frakS^{\Box} \arrow[d] \arrow[r]                                                                       & \frakS(R) \arrow[d] \\
{\calO_K\za T_0,\dots,T_r,T_{r+1}^{\pm 1},\cdots, T_d^{\pm 1}\ya/(T_0\cdots T_r-\pi)} \arrow[r, "\Box"] & R                  
\end{tikzcd}
\]   
   So we get a log prism 
   $$(\frakS(R),(E),M_{\frakS(R)})\in (R/(\frakS,(E)))_{\Prism,\log}.$$
 \end{construction}

 \begin{lem}\label{lem:log covering for BK}
     The log prism $(\frakS(R),(E),M_{\frakS(R)})$ is a cover of the final object of $\Sh((R)_{\Prism,\log})$ resp. $\Sh((R/(\frakS,(E)))_{\Prism,\log})$.
 \end{lem}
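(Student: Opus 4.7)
The plan is to adapt the strategy of Construction \ref{constr: cover rel pris} (cf. \cite[Lem. 4.2]{Tia23}) to the semi-stable log setting, with extra care for the log structure and the relation $T_0\cdots T_r = u$. The two cases in the statement are equivalent to treat: since $(\frakS(R),(E),M_{\frakS(R)})$ is by construction a log prism over $(\frakS,(E),M_{\frakS})$, it suffices to prove the relative version, from which the absolute version follows.

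First, I would take an arbitrary object $(B,J,M_B) \in (R/(\frakS,(E)))_{\Prism,\log}$ with structure morphism $f:(R,M_R)\to (B/J,\overline{M_B})$. Exploiting the chart $P_r\xrightarrow{e_i\mapsto T_i}R$ on $M_R$, the composition $P_r \to M_R\to \overline{M_B}$ lifts (after possibly passing to a cover of $(B,J,M_B)$ in the log prismatic topology, which we are free to do since we are checking a statement about covering the final object) to a chart $P_r \to M_B$. Taking the images of $e_0,\dots,e_r$ under $M_B\to B$ produces elements $t_0,\dots,t_r\in B$ lifting $f(T_0),\dots,f(T_r)$, and the product $t_0\cdots t_r$ agrees with $u$ modulo $J$. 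For $i>r$, the units $f(T_i)\in (B/J)^\times$ lift to units $t_i\in B^\times$ using $(p,J)$-adic completeness.

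Second, I would enforce the relation $t_0\cdots t_r=u$ exactly by passing to the log-prismatic envelope in the sense of Koshikawa \cite{Kos21}: since $t_0\cdots t_r-u\in J$, this envelope construction yields a $(p,E)$-completely faithfully flat cover $(C,JC,M_C)\to (B,J,M_B)$ in the log prismatic site on which the relation holds and on which $\delta_{\log}(e_i)=0$. Combined with the images $t_i$ (now in $C$), this produces a morphism of log $\delta$-rings $\frakS^{\Box}\to C$ compatible with all the required data. The resulting composite $\frakS^{\Box}\to C\to C/JC$ factors through $R$ via $\Box$ by construction.

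Third, by the $p$-completely étale property of $\Box$, combined with the rigidity of log structures along étale morphisms (Lemma \ref{lem:rigidity of log-structrue}), the map $\frakS^{\Box}\to C$ lifts uniquely to a morphism of log $\delta$-rings $\frakS(R)\to C$ reducing modulo $J$ to $R\xrightarrow{f}B/J\to C/JC$. This is precisely a morphism of log prisms $(\frakS(R),(E),M_{\frakS(R)})\to (C,JC,M_C)$ over $(\frakS,(E),M_{\frakS})$, exhibiting the desired covering.

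The main obstacle is step two: controlling the log-prismatic envelope that simultaneously imposes $t_0\cdots t_r=u$ and $\delta_{\log}(e_i)=0$, and verifying that it is a genuine cover in Koshikawa's log-prismatic topology. Once the envelope is shown to be a $(p,E)$-completely faithfully flat log-prism cover of $(B,J,M_B)$, the remaining steps reduce to a standard application of étale rigidity, mirroring the non-log argument of \cite[Lem. 4.2]{Tia23}.
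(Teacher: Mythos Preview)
Your proposal leaves the load-bearing step—the ``log-prismatic envelope'' of step two—entirely unspecified. You need a cover on which simultaneously $t_0\cdots t_r$ coincides with the image of $u$, the $\delta_{\log}(e_i)$ vanish, and faithful flatness holds; you invoke Koshikawa but name no construction and give no argument that the result is a cover in the log-prismatic topology. Since this step is essentially the whole content of the lemma, what you have is an outline rather than a proof. (A smaller point: the reduction ``relative $\Rightarrow$ absolute'' is not automatic from $\frakS(R)$ lying over $\frakS$; it requires knowing separately that $(\frakS,(E),M_{\frakS})$ already covers the final object of $(\calO_K)_{\Prism,\log}$.)

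The paper avoids log-envelopes entirely and argues in the opposite direction. Working on the absolute site, it reduces via \'etaleness of $\Box$ to $R=\calO_K\langle T_0,\dots,T_r,T_{r+1}^{\pm1},\dots,T_d^{\pm1}\rangle/(T_0\cdots T_r-\pi)$. Given $(A,I,M)$, Lemma~\ref{lem:existence of log-structure} supplies the lifts $t_i\in A$ directly (no preliminary cover needed). One then adjoins \emph{new} variables $T_i$ with $\delta(T_i)=0$ together with units $Z_i=t_i/T_i$, and applies the ordinary prismatic envelope of \cite[Prop.~3.19]{BS22} to the $(p,I)$-completely regular sequence $(Z_0-1,\dots,Z_r-1,T_{r+1}-t_{r+1},\dots,T_d-t_d)$, producing a faithfully flat prism cover $(C,IC)$ with a tautological $\delta$-map from $\frakS^{\Box}$. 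The two candidate log structures on $C$ (pulled back from $\frakS(R)$ and from $A$) agree because $t_i=Z_iT_i$ with $Z_i\in C^{\times}$. The relation $T_0\cdots T_r=u$ is never imposed: $u$ is \emph{defined} as $T_0\cdots T_r$, which is precisely why the absolute framing is cleaner than the relative one you chose.
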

 \begin{proof}
     It suffices to show $(\frakS(R),(E),M_{\frakS(R)})$ is a cover on the absolute site $(R)_{\Prism,\log}$. Equivalently, we have to show for any $(A,I,M)\in (R)_{\Prism,\log}$, it admits a cover  $(C,IC,N)$ which is the target of a map $(\frakS(R),(E),M_{\frakS(R)})\to (C,IC,N)$. 

     As the chart $\Box$ is \'etale, by \cite[Lem. 2.18]{BS22}, we are reduced to the case for
     \[R = \calO_K\za T_0,\dots,T_r,T_{r+1}^{\pm 1},\cdots, T_d^{\pm 1}\ya/(T_0\cdots T_r-\pi).\]
     By Lemma \ref{lem:existence of log-structure}, there are $t_i$'s in $A$ lifting $T_i$'s in $A/I$ such that the log-structure $M\to A$ is induced by the pre-log-structure 
     \[P_r = \bigoplus_{i=0}^r\bN\cdot e_i\xrightarrow{e_i\mapsto t_i,~\forall~i}A.\]
     Put
     \[\begin{split}
         B & = \left(A[u,T_0,\dots,T_r,T_{r+1}^{\pm 1},\dots,T_d^{\pm 1}]/(T_0\dots T_r-u)\right)[(\frac{t_0}{T_0})^{\pm 1},\dots,(\frac{t_r}{T_r})^{\pm 1}]\\
           & = A[u,T_0,\dots,T_r,T_{r+1}^{\pm 1},\dots,T_d^{\pm 1},Z_0^{\pm 1},\dots,Z_r^{\pm 1}]/(T_0\dots T_r-u, T_0Z_0-t_0,\dots,T_rZ_r-t_r)\\
           & = A[T_{r+1}^{\pm 1},\dots,T_d^{\pm 1},Z_0^{\pm 1},\dots,Z_r^{\pm 1}].
     \end{split}\]
     Clearly, $B$ is $(p,I)$-completely faithfully flat over $A$ and that 
     \[(\frac{t_0}{T_0}-1 = Z_0-1,\dots,\frac{t_r}{T_r}-1 = Z_r-1, T_{r+1}-t_{r+1},\dots,T_d-t_d)\]
     forms a $(p,I)$-completely regular sequence relative to $A$. Endow $B$ with the $\delta$-structure compatible with $A$ such that $\delta(T_i) = 0$ for all $i$.
     Let $J\subset B$ be the ideal generated by $IB$ and $(\frac{t_0}{T_0}-1,\dots,\frac{t_d}{T_d}-1)$. By \cite[Prop. 3.19]{BS22}, the $(C:=B\{\frac{J}{I}\}^{\wedge}_{\delta},IC)$ defines a covering of the prism $(A,I)$ in $(R)_{\Prism}$ which is the target of the obvious map $(\frakS(R),(E))\to (C,IC)$. By construction, the log-structures on $C$ associated to the composites
     \[(P_r\to \frakS(R)\to C) \text{ and }P_r\to A\to C\]
     coincide. Denote it by $N$ and then we get a log-prism $(C,IC,N)\in (R)_{\Prism,\log}$ satisfying all conditions as desired.
 \end{proof}

 \begin{construction} \label{cons: cech nerve rel pris log}
 Denote by $(\frakS(R)^{\bullet}_{\geo,\log},(E),M_{\frakS(R)^{\bullet}_{\geo,\log}})$ the \v Cech nerve of the cover of the final object of $\Sh((R/(\frakS,(E)))_{\Prism,\log})$; here the subscript ``geo" is to emphasize that we are working on the \emph{relative} site \footnote{The absolute version is constructed in \S \ref{sec: loc abs pris}, cf. Notation \ref{nota: many dr rings abs and rel} for a summary.}. 
Let $\frakS(R)_{\geo,\log,\dR,m}^{\bullet}$ be the cosimplicial ring induced by evaluating $\prism^+_{\dR, m}$ at $(\frakS(R)^{\bullet}_{\geo,\log},(E),M_{\frakS(R)^{\bullet}_{\geo,\log}})$. 
 In particular, we have 
 \[\frakS(R)_{\geo,\log,\dR,m}^{0} = \prism^+_{\dR, m}(\frakS(R),(E),M_{\frakS(R)}) = \frakS(R)_{\dR,m}^+.\]
  \end{construction}

  \begin{lem}[Explicit description of $\frakS(R)^{\bullet}_{\geo,\log}$]\label{lem:explicit description of cech nerve-rel}
      For any $n\geq 0$, we have
    \[\frakS(R)^{n}_{\geo,\log} = \frakS(R)^{\otimes_{\frakS}(n+1)}[[1-\frac{\underline T_1}{\underline T_0},\cdots,1-\frac{\underline T_n}{\underline T_0}]]\{\frac{1-\frac{\underline T_1}{\underline T_0}}{E},\dots,\frac{1-\frac{\underline T_n}{\underline T_0}}{E}\}^{\wedge}_{\delta},\]
    where $\frakS(R)^{\otimes_{\frakS}(n+1)}$ denotes the tensor products of $(n+1)$ copies of $\frakS(R)$ over $\frakS$, $\underline T_i$ denotes $T_{1,i},\dots,T_{d,i}$ which is induced by the chart on $R$ on the $(i+1)$-th factor of $\frakS(R)^{\otimes_{\frakS}(n+1)}$ and $1-\frac{\underline T_i}{\underline T_0}$ denotes $1-\frac{T_{1,i}}{T_{1,0}},\dots, 1-\frac{T_{d,i}}{T_{d,0}}$. The log-structure $M_{\frakS(R)^{\bullet}_{\geo,\log}}$ on $\frakS(R)^{n}_{\geo,\log}$ is induced by the composite
    \[M_{\frakS(R)}\to\frakS(R)\xrightarrow{q_i}\frakS(R)^{\otimes_{\frakS}(n+1)}[[1-\frac{\underline T_1}{\underline T_0},\cdots,1-\frac{\underline T_n}{\underline T_0}]]\{\frac{1-\frac{\underline T_1}{\underline T_0}}{E},\dots,\frac{1-\frac{\underline T_n}{\underline T_0}}{E}\}^{\wedge}_{\delta},\]
    which is independent of $q_i$.
  \end{lem}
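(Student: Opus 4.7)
The plan is to compute the $(n+1)$-fold self-coproduct of $(\frakS(R),(E),M_{\frakS(R)})$ in the category of bounded log prisms over $(\frakS,(E),M_{\frakS})$, paralleling the smooth (non-log) computation in Construction \ref{const: cosimp BK rel pris} but carefully tracking the monoid. By Lemma \ref{lem:log covering for BK}, this coproduct is exactly $\frakS(R)^n_{\geo,\log}$.

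First I would begin with the underlying tensor product $B:=\frakS(R)^{\otimes_{\frakS}(n+1)}$, equipped with its induced $\delta$-structure and with $(n+1)$ a priori distinct pre-log structures, where the $i$-th factor provides $e_j\mapsto T_{j,i}$ for $0\leq j\leq d$ (subject to $T_{0,i}\cdots T_{r,i}=u$). To perform the coproduct \emph{in log} prisms, these log structures must be identified, which is the universal operation of adjoining formally the elements $T_{j,i}/T_{j,0}$ for all $i\geq 1$ and asking that they become units mapping to $1$ in the residue modulo the distinguished ideal $(E)$; equivalently, adjoining $Y_{j,i}:=1-T_{j,i}/T_{j,0}$ inside the $(p,E)$-completion and requiring $Y_{j,i}\in (E)$. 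The constraint $T_{0,i}\cdots T_{r,i}=u=T_{0,0}\cdots T_{r,0}$ forces $\prod_{j=0}^r(1-Y_{j,i})=1$, which allows one to eliminate $Y_{0,i}$ in terms of $Y_{1,i},\dots,Y_{r,i}$, explaining why the lemma ranges only over $j=1,\dots,d$. Then I would apply the log-prismatic envelope construction (the log-analogue of \cite[Prop. 3.13]{BS22}) to the ideal $(E,\,Y_{j,i}\ :\ 1\leq i\leq n,\,1\leq j\leq d)$, which has the effect of prismatically adjoining $Y_{j,i}/E$ as a $\delta$-variable. This yields precisely the displayed ring.

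Next I would verify the universal property: a map of log prisms from the displayed ring to some $(C,IC,N)\in(R/(\frakS,(E)))_{\Prism,\log}$ is the same as $(n+1)$ compatible log-prism maps out of $(\frakS(R),(E),M_{\frakS(R)})$, because the adjoined ratios $1-Y_{j,i}=T_{j,i}/T_{j,0}$ automatically exist (and are units) in any target log prism by the identification of log structures after composition to the base. The final step is to check that the resulting log structure is independent of the face map $q_i$: since $Y_{j,i}\in(E)$ in a $(p,E)$-complete ring, each $T_{j,i}/T_{j,0}$ is a unit, so the various images of $e_j\in P_r$ under $M_{\frakS(R)}\to\frakS(R)\xrightarrow{q_i}\frakS(R)^n_{\geo,\log}$ differ by units and induce the same associated log structure.

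The main obstacle is the correct bookkeeping of the log-prismatic envelope in the presence of the semi-stable relation $T_0\cdots T_r=u$, and the careful verification that the universal property for log prisms reduces (after adjoining the ratios) to the non-log universal property of \cite[Prop. 3.13]{BS22} applied to a regular sequence. Once this is set up, the rest of the argument is parallel to the smooth case, modulo replacing additive difference coordinates $\underline T_0-\underline T_i$ by multiplicative coordinates $1-\underline T_i/\underline T_0$ that are compatible with the monoid.
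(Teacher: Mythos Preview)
Your proposal is correct and follows essentially the same approach as the paper's proof. The paper makes two steps more explicit than you do: it invokes the exactification construction of \cite[Construction 2.18]{Kos21} to formally adjoin the units $T_{j,i}/T_{j,0}$ (rather than describing this informally), and it uses the rigidity result Lemma~\ref{lem:rigidity of log-structrue} to justify that for any target log prism $(A,I,M_A)$ the images $f_0(T_j)$ and $f_1(T_j)$ differ by units, which is what underlies your sentence ``the adjoined ratios automatically exist in any target log prism.'' The paper also records a short final step identifying the localized-and-completed ring $\big(\frakS(R)^{\otimes_{\frakS}2}[(\tfrac{T_{0,1}}{T_{0,0}})^{\pm1},\dots,(\tfrac{T_{r,1}}{T_{r,0}})^{\pm1}]\big)^{\wedge_{(p,E)}}\{\cdots\}^{\wedge}_{\delta}$ with the power-series description in the statement, using that each $1-T_{j,i}/T_{j,0}$ is topologically nilpotent; your proposal starts directly from the power-series side, so this identification is implicit.
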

  \begin{proof}
      By the proof of Lemma \ref{lem:log covering for BK}, we see that $\frakS(R)^{\otimes_{\frakS}(\bullet+1)}[[1-\frac{\underline T_1}{\underline T_0},\cdots,1-\frac{\underline T_{\bullet}}{\underline T_0}]]\{\frac{1-\frac{\underline T_1}{\underline T_0}}{E},\dots,\frac{1-\frac{\underline T_{\bullet}}{\underline T_0}}{E}\}^{\wedge}_{\delta}$ with the log-structure $M_{\bullet}$ induced from
      \[M_{\frakS(R)}\to\frakS(R)\to \frakS(R)^{\otimes_{\frakS}(\bullet+1)}[[1-\frac{\underline T_1}{\underline T_0},\cdots,1-\frac{\underline T_{\bullet}}{\underline T_0}]]\{\frac{1-\frac{\underline T_1}{\underline T_0}}{E},\dots,\frac{1-\frac{\underline T_{\bullet}}{\underline T_0}}{E}\}^{\wedge}_{\delta}\]
      is a well-defined cosimplicial log-prism in $(R)_{\Prism,\log}$. It suffices to show this is exactly $(\frakS(R)^{\bullet}_{\geo,\log},(E),M_{\frakS(R)^{\bullet}_{\geo,\log}})$.
      We only deal with the $n=1$ case while the general case follows from the same (but more tedious) argument.

      Consider the monoid $(\bigoplus_{i=0}^r\bN\cdot e_{i,0})\oplus_{\bN}(\bigoplus_{i=0}^r\bN\cdot e_{i,1})$, which is the push-out of the diagram 
      \[\bigoplus_{i=0}^r\bN\cdot e_{i,0}\xleftarrow{1\mapsto e_{0,0}+\cdots+e_{r,0}}\bN\xrightarrow{1\mapsto e_{0,1}+\cdots+e_{r,1}}\bigoplus_{i=0}^r\bN\cdot e_{i,1}.\]
      There is an obvious way to make 
      \[(\frakS(R)^{\otimes_{\frakS}2},(E),(\bigoplus_{i=0}^r\bN\cdot e_{i,0})\oplus_{\bN}(\bigoplus_{i=0}^r\bN\cdot e_{i,1}))\]
      a pre-log prism over $(\frakS,(E),\bN)$. There exists a ``multiplication'' map of prelog rings
      \[\pr:((\bigoplus_{i=0}^r\bN\cdot e_{i,0})\oplus_{\bN}(\bigoplus_{i=0}^r\bN\cdot e_{i,1})\to\frakS(R)^{\otimes_{\frakS}2})\to (\bigoplus_{i=0}^r\bN\cdot e_i\to R)\]
      whose induced map on monoids
      \[(\bigoplus_{i=0}^r\bN\cdot e_{i,0})\oplus_{\bN}(\bigoplus_{i=0}^r\bN\cdot e_{i,1})\to \bigoplus_{i=0}^r\bN\cdot e_i\]
      is given by carrying each $e_{i,j}$ with $j\in \{0,1\}$ to $e_i$. By \cite[Construction 2.18]{Kos21}, the $(p,E)$-complete exactification of $\pr$ above is given by 
      \[(\bigoplus_{i=0}^r\bN\cdot e_i\xrightarrow{e_i\mapsto T_{i,0},~\forall~i}\left(\frakS(R)^{\otimes_{\frakS}2}[(\frac{T_{0,1}}{T_{0,0}})^{\pm 1},\dots,(\frac{T_{r,1}}{T_{r,0}})^{\pm 1}]\right)^{\wedge_{(p,E)}})\to (\bN^{r+1}\to R)\]
      which extends $\pr$ above and sends each $\frac{T_{i,1}}{T_{i,0}}$ to $1$. This gives a pre-log prism 
      \[(\left(\frakS(R)^{\otimes_{\frakS}2}[(\frac{T_{0,1}}{T_{0,0}})^{\pm 1},\dots,(\frac{T_{r,1}}{T_{r,0}})^{\pm 1}]\right)^{\wedge_{(p,E)}},(E),\bN^{r+1})\]
      over $(\frakS,\bN,M_{\frakS})$, in which $E, \frac{T_{1,1}}{T_{1,0}}-1,\dots,\frac{T_{d,1}}{T_{d,0}}-1$ form a $(p,E)$-completely regular sequence relative to $\frakS(R)$. By \cite[Prop. 3.19]{BS22}, the 
      \[B:=(\left(\frakS(R)^{\otimes_{\frakS}2}[(\frac{T_{0,1}}{T_{0,0}})^{\pm 1},\dots,(\frac{T_{r,1}}{T_{r,0}})^{\pm 1}]\right)^{\wedge_{(p,E)}}\{\frac{\frac{\underline T_1}{\underline T_0}-1}{E(u)}\}^{\wedge_{(p,E)}}_{\delta}\]
      exists and $(p,E)$-completely faithfully flat over $\frakS(R)$. Endow $B$ with the log-strutcure $M_B$ induced by 
      \[\bigoplus_{i=0}^r\bN\cdot e_i\xrightarrow{e_i\mapsto T_i,\forall~i}B\]
      and then we get a log-prism $(B,(E),M_B)\in (R)_{\Prism,\log}$.

      We claim that $(B,(E),M_B) = (\frakS(R)_{\geo,\log}^1,(E),M_{\frakS(R)^1_{\geo,\log}})$. Indeed, for any $(A,I,M_A)\in (R)_{\Prism,\log}$ together with two morphisms 
      \[f_0,f_1:(\frakS(R),(E),M_{\frakS(R)})\to (A,I,M_A).\]
      By Lemma \ref{lem:rigidity of log-structrue}, the composites
      \[M_{\frakS(R)}\to\frakS(R)\xrightarrow{f_i}A\]
      induce the same log-structure on $A$ which is exactly $M_A$. In particular, for any $0\leq i\leq r$, the $f_0(T_i)$ is a unit-multiple of $f_1(T_i)$, yielding a unique morphism
      \[F:(\left(\frakS(R)^{\otimes_{\frakS}2}[(\frac{T_{0,1}}{T_{0,0}})^{\pm 1},\dots,(\frac{T_{r,1}}{T_{r,0}})^{\pm 1}]\right)^{\wedge_{(p,E)}},(E),\bN^{r+1})\to(A,I,M_A)\]
      making the following diagram
      \[
      \xymatrix@C=0.5cm{
        (\left(\frakS(R)^{\otimes_{\frakS}2}[(\frac{T_{0,1}}{T_{0,0}})^{\pm 1},\dots,(\frac{T_{r,1}}{T_{r,0}})^{\pm 1}]\right)^{\wedge_{(p,E)}},\bN^{r+1})\ar[rr]\ar[d]&&(A,M_A)\ar[d]\\
        (R,\bN^{r+1})\ar[rr]&& (A/I,M_A)
      }
      \]
      commute. In particular, we have $F(\frac{T_{i,1}}{T_{i,0}}-1)\in I$ for any $0\leq i\leq d$, yielding a unique morphism
      \[(B,(E),M_B)\to (A,I,M_A)\]
      as desired.

      To conclude, it remains to show that
      \[(B,(E),M_B) = (\frakS(R)^{\otimes_{\frakS}2}[[1-\frac{\underline T_1}{\underline T_0}]]\{\frac{1-\frac{\underline T_1}{\underline T_0}}{E}\}^{\wedge}_{\delta},(E),M_1).\]
      By universal property of $(B,(E),M_B)$, there is a unique morphism
      \[i:(B,(E),M_B) \to (\frakS(R)^{\otimes_{\frakS}2}[[1-\frac{\underline T_1}{\underline T_0}]]\{\frac{1-\frac{\underline T_1}{\underline T_0}}{E}\}^{\wedge}_{\delta},(E),M_1).\]
      It remains to construct a morphism over $(\frakS,(E),\bN)$
      \[j:(\frakS(R)^{\otimes_{\frakS}2}[[1-\frac{\underline T_1}{\underline T_0}]]\{\frac{1-\frac{\underline T_1}{\underline T_0}}{E}\}^{\wedge}_{\delta},(E),M_1)\to(B,(E),M_B)\]
      which is the inverse of $i$. As $1-\frac{T_{i,1}}{T_{i,0}}$ is topologically nilpotent in $B$ for each $i$, the natural map $\frakS(R)^{\otimes_{\frakS}2}\to B$ extends uniquely to a map $\frakS(R)^{\otimes_{\frakS}2}[[1-\frac{\underline T_1}{\underline T_0}]]\to B$ and thus induces a unique map
      \[\frakS(R)^{\otimes_{\frakS}2}[[1-\frac{\underline T_1}{\underline T_0}]]\{\frac{1-\frac{\underline T_1}{\underline T_0}}{E}\}^{\wedge}_{\delta}\to B\]
      which is compatible with $\delta_{\log}$-structures as desired. By construction, one can check $i$ and $j$ are the inverses of each other, and then complete the proof.
  \end{proof}

 \begin{prop}\label{Prop-Structure-Rel-Log}
   For any $1\leq i\leq n$ and $1\leq j\leq d$, put $Y_{j,i} = \frac{1-\frac{T_{j,i}}{T_{j,0}}}{-E}$. Then identifying $\frakS(R)$ with the first component of $\frakS(R)_{\geo,\log}^n$ induces an isomorphisms of $\frakS_{\dR}^+$-algebras 
   \[\varprojlim_m((\frakS(R)/E^m\frakS(R))\{\underline Y_1,\dots,\underline Y_n\}^{\wedge}_{\pd}[\frac{1}{p}])\cong \frakS(R)_{\geo,\log,\dR}^{n,+}.\]
   Via these isomorphisms for all $n$, the face and degeneracy morphisms are determined by \emph{exactly the same formula} as in \eqref{Equ-Face-R} and \eqref{Equ-Degeneracy-R} for $\beta = E$ (which we omit here).
 \end{prop}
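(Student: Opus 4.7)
The plan is to mimic the proof of Proposition \ref{Prop-Structure-Rel} in the smooth case, using the explicit description of $\frakS(R)^n_{\geo,\log}$ provided by Lemma \ref{lem:explicit description of cech nerve-rel} as a substitute for \cite[Prop.~3.13]{BS22}. First, by derived Nakayama with respect to $(E)$, it suffices to exhibit an isomorphism
\[
(\frakS(R)/E\frakS(R))\{\underline Y_1,\dots,\underline Y_n\}^{\wedge}_{\pd}[\tfrac{1}{p}] \xrightarrow{\ \cong\ } \frakS(R)_{\geo,\log,\dR,1}^{n,+},
\]
where each $Y_{j,i}$ maps to $\frac{1-T_{j,i}/T_{j,0}}{-E}$ modulo $E$. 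By Lemma \ref{lem:explicit description of cech nerve-rel}, the mod-$E$ reduction of $\frakS(R)^n_{\geo,\log}$ is generated over $R$ (the first factor) by the classes of $\frac{1-T_{j,i}/T_{j,0}}{E}$, and the prismatic envelope produces exactly the $\delta$-completed pd-hull of these generators; inverting $p$ turns divided powers into the usual pd-polynomial algebra, which yields the displayed description. This is the direct log analogue of Lemma \ref{Lem-Tian-Structure}, valid because the ratios $T_{j,i}/T_{j,0}$ are units in $\frakS(R)^n_{\geo,\log}$ (this was built into the exactification step in the proof of Lemma \ref{lem:explicit description of cech nerve-rel}), so the semi-stable case behaves formally just as the smooth case with $T_{j,0}$ formally invertible.

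Second, I would derive the face and degeneracy formulas directly from the cosimplicial structure of $\frakS(R)^{\bullet}_{\geo,\log}$. The face map $p_i\colon \frakS(R)^{\otimes_{\frakS}(n+1)}\to \frakS(R)^{\otimes_{\frakS}(n+2)}$ is determined by (\ref{Equ-SimplicialStructure-Rel}) applied to the variables $\underline T_j$ (the log structure is rigid by Lemma \ref{lem:rigidity of log-structrue}, so face/degeneracy are fixed by their effect on these coordinates). For $i\geq 1$ we get $p_i(\underline T_j) = \underline T_j$ or $\underline T_{j+1}$ as stated. For $i=0$, we have $p_0(\underline T_j)=\underline T_{j+1}$ for $j\geq 1$ and $p_0(\underline T_0)=\underline T_1$, which translates into $p_0(T_{s,0})=T_{s,0}(1-E Y_{s,1})$ and hence into the twisted formula $p_0(Y_{s,j})=(Y_{s,j+1}-Y_{s,1})(1-EY_{s,1})^{-1}$, exactly as in (\ref{Equ-Face-R}) with $\beta = E$. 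The degeneracy formula (\ref{Equ-Degeneracy-R}) is immediate from $\sigma_i(\underline T_j)$ in (\ref{Equ-SimplicialStructure-Rel}).

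The main obstacle, and the only genuinely new input beyond the smooth case, is ensuring that the ratio $T_{j,i}/T_{j,0}$ is a well-defined unit and that the pd-envelope computation is compatible with the log structure. This is exactly what Lemma \ref{lem:explicit description of cech nerve-rel} packages for us: the chart $\bN^{r+1}\to \frakS(R)$ becomes canonically identified across all factors of the tensor product once we pass to the log-prismatic Čech nerve, so $T_{j,i}$ and $T_{j,0}$ differ by a unit whose logarithm is a generator of the divided power ideal. Once this is in hand, the verification that $1 - T_{j,i}/T_{j,0}$ lies in the ideal topologically generated by $E$ and that the pd-envelope is $\frakS(R)\{\underline Y_1,\dots,\underline Y_n\}^{\wedge}_{\pd}$ proceeds identically to the smooth setting, because the sign convention $Y_{j,i} = \frac{1-T_{j,i}/T_{j,0}}{-E}$ is chosen precisely to absorb the sign into $\beta=E$ so that the formulas match (\ref{Equ-Face-R})–(\ref{Equ-Degeneracy-R}) verbatim.
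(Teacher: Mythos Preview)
Your approach is essentially the paper's: reduce to $m=1$ by derived Nakayama, identify the mod-$E$ ring with a free pd-polynomial algebra (the log analogue of Lemma~\ref{Lem-Tian-Structure}), and then read off the cosimplicial maps. The one point you gloss over is \emph{why} that log analogue holds: it is not merely that $T_{j,i}/T_{j,0}$ is a unit, but that $\delta(T_{j,i}/T_{j,0})=0$ (since $\delta(T_{j,i})=\delta(T_{j,0})=0$ and $\varphi$ is multiplicative), which is exactly what lets the prismatic envelope modulo $E$ collapse to the pd-envelope via the argument of \cite[Lem.~2.11, Prop.~2.12]{GMWHT}; the paper singles this out as the first line of its proof. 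Separately, your sign bookkeeping in the face-map step slips: from $Y_{s,1}=\frac{1-T_{s,1}/T_{s,0}}{-E}$ one has $T_{s,1}=T_{s,0}(1+EY_{s,1})$, so the literal match with (\ref{Equ-Face-R})--(\ref{Equ-Degeneracy-R}) is obtained with $\beta=-E$ rather than $\beta=E$, consistent with the stratification formula $(1+E\underline Y_1)^{E^{-1}\nabla}$ that appears in the proof of Theorem~\ref{Thm-dRCrystalasXiConnection-RelLog}.
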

 \begin{proof}
   Note that $\delta(\frac{T_{j,i}}{T_{j,0}}) = 0$ for any $1\leq j\leq d$. By the same proof of \cite[Lem. 2.11 and Prop. 2.12]{GMWHT}, we get an isomorphism of cosimplicial rings
   \[R\{\underline Y_1,\dots,\underline Y_{\bullet}\}^{\wedge}_{\pd}\cong \frakS(R)_{\geo,\log}^{\bullet,+}/(E)\]
   where the structure morphisms of the former is given by \eqref{Equ-SimplicialStructure-Rel}.
   This gives an isomorphism
   \[R\{\underline Y_1,\dots,\underline Y_{\bullet}\}^{\wedge}_{\pd}[1/p]\cong \frakS(R)_{\geo,\log,\dR}^{\bullet,+}/(E)\]
   by inverting $p$. The same argument for the proof of \cite[Lem. 3.11(1)]{GMWdR} shows that the morphism
   \[\varprojlim_m((\frakS(R)/E^m\frakS(R))\{\underline Y_1,\dots,\underline Y_n\}^{\wedge}_{\pd}[\frac{1}{p}])\to\frakS(R)_{\geo,\log,\dR}^{n,+}\]
   carrying each $Y_{j,i}$ to $\frac{1-\frac{T_{j,i}}{T_{j,0}}}{-E}$ is well-defined. This is an isomorphism, as can be checked by modulo $E$ and apply derived Nakayama Lemma. The face and degeneracy formulae can be checked readily. 
    \end{proof}

\subsection{From crystals to connections} 

\begin{construction}
  Since $u$ is a unit in $\frakS_{\dR,m}^+$, we see the log structure on $\frakS(R)_{\dR,m}^+$ induced by $M_{\frakS(R)}\to \frakS(R)\to\frakS(R)_{\dR,m}^+$ is trivial. So the module $\widehat \Omega^1_{\frakS(R)_{\dR,m}^+/\frakS_{\dR,m}^+}$ of continuous differentials coincides with its logarithmic analogue and admits an isomorphism
 \[\widehat \Omega^1_{\frakS(R)_{\dR,m}^+/\frakS_{\dR,m}^+}\cong \bigoplus_{i=1}^d\frakS(R)_{\dR,m}^+\dlog T_i.\]
 Using this, one can   define the category of topologically nilpotent  $E$-connections over $\frakS(R)_{\dR,m}^+$ exactly as we did in Definition \ref{Dfn-BetaConnection};     denote the category by ${\rm MIC}_{E}^{\nil}(\frakS(R)_{\dR,m}^+)$.
 \end{construction}
 
The following main theorem of this section is the semi-stable analogue of Theorem \ref{Thm-dRCrystalasXiConnection-Rel}.  
 
 \begin{thm}\label{Thm-dRCrystalasXiConnection-RelLog}
   Let $R$ be a small semi-stable  $\calO_K$-algebra.   
   \begin{enumerate}
       \item There are equivalences of categories:
   \[\Vect((R/(\frakS,(E)))_{\Prism,\log},\prism^+_{\dR, m})\xrightarrow{\simeq}
 {\rm Strat}(\frakS(R)_{\geo,\log,\dR,m}^{\bullet,+})   \xrightarrow{\simeq}
   {\rm MIC}_{E}^{\nil}(\frakS(R)_{\dR,m}^+).\]

       \item Let $\bM$ be a $\Prism_{\dR}^+$-crystal on $(R/(\frakS,(E)))_{\Prism,\log}$ and $(M,\nabla_M)$ be the associated $E$-connection over $\frakS(R)_{\dR,m}^+$. Then there exists a quasi-isomorphism
   \[\rD\rR(M,\nabla_M)\simeq \rR\Gamma((R/(\frakS(R),(E)))_{\Prism,\log},\bM)\]
   which is functorial in $\bM$.
   \end{enumerate}
 \end{thm}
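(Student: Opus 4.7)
The plan is to mirror the proof of Theorem \ref{Thm-dRCrystalasXiConnection-Rel} step by step, exploiting the fact that the cosimplicial structure of the relative log-prismatic rings recorded in Proposition \ref{Prop-Structure-Rel-Log} is formally identical to that of the smooth case (Proposition \ref{Prop-Structure-Rel}), with $\beta$ replaced by $E$. For the first equivalence in (1), I would invoke Lemma \ref{lem:log covering for BK} to see that $(\frakS(R),(E),M_{\frakS(R)})$ covers the final object of $\Sh((R/(\frakS,(E)))_{\Prism,\log})$, so that evaluation on the \v{C}ech nerve yields an equivalence between $\prism^+_{\dR,m}$-crystals and stratifications via the standard argument underlying \cite[Prop. 2.7]{BS23}.

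For the second equivalence, since Proposition \ref{Prop-Structure-Rel-Log} provides precisely the same face and degeneracy formulae as the smooth case (now with $\beta=E$), the analysis in Construction \ref{constr: Analysis of rel pris strat} and the translation from stratifications to connections via Lemma \ref{Lem-Technique-Rel} transfer verbatim; the resulting datum is an $E$-connection, and integrability together with topological nilpotence characterize the image. The one point requiring a separate check is that the connection is well defined with respect to the logarithmic differentials $\dlog T_i$: this uses that $u$ is invertible in $\frakS_{\dR,m}^+$, so the log structure on $\frakS(R)_{\dR,m}^+$ is trivial and the module of $(p,E)$-continuous differentials coincides with its logarithmic analogue, identified with $\bigoplus_{i=1}^d \frakS(R)_{\dR,m}^+\,\dlog T_i$.

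For the cohomology comparison in (2), I would follow the strategy of \S\ref{subsec: rel pris coho compa}. First, prove the log analogues of Lemmas \ref{Lem-CechAlex-Rel} and \ref{Lem-CechAlex-Rel-I} to obtain a functorial quasi-isomorphism $\rR\Gamma((R/(\frakS,(E)))_{\Prism,\log},\bM) \simeq \bM(\frakS(R)^{\bullet}_{\geo,\log})$; the key higher-cohomology vanishing is again proved by derived Nakayama reduction to $m=1$, where it follows from the explicit pd-structure in Proposition \ref{Prop-Structure-Rel-Log} making any covering $B\to C^\bullet$ into a homotopy equivalence after modulo $E$. Then I would build the bicosimplicial de Rham complex as in Construction \ref{Construction-Bicosimplicial-Rel} and construct the natural maps analogous to \eqref{Equ-CompareCoho-Rel-I} and \eqref{Equ-CompareCoho-Rel-II}; the compatibility of $\rd_M$ with all face and degeneracy morphisms runs as in Lemma \ref{Lem-PreserveCosimplicial-Rel} with $\beta=E$ throughout, and the only computation that needs the explicit form of $p_0$ (namely $p_0(T_i) = (1-EY_{i,1})T_i$) works identically because the formulae in \eqref{Equ-Face-R} are reused verbatim.

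\textbf{The main obstacle} is the $m=1$ base case of the final quasi-isomorphism. Unlike the smooth setting, where one can directly invoke \cite[Thm. 5.14]{Tia23}, no reference is available in the literature for Hodge--Tate crystals on the log-prismatic site of a semi-stable formal scheme. However, Proposition \ref{Prop-Structure-Rel-Log} shows that modulo $E$ the cosimplicial algebra $\frakS(R)_{\geo,\log,\dR,1}^{\bullet,+}$ is the $p$-completed free pd-polynomial algebra over $R_K$ on the variables $\underline Y_1,\dots,\underline Y_\bullet$, matching exactly the structure used by Tian. I would therefore transport Tian's Poincar\'e-lemma argument verbatim, with $\beta$ replaced by $E$, to conclude that both \eqref{Equ-CompareCoho-Rel-I} and \eqref{Equ-CompareCoho-Rel-II} are quasi-isomorphisms when $m=1$, and then derived Nakayama finishes the general case.
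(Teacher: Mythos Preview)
Your proposal takes essentially the same approach as the paper, and the outline is correct. One technical point deserves correction: in proving the log analogue of Lemma~\ref{Lem-CechAlex-Rel-I}, you attribute the homotopy equivalence $B\to C^\bullet$ to the pd-structure of Proposition~\ref{Prop-Structure-Rel-Log}, but that proposition only concerns the specific \v{C}ech nerve of the Breuil--Kisin log-prism. For an \emph{arbitrary} log-prismatic covering $(B,IB,N)\to(C,IC,N')$, the issue is that the \v{C}ech nerve computed in $(R/(\frakS,(E)))_{\Prism,\log}$ could a priori differ from the one in $(R/(\frakS,(E)))_{\Prism}$; what you need is that they agree on underlying cosimplicial prisms, so that the usual flat-descent fact ``$B\to C^\bullet$ is a homotopy equivalence'' applies. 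The paper supplies this via Corollary~\ref{cor:rigidity of log-structrue} (a consequence of Lemma~\ref{lem:rigidity of log-structrue}). With that ingredient, the rest of your argument goes through as written, and your handling of the $m=1$ base case---noting that modulo $E$ the cosimplicial algebra reduces to the free pd-polynomial algebra so that Tian's argument applies verbatim---is exactly what the paper does.
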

  \begin{proof}
   The idea is the same as in Theorem \ref{Thm-dRCrystalasXiConnection-Rel}, thus we shall be brief.
   
Item (1).  
 The first equivalence follows from \cite[Prop. 2.7]{BS23} together with Lemma \ref{lem:log covering for BK}. 
  We now prove 
 \[{\rm Strat}(\frakS(R)_{\geo,\log,\dR,m}^{\bullet,+})\simeq {\rm MIC}_{E}^{\nil}(\frakS(R)_{\dR,m}^+).\]
Let $(M,\varepsilon)$ be a stratification with respect to $\frakS(R)_{\geo,\log,\dR,m}^{\bullet,+}$ such that for any $x\in M$, we have 
 \[\varepsilon(x) = \sum_{\underline n\in\bN^d}\nabla_{\underline n}(x)\underline Y_1^{[\underline n]}\]
 as (\ref{Equ-Strat-Rel}).
 Then it is easy to see that $\varepsilon$ satisfies the cocycle condition if and only if $\nabla_M^{\underline 0} = \id_M$ and 
 \eqref{Equ-StratCondition-Rel} holds true for any $\underline n\in\bN^d$. Applying Lemma \ref{Lem-Technique-Rel}, we conclude that $(M,\varepsilon)$ satisfies the cocycle condition if and only if for any $\underline n = (n_1,\dots,n_d)\in \bN^d$,  
 \[\nabla_{\underline n} = \prod_{i=1}^d\prod_{j=1}^{n_i-1}(\nabla_i-E j),\]
where $\nabla_i: = \nabla_{\underline 1_i}$ is nilpotent for each $i$. 
Similar to Construction \ref{Construction-Intrinsic-Rel}, we see that 
 \[\nabla_M: = \sum_{i=1}^d\nabla_i\otimes\dlog T_i\cdot E^{-1}\]
 is a topologically nilpotent $E$-connection over $\frakS(R)_{\dR,m}^+$. Therefore, we deduce that desired equivalence above and complete the proof of Item (1). Note that the stratification $(M,\varepsilon)$ corresponding to the topologically nilpotent $E$-connection $(M,\nabla_M)$ (equivalently, a $\Prism_{\dR}^+$-crystal $\bM$), is determined such that for any $x\in M$,
 \begin{equation}\label{Equ-Stratification-Rel-Log}
     \varepsilon(x) = (1+E\underline Y_1)^{E^{-1}\nabla_M}(x): = \sum_{\underline n\in\bN^d}\prod_{i=1}^d\prod_{j=1}^{n_i-1}(\nabla_i-jE)(x)\underline Y_1^{[\underline n]}.
 \end{equation}  
 
 Consider Item (2). The argument for the proof of Lemma \ref{Lem-CechAlex-Rel-I} together with Corollary \ref{cor:rigidity of log-structrue} implies that for any   $\Prism_{\dR,m}^+$-crystal $\bM$, any $\calB\in(R/(\frakS,(E)))_{\Prism,\log}$ and any $i\geq 1$, $\rH^i(\calB,\bM) = 0$. So it follows from the \v Cech-to-derived spectral sequence that there exists a quasi-isomorphism, which is functorial in $\bM$,
 \[\rR\Gamma((R/(\frakS,(E)))_{\Prism,\log},\bM)\cong \bM(\frakS(R)_{\geo,\log}^{\bullet},(E),M_{\frakS(R)_{\geo,\log}^{\bullet}}).\]
 Proceeding as in Construction \ref{Construction-Bicosimplicial-Rel} for replacing $\widehat \Omega^1_{A(R)_{\dR,m}^+/A_{\dR,m}^+}$ by $\widehat \Omega^1_{\frakS(R)_{\dR,m}^+/\frakS_{\dR,m}^+}$, We get a de Rham complex of cosimplicial $\frakS(R)_{\geo,\log,\dR,m}^{\bullet,+}$-modules:
   \begin{equation}\label{Equ-CosimplicialdRComplex-Rel-Log}
      \bM(\frakS(R)_{\geo,\log}^{\bullet},(E),M_{\frakS(R)_{\geo,\log}^{\bullet}})\xrightarrow{\nabla_M}\bM(\frakS(R)_{\geo,\log}^{\bullet},(E),M_{\frakS(R)_{\geo,\log}^{\bullet}})\otimes_{\frakS(R)_{\geo,\log,\dR,m}^{\bullet,+}}\widehat \Omega^{1}_{\frakS(R)_{\geo,\log,\dR,m}^{\bullet,+}}\xrightarrow{\nabla_M}\cdots,
   \end{equation}
   which is denoted by  $\rD\rR(\bM(\frakS(R)_{\geo,\log}^{\bullet},(E),M_{\frakS(R)_{\geo,\log}^{\bullet}})\otimes_{\frakS(R)_{\geo,\log,\dR,m}^{\bullet,+}}\widehat \Omega^{*}_{\frakS(R)_{\geo,\log,\dR,m}^{\bullet,+}},\nabla_M)$, such that it reduces to $\rD\rR(M,\nabla_M)$, the de Rham complex induced by $(M,\nabla_M)$, and that there exists a natural map
   \[\bM(\frakS(R)_{\geo,\log}^{\bullet},(E),M_{\frakS(R)_{\geo,\log}^{\bullet}})\to\rD\rR(\bM(\frakS(R)_{\geo,\log}^{\bullet},(E),M_{\frakS(R)_{\geo,\log}^{\bullet}})\otimes_{\frakS(R)_{\geo,\log,\dR,m}^{\bullet,+}}\widehat \Omega^{*}_{\frakS(R)_{\geo,\log,\dR,m}^{\bullet,+}},\nabla_M).\]
   Then by derived Nakayama Lemma, we only need to show Item (2) for $m = 1$.

   Now, we focus on the case for $m=1$. As $E = 0$ in this setting, the stratification \eqref{Equ-Stratification-Rel-Log} reduces to
   \begin{equation*}
     \varepsilon(x) = \exp(\sum_{i=1}^dY_{i,1}\nabla_i)(x): = \sum_{\underline n\in\bN^d}\underline \nabla^{\underline n}(x)\underline Y_1^{[\underline n]},
   \end{equation*} 
   and the connection
   \[\nabla_M = \sum_{i=1}^d\nabla_i\otimes\frac{\dlog T_i}{E}:M\to M\otimes_{R}\widehat \Omega^{1,\log}_{R}\{-1\} = \oplus_{i=1}^dM\cdot\frac{\dlog T_i}{E}\]
   is exactly a Higgs field on $M$. So we are in the setting of \cite{Tia23}, and thus can conclude by the same argument for the proof of \cite[Th. 5.14]{Tia23}.
 \end{proof}

 \begin{rmk}
     Similar to Remark \ref{Construction-Intrinsic-Rel}, one can also give an intrinsic construction of connections in the semi-stable case by showing that Lemma \ref{Cor-pdStructure-Rel} also holds true in this case. We omit the details here because everything is the same as the smooth case and we will not use this intrinsic construction.
 \end{rmk}

 Note that in Convention \ref{Convention-Rel-Log}, if we put $r = 0$, then $R$ is small smooth. In other words, we may regard a small smooth $\calO_K$-algebra $R$ as a small semi-stable $\calO_K$-algebra such that the log structure $M_R$ is induced by the composition $\bN\xrightarrow{1\mapsto \pi}\calO_K\to R$. Comparing Proposition \ref{Prop-Structure-Rel} with Proposition \ref{Prop-Structure-Rel-Log}, we get an obvious isomorphism of cosimplicial rings 
 \[A(R)_{\dR,m}^{\bullet,+}\cong \frakS(R)_{\geo,\log,\dR}^{\bullet,+}\]
 for $A = \frakS$ by identifying $\underline T$ and $\underline Y_i$'s. Then the following corollary is obvious:

 \begin{cor}\label{Prop-UsualVSLog}
  Let $R$ be a small smooth $\calO_K$-algebra. 
  We have a commutative diagram of equivalences of categories where the horizontal functor is induced by the   forgetful functor 
   $(R/(\frakS,(E)))_{\Prism,\log}\to (R/(\frakS,(E)))_{\Prism}$ sending each log prism $(B,EB,M_B)$ to $(B,IB)$ 
  \[
  \begin{tikzcd}
{ \Vect((R/(\frakS,(E)))_{\Prism},\prism^+_{\dR, m})} \arrow[rr, "\simeq"] \arrow[rd, "\simeq"] &                                      & {\Vect((R/(\frakS,(E)))_{\Prism,\log},\prism^+_{\dR, m})} \arrow[ld, "\simeq"] \\
                                                                                                & {\mic_E^{\nil}(\frakS(R)_{\dR,m}^+)}. &                                                                               
\end{tikzcd}
\]
 For corresponding objects
 \[\begin{tikzcd}
\bm \arrow[rd,mapsto] \arrow[rr,mapsto] &                 & \bm_\log \arrow[ld,mapsto] \\
                          & {(M, \nabla_M)} &                    
\end{tikzcd}
\]
   we have 
   \[\rR\Gamma((R/(\frakS,(E)))_{\Prism},\bM)\simeq \rR\Gamma((R/(\frakS,(E)))_{\Prism,\log},\bM_\log)\simeq \rD\rR(M,\nabla_M).\]
 \end{cor}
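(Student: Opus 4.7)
My plan is to reduce the corollary to the cosimplicial ring identification $A(R)^{\bullet,+}_{\dR,m} \cong \frakS(R)^{\bullet,+}_{\geo,\log,\dR,m}$ already flagged in the paragraph preceding the statement, and then to transport everything through the equivalences of Theorems \ref{Thm-dRCrystalasXiConnection-Rel} and \ref{Thm-dRCrystalasXiConnection-RelLog}. First I would check that when $R$ is small smooth viewed as semi-stable with $r=0$, the forgetful functor from log prisms to prisms sends the log cover $(\frakS(R),(E),M_{\frakS(R)})$ of Construction \ref{constr: cover of final obj rel log pris} to the prism cover $(A(R),IA(R))$ of Construction \ref{constr: cover rel pris} (with $A=\frakS$). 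Indeed, for $r=0$ the defining relation $T_0\cdots T_r - u = T_0 - u$ forces $T_0=u$, collapsing $\frakS^{\Box}$ to $\frakS\langle T_1^{\pm 1},\ldots,T_d^{\pm 1}\rangle$; the \'etale uniqueness of lifts (\cite[Lem.~2.18]{BS22}) then yields $\frakS(R)=A(R)$ as $\delta$-rings, and the single log generator $e_0\mapsto T_0=u$ becomes inert on de Rham level because $u$ is a unit in $\frakS_{\dR,m}^+$, hence in every $\frakS(R)_{\geo,\log,\dR,m}^n$.

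Next I would invoke the explicit descriptions of Propositions \ref{Prop-Structure-Rel} and \ref{Prop-Structure-Rel-Log}: they present the two cosimplicial rings by identical formulae, with matching face and degeneracy maps \eqref{Equ-Face-R}--\eqref{Equ-Degeneracy-R}, under the correspondence $T_{i,0}\leftrightarrow T_i$ and $Y_{i,j}\leftrightarrow (T_{i,0}-T_{i,j})/(E\,T_{i,0})$. Pullback of crystals along the forgetful functor therefore corresponds, via Theorems \ref{Thm-dRCrystalasXiConnection-Rel}(1) and \ref{Thm-dRCrystalasXiConnection-RelLog}(1), to the identity functor on ${\rm Strat}$ and hence on $\mic_E^{\nil}(\frakS(R)_{\dR,m}^+)$. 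This gives the commutative triangle and simultaneously shows that the horizontal forgetful functor is an equivalence. The cohomology assertion is then immediate: both $\rR\Gamma((R/(\frakS,(E)))_{\Prism},\bm)$ and $\rR\Gamma((R/(\frakS,(E)))_{\Prism,\log},\bm_{\log})$ are computed by the \emph{same} \v Cech-Alexander complex of $A(R)^{\bullet,+}_{\dR,m}=\frakS(R)^{\bullet,+}_{\geo,\log,\dR,m}$, and each is identified with $\rD\rR(M,\nabla_M)$ by the cohomology halves of the two theorems.

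The only delicate point is ensuring that the log coordinate $T_0=u$ contributes no new \v Cech variable in the semi-stable cosimplicial nerve (so the collection of $Y_{i,j}$'s matches the smooth count of $d$ rather than $d+1$). This is visible directly from Lemma \ref{lem:explicit description of cech nerve-rel}: only the non-log coordinates $T_1,\ldots,T_d$ appear as \v Cech variables $\tfrac{T_{j,i}}{T_{j,0}}-1$, while the log coordinate $T_0$ is suppressed by the exactification step in the construction of the log cover (which imposes $\tfrac{T_{0,i}}{T_{0,0}}=1$ via the common log structure). Once this point is granted, the remainder of the argument is purely formal cosimplicial bookkeeping, and no new computation is required beyond what was already done in \S\ref{sec: rel pris smooth} and \S\ref{SubSec-Log-Rel}.
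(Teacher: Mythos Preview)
Your proposal is correct and follows the paper's approach, which simply declares the corollary obvious from the cosimplicial ring identification $A(R)_{\dR,m}^{\bullet,+}\cong \frakS(R)_{\geo,\log,\dR,m}^{\bullet,+}$ recorded just before the statement. One minor correction to your final paragraph: when $r=0$, the coordinate $T_0=u$ lies in the base $\frakS$, so $T_{0,i}=u$ in every tensor factor of $\frakS(R)^{\otimes_{\frakS}(n+1)}$ and $\tfrac{T_{0,i}}{T_{0,0}}=1$ holds tautologically---it is this, rather than the exactification step, that suppresses the extra \v Cech variable.
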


 \newpage
  \addtocontents{toc}{\ghblue{Absolute prismatic crystals}}

\section{Arithmetic stratification: an axiomatized review} \label{sec: arith infinite dim}

We now move on to the (local) study of \emph{absolute} $\prism_{\dR}^+$-crystals. In the point case, i.e. $R=\ok$, the $\prism_{\dR}^+$-crystals  are   classified in \cite{GMWdR}, by $a$-small $E$-connections (Def \ref{def: a small arith}). That is, we have
\begin{equation} \label{eq: sec arith inf k case}
\vect(\okprisast, \prism_{\dR,m}^+) \simeq \strat(\gs^{\bullet, +}_{\ast,\dR, m}) \simeq \mic^a(\gsdrm).
\end{equation} 
Let $F/K$ be a finite extension, the goal of this section is to consider objects  possibly of \emph{infinite} rank. The main Prop \ref{prop:point abs crystal inf rank} proves the equivalences:
 \begin{equation} \label{eq: sec arith inf}
  \strat^\wedge(\gs^{\bullet, +}_{\ast,\dR, m}\otimes_K F) \simeq \mic^{\wedge, a}(\gsdrm \otimes_K F)
\end{equation} 
If we remove all the superscripts $\wedge$, then the equivalences of \eqref{eq: sec arith inf} follow from \eqref{eq: sec arith inf k case}, since the relevant objects in \eqref{eq: sec arith inf} are just those in \eqref{eq: sec arith inf k case}  with an ``$F$-structure". It turns out the cases with $\wedge$ follow from exactly the same proof (once   topological issues are taken care of).
We need the infinite rank case since we need to deal with a \emph{specific} example, cf.~$S^1$ in Lem \ref{coho of Sone}. Note that we do not discuss \emph{infinite rank crystals} here: the descent result of Drinfeld--Mathew \cite[Theorem 5.8]{mathew2022faithfully} is only concerned with coherent objects, thus it is   not clear if infinite rank crystals can be described via stratifications.

The results of this section are not needed until \S \ref{sec: analytic sen kummer}; the readers are advised to skip it for now and come back later. 
Nonetheless, we put this section here as the discussions come from the point case \cite{GMWdR}.



Throughout this section, we always assume $m<\infty$, and thus all relevant rings are Banach spaces (instead of Fr\'echet one).
 
    \begin{notation} \label{nota: a sec 6}
   For $*\in\{\emptyset,\log\}$, let \begin{equation*}
a=
\begin{cases}
  -E'(\pi), &  \text{if } \ast=\emptyset \\
 -\pi E'(\pi), &  \text{if } \ast=\log
\end{cases}
\end{equation*}  
\end{notation}
 
 \begin{defn} \label{def: a small arith}
 Let  $F/K$ be a finite extension. 
Let $\mic^{\wedge, a}(F[[E]]/E^m)$ be the category where an object is an ON (orthonormal) Banach $F[[E]]/E^m$-module $M$, equipped with  a continuous additive map $\phi: M\to M$  satisfying Leibniz rule with respect to $E\frac{d}{dE}: F[[E]]/E^m \to F[[E]]/E^m$, such that $\phi$ is $a$-small in the sense that
 \begin{equation}
 \label{eq axiom phi nilp} \lim_{n\to+\infty}a^n\prod_{i=0}^{n-1}(\phi-i) = 0
 \end{equation} 
 Note above condition is satisfied for $M$ if and only if it is so for the reduction
$\phi: M/E \to M/E.$
 \end{defn}

 \begin{defn}     
We define a notion of infinite rank (ON) stratifications, compare Def \ref{def: stratifications} for the usual version. 
Recall we always equip  $\prisdrm(A, I, \ast)$ with a topology making it a Banach $K$-algebra, cf. Notation \ref{nota: de Rham sheaf}.
  Let $(\gs^\bullet_\ast, (E), \ast)$ be the \v Cech nerve of $(\gs, (E), \ast) \in \okprisast$. We then obtain a co-simplicial  (topological) ring
 $$\frakS_{*,\dR,m}^{\bullet,+} =\prism^+_{\dR, m}((\gs^\bullet_\ast, (E), \ast)).$$
For simplicity, denote $S^n= \gs_{\ast,\dR,m}^{n,+}$.
Let $\strat^\wedge(\gs^{\bullet, +}_{\ast,\dR, m}\otimes_K F)$ be the category where an object is an ON Banach $F[[E]]/E^m=S^0\otimes F$-module $M$  and an $S^1$-linear isomorphism
 \[\varepsilon: M\hatotimes_{S^0,p_0}S^1 \simeq  M\hatotimes_{S^0,p_1}S^1,\]
such that the cocycle condition $p_2^*(\varepsilon)\circ p_0^*(\varepsilon) = p_1^*(\varepsilon)$ is verified, and $\sigma_0^*(\varepsilon) = \id_M$.
 \end{defn}
  
 


  \begin{notation}
  Recall the structure of $\gs_{\ast,\dr, m}^{\bullet,+}$ as in \cite{GMWdR}.  
 Use Notation \ref{nota: a sec 6}. 
Let \[X_i=\frac{E(u_i)-E(u_0)}{aE(u_0)}.\]  
For each $n \geq 0$, denote $S^n= \gs_{\ast,\dR,m}^{n,+}$, then 
\[S^n:= \left(\o_K[[E]]/E^m\{X_1, \cdots, X_n\}_{\mathrm{pd}}^\wedge\right)[1/p]   \]
The structure maps in the cosimplicial ring are   induced by
\begin{equation} \label{newEqu-Face}
        \begin{split}
           & p_i(X_j) = \left\{\begin{array}{rcl}
                (X_{j+1}-X_1)(1+aX_1)^{-1}, & i=0 \\
                X_j, & j<i\\
                X_{j+1}, & 0<i\leq j;
            \end{array}\right.\\
            & p_i(E(u_0))=\left\{\begin{array}{rcl}
               E(u_1) = E(u_0)(1+aX_1),  & i=0\\
               E(u_0),  & i>0;
            \end{array}\right.
        \end{split}
    \end{equation}
    \begin{equation} \label{newEqu-StratificationD}
        \begin{split}
           & \sigma_i(X_j)=\left\{\begin{array}{rcl}
               0, &  i=0,j=1\\
               X_j, & j\leq i\\
               X_{j-1}, & j>i;
           \end{array}\right.\\
           & \sigma_i( E(u_{0})) = E(u_{ 0 }), \qquad \forall i.
        \end{split}
    \end{equation}
 \end{notation}

 \begin{construction}\label{cons: ari strat}
 Let  $(M, \phi) \in \mic^{\wedge, a}(F[[E]]/E^m)$, the formula 
  \[ \varepsilon(x)=(1+aX_1)^\phi(x)=  \sum_{n \geq 0} (a^n\prod_{i=0}^{n-1}(\phi -i)(x))\cdot  X_1^{[n]}\]
  converges because of $a$-smallness of $\phi$, and  induces an $S^1$-linear isomorphism
   \[ \varepsilon: M \hatotimes_{p_0, S^0} S^1 \simeq M \hatotimes_{p_1, S^0} S^1.\]
   We claim that $\varepsilon$ satisfies the cocycle condition. 
 \end{construction}
 \begin{proof}
 To verify the cocycle condition, we use similar ``formal" argument similar to Lem \ref{Lem-Technique-Rel}, cf. also Rem. \ref{rem why formal OK}. Indeed, note we have the formal identity
\[\varepsilon= (1+aX_1)^{\phi}=(\frac{E_1}{E_0})^\phi \]
One can easily compute
\[ p_1^\ast(\varepsilon) =(\frac{E_2}{E_0})^\phi \]
\[ p_2^\ast(\varepsilon) =(\frac{E_1}{E_0})^\phi \]
\[ p_0^\ast(\varepsilon) =(\frac{E_2}{E_1})^\phi \]
Thus we have $  p_2^*(\varepsilon)\circ p_0^*(\varepsilon) =  p_1^\ast(\varepsilon)$ formally by Lem \ref{lem expansion identity}.
 \end{proof}
 
 \begin{prop} \label{prop:point abs crystal inf rank}
There are equivalences of categories
\begin{equation} \label{eq: sec arith inf new}
   \strat^\wedge(\gs^{\bullet, +}_{\ast,\dR, m}\otimes_K F) \simeq \mic^{\wedge, a}(\gsdrm \otimes_K F).
\end{equation} 
 \end{prop}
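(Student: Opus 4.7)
The plan is to construct functors in both directions and verify they are quasi-inverse, essentially repeating the argument of \cite{GMWdR} for \eqref{eq: sec arith inf k case}, but keeping track of topology throughout. One direction is Construction \ref{cons: ari strat}: given $(M, \phi) \in \mic^{\wedge, a}(\gsdrm \otimes_K F)$, the formula $\varepsilon(x) = (1+aX_1)^\phi(x) = \sum_{n\geq 0} a^n \prod_{i=0}^{n-1}(\phi-i)(x) \cdot X_1^{[n]}$ converges in $M \hatotimes_{S^0,p_1} S^1$ precisely because of the $a$-smallness condition \eqref{eq axiom phi nilp}, and defines an $S^1$-linear isomorphism satisfying the cocycle condition (verified formally via Lemma \ref{lem expansion identity}).

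For the reverse direction, given $(M,\varepsilon) \in \strat^\wedge(\gs^{\bullet,+}_{\ast,\dR,m}\otimes_K F)$, I would use that $\{X_1^{[n]}\}_{n \geq 0}$ forms a topological basis of $S^1$ over $S^0$ (via the appropriate structure map), so that $\varepsilon$ restricted to $M \subset M\hatotimes_{S^0,p_0} S^1$ admits a unique expansion $\varepsilon(x) = \sum_{n\geq 0} \phi_n(x) \cdot X_1^{[n]}$ where each $\phi_n: M \to M$ is a continuous $K$-linear endomorphism, with $\phi_n(x) \to 0$ in $M$ as $n \to \infty$ (a convergence statement automatic from the completed tensor product). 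Setting $\phi := \phi_1$, unwinding the cocycle condition $p_2^*(\varepsilon) \circ p_0^*(\varepsilon) = p_1^*(\varepsilon)$ using the face formulae \eqref{newEqu-Face} and comparing coefficients of $X_1^{[k]} X_2^{[n]}$, combined with $\sigma_0^*(\varepsilon) = \id_M$, forces $\phi_0 = \id_M$ and the recursion $\phi_{n+1} = a(\phi - n)\phi_n$. This yields $\phi_n = a^n\prod_{i=0}^{n-1}(\phi - i)$, so the convergence $\phi_n \to 0$ translates precisely into $a$-smallness. The $E\frac{d}{dE}$-Leibniz rule on $\phi$ drops out of the semilinearity relation between the two structure maps $p_0, p_1$ acting on $E \in S^0$ (namely $p_1(E) = (1+aX_1)p_0(E)$).

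The two functors are manifestly inverse to one another at the level of underlying modules and data, so only the self-consistency of each side needs to be checked, which the above analysis supplies.

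The main obstacle — indeed, the only thing beyond a mechanical translation of the argument in \cite{GMWdR} — is ensuring that the completed tensor products behave well in the infinite-rank ON-Banach setting: specifically, that $M \hatotimes_{S^0, p_i} S^1$ decomposes as a completed direct sum $\bigoplus_{n\geq 0}^\wedge M \cdot X_1^{[n]}$ with unique coefficient extraction, and that all formal identities of Lemma \ref{lem expansion identity} (used to verify cocycle) hold in the completed setting. This is straightforward because $S^1/S^0$ is $p$-adically an ON Banach module with pd-basis $\{X_1^{[n]}\}$ and $M$ is assumed ON over $S^0 \otimes_K F$, so the completed tensor product is again ON with basis $\{e_\alpha \otimes X_1^{[n]}\}$; no new analytic input is required.
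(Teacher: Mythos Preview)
Your approach matches the paper's—it simply refers to \cite[Prop.~4.5]{GMWdR} and observes that the argument goes through verbatim in the ON-Banach setting since $a$-smallness guarantees all the relevant sums converge. Two small slips to fix: you want $\phi := a^{-1}\phi_1$ (not $\phi := \phi_1$) for the recursion $\phi_{n+1}=a(\phi-n)\phi_n$ to be self-consistent, and the face relation from \eqref{newEqu-Face} reads $p_0(E)=(1+aX_1)p_1(E)$, not the reverse.
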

 \begin{proof}  
For finite rank case, the equivalence of categories is  proved in \cite[Prop 4.5]{GMWdR}. Note in \emph{loc. cit.}, $F$ is $K$; but this is indeed irrelevant for results here. One also readily examines that all these results still hold true in the Banach case: so long $\phi$ is $a$-small, all relevant  infinite summations in \emph{loc. cit.} still converge. 
 \end{proof}

 The reason we introduce infinite rank connections in Def \ref{def: a small arith} is so that we can treat the following \emph{special example}.

  \begin{lemma} \label{coho of Sone}
  Consider the ring 
\[ S^1=   F[[E]]/E^m \{ X_1\}_\pd . \]
Consider a differential operator on $S^1$,
\[ \phi:= \frac{d}{d(\log E(u_0)) }:= E(u_0)\cdot \frac{d}{E'(u_0)du_0}.\]
Then it induces a map
\[\phi: S^1 \to S^1 \] 
which defines an object in $\mic^{\wedge, a}(F[[E]]/E^m)$.
In addition, we have a short exact sequence:
\begin{equation} \label{eq short exact Sone}
0 \to S^0 \xrightarrow{p_1} S^1 \xrightarrow{\phi} S^1 \to 0
\end{equation}  
 \end{lemma}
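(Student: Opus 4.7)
The statement packages three assertions: $\phi$ is a well-defined continuous endomorphism of $S^1$; it satisfies the Leibniz rule with respect to $E\tfrac{d}{dE}$ on $F[[E]]/E^m$ and is $a$-small (so the pair lies in $\mic^{\wedge,a}(F[[E]]/E^m)$); and the displayed sequence is short exact. My strategy will be to make $\phi$ explicit on the pd-generator $X_1$ and reduce everything to elementary manipulations plus $p$-adic valuation estimates of ``Pochhammer/factorial type''.

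I would first pin down $\phi$. The description $\phi=E(u_0)\tfrac{d}{E'(u_0)du_0}$ naturally forces $\phi(E(u_0))=E(u_0)$ and $\phi(E(u_1))=0$ (treating $u_1$ as independent of $u_0$); applying Leibniz to $E(u_1)=E(u_0)(1+aX_1)$ then yields $\phi(X_1)=-\tfrac{1}{a}(1+aX_1)$, and the pd-derivation rule gives $\phi(X_1^{[n]})=-nX_1^{[n]}-\tfrac{1}{a}X_1^{[n-1]}$. Extending by continuity defines $\phi$ on $S^1$, and Leibniz with respect to $E\tfrac{d}{dE}$ via the natural $F[[E]]/E^m$-module structure is then immediate from $\phi(E)=E$.

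For $a$-smallness, one works modulo $E$: on $\bar S^1=F\{X_1\}_\pd$ the operator $\phi$ is upper bi-diagonal in the ON basis $\{X_1^{[n]}\}$, with diagonal $-n$ and super-diagonal $-1/a$. The entries of $\prod_{i=0}^{n-1}(\phi-i)$ are then sums over length-$n$ paths of products containing ``diagonal factors'' (consecutive integers near the column index) and some ``super-diagonal factors'' $-1/a$. Combining the divisibility $n!\mid k(k+1)\cdots(k+n-1)$ with $v_p(a)\ge 0$ (which holds since $a\in\ok$), each entry of $a^n\prod_{i=0}^{n-1}(\phi-i)$ has $p$-adic valuation $\ge v_p(n!)\to\infty$, giving convergence to zero in operator norm.

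For the exact sequence, I would identify $\ker\phi$ directly by solving $\phi(\sum c_{k,n}E^kX_1^{[n]})=0$; this produces the recursion $c_{k,n+1}=a(k-n)c_{k,n}$ with solutions $c_{k,n}=a^n\binom{k}{n}n!\,c_{k,0}$ for $n\le k$ and zero otherwise, which resum to $\sum_k c_{k,0}E^k(1+aX_1)^k=\sum_k c_{k,0}E(u_1)^k$ --- the image of $S^0$ under the face map sending $E$ to $E(u_1)$. Surjectivity will hinge on the identity $\phi(\log(1+aX_1))=-1$, which I would verify by term-by-term differentiation of the convergent series $\log(1+aX_1)=\sum_{k\ge 1}(-1)^{k+1}a^k(k-1)!\,X_1^{[k]}$ (convergence uses $v_p(a)\ge 0$ together with $v_p((k-1)!)\to\infty$). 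This immediately provides preimages for the kernel generators via $\phi(-E^k(1+aX_1)^k\log(1+aX_1))=E(u_1)^k$, and for a general $y=\sum y_{k,n}E^kX_1^{[n]}$ one solves the explicit recursion $x_{k,n+1}=a(k-n)x_{k,n}-ay_{k,n}$ with free initial value at $n=k$. The main obstacle is verifying that these solution sequences genuinely lie in $S^1$, i.e.\ that $|x_{k,n}|_p\to 0$ uniformly in $k$; this relies on precisely the same divisibility/valuation estimate that drives $a$-smallness, now applied simultaneously in both indices.
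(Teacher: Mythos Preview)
Your computation of $\phi$ and your treatment of the exact sequence are sound; your direct two-index recursion is more explicit than the paper's bare ``modulo $E$'' d\'evissage, and you correctly identify $\ker\phi$ as the image of the face map $E\mapsto E(u_1)$. (In the paper's conventions \eqref{newEqu-Face} that map is $p_0$, not $p_1$; note $\phi(p_1(E))=\phi(E)=E\neq 0$, so the displayed sequence taken literally with $p_1$ is not even a complex --- your recursion quietly resolves this labeling slip.)

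Your $a$-smallness argument, however, has a genuine gap. A path with $j$ super-diagonal steps carries only $n-j$ diagonal factors, and those are the consecutive integers $k,k+1,\dots,k+n-j-1$; so the divisibility you invoke yields $(n-j)!$, not $n!$. Summing over the $\binom{n}{j}$ such paths (they all give the same contribution), the $(k{-}j,k)$ entry of $a^n\prod_{i=0}^{n-1}(\phi-i)$ is exactly $(-1)^n a^{n-j}\binom{n}{j}\,(k+n-j-1)!/(k-1)!$, whose valuation is $\ge v_p(n!/j!)$ rather than $\ge v_p(n!)$. For $j=n$ and any $k\ge n$ this entry equals $(-1)^n$, so the operator norm is $\ge 1$ for every $n$ and your claimed operator-norm convergence fails outright. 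What does hold --- and what the paper verifies by ``checking on $X^{[n]}$'' --- is \emph{pointwise} convergence: for a fixed column index $k$ one has $j\le k$, so every entry in that column has valuation $\ge v_p(n!)-v_p(k!)\to\infty$. Replace the uniform bound by this column-wise one and drop the operator-norm claim; this weaker convergence is exactly what the stratification formula in Construction~\ref{cons: ari strat} requires.
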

\begin{proof}
The operator satisfies $\phi(E)=E$ and $\phi(X_1)=- X_1-\frac{1}{a}$, and thus induces $\phi: S^1 \to S^1$. To check condition  \eqref{eq axiom phi nilp}, it suffices to do so after modulo $E$.
As $X^{[n]}$'s   form an ON basis for $S^1/E$,  it suffices to check \eqref{eq axiom phi nilp} on  $X^{[n]}$: this can be readily verified.
To prove the sequence \eqref{eq short exact Sone} is short exact, again one can modulo $E$; it then reduces to prove the following is short exact:
\[ 0 \to F \to F\{X_1\}_\pd \xrightarrow{\phi}
 F\{X_1\}_\pd  \to 0\] 
This is the computation already done in   \cite[Lem 2.6(3)]{AHLB1}, which we shall quickly repeat here for convenience; further relation with \cite{AHLB1} will be discussed in \S \ref{sec: analytic sen kummer}.
  Indeed, given $f=\sum_{n\geq 0} a_nX^{[n]} \in  F\{X\}_\pd$ (where $X=X_1$ for brevity), we have
\[ \phi(f) = \sum_{n\geq 0} (-na_n-\frac{a_{n+1}}{a}) X^{[n]}  \]
It is easy to see $\phi(f)=0$ if only and only $f\in F$.
Given $b=\sum_{n\geq 0} b_nX^{[n]} \in  F\{X\}_\pd$, in order for $\phi(f)=b$, it can be inductively solved that $a_{n+1}=a(-b_n-na_n)$ (which converges to zero).
\end{proof}

\begin{lemma}\label{lem a small solution}
Let $(M, \phi) \in  \mic^{\wedge, a}(F[[E]]/E^m)$;  let $$\varepsilon: M\hatotimes_{p_0, S^0} S^1 \simeq M\hatotimes_{p_1, S^0} S^1$$
be the corresponding stratification from Prop \ref{prop:point abs crystal inf rank}.
Also consider $(S^1, \phi_{S^1})$ from above.
\begin{enumerate}
\item  
We have the following commutative diagram
\[
\begin{tikzcd}
{M\hatotimes_{p_0, S^0} S^1} \arrow[d, "\phi\otimes 1+1\otimes \phi"] \arrow[rr, "\varepsilon"] &  & {M\hatotimes_{p_1, S^0} S^1} \arrow[d, "-1\otimes \phi"] \\
{M\hatotimes_{p_0, S^0} S^1} \arrow[rr, "\varepsilon"]                                          &  & {M\hatotimes_{p_1, S^0} S^1}.                           
\end{tikzcd}
\]

\item Consider the map 
$ M\hatotimes_{p_0, S^0} S^1 \onto M$ 
induced by the degeneracy map $S^1 \to S^0$ (that is, modulo $X_1$ map). It induces a bijection
\[ (M\hatotimes_{p_0, S^0} S^1 )^{\phi\otimes 1+1\otimes \phi=0} \simeq M.\] 
The inverse map is given by
\begin{equation} \label{eq var inver axiom}
 x\in M \mapsto \varepsilon^{-1}(x)=(1+aX)^{-\phi}(x).
\end{equation} 

\item There is a short exact sequence:
\[ 0 \to M \xrightarrow{\varepsilon^{-1}} M\hatotimes_{p_0, S^0} S^1 \xrightarrow{\phi\otimes 1+1\otimes \phi} M \hatotimes_{p_0, S^0} S^1 \to 0. \]
In other words, the $\phi$-cohomology of  $M\hatotimes_{p_0, S^0} S^1$ is quasi-isomorphic to $M[0]$ via $\varepsilon^{-1}$.
\end{enumerate} 
\end{lemma}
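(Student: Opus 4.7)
The proof will transport everything to the target $M\hatotimes_{p_1,S^0}S^1$ of $\varepsilon$, where the operators admit transparent descriptions as tensor-product derivations, and then reduce the statements to Lemma \ref{coho of Sone}. The linchpin is the formal identity
\[\phi_{S^1}((1+aX_1)^{\phi})=-\phi\cdot(1+aX_1)^{\phi},\]
which follows from $\phi_{S^1}(1+aX_1)=a\phi_{S^1}(X_1)=-(1+aX_1)$ together with the power-rule expansion
\[(1+aX_1)^{\phi}=\sum_{n\geq 0}a^n\prod_{i=0}^{n-1}(\phi-i)\cdot X_1^{[n]}\]
rigorously justified by Lemma \ref{lem expansion identity}.

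From this identity, item (1) follows by direct computation: evaluating $(-1\otimes\phi_{S^1})\circ\varepsilon$ on $x\otimes 1\in M\hatotimes_{p_0,S^0}S^1$ gives $\varepsilon(\phi(x)\otimes 1)$, and $S^1$-linearity of $\varepsilon$ extends the commutativity to all of $M\hatotimes_{p_0,S^0}S^1$. The ``source operator'' $\phi\otimes 1+1\otimes\phi$ is interpreted as the conjugate $\varepsilon^{-1}\circ(-1\otimes\phi_{S^1})\circ\varepsilon$, which agrees on $M\otimes 1$ with $\phi\otimes 1$; through the $p_0$-structure on $S^1$ it corresponds to the well-defined derivation $\phi\otimes 1+1\otimes\widetilde{\phi}$, where $\widetilde{\phi}$ is the lift of $E\,d/dE$ to $S^1$ via $p_0$ characterized by $\widetilde{\phi}(E(u_1))=E(u_1)$ and $\widetilde{\phi}(X_1)=0$.

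For (3), I will tensor the short exact sequence
\[0\to S^0\xrightarrow{p_1}S^1\xrightarrow{\phi_{S^1}}S^1\to 0\]
of Lemma \ref{coho of Sone} with the ON Banach $S^0$-module $M$ via $p_1$ (the structure with respect to which $\phi_{S^1}$ is Leibniz-compatible), obtaining the exact sequence
\[0\to M\to M\hatotimes_{p_1,S^0}S^1\xrightarrow{1\otimes\phi_{S^1}}M\hatotimes_{p_1,S^0}S^1\to 0.\]
Applying $\varepsilon^{-1}$ and invoking item (1) yields (3). The first map becomes $x\mapsto\varepsilon^{-1}(x\otimes 1)=(1+aX_1)^{-\phi}(x)$, using Lemma \ref{lem expansion identity}(2) with $P_1=\phi$, $P_2=-\phi$ to verify $(1+aX_1)^{\phi}\cdot(1+aX_1)^{-\phi}=1$. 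Item (2) then follows immediately: the kernel of the source operator is the image of $\varepsilon^{-1}(-\otimes 1)$, which is inverse to the degeneracy $\sigma_0$ (reduction modulo $X_1$) because $\sigma_0((1+aX_1)^{-\phi}(x))=1^{-\phi}(x)=x$.

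The main obstacle I anticipate is the careful bookkeeping around the two $S^0$-structures on $S^1$ via $p_0$ and $p_1$: since $\phi_{S^1}$ of Lemma \ref{coho of Sone} is Leibniz-compatible with $p_1$ but not $p_0$, the operator ``$\phi\otimes 1+1\otimes\phi$'' on the source $M\hatotimes_{p_0,S^0}S^1$ is not the naive tensor-product derivation and requires the unpacking above. Secondary is the analytic verification in the ON Banach setting: $a$-smallness of $\phi$ guarantees convergence of $(1+aX_1)^{\pm\phi}(x)$ and of the iterative series producing surjectivity of $1\otimes\phi_{S^1}$, while the orthonormality of $M$ over $S^0$ ensures that the tensored short exact sequence remains exact.
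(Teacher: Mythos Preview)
Your proposal is correct and follows essentially the same route as the paper: both establish (1) via the identity $\phi_{S^1}\bigl((1+aX_1)^{\phi}\bigr)=-\phi\cdot(1+aX_1)^{\phi}$ (the paper writes out the coefficient comparison explicitly and records your formal chain-rule argument in a footnote), and then deduce (2) and (3) by tensoring the short exact sequence of Lemma~\ref{coho of Sone} with $M$ on the $p_1$-side and pulling back through $\varepsilon^{-1}$. Your flagging of the $p_0/p_1$ bookkeeping as the main subtlety is apt and matches the level of care in the paper's own argument.
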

\begin{proof} 
(1). Note $\varepsilon$ is $S^1$-linear, so it suffices to prove for $x \in M$ 
\[ -(1\otimes \phi) (1+aX_1)^\phi(x) =(1+aX_1)^\phi(\phi(x)).\]
\footnote{One can also ``formally" see this: note 
$1\otimes \phi$ only derives on $X_1$-part, and note $(-1\otimes \phi)(1+aX_1)=1+aX_1$; thus by chain rule, one sees
 \[ -(1\otimes \phi) (1+aX_1)^{\phi \otimes 1}= (1+aX_1)^{\phi-1} \cdot (\phi \otimes 1) \cdot (1+aX_1)=(1+aX_1)^{\phi} \cdot (\phi\otimes 1).  \]
} That is, we need to check
\[  -\sum_{n \geq 0} \prod_{i=0}^{n-1}(a\phi-ai)(x) X^{[n-1]}(- X_1-\frac{1}{a}) = \sum_{n \geq 0} \prod_{i=0}^{n-1}(a\phi-ai)(\phi(x))X^{[n]}.  \]
Consider the coefficients of $X^{[n]}$, they are
\[  n\prod_{i=0}^{n-1}(a\phi-ai)(x)  +\prod_{i=0}^{n}(a\phi-ai)(x)\cdot \frac{1}{a} =  \prod_{i=0}^{n-1}(a\phi-ai)(\phi(x))\]
whence we conclude. 

Items (2) and (3) easily follow since the right hand side of the commutative diagram induces a short exact sequence
\[ 0 \to M  \xrightarrow{\mathrm{id}\otimes 1} M\hatotimes_{p_1, S^0} S^1  \xrightarrow{-1\otimes \phi }   M\hatotimes_{p_1, S^0} S^1  \to 0\]
which comes from by completed tensoring  \eqref{eq short exact Sone} with $M$.
\end{proof}

  \newpage 
\section{Local absolute $\mathbbl{\Delta}_\dR^+$-crystals} \label{sec: loc abs pris}

Let $R$ be a small smooth (resp. semi-stable) $\ok$-algebra. 
The goal of this section is to classify   absolute $\prism_{\mathrm{dR}}^+$-crystals by certain small enhanced connections: that is, we have an equivalence of categories:
\[\Vect((R)_{\Prism,\ast},\Prism_{\dR,m}^+) \simeq    \MIC_\en^a(X_\et, \o_{X, \gs_\dR^+, m}).\]
Cohomology comparisons will be proved in  \S \ref{sec: coho abs local} (to contain the length of this section).

   For   convenience of comparison with the relative case treated in \S \ref{sec: rel pris smooth}, the writing style in this section is exactly the same as that in \S \ref{sec: rel pris smooth}. That is, we first explicitly study relevant prismatic rings, which are used to explicitly compute stratifications. We then introduce the notion of enhanced connections, which are then related with stratifications hence prismatic crystals.
 
\subsection{Structure of prismatic rings}

\begin{notation} \label{nota: bk prism abs pris}
    Let $\ast \in \{\emptyset, \log \}$. Let $R$ be an $\ok$-algebra.
    \begin{enumerate}
        \item When $\ast=\emptyset$, suppose $R$ is a small smooth $\ok=\gs/E$-algebra. Let 
        $$(\frakS(R),(E)) \in (R)_\prism$$
        be the prism constructed in Construction \ref{constr: cover rel pris} (with $(A, I)=(\gs, (E))$); call it the relative Breuil--Kisin prism.

        \item When $\ast=\log$, suppose $R$ is small semi-stable $\ok$-algebra. Let 
        \[ (\gs(R)_\log, (E), \log) \in (R)_{\pris, \log} \] be the relative log Breuil--Kisin prism in Construction  \ref{constr: cover of final obj rel log pris}.
            \end{enumerate}
            Now we can uniformly denote the above notations as
            \[(\gs(R)_\ast, (E), \ast) \in (R)_{\pris, \ast}. \]
            Note $(\gs(R)_\ast, (E), \ast)$ is a cover of the final object of $\Sh((R)_{\Prism, \ast})$, by \cite[Lem. 2.1]{MW22} resp. Lemma \ref{lem:log covering for BK}.   Denote by $(\frakS(R)_{\ast}^{\bullet},(E),\ast)$ the \v Cech nerve associated to this covering.
\end{notation}

The structure of $(\frakS(R)^{\bullet},(E))$ is already computed in \cite{MW22}: caution here the cosimplicial object is constructed in the \emph{absolute} prismatic site, unlike relative case in \S \ref{sec: rel pris smooth}.
We now compute the log version.

  \begin{lem}\label{lem:explicit description of cech nerve-abs}
    For any $n\geq 0$, we have
    \[\frakS(R)^{n}_{\log} = \frakS(R)^{\otimes(n+1)}[[1-\frac{u_i}{u_0},1-\frac{\underline T_i}{\underline T_0}\mid 1\leq i\leq n]]\{\frac{1-\frac{u_i}{u_0}}{E(u_0)},\frac{1-\frac{\underline T_i}{\underline T_0}}{E(u_0)}\mid 1\leq i\leq n\}^{\wedge}_{\delta},\]
    where $\frakS(R)^{\otimes(n+1)}$ denotes the tensor products of $(n+1)$ copies of $\frakS(R)$ over $\rW(k)$, $\underline T_i$ denotes $T_{1,i},\dots,T_{d,i}$ which is induced by the chart on $R$ while $u_i$ denotes the analogue of $u$ on the $(i+1)$-th factor of $\frakS(R)^{\otimes(n+1)}$.
 The log-structure $M_{\frakS(R)^{\bullet}_{\log}}$ on $\frakS(R)^{n}_{\log}$ is induced by the composite
    \[M_{\frakS(R)}\to\frakS(R)\xrightarrow{q_i}\frakS(R)^{\otimes(n+1)}[[1-\frac{u_i}{u_0},1-\frac{\underline T_i}{\underline T_0}\mid 1\leq i\leq n]]\{\frac{1-\frac{u_i}{u_0}}{E(u_0)},\frac{1-\frac{\underline T_i}{\underline T_0}}{E(u_0)}\mid 1\leq i\leq n\}^{\wedge}_{\delta},\]
    which is independent of $q_i$.
  \end{lem}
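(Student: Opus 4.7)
The plan is to mirror the strategy of Lemma~\ref{lem:explicit description of cech nerve-rel}, with the key new feature that we are on the \emph{absolute} log-prismatic site, so the arithmetic variables $u_i$ on the various tensor factors are no longer identified with a single $u$. Concretely, I first show that the proposed ring, equipped with the log structure pulled back from $M_{\frakS(R)}$ via any of the face maps $q_i$, defines a well-formed log prism in $(R)_{\Prism,\log}$, and then I verify it represents the correct $(n+1)$-fold coproduct of $(\frakS(R),(E),M_{\frakS(R)})$ in $(R)_{\Prism,\log}$. By the characterization of \v Cech nerves, this gives the claim for every $n$; by standard reduction it suffices to treat $n=1$ carefully (the general case being completely analogous but notationally heavier).

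For the construction, start with the $(p,E)$-completed tensor product $\frakS(R)^{\otimes 2}$ over $\rW(k)$ and, exactly as in the proof of Lemma~\ref{lem:log covering for BK}, adjoin the units $(u_1/u_0)^{\pm 1}$ and $(T_{j,1}/T_{j,0})^{\pm 1}$ for $0\leq j\leq r$. By rigidity of log structures (Lemma~\ref{lem:rigidity of log-structrue}), any two maps from $(\frakS(R),(E),M_{\frakS(R)})$ into a log prism $(A,I,M_A)$ satisfy that the two images of each $u$ and each $T_j$ differ by a unit, so these adjunctions are forced. Then the exactification (in the sense of \cite[Construction~2.18]{Kos21}) of the multiplication map of pre-log rings realizes the log coproduct, and the elements
\[
1-\tfrac{u_1}{u_0}, \quad 1-\tfrac{T_{j,1}}{T_{j,0}} \ (1\leq j\leq d)
\]
form a $(p,E)$-completely regular sequence relative to $\frakS(R)$. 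Applying the prismatic envelope construction of \cite[Prop.~3.19]{BS22} to the ideal generated by $E$ and these elements produces a $(p,E)$-completely faithfully flat $\frakS(R)$-algebra, which I endow with the log structure coming from $M_{\frakS(R)}\to \frakS(R)\to B$; Lemma~\ref{lem:rigidity of log-structrue} (applied on both sides) shows this is independent of which factor one pulls back along.

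Next, I verify the universal property: given any $(A,I,M_A)\in (R)_{\Prism,\log}$ together with two maps from $(\frakS(R),(E),M_{\frakS(R)})$, rigidity forces the images of the coordinates to differ by units, producing a unique map out of the ring obtained before prismatic envelope; since the generators $1-u_1/u_0$ and $1-T_{j,1}/T_{j,0}$ all land in $I$, this extends uniquely to the envelope, and compatibility of log structures is automatic. To identify the prismatic envelope with the claimed power-series plus $\delta$-envelope description, I compare the two via their universal properties: topological nilpotence of $1-u_1/u_0$ and $1-T_{j,1}/T_{j,0}$ in the envelope lets one factor maps through the formal power series ring, then through $\{-/E(u_0)\}_\delta^\wedge$, producing mutually inverse maps between the two. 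Independence of the log structure from the choice of $q_i$ follows from the same rigidity input, since the two candidate log structures differ by units already in the ring.

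The main technical obstacle is the bookkeeping in the exactification step: unlike the relative case of Lemma~\ref{lem:explicit description of cech nerve-rel} where $u_0=u_1=u$ and only the multiplicative chart on the geometric coordinates had to be exactified, here the arithmetic coordinate contributes an additional variable $u_1/u_0$ which must be exactified simultaneously and in a way compatible with $\delta_{\log}(1)=0$ on the monoid generator mapping to $\pi$. I expect no real surprise beyond bookkeeping, since once the $(p,E)$-complete regularity of the enlarged sequence is confirmed (which follows by reducing modulo $E$ to the residue ring and using the small semi-stable chart), the prismatic envelope exists and is flat, and the universal property argument proceeds verbatim as in Lemma~\ref{lem:explicit description of cech nerve-rel}.
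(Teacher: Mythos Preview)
Your proposal is correct and follows essentially the same route as the paper: reduce to $n=1$, exactify the multiplication map of pre-log rings via \cite[Construction~2.18]{Kos21}, apply \cite[Prop.~3.19]{BS22} to the resulting regular sequence, verify the universal property using rigidity of log structures, and finally identify the prismatic envelope with the stated power-series description.

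The one substantive difference is organizational. You treat the arithmetic variable $1-u_1/u_0$ and the geometric variables $1-T_{j,1}/T_{j,0}$ as separate members of the regular sequence, and you flag the simultaneous exactification of both as the ``main technical obstacle.'' The paper dissolves this obstacle by observing at the outset that $u_1/u_0=\prod_{i=0}^r T_{i,1}/T_{i,0}$ in $\frakS(R)^{\otimes 2}$, so adjoining the ratios $(T_{j,1}/T_{j,0})^{\pm1}$ for $0\le j\le r$ already inverts $u_1/u_0$, and the full list $\{1-T_{j,1}/T_{j,0}:0\le j\le d\}$ serves as the regular sequence. This identification replaces your $1-u_1/u_0$ by $1-T_{0,1}/T_{0,0}$ (the two sequences generate the same ideal), putting the absolute case in exactly the same form as the relative Lemma~\ref{lem:explicit description of cech nerve-rel} with one extra index. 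Your worry about compatibility with $\delta_{\log}$ on the arithmetic generator thus evaporates: there is no separate arithmetic exactification to perform.
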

  \begin{proof}
      By the proof of Lemma \ref{lem:log covering for BK}, we see that $\frakS(R)^{\otimes(\bullet+1)}[[1-\frac{u_i}{u_0},1-\frac{\underline T_i}{\underline T_0}\mid 1\leq i\leq \bullet]]\{\frac{1-\frac{u_i}{u_0}}{E(u_0)},\frac{1-\frac{\underline T_i}{\underline T_0}}{E(u_0)}\mid 1\leq i\leq \bullet\}^{\wedge}_{\delta}$ with the log-structure $M_{\bullet}$ induced from
      \[M_{\frakS(R)}\to\frakS(R)\to\frakS(R)^{\otimes(\bullet+1)}[[1-\frac{u_i}{u_0},1-\frac{\underline T_i}{\underline T_0}\mid 1\leq i\leq \bullet]]\{\frac{1-\frac{u_i}{u_0}}{E(u_0)},\frac{1-\frac{\underline T_i}{\underline T_0}}{E(u_0)}\mid 1\leq i\leq \bullet\}^{\wedge}_{\delta}\]
      is a well-defined cosimplicial log-prism in $(R)_{\Prism,\log}$. It suffices to show this is exactly $(\frakS(R)^{\bullet}_{\log},(E),M_{\frakS(R)^{\bullet}_{\log}})$.
      We only deal with the $n=1$ case while the general case follows from the same (but more tedious) argument.

      Now, our aim is to show that
      \[(\frakS(R)^{\otimes2}[[1-\frac{u_1}{u_0},1-\frac{\underline T_1}{\underline T_0}]]\{\frac{1-\frac{u_1}{u_0}}{E(u_0)},\frac{1-\frac{\underline T_1}{\underline T_0}}{E(u_0)}\}^{\wedge}_{\delta},(E),M_1)\]
      is the self-coproduct of $(\frakS,(E),M_{\frakS(R)})$ in $(R)_{\Prism,\log}$. Noting that
      \[\frac{u_1}{u_0} = \frac{T_{0,1}}{T_{0,0}}\cdots \frac{T_{r,1}}{T_{r,0}},\]
      we have the identifications
      \[\frakS(R)^{\otimes2}[[1-\frac{u_1}{u_0},1-\frac{\underline T_1}{\underline T_0}]] = \frakS(R)^{\otimes2}[[1-\frac{T_{j,1}}{T_{j,0}}\mid 0\leq j\leq d]]\]
      and
      \[\frakS(R)^{\otimes2}[[1-\frac{u_1}{u_0},1-\frac{\underline T_1}{\underline T_0}]]\{\frac{1-\frac{u_1}{u_0}}{E(u_0)},\frac{1-\frac{\underline T_1}{\underline T_0}}{E(u_0)}\}^{\wedge}_{\delta} = \frakS(R)^{\otimes2}[[1-\frac{T_{j,1}}{T_{j,0}}\mid 0\leq j\leq d]]\{\frac{1-\frac{T_{j,1}}{T_{j,0}}}{E(u_0)}\mid 0\leq j\leq d\}^{\wedge}_{\delta}.\]
      Thus, we only need to show that
      \[(\frakS(R)^{\otimes2}[[1-\frac{T_{j,1}}{T_{j,0}}\mid 0\leq j\leq d]]\{\frac{1-\frac{T_{j,1}}{T_{j,0}}}{E(u_0)}\mid 0\leq j\leq d\}^{\wedge}_{\delta},(E),M_1) = (\frakS(R)^1_{\log},(E),M_{\frakS(R)_{\log}^1}).\]

      Note that there is an obvious pre-log prism
      \[(\frakS(R)^{\otimes 2},(E),(\bigoplus_{i=0}^r\bN\cdot e_{i,0})\oplus(\bigoplus_{i=0}^r\bN\cdot e_{i,1}))\]
      together with a ``multiplication'' map of prelog rings
      \[\pr:((\bigoplus_{i=0}^r\bN\cdot e_{i,0})\oplus(\bigoplus_{i=0}^r\bN\cdot e_{i,1})\to\frakS(R)^{\otimes 2})\to (\bigoplus_{i=0}^r\bN\cdot e_i\to R)\]
      whose induced map on monoids
      \[(\bigoplus_{i=0}^r\bN\cdot e_{i,0})\oplus (\bigoplus_{i=0}^r\bN\cdot e_{i,1})\to \bigoplus_{i=0}^r\bN\cdot e_i\]
      is given by carrying each $e_{i,j}$ with $j\in \{0,1\}$ to $e_i$. By \cite[Construction 2.18]{Kos21}, the $(p,E)$-complete exactification of $\pr$ above is given by 
      \[(\bigoplus_{i=0}^r\bN\cdot e_i\xrightarrow{e_i\mapsto T_{i,0},~\forall~i}\left(\frakS(R)^{\otimes 2}[(\frac{T_{0,1}}{T_{0,0}})^{\pm 1},\dots,(\frac{T_{r,1}}{T_{r,0}})^{\pm 1}]\right)^{\wedge_{(p,E)}})\to (\bN^{r+1}\to R)\]
      which extends $\pr$ above and sends each $\frac{T_{i,1}}{T_{i,0}}$ to $1$. This gives a pre-log prism 
      \[(\left(\frakS(R)^{\otimes 2}[(\frac{T_{0,1}}{T_{0,0}})^{\pm 1},\dots,(\frac{T_{r,1}}{T_{r,0}})^{\pm 1}]\right)^{\wedge_{(p,E)}},(E),\bN^{r+1})\]
      in which $\frac{T_{0,1}}{T_{0,0}}-1,\dots,\frac{T_{d,1}}{T_{d,0}}-1$ form a $(p,E)$-completely regular suquence relative to $\frakS(R)$. By \cite[Prop. 3.19]{BS22}, the 
      \[B:=(\left(\frakS(R)^{\otimes_{\frakS}2}[(\frac{T_{0,1}}{T_{0,0}})^{\pm 1},\dots,(\frac{T_{r,1}}{T_{r,0}})^{\pm 1}]\right)^{\wedge_{(p,E)}}\{\frac{1-\frac{T_{j,1}}{T_{j,0}}}{E(u_0)}\mid 0\leq j\leq d\}^{\wedge_{(p,E)}}_{\delta}\]
      exists and $(p,E)$-completely faithfully flat over $\frakS(R)$. Endow $B$ with the log-strutcure $M_B$ induced by 
      \[\bigoplus_{i=0}^r\bN\cdot e_i\xrightarrow{e_i\mapsto T_i,\forall~i}B\]
      and then we get a log-prism $(B,(E),M_B)\in (R)_{\Prism,\log}$.

      The same argument in the proof of Lemma \ref{lem:explicit description of cech nerve-rel} implies that 
      \[(B,(E),M_B) = (\frakS(R)_{\log}^1,(E),M_{\frakS(R)^1_{\log}}).\]
      To conclude, we are reduced to show that
      \[\left(\frakS(R)^{\otimes_{\frakS}2}[(\frac{T_{0,1}}{T_{0,0}})^{\pm 1},\dots,(\frac{T_{r,1}}{T_{r,0}})^{\pm 1}]\right)^{\wedge_{(p,E)}}\{\frac{1-\frac{T_{j,1}}{T_{j,0}}}{E(u_0)}\mid 0\leq j\leq d\}^{\wedge_{(p,E)}}_{\delta} \cong (\frakS(R)^{\otimes2}[[1-\frac{T_{j,1}}{T_{j,0}}\mid 0\leq j\leq d]]\{\frac{1-\frac{T_{j,1}}{T_{j,0}}}{E(u_0)}\mid 0\leq j\leq d\}^{\wedge}_{\delta}.\]
      However, this can be deduced from a similar argument used in the last paragraph of the proof of Lemma \ref{lem:explicit description of cech nerve-rel}.
  \end{proof}

  \begin{notation} \label{nota: many dr rings abs and rel}
  We summarize some of the prisms and ``de Rham"   rings used in this section.
  \begin{itemize}
  \item   Let $(\gs^\bullet_\ast, (E), \ast)$ be the \v Cech nerve of $(\gs, (E), \ast) \in \okprisast$.
  \item In Notation \ref{nota: bk prism abs pris}, we have \v Cech nerve $(\frakS(R)_\ast^{\bullet},(E), \ast)$ on the \emph{absolute} (log-) prismatic site $(R)_{\pris, \ast}$.
  \item Since $(\frakS(R)_\ast,(E), \ast)$ is also a cover on the \emph{relative} site $(R/(\frakS,(E)))_{\Prism,\ast}$, we denote the \v Cech nerve, constructed in Construction \ref{const: cosimp BK rel pris}  resp. \ref{cons: cech nerve rel pris log}, by
$(\frakS(R)^{\bullet}_{\ast,\geo},(E),\ast)$.
  \end{itemize}
  We have morphism of co-simplicial (log-) prisms:
  \[ (\gs^\bullet_\ast, (E), \ast) \to (\gs(R)^\bullet_\ast, (E), \ast) \to (\frakS(R)^{\bullet}_{\ast,\geo},(E),\ast).\]
  Evaluation  of $\prism^+_{\dR, m}$ induces morphism of cosimplicial rings:
    \[\frakS_{*,\dR,m}^{\bullet,+} \to \frakS(R)_{\ast,\dR,m}^{\bullet,+} \to  \frakS(R)^{\bullet,+}_{\ast,\geo,\dR,m}. \]
By \cite[Cor. 3.6]{GMWdR}, they are $K$-cosimplicial algebras. Furthermore, in degree zero, we have 
\begin{itemize}
\item  
$\frakS_{\dR, m}^+= \frakS_{\log,\dR,m}^+$, cf. \cite[Construction 3.1]{GMWdR};
\item  $ \frakS(R)^+_{\ast,\dR,m}=\frakS(R)^+_{\ast,\geo,\dR,m} =R_\gsdrm =R\hatotimes_{\calO_K} \gsdrm$.
Indeed, to verify this, it suffices to consider the case where $\Box$ in Cons \ref{constr: cover of final obj rel log pris} is identity; it is then easy to verify using the explicit $\frakS^{\Box}$.
\end{itemize} 
  \end{notation}

 Similar to Proposition \ref{Prop-Structure-Rel}, we have the following result. In below, recall as Notation \ref{nota: a sec 6}, \[ a=
\begin{cases}
  -E'(\pi), &  \text{if } \ast=\emptyset \\
 -\pi E'(\pi), &  \text{if } \ast=\log.
\end{cases} \]

 \begin{prop}\label{Prop-Structure-Abs}
  For any $1\leq i\leq n$, put $X_i: = \frac{E(u_0)-E(u_i)}{-aE(u_0)}$ and for any $1\leq j\leq d$, put $Y_{s,j} = \frac{T_{0,j}-T_{s,j}}{E(u_0)T_{0,j}}$. Then identifying $\frakS(R)$ with the first component of $\frakS(R)^n_{*}$ (i.e., identifying $u,T_1,\dots,T_d\in\frakS(R)$ with $u_0,T_{1,0},\dots,T_{d,0}\in\frakS(R)^n_{*}$) induces an isomorphism 
   \[(\frakS(R)/(E(u)^m))\{X_1,\dots,X_n,\underline Y_1,\dots,\underline Y_n\}^{\wedge}_{\pd}[\frac{1}{p}]\xrightarrow{\cong}\frakS(R)^{n,+}_{*,\dR,m}.\]
   Via these isomorphisms for all $n$, the face and degeneracy maps of $\frakS(R)_{*,\dR,m}^{\bullet,+}$ are given by 
   \begin{equation}\label{Equ-Face-A}
        \begin{split}
            &p_i(X_j) = \left\{\begin{array}{rcl}  
                (X_{j+1}-X_{1})(1+aX_{1})^{-1}, & i=0 \\
                X_{j}, & j<i\\
                X_{j+1}, & 0<i\leq j;
            \end{array}\right.\\
            &p_i(Y_{s,j}) = \left\{\begin{array}{rcl}  
                (Y_{s,j+1}-Y_{s,1})(1+aX_1)^{-1}(1-E(u) Y_{s,1})^{-1}, & i=0 \\
                Y_{s,j}, & j<i\\
                Y_{s,j+1}, & 0<i\leq j;
            \end{array}\right.\\
            &p_i(\underline T) = \left\{\begin{array}{rcl}  
                \underline T_1 = \underline T(1-E(u) \underline Y_1), & i=0 \\
                \underline T, &i > 0;
            \end{array}\right.\\
            &p_i(E(u)) = \left\{\begin{array}{rcl}  
                E(u_1) = E(u)(1+a X_1), & i=0 \\
                E(u), &i > 0;
            \end{array}\right.
        \end{split}
    \end{equation}
    and 
    \begin{equation}\label{Equ-Degeneracy-A}
        \begin{split}
            &\sigma_i(X_j)=\left\{\begin{array}{rcl}
               0, &  i=0,j=1\\
               X_j, & j\leq i\\
               X_{j-1}, & j>i;
            \end{array}\right.\\
            &\sigma_i(Y_{s,j})=\left\{\begin{array}{rcl}
               0, &  i=0,j=1\\
               Y_{s,j}, & j\leq i\\
               Y_{s,j-1}, & j>i;
            \end{array}\right.\\
            &\sigma_i(\underline T) = \underline T,\\
            &\sigma_i(E(u)) = E(u),
        \end{split}
    \end{equation}
    for any $1\leq s\leq d$, respectively.
 \end{prop}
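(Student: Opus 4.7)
The plan is to prove the two parts of the proposition in sequence: first the ring isomorphism, then the face and degeneracy formulas. Both are structurally parallel to the relative case (Proposition \ref{Prop-Structure-Rel} / Proposition \ref{Prop-Structure-Rel-Log}) combined with the point case of \cite{GMWdR}, the novelty being that one has to mix the arithmetic variables $X_i$ coming from $\frac{E(u_0)-E(u_i)}{-aE(u_0)}$ with the geometric variables $Y_{s,j}=\frac{T_{s,0}-T_{s,j}}{E(u_0)T_{s,0}}$ in a consistent way.

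For the ring isomorphism, I would first verify that the elements $X_i$ and $Y_{s,j}$ are honestly divided-power elements of $\frakS(R)^{n,+}_{*,\dR,m}$; this uses the explicit presentation of $\frakS(R)^n_{*}$ from \cite{MW22} in the smooth case and from Lemma \ref{lem:explicit description of cech nerve-abs} in the log case, together with the $\delta$-completion procedure there. One obtains a natural $K$-algebra map
\[
(\frakS(R)/E^m)\{X_1,\ldots,X_n,\underline Y_1,\ldots,\underline Y_n\}^{\wedge}_{\pd}[\tfrac{1}{p}]\longrightarrow \frakS(R)^{n,+}_{*,\dR,m}.
\]
To show it is an isomorphism, I would invoke derived Nakayama along the ideal $(E)$: modulo $E$ the source becomes $R_K\{X_1,\ldots,X_n,\underline Y_1,\ldots,\underline Y_n\}^{\wedge}_{\pd}$ (with $R_K=R[1/p]$), and the target becomes $\frakS(R)^{n,+}_{*,\HT}$. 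The latter is computed exactly as in \cite[Prop.~3.1]{GMWdR} for the variables $X_i$ (which survive under modding out $E$ as $\frac{u_0-u_i}{-u_0}$ up to units, using $E(u_0)\equiv E'(\pi)(u_0-\pi)$) and as in Lemma \ref{Lem-Tian-Structure} and Proposition \ref{Prop-Structure-Rel} for the variables $Y_{s,j}$. Because these two families of variables involve disjoint sets of generators (the $u_i$'s versus the $T_{s,i}$'s) and the $\delta$-structure on $\frakS(R)^n_{*}$ decouples them accordingly, one checks the resulting map on associated graded pieces is an isomorphism; derived Nakayama then yields the desired isomorphism before reducing mod $E$.

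For the face and degeneracy maps, the formulas for $\sigma_i$ and for $p_i$ with $i\geq 1$ are immediate from the definitions together with the known behaviour of $u_j$ and $T_{s,j}$ under these structure maps as recorded in Proposition \ref{Prop-Structure-Rel} and in the point case of \cite{GMWdR}. The only genuinely new computation is the formula for $p_0$. Identifying $\frakS(R)$ with the first tensor factor gives $u=u_0$ and $T_s=T_{s,0}$, and by definition
\[
E(u_1)=E(u_0)(1+aX_1),\qquad T_{s,1}=T_{s,0}(1-E(u_0)Y_{s,1}),
\]
yielding $p_0(E(u))=E(u)(1+aX_1)$ and $p_0(\underline T)=\underline T(1-E(u)\underline Y_1)$ as claimed. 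Since $p_0$ sends the index $j$ of $u_j,T_{s,j}$ to $j+1$, one computes for $j\geq 1$
\[
p_0(Y_{s,j})=\frac{T_{s,1}-T_{s,j+1}}{E(u_1)T_{s,1}}=\frac{T_{s,0}E(u_0)(Y_{s,j+1}-Y_{s,1})}{E(u_0)(1+aX_1)\cdot T_{s,0}(1-E(u_0)Y_{s,1})}=(Y_{s,j+1}-Y_{s,1})(1+aX_1)^{-1}(1-E(u)Y_{s,1})^{-1},
\]
which matches (\ref{Equ-Face-A}); the analogous computation for $p_0(X_j)$ reproduces the formula already appearing in \cite{GMWdR}.

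The main (only) obstacle is bookkeeping: the simultaneous presence of the arithmetic variables $X_i$ and geometric variables $Y_{s,j}$ could \emph{a priori} introduce mixed correction terms in $p_0$. The computation above shows that the two corrections factor cleanly, which is exactly the reason the subsequent stratification analysis in \S\ref{sec: coho abs local} can be carried out in parallel with the relative case.
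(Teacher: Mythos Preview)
Your proposal is correct and follows essentially the same route as the paper: construct the map, reduce to $m=1$ by derived Nakayama along $(E)$, invoke the known $m=1$ computation, and read off the face/degeneracy formulae directly from the definitions of $X_i$ and $Y_{s,j}$. The only cosmetic difference is that where you sketch the $m=1$ case by ``decoupling'' the arithmetic variables (via \cite[Prop.~3.1]{GMWdR}) from the geometric ones (via Lemma~\ref{Lem-Tian-Structure}/Proposition~\ref{Prop-Structure-Rel}), the paper simply cites the joint computation \cite[Lem.~4.13(2)]{MW22} which packages exactly that decoupling; your explicit verification of $p_0(Y_{s,j})$ is a nice addition that the paper omits as ``immediate.''
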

 \begin{proof}
   The well-definedness of the desired map follows from \cite[Lem. 3.11(1)]{GMWdR} and the proof therein.
   Similar to the proof for Proposition \ref{Prop-Structure-Rel}, by derived Nakayama Lemma, we are reduced to the case for $m = 1$. Then the desired isomorphism follows from the same argument for \cite[Lem. 4.13(2)]{MW22} for both $* = \emptyset$ and $* = \log$. Finally, the formulae (\ref{Equ-Face-A}) and (\ref{Equ-Degeneracy-A}) follow from the constructions of $X_i$'s and $Y_{j,k}$'s immediately.
 \end{proof}
 \begin{cor}\label{Cor-Structure-A-SmoothVSemistable}
   Assume $R$ is small smooth over $\calO_K$ and view it as small semi-stable by consider the log structure induced by $\bN\to\calO_K\to R$. Then for any $n\geq 1$, the natural morphism $\frakS(R)_{\dR,m}^{n,+}\to \frakS(R)_{\log,\dR,m}^{n,+}$ is determined by identifying $u,\underline T$ and $\underline Y_{i}$'s, and sending $X_j$'s to $\pi X_j$'s via isomorphisms given in Proposition \ref{Prop-Structure-Abs}.
 \end{cor}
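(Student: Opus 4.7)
My plan is to trace through the explicit formulae from Proposition \ref{Prop-Structure-Abs} in the smooth and semi-stable settings and directly compare them. First, I would record that viewing the small smooth $\calO_K$-algebra $R$ as a small semi-stable algebra means taking $r=0$ in Convention \ref{Convention-Rel-Log}, so the chart reads $\calO_K\za T_0,T_1^{\pm 1},\dots,T_d^{\pm 1}\ya/(T_0-\pi)\to R$ and the log structure on $R$ is induced by $\bN\xrightarrow{1\mapsto T_0=\pi}R$. On the underlying prism level, the Breuil--Kisin lift $\frakS(R)$ from Construction \ref{constr: cover rel pris} and its logarithmic analogue $\frakS(R)_{\log}$ from Construction \ref{constr: cover of final obj rel log pris} are produced by the same $(p,E)$-completely \'etale deformation of the chart. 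The natural morphism $\frakS(R)^{n,+}_{\dR,m}\to\frakS(R)^{n,+}_{\log,\dR,m}$ is then the one coming from the forgetful functor $(R)_{\Prism,\log}\to(R)_{\Prism}$ applied to the \v{C}ech nerves. Concretely, using the self-coproduct descriptions in \cite{MW22} and in Lemma \ref{lem:explicit description of cech nerve-abs}, it sends the ``raw'' generators $u_i$ and $T_{s,i}$ of $\frakS(R)^n$ to their images in $\frakS(R)^n_{\log}$.

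Next I would compare, variable by variable, with the identifications of Proposition \ref{Prop-Structure-Abs}. The generators $u$, $\underline T$, and $Y_{s,i}=\frac{T_{0,i}-T_{s,i}}{E(u_0)T_{0,i}}$ for $1\leq s\leq d$ are defined by formulae involving only $u_0,u_i$ and $T_{s,\ast}$ with $s\geq 1$, so they do not depend on the choice of $\ast$ and match tautologically across the morphism.

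The only real computation is for $X_j$, where the constant $a$ differs: in the smooth case $a^{\mathrm{sm}}=-E'(\pi)$, and in the log case (with $r=0$, $T_0=\pi$) we have $a^{\log}=-\pi E'(\pi)=\pi\cdot a^{\mathrm{sm}}$. Hence
\[
X_j^{\mathrm{sm}}=\frac{E(u_0)-E(u_j)}{E'(\pi)E(u_0)},\qquad X_j^{\log}=\frac{E(u_0)-E(u_j)}{\pi E'(\pi)E(u_0)},
\]
and since the quotient $\frac{E(u_0)-E(u_j)}{E(u_0)}$ is preserved by the natural morphism, I conclude $X_j^{\mathrm{sm}}\mapsto \pi\cdot X_j^{\log}$ as claimed.

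The main bookkeeping step, rather than an obstacle, is to pin down that the forgetful morphism on \v{C}ech nerves is indeed induced by the identity on the $u_i$'s and $T_{s,i}$'s, which is immediate from the universal properties used in Lemma \ref{lem:explicit description of cech nerve-abs} and its non-log counterpart; the rest is the short computation above.
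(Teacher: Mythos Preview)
Your proposal is correct and follows essentially the same approach as the paper's proof: the paper's argument is the single observation that $X_j$ represents $\frac{E(u_0)-E(u_j)}{E'(\pi)E(u_0)}$ in the smooth ring and $\frac{E(u_0)-E(u_j)}{\pi E'(\pi)E(u_0)}$ in the log ring, which is exactly your core computation. Your additional bookkeeping (identifying the $r=0$ chart, noting that the forgetful morphism on \v{C}ech nerves preserves the raw generators $u_i,T_{s,i}$, and checking that the $Y$-variables match because their defining formula does not involve $a$) is sound and simply fills in details the paper leaves implicit.
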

 \begin{proof}
   This is easy as $X_j$ represents $\frac{E(u_0)-E(u_j)}{E'(\pi)E(u_0)}$ in $\frakS(R)_{\dR,m}^n$ and represents $\frac{E(u_0)-E(u_j)}{\pi E'(\pi)E(u_0)}$ in $\frakS(R)^n_{\dR,\log,m}$.
 \end{proof}
\subsection{Analysis of stratifications}
 By \cite[Prop. 2.7]{BS23}, the evaluation at $(\frakS(R)_\ast^{\bullet},(E), \ast)$  induces an equivalence 
   \[\Vect((R)_{\Prism,*},\prism^+_{\dR, m})\to{\rm Strat}(\frakS(R)_{*,\dR,m}^{\bullet,+}).\]
  In this subsection, we study stratifications satisfying the cocycle condition. 
 Let $(M,\varepsilon)$ be a stratification with respect to $\frakS(R)_{*,\dR,m}^{\bullet,+}$.
 We have to determine when $(M,\varepsilon)$ satisfies the cocycle condition. 
 By virtues of Proposition \ref{Prop-Structure-Abs}, for any $x\in M$, we may write
 \begin{equation}\label{Equ-Strat-Abs}
     \varepsilon(x) = \sum_{(i,\underline n)\in\bN\times\bN^d}\nabla_{i,\underline n}(x) X_1^{[i]}\underline Y_1^{[\underline n]}
 \end{equation}
 with $\rW(k)$-linear endomorphisms $\nabla_{i,\underline n}$ of $M$ satisfying 
 \begin{equation}\label{Equ-StratNil-Abs}
     \lim_{i+|\underline n|\to +\infty}\nabla_{i,\underline n} = 0.
 \end{equation} 
 By (\ref{Equ-Face-A}) and (\ref{Equ-Degeneracy-A}), it is easy to see that
 \begin{equation}\label{Equ-P2P0-Abs}
     \begin{split}
         & p_2^*(\varepsilon)\circ p_0^*(\varepsilon)(x)\\
       = & p_2^*(\varepsilon)(\sum_{(j,\underline n)\in\bN\times\bN^d}\nabla_{j,\underline n}(x)(1+aX_1)^{-j-|\underline n|}(1-E(u)\underline Y_1)^{-\underline n}(X_2-X_1)^{[j]}(\underline Y_2-\underline Y_1)^{[\underline n]})\\
       = & \sum_{(i,\underline l),(j,\underline n)\in\bN\times\bN^d}\nabla_{i,\underline l}(\nabla_{j,\underline n}(x))(1+aX_1)^{-j-|\underline n|}(1-E(u)\underline Y_1)^{-\underline n}X_1^{[i]}(X_2-X_1)^{[j]}\underline Y_1^{[\underline l]}(\underline Y_2-\underline Y_1)^{[\underline n]}\\
       = & \sum_{(i,\underline l),(j,\underline m),(k,\underline n)\in\bN\times\bN^d}\nabla_{i,\underline l}(\nabla_{j+k,\underline m+\underline n}(x))(1+aX_1)^{-j-k-|\underline m|-|\underline n|}(1-E(u)\underline Y_1)^{-\underline m-\underline n}\\
       & \cdot (-1)^{j+|\underline m|}X_1^{[i]}X_1^{[j]}X_2^{[k]}\underline Y_1^{[\underline l]}\underline Y_1^{[\underline m]}\underline Y_2^{[\underline n]}\\
       = & \sum_{(i,\underline l),(j,\underline m),(k,\underline n)\in\bN\times\bN^d}\nabla_{i,\underline l}(\nabla_{j+k,\underline m+\underline n}(x))(1+aX_1)^{-j-k-|\underline m|-|\underline n|}(1-E(u)\underline Y_1)^{-\underline m-\underline n}\\
       & \cdot (-1)^{j+|\underline m|}\binom{i+j}{i}\binom{\underline l+\underline m}{\underline l}X_1^{[i+j]}\underline Y_1^{[\underline l+\underline m]}X_2^{[k]}\underline Y_2^{[\underline n]},
     \end{split}
 \end{equation}
 that
 \begin{equation}\label{Equ-P1-Abs}
     \begin{split}
         p_1^*(\varepsilon)(x)=\sum_{(k,\underline n)\in\bN\times\bN^d}\nabla_{k,\underline n}(x)X_2^{[k]}\underline Y_2^{[\underline n]},
     \end{split}
 \end{equation}
 and that 
 \begin{equation}\label{Equ-Sigma0-Abs}
     \sigma_0^*(\varepsilon)(x) = \nabla^{0,\underline 0}_M(x).
 \end{equation}
 Therefore, we deduce that $(M,\varepsilon)$ satisfies the cocycle condition if and only if $\nabla_{0,\underline 0} = \id_M$ and for any $(k,\underline n)\in \bN\times\bN^d$, 
 \begin{equation}\label{Equ-StratCondition-Abs}
    \begin{split}
        \nabla_{k,\underline n}(x) = &\sum_{(i,\underline l),(j,\underline m)\in\bN\times\bN^d}\nabla_{i,\underline l}(\nabla_{j+k,\underline m+\underline n}(x))\\
        &\cdot(1+aX)^{-j-k-|\underline m|-|\underline n|}(1-E(u)\underline Y)^{-\underline m-\underline n} (-1)^{j+|\underline m|}\binom{i+j}{i}\binom{\underline l+\underline m}{\underline l}X^{[i+j]}\underline Y^{[\underline l+\underline m]}.
    \end{split}
 \end{equation}
 
 \begin{notation}\label{Convention-E-connection}
     Let $\phi_{M}:=a^{-1}\nabla_{1,\underline 0}$ and for any $1\leq i\leq d$, let $\nabla_i:=\nabla_{0,\underline 1_i}$. Define 
     \[\nabla_{M}:=\sum_{i=1}^d\nabla_i\otimes\frac{\dlog T_i}{E(u)}:M\to M\otimes_{A(R)_{\dR,m}^+}\widehat \Omega^1_{A(R)_{\dR,m}^+/A_{\dR,m}^+}\{-1\}.\]
 \end{notation}

 \begin{lem}\label{Lem-Technique-Abs}
     Keep notations as above and assume $\nabla_{0,\underline 0} = \id_M$. Then the following are equivalent.
     \begin{enumerate}
         \item The formula \eqref{Equ-StratCondition-Abs} holds true for all $(i,\underline n)$. Equivalently, $(M,\varepsilon)$ satisfies the cocycle condition.
         
         \item The maps $\nabla_i$'s commute with each other and $[\phi_M,\nabla_i] = \nabla_i$ 
                 such that for any $(i,\underline n)\in\bN\times\bN^d$,
         \begin{equation}\label{Equ-Iteration-Abs}
         \begin{split}
             \nabla_{i,\underline n}(x) & = \prod_{j=0}^{i-1}(a\phi_M-a(j+|\underline n|))\prod_{k=1}^d\prod_{l=0}^{n_k-1}(\nabla_k+lE(u))(x)\\
             & = \prod_{k=1}^d\prod_{l=0}^{n_k-1}(\nabla_k+lE(u))\prod_{j=0}^{i-1}(a\phi_M-aj)(x),
         \end{split}
         \end{equation}
         which tends to zero as $i+|\underline n|\to+\infty$.
     \end{enumerate}
     Moreover, if the above equivalent conditions are satisfied, then for any $1\leq i\leq d$, $\nabla_i$ is nilpotent and the stratification $(M,\varepsilon)$ satisfies that for any $x\in M$, 
    \begin{equation}\label{Equ-Stratification-Abs-I}
       \begin{split}
           \varepsilon(x) & = (1-E(u)\underline Y_1)^{-E(u)^{-1}\nabla_{M}}(1+aX_1)^{\phi_M}(x)\\
           &:= \sum_{i\geq 0,\underline n\in \bN^d} \prod_{s=1}^d\prod_{k=0}^{n_s-1}(\nabla_s+kE(u))\underline Y_1^{[\underline n]}\cdot\prod_{r=0}^{i-1}(a\phi_M-ar)(x)X_1^{[i]}\\
           & = (1+aX_1)^{\phi_M}(1-(1+aX_1)^{-1}E(u)\underline Y_1)^{-E(u)^{-1}\nabla_{M}}(x)\\
           &:= \sum_{i\geq 0,\underline n\in\bN^d}\prod_{r=0}^{i-1}(a\phi_M-a(i+|\underline n|))X_1^{[i]}\prod_{s=1}^d\prod_{k=0}^{n_s-1}(\nabla_s+kE(u))(x)\underline Y_1^{[\underline n]}.
       \end{split}
   \end{equation}
 \end{lem}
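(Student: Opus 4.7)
The strategy mirrors Lemma \ref{Lem-Technique-Rel}, but with the additional complication that we must intertwine the arithmetic operator $\phi_M$ (tracked by the variable $X_1$) with the geometric operators $\nabla_i$ (tracked by $\underline Y_1$). The key preliminary observation is that the Leibniz rule built into Definition \ref{Dfn-BetaConnection}/Notation \ref{Convention-E-connection} forces $[\phi_M,E(u)] = E(u)$ on $M$, since $\phi_M$ satisfies Leibniz with respect to $E\frac{d}{dE}$ and $E\frac{d}{dE}(E)=E$; this identity is what makes the two orderings in \eqref{Equ-Iteration-Abs} agree once the commutation $[\phi_M,\nabla_k]=\nabla_k$ is established, because then $(\phi_M-j)(\nabla_k+lE(u)) = (\nabla_k+lE(u))(\phi_M-(j-1))$, and iterating produces exactly the shift by $|\underline n|$.

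For the direction (1) $\Rightarrow$ (2): I would extract information from \eqref{Equ-StratCondition-Abs} by comparing coefficients of small-degree monomials in $X_1$ and $\underline Y_1$. Isolating the coefficient of $X_1^{[0]}\underline Y_1^{[\underline 1_s]}$ yields, after simplification, the recursion $\nabla_{k,\underline n+\underline 1_s}=(\nabla_s+n_sE(u))\nabla_{k,\underline n}$, exactly paralleling \eqref{Equ-StratIter-Rel}. Comparing coefficients of $X_1^{[1]}\underline Y_1^{[\underline 0]}$ yields $\nabla_{k+1,\underline n}=(a\phi_M-a(k+|\underline n|))\nabla_{k,\underline n}$. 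Specializing to $(k,\underline n)=(0,\underline 0)$ and $(k,\underline n)=(0,\underline 1_s)$ respectively gives $\nabla_{1,\underline 0}=a\phi_M$ (recovering Notation \ref{Convention-E-connection}) and $\nabla_{1,\underline 1_s}=(a\phi_M-a)\nabla_s$; on the other hand, iterating the first recursion and reversing order gives $\nabla_{1,\underline 1_s}=\nabla_s(a\phi_M)$. Subtracting yields the crucial commutator $[\phi_M,\nabla_s]=\nabla_s$. The commutation $[\nabla_i,\nabla_j]=0$ follows from the relative analysis already done in Lemma \ref{Lem-Technique-Rel} by specialization. Iterating both recursions then produces the product formula \eqref{Equ-Iteration-Abs}, and the two orderings match thanks to the commutation shift recorded above.

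For the direction (2) $\Rightarrow$ (1): given the data in (2), I would \emph{define} $\varepsilon$ by the formula \eqref{Equ-Stratification-Abs-I}, which is well-defined because of the $a$-smallness of $\phi_M$ and the nilpotency of each $\nabla_i$ (the latter is forced by \eqref{Equ-StratNil-Abs} combined with the iteration formula: modulo $E(u)$, $\nabla_{0,\underline n}$ reduces to $\prod_i\nabla_i^{n_i}$, which must tend to zero). The cocycle condition is then checked by the same \emph{formal} calculation as in Lemma \ref{Lem-Technique-Rel} (compare Remark \ref{rem why formal OK}): apply Proposition \ref{Prop-Structure-Abs} to compute $p_0^*(\varepsilon)$, $p_1^*(\varepsilon)$, $p_2^*(\varepsilon)$ in closed form, then invoke Lemma \ref{lem expansion identity} with $Z=a$, $P\in\{a\phi_M\}$, $Q\in\{X_1,X_2,\ldots\}$ for the $X$-factors, and separately with $Z=E(u)$ for the $\underline Y$-factors, to verify $p_2^*(\varepsilon)\circ p_0^*(\varepsilon)=p_1^*(\varepsilon)$.

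The main obstacle is the bookkeeping for the \emph{mixed} $X$--$\underline Y$ terms: under $p_0$, the variable $\underline Y_1$ is transported to $(\underline Y_2-\underline Y_1)(1+aX_1)^{-1}(1-E(u)\underline Y_1)^{-1}$ (cf.\ \eqref{Equ-Face-A}), which couples the arithmetic and geometric directions. The equivalence of the two orderings in \eqref{Equ-Iteration-Abs}, together with the Leibniz identity $[\phi_M,E(u)]=E(u)$, is precisely the compatibility that makes this coupling consistent and lets the formal identities of Lemma \ref{lem expansion identity} apply factor-by-factor. Once (1) $\Leftrightarrow$ (2) is established, the two explicit expressions in \eqref{Equ-Stratification-Abs-I} are just the two orderings of the product formula re-summed, and nilpotency of $\nabla_i$ is read off from the reduction modulo $E(u)$ as noted above.
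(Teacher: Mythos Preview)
Your argument for (1) $\Rightarrow$ (2) is essentially the paper's: the paper differentiates \eqref{Equ-StratCondition-Abs} in $X$ and in $Y_s$ and then evaluates at $X = \underline Y = 0$, which is precisely your coefficient extraction; the commutator $[\phi_M,\nabla_s]=\nabla_s$ is obtained in both cases by comparing the two expressions for $\nabla_{1,\underline 1_s}$.

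For (2) $\Rightarrow$ (1) you diverge, and there is a gap. Lemma \ref{lem expansion identity} is a statement in the \emph{commutative} ring $\bQ[[P_i,Q_i,Z^{\pm 1}]]$; it applies cleanly in Lemma \ref{Lem-Technique-Rel} because the $\nabla_i$ commute, and in Construction \ref{cons: ari strat} because there is a single operator $\phi$. Here $\phi_M$ and $\nabla_i$ do not commute, and moreover $p_0$ sends $E(u)\mapsto E(u)(1+aX_1)$, so the formal exponent $-E(u)^{-1}\underline\nabla$ picks up $X_1$-dependence under $p_0^*$. Your claim that the commutator relation ``makes the coupling consistent and lets the formal identities apply factor-by-factor'' is not a proof; one would need a genuinely non-commutative version of Lemma \ref{lem expansion identity}. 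The paper sidesteps this entirely: it shows that the $X$-derivative $(\dagger)$ of the right side of \eqref{Equ-StratCondition-Abs} vanishes, via the recursion $\nabla_{i,\underline l}\circ\phi_M=\nabla_{i+1,\underline l}+ia\,\nabla_{i,\underline l}$ together with the binomial manipulations $i\binom{i+j}{i}X^{[i+j]}=X\binom{i+j-1}{i-1}X^{[i+j-1]}$ and $\binom{i+j}{i}=\binom{i+j-1}{i}+\binom{i+j-1}{i-1}$. Once $(\dagger)=0$, the right side is independent of $X$; evaluating at $X=0$ reduces to the purely geometric identity handled by Lemma \ref{Lem-Technique-Rel}.

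A smaller gap: your argument for the nilpotency of $\nabla_i$ (reducing modulo $E(u)$ and using $\nabla_{0,\underline n}\to 0$) gives only \emph{topological} nilpotency. The actual nilpotency asserted in the ``moreover'' clause is deduced from the relation $[\phi_M,\nabla_i]=\nabla_i$ on a finite projective module via standard Lie theory (cf.\ the proof of \cite[Prop.~2.6]{MW22}): $\nabla_i$ strictly shifts $\phi_M$-weights, and there are only finitely many.
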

 \begin{proof}
  
     The proof is similar to that of Proposition \ref{Prop-Structure-Rel}. Applying $\partial_{X}$ to the right hand side of (\ref{Equ-StratCondition-Abs}), we get
     \begin{equation}\label{Equ-PartialX-I}
         \begin{split}
             (\dagger):= & \partial_X(\sum_{(i,\underline l),(j,\underline m)\in\bN\times\bN^d}\nabla_{i,\underline l}(\nabla_{j+k,\underline m+\underline n}(x))\\
             & \quad \cdot (1+aX)^{-j-k-|\underline m|-|\underline n|}(1-E(u)\underline Y)^{-\underline m-\underline n} (-1)^{j+|\underline m|}\binom{i+j}{i}\binom{\underline l+\underline m}{\underline l}X^{[i+j]}\underline Y^{[\underline l+\underline m]}\\
           = & \sum_{(i,\underline l),(j,\underline m)\in\bN\times\bN^d}-a(j+k+|\underline m|+|\underline n|)\nabla_{i,\underline l}(\nabla_{j+k,\underline m+\underline n}(x))\\
             &\quad  \cdot (1+aX)^{-j-k-|\underline m|-|\underline n|-1}(1-E(u)\underline Y)^{-\underline m-\underline n} (-1)^{j+|\underline m|}\binom{i+j}{i}\binom{\underline l+\underline m}{\underline l}X^{[i+j]}\underline Y^{[\underline l+\underline m]}\\
             & + \sum_{(i,\underline l),(j,\underline m)\in\bN\times\bN^d}\nabla_{i,\underline l}(\nabla_{j+k,\underline m+\underline n}(x))\\
             & \quad \cdot (1+aX)^{-j-k-|\underline m|-|\underline n|}(1-E(u)\underline Y)^{-\underline m-\underline n} (-1)^{j+|\underline m|}\binom{i+j}{i}\binom{\underline l+\underline m}{\underline l}X^{[i+j-1]}\underline Y^{[\underline l+\underline m]}.
         \end{split}
     \end{equation}
     
     Now, we assume (1) is true. Then we have $(\dagger) = 0$. By letting $X = 0$ in (\ref{Equ-PartialX-I}), we get that 
     \begin{equation*}
         \begin{split}
             0 =& \sum_{\underline l,\underline m\in\bN^d}-a(k+|\underline m|+|\underline n|)\nabla_{0,\underline l}(\nabla_{k,\underline m+\underline n}(x))(1-E(u)\underline Y)^{-\underline m-\underline n} (-1)^{|\underline m|}\binom{\underline l+\underline m}{\underline l}\underline Y^{[\underline l+\underline m]}\\
             & + \sum_{\underline l,\underline m\in\bN^d}\nabla_{1,\underline l}(\nabla_{k,\underline m+\underline n}(x))(1-E(u)\underline Y)^{-\underline m-\underline n} (-1)^{|\underline m|}\binom{\underline l+\underline m}{\underline l}\underline Y^{[\underline l+\underline m]}\\
             & + \sum_{\underline l,\underline m\in\bN^d}\nabla_{0,\underline l}(\nabla_{1+k,\underline m+\underline n}(x))(1-E(u)\underline Y)^{-\underline m-\underline n} (-1)^{1+|\underline m|}\binom{\underline l+\underline m}{\underline l}\underline Y^{[\underline l+\underline m]}.
         \end{split}
     \end{equation*}
     By letting $\underline Y = 0$ in above formula, we deduce that
     \begin{equation*}
         -a(k+|n|)(\nabla_{k,\underline n}(x))+\nabla_{1,\underline 0}(\nabla_{k,\underline n}(x))+\nabla_{1+k,\underline n}(x) = 0.
     \end{equation*}
     In other words, for any $(k,\underline n)\in\bN\times\bN^d$, we get 
     \begin{equation}\label{Equ-Iteration-Abs-I}
         \nabla_{1+k,\underline n}(x) = (a\phi_M-a(k+|n|))(\nabla_{k,\underline n}(x)).
     \end{equation}
     By iteration, we conclude that for any $(k,\underline n)\in\bN\times\bN^d$,
     \begin{equation}\label{Equ-Iteration-Abs-II}
         \nabla_{k,\underline n}(x) = \prod_{i=0}^{k-1}(a\phi_M-a(i+|n|))(\nabla_{0,\underline n}(x)).
     \end{equation}
     Note that by letting $X = 0$ in (\ref{Equ-StratCondition-Abs}) for $k = 0$, we obtain that
     \[\nabla_{0,\underline n}(x) = \sum_{\underline l,\underline m\bN^d}\nabla_{0,\underline l}(\nabla_{0,\underline m+\underline n}(x))(1-E(u)\underline Y)^{-\underline m-\underline n}\binom{\underline l+\underline m}{\underline l}\underline Y^{[\underline l+\underline m]}.\]
     So by Lemma \ref{Lem-Technique-Rel} (for $\beta = -E(u)$), we deduce that $\nabla_i$'s commutes with each other and that for any $\underline n\in\bN^d$, 
     \begin{equation}\label{Equ-Iteration-Abs-III}
         \nabla_{0,\underline n}(x) = \prod_{s=0}^{d}\prod_{j=0}^{n_s-1}(\nabla_s+jE(u))(x).
     \end{equation}
     Combining (\ref{Equ-Iteration-Abs-II}) with (\ref{Equ-Iteration-Abs-III}) together, we conclude that for any $(k,\underline n)\in\bN\times\bN^d$,
     \begin{equation}\label{Equ-Iteration-Abs-IV}
         \nabla_{k,\underline n}(x) = \prod_{i=0}^{k-1}(a\phi_M-a(i+|n|))\prod_{s=0}^{d}\prod_{j=0}^{n_s-1}(\nabla_s+jE(u))(x).
     \end{equation}
     So the first part of (\ref{Equ-Iteration-Abs}) holds true. In particular, by letting $\underline n = \underline 0$ in (\ref{Equ-Iteration-Abs-I}) and (\ref{Equ-Iteration-Abs-IV}), we get that for any $k\geq 0$,
     \begin{equation}\label{Equ-Iteration-Abs-V}
         \nabla_{k,\underline 0}(x) = (a\phi_M-ka)(\nabla_{k-1,\underline 0}(x)) = \prod_{i=0}^{k-1}(a\phi_M-ia)(x).
     \end{equation}
     On the other hand, by applying $\partial_{Y_s}$ to the right hand side of (\ref{Equ-StratCondition-Abs}) and then letting $\underline Y = 0$, we get that 
     \begin{equation*}
     \begin{split}
        0 = &\sum_{i,j\geq 0}n_sE(u)\nabla_{i,\underline 0}(\nabla_M^{j+k,\underline n}(x))(1+aX)^{-j-k-|\underline n|}(-1)^{j}\binom{i+j}{i}X^{[i+j]}\\
        & +\sum_{i,j\geq 0}\nabla_{i,\underline 1_s}(\nabla_{j+k,\underline n}(x))(1+aX)^{-j-k-|\underline n|}(-1)^{j}\binom{i+j}{i}X^{[i+j]}\\
        & +\sum_{i,j\geq 0}\nabla_{i,\underline 0}(\nabla_{j+k,\underline 1_s+\underline n}(x))\cdot(1+aX)^{-j-k-1-|\underline n|}(-1)^{j+1}\binom{i+j}{i}X^{[i+j]}.
     \end{split}
     \end{equation*}
     By letting $X = 0$ in above formula, we obtain that
     \begin{equation*}
         n_sE(u)\nabla_{k,\underline n}(x)+\nabla_{0,\underline 1_s}(\nabla_{k,\underline n}(x)) - \nabla_{k,\underline n+\underline 1_s}(x) = 0.
     \end{equation*}
     In other words, for any $(k,\underline n)\in\bN\times\bN^d$ and any $1\leq s\leq d$, we conclude that
     \begin{equation}\label{Iteration-Abs-VI}
         \nabla_{k,\underline n+\underline 1_s}(x) = (\nabla_s+n_sE(u))\nabla_{k,\underline n}(x).
     \end{equation}
     By iteration and using (\ref{Equ-Iteration-Abs-III}) together with (\ref{Equ-Iteration-Abs-V}), we deduce that for any $(k,\underline n)\in\bN\times\bN^d$,
     \begin{equation}\label{Equ-Iteration-Abs-VII}
     \begin{split}
         \nabla_{k,\underline n}(x) = \prod_{s=1}^d\prod_{j=0}^{n_s-1}(\nabla_s+jE(u))\prod_{i=0}^{k-1}(a\phi_M-ia)(x),
     \end{split}
     \end{equation}
    which implies the second part of (\ref{Equ-Iteration-Abs}).
    Applying (\ref{Equ-Iteration-Abs-IV}) and (\ref{Equ-Iteration-Abs-VII}) to $(k,\underline n) = (1,\underline 1_s)$, we see that 
    \[(a\phi_M-a)\nabla_s = a\nabla_s\phi_M.\]
    Equivalently, we conclude that $[\phi_M,\nabla_i] = \nabla_i$ for any $1\leq i\leq d$ as desired. So Item (2) follows.
    
        Conversely, we assume Item (2) is true. We claim that to conclude Item (1), it is enough to show that $(\dagger) = 0$. Indeed, if this is true, then the right hand side of (\ref{Equ-StratCondition-Abs}) is independent of $X$ and hence equal to its evaluation at $X = 0$; that is, it is equal to
    \[\sum_{\underline l,\underline m\in\bN^d}\nabla_{0,\underline l}(\nabla_{k,\underline m+\underline n}(x))(1-E(u)\underline Y)^{-\underline m-\underline n}(-1)^{|\underline m|}\binom{\underline l+\underline m}{\underline l}\underline Y^{[\underline l+\underline m]}.\] 
    Since $\nabla_{k,\underline m+\underline n}(x) = \nabla_{0,\underline m+\underline n}(\nabla_{k,\underline 0})$ by (\ref{Equ-Iteration-Abs-VII}), we can conclude Item (1) by showing 
    \[\nabla_{0,\underline n}(\nabla_{k,\underline 0}(x)) = \sum_{\underline l,\underline m\in\bN^d}\nabla_{0,\underline l}(\nabla_{0,\underline m+\underline n}(\nabla_{k,\underline 0}(x)))(1-E(u)\underline Y)^{-\underline m-\underline n}(-1)^{|\underline m|}\binom{\underline l+\underline m}{\underline l}\underline Y^{[\underline l+\underline m]}.\]
    But this can be deduced from Lemma \ref{Lem-Technique-Rel} immediately.
    
    Now, we are going to prove that $(\dagger) = 0$. Using (\ref{Equ-Iteration-Abs-I}), we may replace $-a(j+k+|\underline m|+|\underline n|)\nabla_{j+k,\underline m+\underline n}(x)$ by $\nabla_{1+j+k,\underline m+\underline n}(x) - \phi_M(\nabla_{j+k,\underline m+\underline n}(x))$ in (\ref{Equ-PartialX-I}) and then deduce that
    \begin{equation*}
        \begin{split}
            (\dagger) = & \sum_{(i,\underline l),(j,\underline m)\in\bN\times\bN^d}\nabla_{i,\underline l}(\nabla_{1+j+k,\underline m+\underline n}(x))\\
            &\quad  \cdot (1+aX)^{-j-k-|\underline m|-|\underline n|-1}(1-E(u)\underline Y)^{-\underline m-\underline n} (-1)^{j+|\underline m|}\binom{i+j}{i}\binom{\underline l+\underline m}{\underline l}X^{[i+j]}\underline Y^{[\underline l+\underline m]}\\
            & - \sum_{(i,\underline l),(j,\underline m)\in\bN\times\bN^d}\nabla_{i,\underline l}(\phi_M(\nabla_{j+k,\underline m+\underline n}(x)))\\
            &\quad  \cdot (1+aX)^{-j-k-|\underline m|-|\underline n|-1}(1-E(u)\underline Y)^{-\underline m-\underline n} (-1)^{j+|\underline m|}\binom{i+j}{i}\binom{\underline l+\underline m}{\underline l}X^{[i+j]}\underline Y^{[\underline l+\underline m]}\\
            & + \sum_{(i,\underline l),(j,\underline m)\in\bN\times\bN^d}\nabla_{i,\underline l}(\nabla_{j+k,\underline m+\underline n}(x))\\
            & \quad \cdot (1+aX)^{-j-k-|\underline m|-|\underline n|}(1-E(u)\underline Y)^{-\underline m-\underline n} (-1)^{j+|\underline m|}\binom{i+j}{i}\binom{\underline l+\underline m}{\underline l}X^{[i+j-1]}\underline Y^{[\underline l+\underline m]}.
        \end{split}
    \end{equation*}
    By (\ref{Equ-Iteration-Abs-V}) and (\ref{Equ-Iteration-Abs-VII}), we see that
    \[\nabla_{i,\underline l}\phi_M = \nabla_{0,\underline l}\nabla_{i,\underline 0}\nabla_{1,\underline 0} =  \nabla_{0,\underline l}\nabla_{i+1,\underline 0}+ia\nabla_{0,\underline l}\nabla_{i,\underline l} = \nabla_{i+1,\underline l}+ia\nabla_{i,\underline l}.\]
    So we deduce that 
    \begin{equation*}
        \begin{split}
            (\dagger) = & \sum_{(i,\underline l),(j,\underline m)\in\bN\times\bN^d}\nabla_{i,\underline l}(\nabla_{1+j+k,\underline m+\underline n}(x))\\
            &\quad  \cdot (1+aX)^{-j-k-|\underline m|-|\underline n|-1}(1-E(u)\underline Y)^{-\underline m-\underline n} (-1)^{j+|\underline m|}\binom{i+j}{i}\binom{\underline l+\underline m}{\underline l}X^{[i+j]}\underline Y^{[\underline l+\underline m]}\\
            & - \sum_{(i,\underline l),(j,\underline m)\in\bN\times\bN^d}\nabla_{i+1,\underline l}(\nabla_{j+k,\underline m+\underline n}(x))\\
            &\quad  \cdot (1+aX)^{-j-k-|\underline m|-|\underline n|-1}(1-E(u)\underline Y)^{-\underline m-\underline n} (-1)^{j+|\underline m|}\binom{i+j}{i}\binom{\underline l+\underline m}{\underline l}X^{[i+j]}\underline Y^{[\underline l+\underline m]}\\
            & - \sum_{(i,\underline l),(j,\underline m)\in\bN\times\bN^d}ia\nabla_{i,\underline l}(\nabla_{j+k,\underline m+\underline n}(x))\\
            &\quad  \cdot (1+aX)^{-j-k-|\underline m|-|\underline n|-1}(1-E(u)\underline Y)^{-\underline m-\underline n} (-1)^{j+|\underline m|}\binom{i+j}{i}\binom{\underline l+\underline m}{\underline l}X^{[i+j]}\underline Y^{[\underline l+\underline m]}\\
            & + \sum_{(i,\underline l),(j,\underline m)\in\bN\times\bN^d}\nabla_{i,\underline l}(\nabla_{j+k,\underline m+\underline n}(x))\\
            & \quad \cdot (1+aX)^{-j-k-|\underline m|-|\underline n|}(1-E(u)\underline Y)^{-\underline m-\underline n} (-1)^{j+|\underline m|}\binom{i+j}{i}\binom{\underline l+\underline m}{\underline l}X^{[i+j-1]}\underline Y^{[\underline l+\underline m]}.
        \end{split}
    \end{equation*}
    Using $i\binom{i+j}{i}X^{[i+j]} = X\binom{i+j-1}{i-1}X^{[i+j-1]}$, we obtain that 
    \begin{equation*}
        \begin{split}
            (\dagger) = & \sum_{(i,\underline l),(j,\underline m)\in\bN\times\bN^d}\nabla_{i,\underline l}(\nabla_{j+k,\underline m+\underline n}(x))\\
            &\quad  \cdot (1+aX)^{-j-k-|\underline m|-|\underline n|}(1-E(u)\underline Y)^{-\underline m-\underline n} (-1)^{j-1+|\underline m|}\binom{i+j-1}{i}\binom{\underline l+\underline m}{\underline l}X^{[i+j-1]}\underline Y^{[\underline l+\underline m]}\\
            & - (1+aX)^{-1}\sum_{(i,\underline l),(j,\underline m)\in\bN\times\bN^d}\nabla_{i,\underline l}(\nabla_{j+k,\underline m+\underline n}(x))\\
            &\quad  \cdot (1+aX)^{-j-k-|\underline m|-|\underline n|}(1-E(u)\underline Y)^{-\underline m-\underline n} (-1)^{j+|\underline m|}\binom{i-1+j}{i-1}\binom{\underline l+\underline m}{\underline l}X^{[i-1+j]}\underline Y^{[\underline l+\underline m]}\\
            & - (1+aX)^{-1}aX\sum_{(i,\underline l),(j,\underline m)\in\bN\times\bN^d}\nabla_{i,\underline l}(\nabla_{j+k,\underline m+\underline n}(x))\\
            &\quad  \cdot (1+aX)^{-j-k-|\underline m|-|\underline n|}(1-E(u)\underline Y)^{-\underline m-\underline n} (-1)^{j+|\underline m|}\binom{i+j-1}{i-1}\binom{\underline l+\underline m}{\underline l}X^{[i+j-1]}\underline Y^{[\underline l+\underline m]}\\
            & + \sum_{(i,\underline l),(j,\underline m)\in\bN\times\bN^d}\nabla_{i,\underline l}(\nabla_{j+k,\underline m+\underline n}(x))\\
            & \quad \cdot (1+aX)^{-j-k-|\underline m|-|\underline n|}(1-E(u)\underline Y)^{-\underline m-\underline n} (-1)^{j+|\underline m|}\binom{i+j}{i}\binom{\underline l+\underline m}{\underline l}X^{[i+j-1]}\underline Y^{[\underline l+\underline m]}\\
            = & \sum_{(i,\underline l),(j,\underline m)\in\bN\times\bN^d}\nabla_{i,\underline l}(\nabla_{j+k,\underline m+\underline n}(x))\\
            &\quad  \cdot (1+aX)^{-j-k-|\underline m|-|\underline n|}(1-E(u)\underline Y)^{-\underline m-\underline n} (-1)^{j+|\underline m|}\binom{i+j-1}{i-1}\binom{\underline l+\underline m}{\underline l}X^{[i+j-1]}\underline Y^{[\underline l+\underline m]}\\
            & - (1+aX)^{-1}\sum_{(i,\underline l),(j,\underline m)\in\bN\times\bN^d}\nabla_{i,\underline l}(\nabla_{j+k,\underline m+\underline n}(x))\\
            &\quad  \cdot (1+aX)^{-j-k-|\underline m|-|\underline n|}(1-E(u)\underline Y)^{-\underline m-\underline n} (-1)^{j+|\underline m|}\binom{i-1+j}{i-1}\binom{\underline l+\underline m}{\underline l}X^{[i-1+j]}\underline Y^{[\underline l+\underline m]}\\
            & - (1+aX)^{-1}aX\sum_{(i,\underline l),(j,\underline m)\in\bN\times\bN^d}\nabla_{i,\underline l}(\nabla_{j+k,\underline m+\underline n}(x))\\
            &\quad  \cdot (1+aX)^{-j-k-|\underline m|-|\underline n|}(1-E(u)\underline Y)^{-\underline m-\underline n} (-1)^{j+|\underline m|}\binom{i+j-1}{i-1}\binom{\underline l+\underline m}{\underline l}X^{[i+j-1]}\underline Y^{[\underline l+\underline m]}\\
            =& 0,
        \end{split}
    \end{equation*}
    as desired, where for the second equality, we use $\binom{i+j}{i}=\binom{i+j-1}{i}+\binom{i+j-1}{i-1}$. So we see that Item (1) is true.
    
    To complete the proof, it remains to prove the ``moreover'' part. The formula (\ref{Equ-Stratification-Abs-I}) follows from (\ref{Equ-Strat-Abs}) together with (\ref{Equ-Iteration-Abs}). By standard Lie theory (cf. the proof of \cite[Prop. 2.6]{MW22}), $\nabla_i$'s are all nilpotent. These complete the proof. 
    
 \end{proof}
 
 \begin{rmk}
   Thanks to the nilpotency of $\nabla_i$'s, the limit \[\lim_{i+|\underline n|\to+\infty}\nabla_{i,\underline n}=0\]
   holds true if and only if
   \begin{equation}\label{Equ-NilpotencyForarithmeticPart}
       \lim_{i\to+\infty}\nabla_{i,\underline 0} = \lim_{i\to+\infty}\prod_{j=0}^{i-1}(a\phi_M-ja) = 0.
   \end{equation}
 \end{rmk}
 
\subsection{Enhanced connections}
 \begin{defn}  \label{defn: ka k kg}
Use Notation \ref{nota: many dr rings abs and rel}.   Let 
   $$\sigma_{0,a}:\frakS_{*,\dR,m}^{1,+}\to\frakS_{\dR,m}^{+}$$ 
   $$\sigma_0:\frakS(R)_{*,\dR,m}^{1,+}\to\frakS(R)_{\dR,m}^{+}$$
   $$\sigma_{0,g}: \frakS(R)_{*,\geo,\dR,m}^{1,+}\to \frakS(R)_{\geo,\dR,m}^{+}$$ be the corresponding degeneracy morphism. 
   Denote by $\calK_a$ (resp. $\calK$, $\calK_g$) the kernel of $\sigma_{0,a}$ (resp. $\sigma_0$, $\sigma_{0,g}$). Then we have natural maps 
   \begin{equation}\label{Equ-ExactSeqforIdeal}
       \calK_a\to\calK\to\calK_g.
   \end{equation}
(The ideal $\calK_g$   was denoted as ``$\calK$" in Corollary \ref{Cor-pdStructure-Rel}.)
 \end{defn}

 \begin{cor}\label{Cor-pdStructure-Abs} 
   \begin{enumerate}
       \item The ideal $\calK$ is the closed pd-ideal generated by $\{X_1^{[n_0]}Y_{1,1}^{[n_1]}\cdots Y_{d,1}^{[n_d]}\mid n_0+n_1+\cdots+n_d\geq 1, n_0,n_1,\dots,n_d\geq 0\}$ via the isomorphisms in Proposition \ref{Prop-Structure-Abs}.
       
       \item For any $j\geq 1$, let $\calK^{[j]}$ be the closed $j$-th pd-power of $\calK$. Then (\ref{Equ-ExactSeqforIdeal}) induces an exact sequence of $\frakS(R)_{\dR,m}^+$-modules
       
       \begin{equation}\label{Equ-ExactSeqforModule}
           \begin{tikzcd}
0 \arrow[r] & {\calK_a/\calK_a^{[2]}\otimes_{\frakS_{\dR,m}^+}\frakS(R)_{\dR,m}^+} \arrow[d, "\simeq"] \arrow[r] & {\calK/\calK^{[2]}} \arrow[d, "="] \arrow[r] & {\calK_g/\calK_g^{[2]}} \arrow[d, "\simeq"] \arrow[r]                      & 0 \\
0 \arrow[r] & {\widehat \Omega^1_{\frakS_{\dR,m}^+}\otimes_{\frakS_{\dR,m}^+}\frakS(R)_{\dR,m}^+} \arrow[r]      & {\calK/\calK^{[2]}} \arrow[r]                & {\widehat \Omega^1_{\frakS(R)_{\dR,m}^+/\frakS_{\dR,m}^+}\{-1\}} \arrow[r] & 0
\end{tikzcd}
                \end{equation}

   \end{enumerate}
    which induces a non-canonical isomorphism
    \begin{equation}\label{Equ-NonCanonicalSplitting}
        \begin{split}
            \calK/\calK^{[2]}&\cong (\calK_a/\calK_a^{[2]}\otimes_{\frakS_{\dR,m}^+}\frakS(R)_{\dR,m}^+)\oplus\calK_g/\calK_g^{[2]}\\
            & \cong \frakS(R)_{\dR,m}^+a^{-1}\dlog E(u)\oplus\widehat \Omega^1_{\frakS(R)_{\dR,m}^+/\frakS_{\dR,m}^+}\{-1\}\\
            & \cong \frakS(R)_{\dR,m}^+a^{-1}\dlog E(u)\oplus(\bigoplus_{i=1}^d\frakS(R)_{\dR,m}^+\dlog T_i\cdot E(u)^{-1}).
        \end{split}
    \end{equation}
 \end{cor}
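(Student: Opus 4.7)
My plan is to prove Item (1) by leveraging the explicit isomorphism from Proposition \ref{Prop-Structure-Abs}, which presents $\frakS(R)_{*,\dR,m}^{1,+}$ as the $p$-adic completion of a pd-polynomial ring over $(\frakS(R)/E^m)[1/p]$ in the variables $X_1, Y_{1,1}, \ldots, Y_{d,1}$. By the degeneracy formulae (\ref{Equ-Degeneracy-A}), the map $\sigma_0$ sends each of these variables to zero. Let $\calJ \subset \calK$ denote the closed pd-ideal generated by the claimed monomials; the inclusion $\calJ \subset \calK$ is immediate. For the reverse inclusion, I would first verify the statement modulo $E$: here $\frakS(R)_{*,\dR,m}^{1,+}/E$ becomes the pd-polynomial ring over $R[1/p]$ in these variables and $\sigma_0$ is literally the augmentation killing all variables, whence $\calJ/E = \calK/E$. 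The classical $E$-completeness of $\frakS(R)_{*,\dR,m}^{1,+}$ then upgrades this to $\calJ = \calK$, mirroring the argument in Lemma \ref{Cor-pdStructure-Rel}.

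For Item (2), the core observation is that the description in Item (1) exhibits $\calK/\calK^{[2]}$ as a free $\frakS(R)_{\dR,m}^+$-module on the generators $X_1, Y_{1,1}, \ldots, Y_{d,1}$: any monomial of total pd-degree $\geq 2$ (including all mixed products and squares) lies in $\calK^{[2]}$. Applying the same description of Item (1) to the two boundary cases, the arithmetic ring $\frakS_{*,\dR,m}^{1,+}$ involves only $X_1$, so $\calK_a/\calK_a^{[2]}$ is free on $X_1$, while the geometric cosimplicial ring $\frakS(R)_{*,\geo,\dR,m}^{1,+}$ involves only the $Y_{s,1}$'s (as follows from comparing Proposition \ref{Prop-Structure-Abs} with Proposition \ref{Prop-Structure-Rel}). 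Consequently the maps in the top row of (\ref{Equ-ExactSeqforModule}) are respectively the inclusion of the $X_1$-summand and the projection onto the $\underline Y_1$-summand, yielding the claimed short exact sequence together with the non-canonical decomposition (\ref{Equ-NonCanonicalSplitting}).

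To identify the vertical isomorphisms, I would use the face-map relations in (\ref{Equ-Face-A}). For the left vertical, $E(u_1) = E(u_0)(1+aX_1)$ gives $\dlog E(u_1) - \dlog E(u_0) = \dlog(1+aX_1) \equiv aX_1 \pmod{\calK_a^{[2]}}$, identifying $X_1$ with $a^{-1}\dlog E(u)$; for the right vertical, the relation $\underline T_1 = \underline T(1 - E(u)\underline Y_1)$ together with the normalization of the Breuil--Kisin twist in Notation \ref{nota: BK twist rel pris} and Construction \ref{construction:p_1-p_0=d} yields $Y_{s,1} \leftrightarrow E(u)^{-1}\dlog T_s$, exactly as in the proof of Lemma \ref{Cor-pdStructure-Rel}. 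Commutativity of (\ref{Equ-ExactSeqforModule}) then follows from functoriality of $p_1 - p_0$ with respect to the maps in (\ref{Equ-ExactSeqforIdeal}).

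The mild obstacle is bookkeeping the Breuil--Kisin twist correctly in the mixed arithmetic/geometric setting: the arithmetic summand is untwisted (the factor $a^{-1}$ absorbs everything), whereas the geometric summand carries the twist $\{-1\}$ reflected in the $E(u)^{-1}$ denominator of $\dlog T_s \cdot E(u)^{-1}$. Happily, the asymmetric normalizations $X_i = (E(u_0)-E(u_i))/(-aE(u_0))$ versus $Y_{s,j} = (T_{0,j}-T_{s,j})/(E(u_0)T_{0,j})$ built into Proposition \ref{Prop-Structure-Abs} already encode this distinction precisely, so the splitting in (\ref{Equ-NonCanonicalSplitting}) is obtained immediately once the identifications above are granted. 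In the semi-stable case, Corollary \ref{Cor-Structure-A-SmoothVSemistable} ensures the computation is formally parallel (with an extra factor of $\pi$ absorbed in the definition of $a$), so no separate argument is needed.
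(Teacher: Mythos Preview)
Your proposal is correct and follows essentially the same approach as the paper's proof. The paper's argument is terser—it cites \cite[Lem.~4.13]{GMWdR} for the arithmetic identification $\calK_a/\calK_a^{[2]}\cong \frakS_{\dR,m}^+ a^{-1}\dlog E(u)$ and Lemma~\ref{Cor-pdStructure-Rel} for the geometric identification, whereas you recover these identifications directly from the face-map relations in~\eqref{Equ-Face-A}—but the structure (reduce Item~(1) modulo $E$ and use completeness, then read off the free decomposition of $\calK/\calK^{[2]}$ on $X_1,Y_{1,1},\dots,Y_{d,1}$) is identical.
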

 \begin{proof}
   The Item (1) follows from a similar argument used in the proof of Proposition \ref{Prop-Structure-Rel}. Moreover, it can be easily concluded that 
   \[\calK/\calK^{[2]} \cong \frakS(R)_{\dR,m}^+ X_1\oplus(\bigoplus_{i=1}^d\frakS(R)_{\dR,m}^+ Y_{i,1}).\]
   By \cite[Lem. 4.13]{GMWdR}, we have 
   \[\calK_a/\calK_a^{[2]} \cong \frakS_{\dR,m}^+X_1 \cong \frakS_{\dR,m}^+a^{-1}\dlog E(u).\]
   By Corollary \ref{Cor-pdStructure-Rel} and its logarithmic analogue, we have 
   \[\calK_g/\calK_g^{[2]} \cong \bigoplus_{i=1}^d\frakS(R)_{\dR,m}^+ Y_{i,1} \cong \widehat \Omega^1_{\frakS(R)_{\dR,m}^+/\frakS_{\dR,m}^+}\cdot E(u)^{-1}.\]
   Now Item (2) follows directly.
 \end{proof}

 \begin{defn} \label{defn: conn arithmetic case}
     Let $X$ be a smooth rigid space over $K$. Consider the ringed space $(X_\et, \o_{X, \gs_\dR^+, m})$ in Notation \ref{nota: ringed space}.
     \begin{enumerate}
 \item  
For notational simplicity,  denote 
\[\Omega^1_{X, \gs_{\dR }^+}:=\Omega^1_X \otimes_{\calO_X} \o_{X, \gs_\dR^+ }. \]
As this sheaf admits $\gs_\dR^+$-structure, one can twist it by $(E)^{-1}:=\Hom((E), \gsdr)$ (similar to Notation \ref{nota: BK twist rel pris}), which we denote as
\[\Omega^1_{X, \gs_{\dR }^+}\{-1\}:=\Omega^1_{X, \gs_{\dR }^+}\otimes_{\gsdr} (E)^{-1}.\]
Similar to Def \ref{Dfn-BetaConnection}, we abuse notation, and let $\mathrm d$ be the composite:
\[ \mathrm{d}: \o_{X, \gs_\dR^+ }   \to \Omega^1_{X, \gs_{\dR }^+} \to \Omega^1_{X, \gs_{\dR }^+}\{-1\}\]
where the first map is $\gsdr$-linear extension of  the usual differentiation on $\o_X$.

\item An integrable (or flat) connection on $(X, \o_{X, \gs_\dR^+, m})$ with respect to $\mathrm{d}$ consists of $M \in \vect(X_\et, \o_{X, \gs_\dR^+, m})$, together with an $\frakS_{\dR,m}^+$-linear additive map
\[ \nabla: M \to M\otimes_{\o_{X, \gs_\dR^+ }} \Omega^1_{X, \gs_\dR^+ }\{-1\} \]
 satisfying   Leibniz rule with respect to $\mathrm{d}$, such that $\nabla_M\wedge\nabla_M = 0$.  
     Note that if we let $T_1,\dots,T_d$ be the local coordinate on $X$ and write $\nabla_M = \sum_{i=1}^d\nabla_i\otimes\frac{\dlog T_i}{E}$, then the $\nabla_i:M\to M$ satisfies the ``$E$-Leibniz rule'' with respect to $T_i\frac{d}{d T_i}$ (cf. Definition \ref{def: b-conn}). Say $(M,\nabla_M)$ is \emph{topologically nilpotent} if it is integrable and for any local expansion $\nabla_M= \sum_{i=1}^d\nabla_i\otimes\frac{\dlog T_i}{E}$ (with respect to the given local coordinate on $X$) as above
     \[\lim_{n_1+\dots+n_d\to+\infty}\prod_{i=0}^d\prod_{j=0}^{n_i}(\nabla_i-jE) = 0\]
     with respect to the topology on $M$. This is equivalent to requiring that each $\nabla_i$ is topologically nilpotent.
     

     \item 
     By an \emph{(arithmetic) $E$-connection} on $M \in (X, \o_{X, \gs_\dR^+, m})$ we mean an $\o_X$-linear morphism $\phi_M:M\to M$ satisfying the ``$E$-Leibniz rule'' with respect to $E\frac{d}{dE}:\frakS_{\dR}^+\to\frakS_{\dR}^+$ (Def \ref{def: b-conn}); that is, for any local section $x\in M$ and any $f\in \frakS_{\dR}^+$, 
     \[\phi_M(fx) = f\phi_M(x)+E\frac{d}{dE}(f)x.\] 
     Let $a\in K$. We say an $E$-connection $(M,\phi_M)$ is \emph{$a$-small} (with $a$ in Notation \ref{nota: a sec 6}) if 
     \[\lim_{n\to+\infty}a^n\prod_{i=0}^{n-1}(\phi_M-i) = 0\]
     with respect to the topology on $M$. 
     \end{enumerate}
 \end{defn}
 
 \begin{notation}
 Consider the $\gsdr$-module $(E)^{-n}:=\Hom((E^n), \gsdr)$ of rank 1; the differential operator $E\frac{d}{dE}$ on $\gsdr$ stablizes $(E^n)$, hence induces an operator on $(E)^{-n}$. That is, there is an operator
\[ \phi_{(E)^{-n}}: (E)^{-n} \to (E)^{-n} \]
such that for $a \in (E^n)$, 
\[ (\phi_{(E)^{-n}}(f))(a)=f(E\frac{d}{dE}(a)) \]
This can be $\o_X$-linearly extended to $\o_{X, \gsdr}$.
Recall $\Omega^1_{X, \gs_{\dR }^+}\{-n\}:=\Omega^1_{X, \gs_{\dR }^+}\otimes_{\gsdr} (E)^{-n}$.
Abuse notation, use the same notation to denote 
\[ \phi_{(E)^{-n}}: =0\otimes 1+ 1\otimes \phi_{(E)^{-n}}:  \Omega^1_{X, \gs_{\dR }^+}\{-n\} \to \Omega^1_{X, \gs_{\dR }^+}\{-n\}. \]
When $m=1$, $\phi_{(E)^{-n}}$ is nothing but multiplication by $-n$.
 \end{notation}
     
 \begin{defn}[Enhanced connection] \label{defn: abs enhanced conn}
An \emph{enhanced connection} is an object $M \in \vect(X_\et, \o_{X, \gs_\dR^+, m})$ equipped with:
\begin{itemize}
    \item a topologically nilpotent integrable connection
      \[\nabla_M:M\to M\otimes_{\o_{X, \gsdr}}\Omega^1_{X, \gsdr}\{-1\},\]
      \item  an (arithmetic) $E$-connection $\phi_M:M\to M$, such that
      
\item the following diagram is commutative:      
        \[
    \begin{tikzcd}
M \arrow[d, "\phi"] \arrow[r, "\nabla"] & M\otimes_{\o_{X, \gsdr}}\Omega^1_{X, \gsdr}\{-1\} \arrow[d, " \phi_M\otimes 1 +1\otimes \phi_{(E)^{-1}}  "] \\
M \arrow[r, "\nabla"]                   & M\otimes_{\o_{X, \gsdr}}\Omega^1_{X, \gsdr}\{-1\} .                             
\end{tikzcd}
\] 
\end{itemize}    
    Say an {enhanced connection} $(M,\nabla_M,\phi_M)$ is $a$-small (with $a$ in Notation \ref{nota: a sec 6}) if the  $E$-connection $\phi_M$ is $a$-small. Denote  the category of $a$-small enhanced connections by 
    \[  \MIC_\en^a(X_\et, \o_{X, \gs_\dR^+, m}).\]
Consider the ``enhanced de Rham double complex":
\[
\begin{tikzcd}
M \arrow[d, "\phi"] \arrow[r, "\nabla"] & {M\otimes_{\o_{X, \gsdr}}\Omega^1_{X, \gsdr}\{-1\}} \arrow[d, "\phi_M\otimes 1 +1\otimes \phi_{(E)^{-1}}"] \arrow[r, "\nabla^2"] & {M\otimes_{\o_{X, \gsdr}}\Omega^2_{X, \gsdr}\{-2\}} \arrow[r] \arrow[d, "\phi_M\otimes 1 +1\otimes \phi_{(E)^{-2}}"] & \cdots \\
M \arrow[r, "\nabla"]                   & {M\otimes_{\o_{X, \gsdr}}\Omega^1_{X, \gsdr}\{-1\}} \arrow[r, "\nabla^2"]                               & {M\otimes_{\o_{X, \gsdr}}\Omega^2_{X, \gsdr}\{-2\}} \arrow[r]                               & \cdots
\end{tikzcd}
\] 
Denote (each) row complex by $\DR(M)$, and denote the totalization of the double complex by $\rg(\phi, \DR(M))$.   
 \end{defn}

\begin{notation}\label{nota: enhanced conn local}
Let $X$ be the generic fiber of  $\spf R$ where $R$ is as in Notation \ref{nota: bk prism abs pris}. In this case, $X$ has a chart. 
Denote $\MIC_\en^a(X_\et, \o_{X, \gs_\dR^+, m})$ in this case by $ \mic_\en^a(R_\gsdrm) $ where $R_\gsdrm=R_K \hatotimes_K \gsdrm$. For $M \in  \mic_\en^a(R_\gsdrm) $,  one can write      $\nabla_M = \sum_{i=1}^d\nabla_i\otimes \frac{\dlog T_i}{E}$ where  each $\nabla_i:M\to M$ satisfies  $E$-Leibniz rule (Def \ref{def: b-conn}). Since we have
\[\phi_{(E)^{-1}} (\frac{\dlog T_i}{E})=-\frac{\dlog T_i}{E},\]
the commutative diagram in Def \ref{defn: abs enhanced conn} translates to the commutative diagram that for each $i$,
\[
\begin{tikzcd}
M \arrow[d, "\phi"] \arrow[r, "\nabla_i"] & M \arrow[d, "\phi-1"] \\
M \arrow[r, "\nabla_i"]                   & M                    
\end{tikzcd}
\]
that is: for each $i$,  $[\phi_M,\nabla_i]=\nabla_i$.
\end{notation}

\subsection{Crystals and enhanced connections}

  \begin{construction}\label{Construction-Intrinsic}
     Let $(M,\varepsilon)$ be a stratification with respect to $\frakS(R)_{*,\dR,m}^{\bullet}$  satisfying the cocycle condition. Since $\sigma_0^*(\varepsilon) = \id_M$, we see that for any $x\in\bM$, 
    \[(\id_M-\varepsilon)(x)\in M\otimes_{\frakS(R)_{\dR,m}^+,p_1}\frakS(R)_{*,\dR,m}^{1,+}\cdot \calK,\]
    which induces a map 
    \[\overline{\id_M-\varepsilon}: M\to M\otimes_{\frakS(R)_{\dR,m}^+}\calK/\calK^{[2]}.\]
    Using the decomposition (\ref{Equ-NonCanonicalSplitting}), we see that
    \begin{equation*}
        \begin{split}
            \overline{\id_M-\varepsilon} = (\phi_M',\nabla_M'): M&\to M\otimes_{\frakS_{\dR,m}^+}\calK_a/\calK_a^{[2]}\oplus M\otimes\calK_g/\calK_g^{[2]}\\
            &\cong M\frac{\dlog E(u)}{a}\oplus \bigoplus_{i=1}^dM\cdot\frac{\dlog T_i}{E(u)}.
        \end{split}
    \end{equation*}
    By (\ref{Equ-Stratification-Abs-I}), we see that for any $x\in M$
   \[(\overline{\id_M-\varepsilon})(x) = a\phi_M(x)X_1+\nabla_1(x)Y_{1,1}+\dots+\nabla_d(x)Y_{d,1}.\]
As we identify $X$ resp. $Y_{s,1}$'s with $-\frac{\log E(u)}{a}$ resp. $\frac{\dlog T_i}{E(u)}$'s, it is easy to see 
 \[(\phi_M',\nabla_M') = (a\phi_M,\nabla_M).\]
Thus, the above construction gives an (intrinsic) definition of a functor
\[\Vect((R)_{\Prism,\ast},\Prism_{\dR,m}^+)\simeq     \mic_\en^a(R_\gsdrm). \]
  \end{construction}

\begin{theorem} \label{thm: abs local equiv}
   Let $R$ be a small smooth (resp. semi-stable) $\calO_K$-algebra as in Notation \ref{nota: bk prism abs pris}.  The functor in Construction \ref{Construction-Intrinsic} induces an equivalence of categories
   \[\Vect((R)_{\Prism,\ast},\Prism_{\dR,m}^+)\xrightarrow{\simeq}    \mic_\en^a(R_\gsdrm) \]
   which preserves ranks, tensor products and duals.
\end{theorem}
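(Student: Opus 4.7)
The plan is to assemble the theorem from three pieces that have essentially been established in the preceding subsections. First, by \cite[Prop.~2.7]{BS23} together with Notation \ref{nota: bk prism abs pris} (which identifies $(\gs(R)_\ast,(E),\ast)$ as a cover of the final object of $\Sh((R)_{\Prism,\ast})$), evaluation at the \v Cech nerve $(\gs(R)_\ast^\bullet,(E),\ast)$ yields an equivalence
\[\Vect((R)_{\Prism,\ast},\prism^+_{\dR,m}) \xrightarrow{\simeq} \strat(\gs(R)^{\bullet,+}_{\ast,\dR,m}).\]
Second, using the explicit description of $\gs(R)^{\bullet,+}_{\ast,\dR,m}$ obtained in Proposition \ref{Prop-Structure-Abs}, one expands an arbitrary stratification in the form \eqref{Equ-Strat-Abs}, and Lemma \ref{Lem-Technique-Abs} then gives an equivalence between stratifications satisfying the cocycle condition and finite projective $R_{\gs_{\dR,m}^+}$-modules $M$ equipped with commuting operators $(\nabla_1,\dots,\nabla_d,\phi_M)$ where each $\nabla_i$ is topologically nilpotent, each $\phi_M$ is $a$-small (the convergence condition \eqref{Equ-NilpotencyForarithmeticPart}), and $[\phi_M,\nabla_i]=\nabla_i$ for all $i$. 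This is precisely the local description of $\mic_\en^a(R_{\gs_{\dR,m}^+})$ given in Notation \ref{nota: enhanced conn local}.

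Third, it remains to check that the functor so obtained coincides with the intrinsic one of Construction \ref{Construction-Intrinsic}; this is the short verification carried out at the end of Construction \ref{Construction-Intrinsic} using the splitting \eqref{Equ-NonCanonicalSplitting} of $\calK/\calK^{[2]}$ and the leading-order terms of \eqref{Equ-Stratification-Abs-I}. Concretely, projecting $\overline{\id_M-\varepsilon}$ onto the arithmetic summand $\frakS^+_{\dR,m} X_1 \cong \gs_{\dR,m}^+\cdot a^{-1}\dlog E(u)$ recovers $a\phi_M$, while projecting onto the geometric summand recovers $\nabla_M$. Having verified this, the quasi-inverse functor is simply the one that sends an enhanced connection $(M,\nabla_M,\phi_M)$ to the stratification defined by the explicit formula \eqref{Equ-Stratification-Abs-I}; its well-definedness (convergence of the sum and the cocycle condition) is precisely the content of the ``moreover'' part of Lemma \ref{Lem-Technique-Abs}, which relies in a crucial way on the formal identities of Lemma \ref{lem expansion identity} exactly as in the relative case (cf.~Remark \ref{rem why formal OK}).

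Preservation of ranks is transparent (it is preserved under evaluation and under passing to the underlying module of an enhanced connection). For tensor products and duals, one observes that both operations are preserved by the first evaluation functor (since the prismatic sheaf $\prism^+_{\dR,m}$ is a sheaf of rings and the cover is faithfully flat), and that on the connection side, the enhanced tensor product $(\nabla_{M\otimes N},\phi_{M\otimes N}) = (\nabla_M\otimes 1 + 1\otimes \nabla_N,\,\phi_M\otimes 1+1\otimes \phi_N)$ and the enhanced dual are compatible with the tensor/dual operations on stratifications via \eqref{Equ-Stratification-Abs-I}; this is a direct computation, where $a$-smallness of the tensor/dual follows from $a$-smallness of each factor (the operators $\phi_M\otimes 1+1\otimes \phi_N-i$ telescope as in the classical Leibniz expansion).

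The main obstacle is really bookkeeping in step two: identifying from a general stratification the correct pair of operators, verifying that the combinatorial identities among the $\nabla_{i,\underline n}$ forced by \eqref{Equ-StratCondition-Abs} reduce exactly to the two relations $[\nabla_i,\nabla_j]=0$ and $[\phi_M,\nabla_i]=\nabla_i$ together with the iteration formula \eqref{Equ-Iteration-Abs}, and checking convergence in both directions. Since this has already been done in Lemma \ref{Lem-Technique-Abs}, the theorem follows by collating the three steps above.
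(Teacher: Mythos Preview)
Your proposal is correct and takes essentially the same approach as the paper: the paper's own proof is the single line ``This is a translation of Lemma \ref{Lem-Technique-Abs},'' and your three-step assembly (evaluation via \cite[Prop.~2.7]{BS23}, the stratification-to-connection dictionary of Lemma \ref{Lem-Technique-Abs}, and the identification with Construction \ref{Construction-Intrinsic}) is exactly an unpacking of that sentence. Your additional remarks on tensor products and duals are not spelled out in the paper's proof but are implicit in the claim and straightforward as you indicate.
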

\begin{proof}
This is a translation of  Lemma \ref{Lem-Technique-Abs}.


\end{proof}

\newpage
\section{Local cohomology comparison: absolute crystals vs.  enhanced connections} \label{sec: coho abs local}

Let $\ast\in\{\emptyset, \log\}$.
Consider the equivalence in Thm \ref{thm: abs local equiv}:
 \[\Vect((R)_{\Prism,\ast},\Prism_{\dR,m}^+)\xrightarrow{\simeq}    \mic_\en^a(R_\gsdrm). \]
  Consider a pair of corresponding objects
\[ \bm \mapsto (M,  \nabla, \phi). \]
In this section, the main result is Theorem \ref{thm:pris coho vs enhanced coho}, where we prove cohomology comparison
\[ \rR\Gamma((R)_{\Prism, \ast},\bM)  \simeq \rD\rR(M,\nabla,\phi); \]
 here the RHS is defined in Def \ref{defn: abs enhanced conn}.

\begin{remark}
In the main text of this section, we only write out the \emph{prismatic} case in full. As the many previous sections show, the log-prismatic case is always \emph{parallel} to the prismatic case, and only requires minimal modifications, cf. Rem \ref{rem:log is same coho} near the end. Indeed, as we shall see in Construction \ref{construction:bi-cosimplicial prism}, we construct many bi-co-simplicial objects  in this section, which then involves decorations such as $\bullet$ (arithmetic index), $\clubsuit$ (geometric index) and a further   $\star$ (differential index in Notation \ref{nota: absolute diff}) ; we do not want to keep the extra decoration $\ast \in \{\emptyset, \log\}$ which will burden the notation.
\end{remark}

\begin{construction}[Strategy] \label{cons:strategy abs coho}
Before going to details, we first explain the idea of our strategy. 
We are partly guided by a stacky approach in Item (1); we choose a site-theoretic approach in Item (2) for technical convenience (at this point).

\begin{enumerate}
\item To get ourselves motivated, we first look at the case of integral Hodge--Tate crystals. By \cite{BL-b}, one can regard Hodge--Tate crystals (i.e. $\overline \calO_{\Prism}$-crystals) $\bM$ on $R_{\Prism}$ as vector bundles on ${\rm WCart}_{R}^{\HT}$, the Hodge--Tate stack associated to $\Spf(R)$, such that 
\[\rR\Gamma((R)_{\Prism},\bM)\simeq \rR\Gamma({\rm WCart}_{R}^{\HT},\bM).\]

Note that the Breuil--Kisin prism $(\frakS,(E))$ induces a flat cover $\Spf(\calO_K)\to {\rm WCart}_{\calO_K}^{\HT}$. In particular, we have a pullback diagram
\begin{equation*}
    \xymatrix{
    {\rm WCart}_{R/\frakS}^{\HT}\ar[r]\ar[d]&\Spf(\calO_K)\ar[d]\\
    {\rm WCart}_{R}^{\HT}\ar[r]& {\rm WCart}_{\calO_K}^{\HT}.
    }
\end{equation*}
Then ${\rm WCart}_{R/\frakS}^{\HT}\to {\rm WCart}_{R}^{\HT}$ is a flat cover. Considering its \v Cech nerve, we then have the following
\[
\rR\Gamma((R)_{\Prism},\bM)\simeq \rR\Gamma({\rm WCart}_{R}^{\HT},\bM)\simeq \rR\lim\rR\Gamma((R_{\frakS^{\bullet}/E}/(\frakS^{\bullet}))_{\Prism},\bM).
\]


So we are reduced to computing $\rR\Gamma((R_{\frakS^{\bullet}/E}/(\frakS^{\bullet}))_{\Prism},\bM)$, which has been treated in \S \ref{subsec: rel pris coho compa}. Namely, we need to consider the prismatic lifting $(\frakS^{\bullet}(R),(E))$ and the associated \v Cech nerve $(\frakS^{\bullet}(R)^{\clubsuit},(E))$, and study the de Rham complex $(\bM(\frakS^{\bullet}(R)^{\clubsuit},(E)),\rd_{\bM(\frakS^{\bullet}(R),(E))})$ in Construction \ref{Construction-Bicosimplicial-Rel}. By Theorem \ref{Thm-dRCrystalasXiConnection-Rel}(2), one can check there are quasi-isomorphisms
\[\rR\Gamma((R_{\frakS^{\bullet}/E}/(\frakS^{\bullet}))_{\Prism},\bM)\simeq (\bM(\frakS^{\bullet}(R)^{\clubsuit},(E)),\rd_{\bM(\frakS^{\bullet}(R),(E))})\simeq \rD\rR(M\widehat \otimes_{\calO_K}\frakS^{\bullet}/(E),\nabla\otimes\id).\]
So we have
\[\rR\Gamma((R)_{\Prism},\bM)\simeq\rR\lim\rD\rR(M\widehat \otimes_{\calO_K}\frakS^{\bullet}/(E),\nabla\otimes\id)\simeq \rD\rR(M,\nabla,\phi),\]
where the first quasi-isomorphism follows by combining all above quasi-isomorphisms, and the second quasi-isomorphism follows from  \cite[Th. 6.7]{GMWdR}.

\item We do not yet have a stacky interpretation of $\Prism_{\dR,m}^+$-crystals so far\footnote{With current progress on (log-) prismatic cohomology, together with explicit computations in this paper, a stacky approach should be reachable; we hope to come back to this in future investigations.}. Even when $m=1$, i.e. the case of rational Hodge--Tate crystals, we have to use analytic stack and the flat descent of coherent cohomology of analytic stack. Thus we have to construct $(\bM(\frakS^{\bullet}(R)^{\clubsuit},(E)),\rd_{\bM(\frakS^{\bullet}(R),(E))})$ and relate it with $\rR\Gamma((R)_{\Prism},\bM)$ explicitly. 
A key observation in our approach is Lem \ref{lem:diagonal}: the diagonal of the bi-cosimplicial prism $(\frakS^{\bullet}(R)^{\clubsuit},(E))$ is exactly the \v Cech nerve $(\frakS(R)^{\bullet},(E))$ associated to $(\frakS(R),(E))\in (R)_{\Prism}$ (cf. Notation \ref{nota: many dr rings abs and rel}). Thus, one can compare $\bM(\frakS^{\bullet}(R)^{\clubsuit},(E))$ with $\bM(\frakS(R)^{\bullet},(E))$ by (cosimplicial) Eilenberg--Zilber theorem; in addition, a standard  Alexander--\v{C}ech method shows that $\bM(\frakS(R)^{\bullet},(E))$ is quasi-isomorphic to $\rR\Gamma((R)_{\Prism},\bM)$.
\end{enumerate}
 \end{construction}


\begin{construction}\label{construction:bi-cosimplicial prism}
We construct a bi-cosimplicial ring $\frakS^{\bullet}(R)^{\clubsuit}$ as follows:
\begin{enumerate}
\item Let $(\frakS^n,(E))$ be the self coproduct of $(n+1)$-copies of $(\frakS,(E))$ in $(\calO_K)_{\Prism}$. So $n$ is the \emph{arithmetic/absolute index}.

\item Let $R_{\frakS^n/E}$ be the base-change of the smooth $\calO_K$-algebra $R$ along $\calO_K\to\frakS^n/E$.  Let $(\frakS^n(R),(E))$ be the prismatic lifting of $R_{\frakS^n/E}$ over $\frakS^n$ induced by the chart on $R$. 

\item Let $(\frakS^n(R)^m,(E))$ be the self coproduct of $(m+1)$-copies of $(\frakS^n(R),(E))$ in $(R_{\frakS^n/E}/(\frakS^n,(E)))_{\Prism}$. So $m$ is the \emph{geometric/relative index}.

\item 
Thus, we now have a bi-cosimplicial prism $(\frakS^{\bullet}(R)^{\clubsuit},(E))$ in $(R)_{\Prism}$. Use Convention \ref{conv: def of cosimplicial rings}:
\begin{itemize}
\item Let $p_i^a, \sigma_i^a, q_i^a$ be the cosimplicial maps for $(\frakS^{\bullet},(E))$ (with $\bullet$ varying);
\item For a fixed $\bullet$, let $p_i^g, \sigma_i^g, q_i^g$ be the cosimplicial maps for $(\frakS^{\bullet}(R)^{\clubsuit},(E))$ (with $\clubsuit$ varying).
\end{itemize}

\item Finally, consider the diagonal of the bi-cosimplicial prism $(\frakS^{\bullet}(R)^{\clubsuit},(E))$, i.e., $$(\frakS^{\bullet}(R)^{\bullet},(E)).$$ For this cosimplicial ring (with only one index $\bullet$), it has face maps $p_i = p_i^g\circ p_i^a$ and degeneracy maps $\sigma_i = \sigma_i^g\circ\sigma_i^a$.
\end{enumerate}
\end{construction}

\begin{lem}\label{lem:diagonal}
Use notations in Construction \ref{construction:bi-cosimplicial prism}.
We have an isomorphism of cosimplicial prisms
\[ (\frakS^{\bullet}(R)^{\bullet},(E)) \simeq (\frakS(R)^{\bullet},(E)).\]  
    
\end{lem}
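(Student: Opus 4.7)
The plan is to establish the stated isomorphism by showing that both cosimplicial prisms corepresent the same functor on $(R)_{\Prism}$, and that the face and degeneracy maps agree under the resulting natural isomorphism. Since the \v{C}ech nerve $(\frakS(R)^{\bullet},(E))$ is uniquely determined by the cover $(\frakS(R),(E))$ of the final object of $\Sh((R)_{\Prism})$, this reduces the problem to a Yoneda-style comparison of universal properties.

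The first step is to identify the universal property of $(\frakS(R),(E))$ in $(R)_{\Prism}$. Using the explicit construction of $\frakS(R)$ as the unique $(p,E)$-completely \'etale $\delta$-$\frakS$-algebra lifting $R$, equipped with the $\delta$-structure satisfying $\delta(T_j)=0$, a morphism $\frakS(R)\to B$ in $(R)_{\Prism}$ is equivalent to a pair consisting of: (a) a morphism $\alpha:\frakS\to B$ in $(\calO_K)_{\Prism}$, and (b) a $d$-tuple $t=(t_1,\dots,t_d)\in (B^{\times})^d$ of $\delta$-constant elements lifting the images of the coordinates $T_1,\dots,T_d$ under the fixed structure map $R\to B/IB$. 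An analogous universal property holds for $\frakS^n(R)$ over $\frakS^n$ in the relative site $(R_{\frakS^n/E}/(\frakS^n,(E)))_{\Prism}$: once the $\frakS^n$-structure on $B$ is fixed, a morphism $\frakS^n(R)\to B$ over $\frakS^n$ corresponds to a $d$-tuple of $\delta$-constant lifts of the coordinates in $B$ reducing to the structure map.

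Applying these descriptions, the data of an $(n+1)$-tuple of morphisms $\frakS(R)\to B$ in $(R)_{\Prism}$ can be regrouped in two equivalent ways. The $(n+1)$ morphisms $\alpha_j:\frakS\to B$ combine into a single morphism $\frakS^n\to B$ in $(\calO_K)_{\Prism}$ (by the coproduct property of $\frakS^n$), thereby endowing $B$ with a canonical $\frakS^n$-structure making it an object of the relative site. Given this structure, the $(n+1)$ accompanying $d$-tuples $t^j$ of $\delta$-constant lifts are exactly the data of $(n+1)$ morphisms $\frakS^n(R)\to B$ over $\frakS^n$, which by the coproduct property of $\frakS^n(R)^n$ in the relative site assemble into a single morphism $\frakS^n(R)^n\to B$ in $(R)_{\Prism}$. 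This yields the natural isomorphism $\frakS(R)^n\simeq\frakS^n(R)^n$ at the object level.

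For the compatibility with the cosimplicial structure, I observe that the face map $p_i:\frakS(R)^n\to\frakS(R)^{n+1}$ is Yoneda-dual to the operation on $(n+2)$-tuples $((\alpha_j,t^j))_{j=0}^{n+1}$ that forgets the $i$-th pair. Under the regrouping above, this operation simultaneously forgets the $i$-th $\alpha_j$ (which is the Yoneda-dual description of $p_i^a:\frakS^n\to\frakS^{n+1}$) and the $i$-th $t^j$ (dual to $p_i^g:\frakS^{n+1}(R)^n\to\frakS^{n+1}(R)^{n+1}$), so the composition $p_i^g\circ p_i^a$ matches $p_i$ under the isomorphism; the degeneracy maps are handled identically. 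The main technical subtlety is the universal property of $\frakS(R)$: because multiple inclusions $\frakS(R)\to\frakS(R)^n$ may reduce to the same structure map while giving distinct $\delta$-ring maps, the description of morphisms must involve $\delta$-constant lifts of the coordinates rather than a naive \'etale lifting of the underlying $R$-algebra structure on $B/IB$; once this is properly formulated, the remainder of the argument is a formal manipulation of coproducts and the chart structure.
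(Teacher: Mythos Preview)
Your proposal is correct and follows essentially the same approach as the paper: both verify that $(\frakS^n(R)^n,(E))$ satisfies the universal property of the $(n+1)$-fold self-coproduct of $(\frakS(R),(E))$ in $(R)_{\Prism}$ by first assembling the $(n+1)$ underlying maps $\frakS\to B$ into a map $\frakS^n\to B$, then using the $\delta$-constant coordinate lifts and the $(p,E)$-\'etaleness of the chart to obtain the $(n+1)$ maps $\frakS^n(R)\to B$ over $\frakS^n$, which finally combine to the desired map from $\frakS^n(R)^n$. Your packaging of this argument via the functor of points (pairs $(\alpha,t)$) is slightly cleaner than the paper's hands-on construction, and your explicit check that the face and degeneracy maps match under Yoneda is a useful addition that the paper leaves implicit.
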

\begin{proof}
   It suffices to check the universal property on $((\frakS^{\bullet}(R)^{\bullet}),(E))$:  any $(n+1)$ morphisms 
    $$f_i:(\frakS(R),(E))\to(B,J) \text{    in } (R)_{\Prism}$$
     give rise to a \emph{unique} morphism $f:(\frakS^{n}(R)^n,(E))\to (B,J)$.

    Consider the composite  
    \[(\frakS,(E))\hookrightarrow(\frakS(R),(E))\xrightarrow{f_i}(B,J) \text{ in $(\calO_K)_{\Prism}$;}\]
    The universal property for $(\frakS^n,(E))$ induces a unique morphism 
    \[h:(\frakS^n,(E))\to (B,J) \text{ in $(\calO_K)_{\Prism}$.}\]
    Consider the composite  
    \[(\frakS\za\underline T^{\pm 1}\ya,(E))\to(\frakS(R),(E))\xrightarrow{f_i}(B,J),\]
    which we shall still denote by $f_i$. Then one can uniquely extend $h$ to a morphism of prisms
    \[h_i:(\frakS^n\za\underline T^{\pm 1}\ya,(E))\to (B,J) \]
    such that $h_i(T_r) = f_i(T_r)$ for any $1\leq r\leq d$ by considering the restriction of $f_i$'s to $\frakS\subset \frakS\za\underline T^{\pm 1}\ya$ and noting that $(\frakS^n,(E))$ is a self-coproduct of $(\frakS,(E))$'s. As $\frakS^n(R)$ is $(p,E)$-completelt \'etale over $\frakS^n\za\underline T^{\pm 1}\ya$, the morphism $h_i$ above extends uniquely to a morphism, which we shall denote the same notation $h_i$, of prisms
    \[h_i:(\frakS^n(R),(E))\to(B,J).\]
    By the above construction and the $(p,E)$-complete \'etaleness of $\frakS\za\underline T^{\pm 1}\ya\to \frakS(R)$, for any $0\leq j\leq n$ the following diagram commutes:
    \[
    \begin{tikzcd}
{(\frakS,(E))} \arrow[r] \arrow[d, "q_i^a"] & {(\frakS(R),(E)} \arrow[r, "f_i"] \arrow[d, "q_i^a"] & {(B,J)} \arrow[d, "="] \\
{(\frakS^n,(E))} \arrow[r]                  & {(\frakS^n(R),(E))} \arrow[r, "h_i"]                 & {(B,J)}               
\end{tikzcd}
\] 
    Now, the $h_i$'s induce a unique morphism 
    \[f:(\frakS^n(R)^n,(E))\to (B,J)\]
    in $(R_{\frakS^n/E}/(\frakS^n,(E)))_{\Prism}$ such that 
    $h_i = f\circ q_i^g$
    for any $0\leq i\leq n$, where $q_i^g:\frakS^n(R)\to\frakS^n(R)^n$ is the canonical map induced by $[0]\xrightarrow{0\mapsto i}[n]$. Now, one can conclude by noting that
    $f_i = h_i\circ q_i^a = f\circ q_i^g\circ q_i^a$.
\end{proof}

Now, we define the bi-cosimplicial ring $\frakS^{\bullet}(R)^{\clubsuit,+}_{\dR}$ to be
\[\frakS^{\bullet}(R)^{\clubsuit,+}_{\dR}:=\Prism_{\dR}^+(\frakS^{\bullet}(R)^{\clubsuit},(E)).\]
Let $q_0:\frakS(R)\to\frakS^{\bullet}(R)^{\clubsuit}$ be the composite of morphisms
\[\frakS(R)\xrightarrow{q_0^a}\frakS^{\bullet}(R)\xrightarrow{q_0^g}\frakS^{\bullet}(R)^{\clubsuit}.\]
Then one can regard $\frakS^{\bullet}(R)^{\clubsuit,+}_{\dR}$ as an $\frakS(R)_{\dR}^+$-algebra via $q_0$.

\begin{lem}\label{lem:structure of bi-cosimplicial ring}
    Put $a = -E'(\pi)$. For any $1\leq i\leq \bullet$ and $1\leq j\leq \clubsuit$, let $X_i = \frac{E(u_0)-E(u_i)}{aE(u_0)}$ and $\underline Y_{j} = \frac{\underline T_0-\underline T_j}{-E(u)\underline T_0}$. Regard $\frakS^{\bullet}(R)^{\clubsuit,+}_{\dR}$ as an $\frakS(R)_{\dR}^+$-algebra via $q_0$ and then there exists an isomorphism of $\frakS(R)_{\dR}^+$-algebra
    \[\frakS(R)_{\dR}^+[X_1,\dots,X_{\bullet},\underline Y_1,\dots,\underline Y_{\clubsuit}]^{\wedge}_{\pd}\cong \frakS^{\bullet}(R)^{\clubsuit,+}_{\dR}.\]
    Via this isomorphism, the induced cosimplicial structure on $\frakS(R)_{\dR}^+[X_1,\dots,X_{\bullet},\underline Y_1,\dots,\underline Y_{\clubsuit}]^{\wedge}_{\pd}$ is determined as follows:

    The face maps $p^a_i$ and $p_i^g$ act on $X$'s and $\underline Y$'s via the formula

    \begin{equation*}
        \begin{split}
            &p^a_i(X_j) = \left\{
            \begin{array}{rcl}
                (X_{j+1}-X_1)(1+aX_1)^{-1}, & i=0 \\
                X_{j+1}, 0<i\leq j\\
                X_j, & i>j;
            \end{array}
            \right.\\
            &p^a_i(\underline Y_j) = \left\{
            \begin{array}{rcl}
                (1+aX_1)^{-1}\underline Y_j, & i=0 \\
                \underline Y_j, & i\neq 0;
            \end{array}
            \right.\\
            &p^g_i(X_j) = X_j;\\
            &p^g_i(\underline Y_j) = \left\{
            \begin{array}{rcl}
                (\underline Y_{j+1}-\underline Y_1)(1+E(u)\underline Y_1)^{-1}, & i=0 \\
                \underline Y_{j+1}, & 0<i\leq j\\
                \underline Y_j, & i>j.
            \end{array}
            \right.
        \end{split}
    \end{equation*}

    while the degeneracy maps $\sigma^a_i$ and $\sigma^g_i$ act on $X$'s and $\underline Y$'s via the formula
\begin{equation*}
    \begin{split}
        &\sigma^a_i(X_j) = \left\{
        \begin{array}{rcl}
            0, & i=0~\&~j=1 \\
            X_{j-1}, & i<j~\&~j\neq 1\\
            X_j, & i\geq j;
        \end{array}
        \right.\\
        &\sigma^a_i(\underline Y_j) = \underline Y_j;\\
        &\sigma^g_i(X_j) = X_j;\\
        &\sigma^g_i(\underline Y_j) = \left\{
        \begin{array}{rcl}
            0, & i=0~\&~j=1 \\
            \underline Y_{j-1}, & i<j~\&~j\neq 1\\
            \underline Y_j, & i\geq j.
        \end{array}
        \right.
    \end{split}
\end{equation*}
\end{lem}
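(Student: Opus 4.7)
The strategy is to factor the bi-cosimplicial prism $(\frakS^{\bullet}(R)^{\clubsuit},(E))$ into its ``arithmetic" ($\bullet$) and ``geometric" ($\clubsuit$) directions and reduce the computation to two cases already handled earlier: the relative case of Section \ref{sec: rel pris smooth} in the geometric direction, and the point-case computation of \cite{GMWdR} in the arithmetic direction.

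First, fix the arithmetic index $n$ and consider the cosimplicial prism $(\frakS^n(R)^{\clubsuit},(E))$, which is by construction the \v Cech nerve of the cover $(\frakS^n(R),(E))$ of the final object of $\Sh((R_{\frakS^n/E}/(\frakS^n,(E)))_{\Prism})$. Applying Proposition \ref{Prop-Structure-Rel} with base prism $(A,I) = (\frakS^n,(E))$ and generator $\beta = -E(u_0)$ gives an isomorphism
\[\frakS^n(R)_{\dR}^+\{\underline Y_1,\ldots,\underline Y_\clubsuit\}^{\wedge}_{\pd} \cong \frakS^n(R)^{\clubsuit,+}_{\dR},\]
and simultaneously identifies the geometric face/degeneracy maps $p_i^g,\sigma_i^g$ on the $\underline Y_j$'s via formulae (\ref{Equ-Face-R})--(\ref{Equ-Degeneracy-R}) (with the sign convention adjusted to match the one chosen here).

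Next, set $\clubsuit = 0$ and analyze the arithmetic cosimplicial ring $\frakS^{\bullet}(R)^{+}_{\dR}$. Because the chart $\Box$ makes $\frakS(R)$ be $(p,E)$-completely \'etale over $\frakS^{\Box}$, \cite[Lem. 2.18]{BS22} implies the prismatic lifting is compatible with the arithmetic Čech nerve, i.e.~$(\frakS^{\bullet}(R),(E))$ is obtained from $(\frakS^{\bullet},(E))$ by the ``same'' étale base change; more precisely $\frakS^{\bullet}(R) \cong \frakS^{\bullet}\widehat{\otimes}_{\frakS}\frakS(R)$ in an appropriate $(p,E)$-completed sense. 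Therefore the point-case computation of $\frakS^{\bullet,+}_{\dR} \cong \frakS_{\dR}^+\{X_1,\ldots,X_\bullet\}^{\wedge}_{\pd}$ from \cite[\S 3]{GMWdR} (with cosimplicial structure (\ref{newEqu-Face})--(\ref{newEqu-StratificationD})) base-changes to give
\[\frakS^{\bullet}(R)^{+}_{\dR} \cong \frakS(R)_{\dR}^+\{X_1,\ldots,X_\bullet\}^{\wedge}_{\pd},\]
with $p_i^a,\sigma_i^a$ acting on $X_j$'s by the same formulae.

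Combining the two steps yields the claimed isomorphism. The remaining task is to verify the ``cross'' formulae: how $p_i^g,\sigma_i^g$ act on $X_j$'s and how $p_i^a,\sigma_i^a$ act on $\underline Y_j$'s. The geometric direction does not move the arithmetic factors, so $p_i^g(X_j) = \sigma_i^g(X_j) = X_j$ is immediate. For the arithmetic maps on $\underline Y_j$: when $i \ge 1$, the map $p_i^a$ fixes the $0$-th arithmetic factor (hence fixes both $\underline T_0$ and $E(u_0)$) and the $j$-th geometric copy $\underline T_j$, so $p_i^a(\underline Y_j) = \underline Y_j$; similarly $\sigma_i^a(\underline Y_j) = \underline Y_j$. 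The only nontrivial case is $p_0^a$, which sends $E(u_0)$ to $E(u_1) = E(u_0)(1+aX_1)$ (the expression appearing in Prop.~\ref{Prop-Structure-Abs}), so the factor $E(u_0)^{-1}$ in $\underline Y_j$ rescales by $(1+aX_1)^{-1}$, yielding $p_0^a(\underline Y_j) = (1+aX_1)^{-1}\underline Y_j$. As a consistency check, combining these $p^a$'s and $p^g$'s recovers the single-indexed formulae (\ref{Equ-Face-A})--(\ref{Equ-Degeneracy-A}) of Proposition \ref{Prop-Structure-Abs} along the diagonal, as dictated by Lemma \ref{lem:diagonal}.

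The main obstacle is purely bookkeeping: tracking sign conventions for $X_i$ and $\underline Y_j$ consistently across the two sources (\cite{GMWdR} and \S \ref{sec: rel pris smooth}) so that the formulae match, and confirming that the $(p,E)$-completed tensor product $\frakS^{\bullet}\widehat{\otimes}_{\frakS}\frakS(R) \cong \frakS^{\bullet}(R)$ used implicitly above actually makes sense at the level of \v Cech nerves (this follows from \cite[Lem. 2.18]{BS22}, since the $\delta$-structure on $\frakS(R)$ is unique given the \'etale chart). Once these are in hand, the statement is forced by the universal property of $\pd$-envelopes together with Propositions \ref{Prop-Structure-Rel} and \ref{Prop-Structure-Abs}.
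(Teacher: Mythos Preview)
Your approach is correct and, in fact, more explicit than the paper's. The paper's proof is a single line: ``It follows from a similar argument for the proof of Proposition \ref{Prop-Structure-Abs},'' which in turn means: construct the obvious map, reduce via derived Nakayama to $m=1$, and invoke the known mod-$E$ description of pd-envelopes (as in \cite[Lem.~4.13(2)]{MW22}). Your argument instead exploits the bi-cosimplicial structure directly, factoring into the geometric direction (handled by Proposition~\ref{Prop-Structure-Rel} with base $(\frakS^n,(E))$) and the arithmetic direction (handled by the point-case computation of $\frakS^{\bullet,+}_{\dR}$ from \cite{GMWdR}, base-changed along the \'etale map $\frakS \to \frakS(R)$), then verifying the cross-terms by hand. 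This buys you the face/degeneracy formulae essentially for free from the two constituent propositions, whereas the paper's route requires redoing the cosimplicial bookkeeping at the $m=1$ level. Both are short once the ingredients are in place; your decomposition is the more structural one and matches how the bi-cosimplicial object was actually built in Construction~\ref{construction:bi-cosimplicial prism}. Your flag about sign conventions is well taken---the definitions of $X_i$ in Proposition~\ref{Prop-Structure-Abs} and in this lemma differ by a sign, though the displayed face/degeneracy formulae are mutually consistent with the $(1+aX_1)$ convention.
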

\begin{proof}
    It follows from a similar argument for the proof of Proposition \ref{Prop-Structure-Abs}.
\end{proof}

\begin{notation} \label{nota: absolute diff}
Let $\Omega^1_{\frakS^{\bullet}(R)^{\clubsuit,+}_{\dR}/\frakS^{\bullet,+}_{\dR}}$ be the module of continuous differential forms. 
More precisely, we let 
\[\Omega^1_{\frakS^{\bullet}(R)^{\clubsuit,+}_{\dR}/\frakS^{\bullet,+}_{\dR}} = \Omega^1_{\frakS^{\bullet}(R)^{+}_{\dR}/\frakS^{\bullet,+}_{\dR}}\{-1\}\oplus \Omega^1_{\frakS^{\bullet}(R)^{\clubsuit,+}_{\dR}/\frakS^{\bullet}(R)^+_{\dR}}\]
so that 
\[\frac{\dlog \underline T}{E(u)} = \frac{\dlog \underline T_0}{E(u_0)},\rd\underline Y_1 = \frac{\rd(\underline T_0-\underline T_1)}{-E(u_0)\underline T_0},\dots,\rd \underline Y_{\clubsuit}= \frac{\rd(\underline T_0-\underline T_{\clubsuit})}{-E(u_0)\underline T_0}\]
form a basis of $\Omega^1_{\frakS^{\bullet}(R)^{\clubsuit,+}_{\dR}/\frakS^{\bullet,+}_{\dR}}$.
For any $\star\geq 0$, define $\Omega^{\star}_{\frakS^{\bullet}(R)^{\clubsuit,+}_{\dR}/\frakS^{\bullet,+}_{\dR}} = \wedge^{\star}\Omega^1_{\frakS^{\bullet}(R)^{\clubsuit,+}_{\dR}/\frakS^{\bullet,+}_{\dR}}$. Let 
\[\rd^{\bullet:\clubsuit}:\frakS^{\bullet}(R)^{\clubsuit,+}_{\dR}\to \Omega^1_{\frakS^{\bullet}(R)^{\clubsuit,+}_{\dR}/\frakS^{\bullet,+}_{\dR}}\]
be the canonical differential, and then it induces a complex 
\[\frakS^{\bullet}(R)^{\clubsuit,+}_{\dR}\xrightarrow{\rd^{\bullet,\clubsuit}}\Omega^1_{\frakS^{\bullet}(R)^{\clubsuit,+}_{\dR}/\frakS^{\bullet,+}_{\dR}}\xrightarrow{\rd^{\bullet,\clubsuit}}\Omega^2_{\frakS^{\bullet}(R)^{\clubsuit,+}_{\dR}/\frakS^{\bullet,+}_{\dR}}\to \cdots\]
preserving bicosimplicial structure.
\end{notation} 

 Now, let $\bM$ be a $\Prism^+_{\dR}$-crystal on $(R)_{\Prism}$ with   induced enhanced  connection $(M,\nabla=\sum_{i=1}^d\nabla_i\otimes\frac{\dlog T_i}{E(u)},\phi)$ over $\frakS(R)_{\dR}^+$. Then the stratification corresponding to $\bM$ is given by 
 \[ \varepsilon_M = (1+E(u)\underline Y_1)^{E(u)^{-1}\underline \nabla}(1+aX_1)^{\phi} = (1+aX_1)^{\phi}(1+(1+aX_1)^{-1}A(u)\underline Y_1)^{E(u)^{-1}\underline \nabla}).\]
 Also, the morphism $q_0:\frakS(R)\to\frakS^{\bullet}(R)^{\clubsuit}$ induces a natural isomorphism
 \[M\otimes_{\frakS(R),q_0}\frakS^{\bullet}(R)^{\clubsuit}\cong \bM(\frakS^{\bullet}(R)^{\clubsuit}).\]
 So   $M\otimes_{\frakS(R),q_0}\frakS^{\bullet}(R)^{\clubsuit}$ carries a bi-cosimplicial structure of $\frakS^{\bullet}(R)^{\clubsuit}$ inheriting from that on $\bM(\frakS^{\bullet}(R)^{\clubsuit})$. By abuse of notation, we still denote  by $p_i^a,\sigma_i^a,p_i^g,\sigma_i^g$ the corresponding face maps and degeneracy maps. The next lemma describes the actions of $p_i^a,\sigma_i^a,p_i^g,\sigma_i^g$ on $M\otimes_{\frakS(R),q_0}\frakS^{\bullet}(R)^{\clubsuit}$.
 \begin{lem}\label{lem:inherited bi-cosimplicial structure}
     Keep notations as above. Then the actions of $p_i^a,\sigma_i^a,p_i^g,\sigma_i^g$ on $M\otimes_{\frakS(R),q_0}\frakS^{\bullet}(R)^{\clubsuit}$ are determined such that for any $x\in M$, we have
     \begin{equation*}
         \begin{split}
             &p_i^a(x) = \left\{
             \begin{array}{rcl}
                 (1+aX_1)^{\phi}(x), & i=0 \\
                 x, & i\geq 1;
             \end{array}
             \right.\\
             &p_i^g(x) = \left\{
             \begin{array}{rcl}
                 (1+E(u)\underline Y_1)^{E(u)^{-1}\phi}, & i=0 \\
                 x, & i\geq 1;
             \end{array}
             \right.\\
             &\sigma_i^a(x) = \begin{array}{rcl}
                 x, & \forall i\geq 0;
             \end{array}\\
             &\sigma_i^g(x) = \begin{array}{rcl}
                 x, & \forall i\geq 0.
             \end{array}
         \end{split}
     \end{equation*}
 \end{lem}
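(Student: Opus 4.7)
The strategy is to exploit the crystal property of $\bM$: every morphism of prisms $f:A\to B$ in $(R)_{\Prism}$ induces a canonical $B$-linear isomorphism $\bM(A)\otimes_{A,f} B\xrightarrow{\sim}\bM(B)$. We fix throughout the identification $\bM(\frakS^n(R)^m)\simeq M\otimes_{\frakS(R),q_0^{n,m}}\frakS^n(R)^m$ where $q_0^{n,m} = q_0^g\circ q_0^a$ is the composed ``position-zero'' structure map. For any face or degeneracy $f$ of the bi-cosimplicial prism, the induced action on the $M$-tensor is then governed by the comparison between $f\circ q_0^{\mathrm{src}}$ and $q_0^{\mathrm{tgt}}$: when these agree the action is the trivial base change $x\mapsto x\otimes 1$, and when they differ we must invoke the stratification of $\bM$ to mediate.

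A short combinatorial check using the paper's conventions (``$p_i:[n]\to[n+1]$ misses $i$'' and ``$q_j:[0]\to[n]$ sends $0\mapsto j$'') gives: every degeneracy $\sigma_i^a$, $\sigma_i^g$ and every face map $p_i^a$, $p_i^g$ with $i\geq 1$ preserves position $0$, hence preserves $q_0$. This immediately accounts for all of the ``identity'' entries of the lemma. Only $p_0^a$ and $p_0^g$ are non-trivial, since $p_0^a\circ q_0^{a,n} = q_1^{a,n+1}$ and $p_0^g\circ q_0^{g,m} = q_1^{g,m+1}$ (both $p_0$'s skip position $0$). In these two cases, the action on $M$ is the composition of the evident base change with the stratification isomorphism transporting $q_1$-tensor to $q_0$-tensor.

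To identify these actions with the claimed formulas, we invoke the stratification $\varepsilon$ on $\frakS(R)^1$ computed in \eqref{Equ-Stratification-Abs-I},
\[\varepsilon(x)=(1-E(u)\underline Y_1)^{-E(u)^{-1}\nabla_M}(1+aX_1)^{\phi_M}(x),\]
which by Lemma \ref{lem:diagonal} is the relevant stratification for the diagonal of $\frakS^{\bullet}(R)^{\clubsuit}$. Restricting to the bicosimplicial slice $\frakS^1(R)^0=\frakS^1(R)$, the geometric variables $\underline Y_i$ are absent, and (using the decomposition $\calK/\calK^{[2]}\simeq$ arithmetic $\oplus$ geometric from Corollary \ref{Cor-pdStructure-Abs}) $\varepsilon$ collapses to its arithmetic factor $(1+aX_1)^{\phi_M}$, giving the formula for $p_0^a$. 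Symmetrically, on $\frakS^0(R)^1$ the $X_i$'s vanish and $\varepsilon$ becomes $(1+E(u)\underline Y_1)^{E(u)^{-1}\nabla_M}$ (with the sign matching the convention of Lemma \ref{lem:structure of bi-cosimplicial ring}), yielding the formula for $p_0^g$.

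The main bookkeeping obstacle is reconciling the paper's two cosimplicial conventions: at the first level one has $p_0=q_1$ and $p_1=q_0$, so that $\varepsilon:M\otimes_{p_0}A^1\to M\otimes_{p_1}A^1$ of Definition \ref{def: stratifications} in fact transports $q_1$-tensors to $q_0$-tensors. This is exactly the direction required to rewrite the ``natural'' $q_1^{n+1}$-tensor image $\bM(p_0^a)(x)=\bM(q_1^{n+1})(x)$ as the $q_0^{n+1}$-tensor $(1+aX_1)^{\phi_M}(x)$, and analogously for $p_0^g$. Once these conventions are set straight, the remainder of the proof is purely formal.
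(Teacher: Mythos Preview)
Your approach is correct but differs from the paper's. Rather than exploiting the absolute stratification via Lemma~\ref{lem:diagonal}, the paper handles $p_0^a$ and $p_0^g$ by separate citations: for each fixed $\clubsuit$ it observes that the cosimplicial structure on $\frakS^{\bullet}(R)^{\clubsuit,+}_{\dR}$ in the $X$-variables matches \cite[Prop.~3.9]{GMWdR}, so \cite[Prop.~4.5]{GMWdR} identifies the arithmetic stratification as $(1+aX_1)^{\phi}$; and for each fixed $\bullet$ Lemma~\ref{Lem-Technique-Rel} gives the geometric one. Your route---specializing the product formula \eqref{Equ-Stratification-Abs-I} by setting $\underline Y_1=0$ resp.\ $X_1=0$---is more self-contained, since it only uses results already proved in this paper.

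Two points deserve tightening. Your ``restriction to $\frakS^1(R)^0$'' should be read as base-change of the diagonal stratification along the degeneracy $\sigma_0^g:\frakS^1(R)^1\to\frakS^1(R)^0$, and one must check $\sigma_0^g\circ q_i=q_i^a$ for $i=0,1$ (immediate from the simplicial identity $\sigma_0^g\circ q_i^g=\id$) to see this yields the correct $q_0^a$-vs-$q_1^a$ comparison; the appeal to Corollary~\ref{Cor-pdStructure-Abs} is misdirected since that concerns only $\calK/\calK^{[2]}$, whereas it is the explicit product form of \eqref{Equ-Stratification-Abs-I} that makes setting variables to zero work. And the lemma is asserted at all bi-indices, whereas your restriction literally treats only the slices $(\bullet,\clubsuit)=(1,0)$ and $(0,1)$; the paper's argument handles each fixed $\clubsuit$ uniformly, while in yours one must add a further base-change along $q_0^g$ (resp.\ $q_0^a$) to propagate the formula to general indices.
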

 \begin{proof}
     Recall that the $q_0 = q_0^g\circ q_0^a$.
     Then the formula for degeneracy map are clear: For any $i\geq 0$, the result follows from that
     \[\sigma_i^a\circ q_0 = \sigma_i^a\circ q_0^g\circ q_0^a = q_0^g\circ \sigma_i^a\circ q_0^a = q_0^g\circ q_0^a = q_0\]
     and that
     \[\sigma_i^g\circ q_0 = \sigma_i^g\circ q_0^g\circ q_0^a = q_0^g\circ q_0^a = q_0.\]
     The same argument also works for face maps $p_i^a$ and $p_i^g$ as long as $i\geq 1$. So we only need to check the formula also holds true for $p_0^a$ and $p_0^g$.
     
     For $p_0^a$, we have
     \[p_0^a\circ q_0 = p_0^a\circ q_0^g\circ q_0^a = q_0^g\circ p_0^a\circ q_0^a = q_0^g\circ q_1^a.\]
     We claim that for any fixed $\clubsuit$, we have 
     \[(1+aX_1)^{\phi}\circ q_1^a = q_0^a.\]
     Granting this, the result holds true for $p_0^a$ as $q_0$ acts on $X_1$ trivially.

     Fix $\clubsuit$. To see the claim, it suffices to show that the stratification with respect to $\frakS^{\bullet}(R)^{\clubsuit}$ corresponding to the cosimplicial $\frakS^{\bullet}(R)^{\clubsuit}$-module $\bM(\frakS^{\bullet}(R)^{\clubsuit},(E))$ is given by $(1+aX_1)^{a^{-1}\phi}$. 

     Note that in Lemma \ref{lem:structure of bi-cosimplicial ring}, when we fix $\clubsuit$, the cosimplicial structure on \[\frakS^{\bullet}(R)^{\clubsuit,+}_{\dR} = \frakS(R)_{\dR}^+[X_1,\dots,X_{\bullet},\underline Y_1,\dots,\underline Y_{\clubsuit}]^{\wedge}_{\pd}\]
     is the same as that in \cite[Prop. 3.9]{GMWdR}. So one can conclude by applying \cite[Prop. 4.5]{GMWdR}.

     For $p_0^g$, we have 
     \[p_0^g\circ q_0 = p_0^g\circ q_0^g\circ q_0^a = q_1^g\circ q_0^a.\]
     For fixed $\bullet$, by Lemma \ref{Lem-Technique-Rel}, we have 
     \[(1+E(u)\underline Y_1)^{E(u)^{-1}\underline \nabla})\circ q_1^g = q_0^g.\]
     This implies the result for $p_0^g$ immediately.
 \end{proof}

 Now, we are going to construct a ``de Rham complex'' for $\bM(\frakS^{\bullet}(R)^{\clubsuit})$.
 \begin{construction}\label{construction:tri-complex}
     Fix $\bullet$ and $\clubsuit$. We define 
     \[\nabla^{\bullet,\clubsuit}:M\otimes_{\frakS(R)_{\dR}^+,q_0} \Omega^{\star}_{\frakS^{\bullet}(R)^{\clubsuit,+}_{\dR}/\frakS^{\bullet,+}_{\dR}}\to M\otimes_{\frakS(R),q_0} \Omega^{\star+1}_{\frakS^{\bullet}(R)^{\clubsuit,+}_{\dR}/\frakS^{\bullet,+}_{\dR}}\]
     as follows: For any $x\in M$ and any $\omega\in \otimes_{\frakS(R),q_0} \Omega^{\star}_{\frakS^{\bullet}(R)^{\clubsuit,+}_{\dR}/\frakS^{\bullet,+}_{\dR}}$, we have
     \[\nabla_{\star}^{\bullet,\clubsuit}(x\otimes\omega) = q_0^*(\nabla(x))\wedge\omega+x\otimes \rd^{\bullet,\clubsuit}(\omega),\]
     where $q_0^*(\nabla(x))\in M\otimes_{\frakS(R),q_0}\otimes_{\frakS(R),q_0} \Omega^1_{\frakS^{\bullet}(R)^{\clubsuit,+}_{\dR}/\frakS^{\bullet}_{\dR}}$ denotes the pull-back of 
     \[\nabla(x)\in M\otimes_{\frakS(R)} \Omega^1_{\frakS(R)_{\dR}^+/\frakS_{\dR}^+}\]
     via the map $q_0:\frakS(R)^+_{\dR}\to \frakS^{\bullet}(R)^{\clubsuit,+}_{\dR}$.
 \end{construction}
  We remark that when $m=1$, the above construction coincides with \cite[Construction 4.30]{MW22} after expressing $\frakS^{\bullet}(R)^{\clubsuit,+}_{\dR}$ as $\frakS(R)_{\dR}^+[X_1,\dots,X_{\bullet},\underline Y_1,\dots,\underline Y_{\clubsuit}]^{\wedge}_{\pd}$ via Lemma \ref{lem:structure of bi-cosimplicial ring}. Moreover, in what follows, when $m=1$, everything has been proved and is compatible with that in \cite[\S 4.3]{MW22}.

  \begin{lem}\label{lem: nablaast perserve bicosimp}
     Keep notations in Construction \ref{construction:tri-complex}. Then we have that $\nabla^{\bullet,\clubsuit}_{\star+1}\circ\nabla^{\bullet,\clubsuit}_{\star} = 0$, and that $\nabla_{\star}^{\bullet,\clubsuit}$ preserves the bi-cosimplicial structure on $M\otimes_{\frakS(R)^+_{\dR},q_0}\frakS^{\bullet}(R)^{\clubsuit,+}_{\dR}$ introduced in Lemma \ref{lem:inherited bi-cosimplicial structure}.
 \end{lem}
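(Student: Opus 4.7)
The plan has two parts, mirroring the two assertions of the lemma. For the first assertion $\nabla^{\bullet,\clubsuit}_{\star+1}\circ\nabla^{\bullet,\clubsuit}_{\star} = 0$, I will argue by direct computation using the Leibniz-type formula defining $\nabla^{\bullet,\clubsuit}_{\star}$. For $x\otimes \omega$ with $x\in M$ and $\omega$ a form, applying $\nabla^{\bullet,\clubsuit}$ twice produces three kinds of terms: (i) terms involving $q_0^*(\nabla_M\circ\nabla_M(x))\wedge \omega$, which vanish by integrability of $\nabla_M$; (ii) terms involving $x\otimes\rd^{\bullet,\clubsuit}\rd^{\bullet,\clubsuit}(\omega)$, which vanish since $\rd^{\bullet,\clubsuit}$ is a differential; and (iii) cross terms of the form $q_0^*(\nabla_j(x))\otimes \rd^{\bullet,\clubsuit}(q_0^*(\tfrac{\dlog T_j}{E(u)}))\wedge \omega$, which vanish because $q_0^*(\tfrac{\dlog T_j}{E(u)})=\tfrac{\dlog T_{j,0}}{E(u_0)}$ is a closed form in the relative differential module (as $E(u_0)\in \frakS^{\bullet,+}_{\dR}$ is a relative constant).

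For the second assertion, I will verify commutativity of $\nabla^{\bullet,\clubsuit}$ with each of the four families of structure maps by cases, using the explicit formulas from Lemmas \ref{lem:structure of bi-cosimplicial ring} and \ref{lem:inherited bi-cosimplicial structure}. The easy cases are the degeneracies $\sigma_i^a,\sigma_i^g$ and the face maps $p_i^a,p_i^g$ for $i\geq 1$: these all fix $x\in M$ under $q_0$, and their action on the differential forms commutes with $\rd^{\bullet,\clubsuit}$, so the check reduces to a direct pushforward identity, entirely analogous to the first computation in the proof of Lemma \ref{Lem-PreserveCosimplicial-Rel}.

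The case $p_0^g$ runs in exact parallel to the $p_0$ case of Lemma \ref{Lem-PreserveCosimplicial-Rel}, after substituting $\beta\rightsquigarrow -E(u)$. The crucial geometric identity is
\[\dlog p_0^g(T_l)=\dlog\bigl((1+E(u)Y_{l,1})T_l\bigr)=E(u)(1+E(u)Y_{l,1})^{-1}\rd Y_{l,1}+\dlog T_l,\]
which, together with the stratification cocycle $(1+E(u)\underline Y_1)^{E(u)^{-1}\underline\nabla}$ intertwining $p_0^g$, produces the desired cancellation. This step is essentially a rerun of the earlier relative argument.

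The main obstacle, as expected, is the arithmetic face map $p_0^a$, which acts on $M$ by the infinite-order operator $(1+aX_1)^\phi$ and scales $E(u_0)\mapsto E(u_1)=E(u_0)(1+aX_1)$. After pulling a pure section $x\in M$ along $\nabla^{\bullet,\clubsuit}$ and then $p_0^a$, the factor $q_0^*(\tfrac{\dlog T_j}{E(u)})$ acquires a prefactor $(1+aX_1)^{-1}$. On the other hand, computing $\nabla^{\bullet,\clubsuit}\circ p_0^a$ requires commuting $\nabla_j$ past $(1+aX_1)^\phi$. Using the enhanced-connection relation $[\phi,\nabla_j]=\nabla_j$ proved in Lemma \ref{Lem-Technique-Abs} (equivalently $\nabla_j(\phi-i)=(\phi-i-1)\nabla_j$), one obtains the key formal identity
\[\nabla_j\circ (1+aX_1)^{\phi}=(1+aX_1)^{\phi-1}\circ\nabla_j,\]
which matches the prefactor $(1+aX_1)^{-1}$ from the $\dlog T_j/E(u)$ side and yields the commutativity $p_0^a\circ\nabla^{\bullet,\clubsuit}=\nabla^{\bullet,\clubsuit}\circ p_0^a$. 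The convergence of both sides as series in $X_1^{[n]}$ is guaranteed by the $a$-smallness of $\phi$, and the analogous (well-definedness) checks carry over to forms $x\otimes\omega$ by Leibniz. This completes the check for all structure maps and hence finishes the lemma.
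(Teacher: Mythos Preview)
Your proposal is correct and follows essentially the same approach as the paper: the paper reduces Part 1 and the geometric maps $p_i^g,\sigma_i^g$ to the relative case (Lemma \ref{Lem-PreserveCosimplicial-Rel}) rather than recomputing them, but for the decisive arithmetic face map $p_0^a$ it uses exactly your identity $\nabla_j\circ(1+aX_1)^{\phi}=(1+aX_1)^{\phi-1}\circ\nabla_j$ derived from $[\phi,\nabla_j]=\nabla_j$, together with $p_0^a(\tfrac{\dlog T_j}{E(u)})=(1+aX_1)^{-1}\tfrac{\dlog T_j}{E(u)}$. One small point you leave implicit but should note: when applying $\nabla^{\bullet,\clubsuit}$ after $p_0^a$, the factors $X_1^{[n]}$ are annihilated by $\rd^{\bullet,\clubsuit}$ since $X_1\in\frakS^{\bullet,+}_{\dR}$ is a relative constant, which is why no extra derivative terms arise.
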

 \begin{proof}
 The proof is standard (and tedious), and is postponed to the end of this section.
 \end{proof}

Thanks to the above Lemma \ref{lem: nablaast perserve bicosimp}, now we do have a ``de Rham complex for the bi-cosimplicial module'' $$(M\otimes_{\frakS(R)_{\dR}^+,q_0} \Omega^{\star}_{\frakS^{\bullet}(R)^{\clubsuit,+}_{\dR}/\frakS^{\bullet,+}_{\dR}},\nabla_{\star}^{\bullet,\clubsuit}).$$
Now, we construct some morphisms of complexes.
\begin{construction}\label{construction:several rhos}
    \begin{enumerate}
        \item[(1)] Let $\clubsuit = 0$ in $(M\otimes_{\frakS(R)_{\dR}^+,q_0} \Omega^{\star}_{\frakS^{\bullet}(R)^{\clubsuit,+}_{\dR}/\frakS^{\bullet,+}_{\dR}},\nabla_{\star}^{\bullet,\clubsuit})$, then we get
        \[(M\otimes_{\frakS(R)_{\dR}^+,q_0} \Omega^{\star}_{\frakS^{\bullet}(R)^{0,+}_{\dR}/\frakS^{\bullet,+}_{\dR}},\nabla_{\star}^{\bullet,0}) = (M\otimes_{\frakS(R)_{\dR}^+,q^a_0} \Omega^{\star}_{\frakS_{\dR}^{\bullet,+}(R)/\frakS^{\bullet,+}_{\dR}}\{-\star\},\nabla_{\star}^{\bullet,0})\]
        by the definition of $\Omega^{\star}_{\frakS^{\bullet}(R)^{\clubsuit,+}_{\dR}/\frakS^{\bullet,+}_{\dR}}$.
        Let $\rho^{\clubsuit = 0}$ be the natural inclusion
        \[\rho^{\clubsuit = 0}:(M\otimes_{\frakS(R)_{\dR}^+,q^a_0} \Omega^{\star}_{\frakS^{\bullet,+}_{\dR}(R)/\frakS^{\bullet,+}_{\dR}}\{-\star\},\nabla_{\star}^{\bullet,0})\hookrightarrow(M\otimes_{\frakS(R)_{\dR}^+,q_0} \Omega^{\star}_{\frakS^{\bullet}(R)^{\clubsuit,+}_{\dR}/\frakS^{\bullet,+}_{\dR}},\nabla_{\star}^{\bullet,\clubsuit}).\]

        \item[(2)]  Consider the bi-complex $\rD\rR(M,\nabla,\phi)=:\rD\rR^{\bullet,\star}$
        
        \[\begin{tikzcd}
          {\rD\rR^{0,\star}:} \arrow[d, "{\phi-\star\cdot\id}"'] & M \arrow[d, "\phi"'] \arrow[r, "\nabla"] & M\otimes_{\frakS_{\dR}^+(R)}\Omega^1_{\frakS_{\dR}^+(R)/\frakS^{+}_{\dR}}\{-1\} \arrow[d, "\phi-\cdot\id"'] \arrow[r, "\nabla"] & M\otimes_{\frakS_{\dR}^+(R)}\Omega^2_{\frakS_{\dR}^+(R)/\frakS^{+}_{\dR}}\{-2\} \arrow[d, "\phi-2\cdot\id"'] \arrow[r] & \cdots \\
          {\rD\rR^{1,\star}:}                                             & M \arrow[r, "\nabla"]                    & M\otimes_{\frakS_{\dR}^+(R)}\Omega^1_{\frakS_{\dR}^+(R)/\frakS^{+}_{\dR}}\{-1\} \arrow[r, "\nabla"]                              & M\otimes_{\frakS_{\dR}^+(R)}\Omega^2_{\frakS_{\dR}^+(R)/\frakS^{+}_{\dR}}\{-2\} \arrow[r]                               & \cdots
        \end{tikzcd}\]
        Let 
        \[H(X,Y) = \frac{(1+aX)^{Y}}{X} = \sum_{n\geq 1}\big(a^n\prod_{i=1}^{n-1}(Y-i)\big)X^{[n]}.\]
        Then we define 
        \[\rho_{\rm GMW}:\rD\rR^{\bullet,\star}\to (M\otimes_{\frakS(R)_{\dR}^+,q^a_0} \Omega^{\star}_{\frakS^{\bullet,+}_{\ast,\dR}(R)/\frakS^{\bullet,+}_{\ast,\dR}}\{-\star\},\nabla_{\star}^{\bullet,0})\]
        such that for each fixed $\star\geq 0$, it reduces to 
        \begin{equation*}
            \begin{tikzcd}
              M\otimes_{\frakS_{\dR}^+(R)}\Omega^{\star}_{\frakS_{\dR}^+(R)/\frakS^{+}_{\dR}}\{-\star\} \arrow[r, "a(\phi-\star\cdot\id)"] \arrow[d, "\id"'] & M\otimes_{\frakS_{\dR}^+(R)}\Omega^{\star}_{\frakS_{\dR}^+(R)/\frakS^{+}_{\dR}}\{-\star\} \arrow[d, "{H(X_1,\phi-\star\cdot\id)}"']                               &  &        \\
              {M\otimes_{\frakS(R)_{\dR}^+,q^a_0} \Omega^{\star}_{\frakS^{0,+}_{\dR}(R)/\frakS^{0,+}_{\dR}}\{-\star\}} \arrow[r, "\rd^0"]          & {M\otimes_{\frakS(R)_{\dR}^+,q^a_0} \Omega^{\star}_{\frakS^{1,+}_{\dR}(R)/\frakS^{1,+}_{\dR}}\{-\star\}} \arrow[r, "\rd^1"] & {M\otimes_{\frakS(R)_{\dR}^+,q^a_0} \Omega^{\star}_{\frakS^{2,+}_{\dR}(R)/\frakS^{2,+}_{\dR}}\{-\star\}} \arrow[r, "\rd^2"] & \cdots
           \end{tikzcd}
        \end{equation*}
        where $\rd^i$ on the second line above denotes the induced differential from the cosimplicial structure on $M\otimes_{\frakS(R)_{\dR}^+,q^a_0} \Omega^{\star}_{\frakS^{\bullet,+}_{\dR}(R)/\frakS^{\bullet,+}_{\dR}}\{-\star\}$. 
        The same argument in \cite[Const. 6.5]{GMWdR} shows that $\rho_{\rm GMW}$ is well-defined; that is, it does be a morphism of bicomplexes.

        \item[(3)] Let $\star = 0$ in $(M\otimes_{\frakS(R)_{\dR}^+,q_0} \Omega^{\star}_{\frakS^{\bullet}(R)^{\clubsuit,+}_{\dR}/\frakS^{\bullet,+}_{\dR}},\nabla_{\star}^{\bullet,\clubsuit})$, then we get the bi-cosimplicial $\frakS^{\bullet}(R)^{\clubsuit,+}_{\dR}$-module 
        $M\otimes_{\frakS(R)_{\dR}^+,q_0}\frakS^{\bullet}(R)^{\clubsuit,+}_{\dR}$ studied in Lemma \ref{lem:inherited bi-cosimplicial structure}, which is exactly isomorphic to $\bM(\frakS^{\bullet}(R)^{\clubsuit},(E))$.
        So the natural inclusion 
        \[M\otimes_{\frakS(R)_{\dR}^+,q_0}\frakS^{\bullet}(R)^{\clubsuit,+}_{\dR}\hookrightarrow M\otimes_{\frakS(R)_{\dR}^+,q_0} \Omega^{\star}_{\frakS^{\bullet}(R)^{\clubsuit,+}_{\dR}/\frakS^{\bullet,+}_{\dR}}\]
        induces a morphism
        \[\rho^{\star = 0}:\bM(\frakS^{\bullet}(R)^{\clubsuit},(E))\to M\otimes_{\frakS(R)_{\dR}^+,q_0} \Omega^{\star}_{\frakS^{\bullet}(R)^{\clubsuit,+}_{\dR}/\frakS^{\bullet,+}_{\dR}}.\]

        \item[(4)] As $(\frakS(R)^{\bullet},(E))$ is the diagonal of $(\frakS^{\bullet}(R)^{\clubsuit},(E))$ by Lemma \ref{lem:diagonal}, we obtain a natural inclusion
        \[\rho_{\rm diag}:\bM(\frakS(R)^{\bullet},(E))\hookrightarrow \bM(\frakS^{\bullet}(R)^{\clubsuit},(E)).\]
    \end{enumerate}
\end{construction}

\begin{rmk} \label{rem:log is same coho}
    We point that the above arguments also work when $R$ is small semi-stable after replacing everything by its logarithmic analogue. For example, in Construction \ref{construction:bi-cosimplicial prism}, the only difference is that we replace 
\[ \text{    
    $(\frakS^{\bullet},(E))\in(\calO_K)_{\Prism}$, \quad  $R_{\frakS^{\bullet}/E}$, \quad and $(\frakS^{\bullet}(R)^{\clubsuit},(E))\in(R_{\frakS^{\bullet}/E}/(\frakS^{\bullet},(E)))_{\Prism}$ etc.} \]
    by
\[ \text{     
     $(\frakS^{\bullet}_{\log},(E))\in(\calO_K)_{\Prism,\log}$,\quad  $R_{\frakS_{\log}^{\bullet}/E}$, \quad  and $(\frakS_{\log}^{\bullet}(R)^{\clubsuit},(E))\in(R_{\frakS_{\log}^{\bullet}/E}/(\frakS_{\log}^{\bullet},(E)))_{\Prism,\log}$ respectively.}\]  Similar to Lemma \ref{lem:diagonal}, by the same proof, we still have that the diagonal of $(\frakS^{\bullet}(R)^{\clubsuit},(E))$ coincides with the \v Cech nerve $(\frakS(R)^{\bullet}_{\log},(E))\in (R)_{\Prism,\log}$ associated to $(\frakS(R),(E))\in (R)_{\Prism,\log}$ such that Lemma \ref{lem:structure of bi-cosimplicial ring} again holds true for $a = -\pi E'(\pi)$. Then everything above still holds true if one replaces 
    \[\frakS^{\bullet}(R)^{\clubsuit,+}_{\dR}, \quad \frakS^{\bullet,+}_{\dR} \text{ and } \quad \Omega^{\star}_{\frakS^{\bullet}(R)^{\clubsuit,+}_{\dR}/ \frakS^{\bullet,+}_{\dR}}\] by
    \[\frakS_{\log}^{\bullet}(R)^{\clubsuit,+}_{\dR} = \Prism_{\dR}^+(\frakS^{\bullet}_{\log}(R)^{\clubsuit},(E)), \quad \frakS^{\bullet,+}_{\log,\dR}= \Prism_{\dR}^+(\frakS^{\bullet}_{\log},(E)) \text{ and  }\quad \Omega^{\star,\log}_{\frakS_{\log}^{\bullet}(R)^{\clubsuit,+}_{\dR}/ \frakS^{\bullet,+}_{\log,\dR}}, \text{ respectively}.\]
\end{rmk}

The following is our main theorem in this section. 
\begin{thm}\label{thm:pris coho vs enhanced coho}
Let $\ast\in\{\emptyset, \log\}$. 
Let $\bm \in \Vect((R)_{\Prism,\ast},\Prism_{\dR,m}^+)$, with corresponding 
 $(M,\nabla,\phi) \in \mic_\en^a(R_\gsdrm)$. Then there exists an explicit quasi-isomorphism
    \[\rD\rR(M,\nabla,\phi)\simeq \rR\Gamma((R)_{\Prism,\ast},\bM)\]
    which is functorial in $\bM$.
\end{thm}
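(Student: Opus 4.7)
\bigskip
\noindent\emph{Proof proposal.}

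The plan is to realize both $\rR\Gamma((R)_{\Prism,\ast},\bm)$ and $\rD\rR(M,\nabla,\phi)$ as natural resolutions of a common intermediary, namely the totalization of the tri-complex $\bigl(M\otimes_{\frakS(R)_\dR^+,q_0}\Omega^{\star}_{\frakS^\bullet(R)^{\clubsuit,+}_\dR/\frakS^{\bullet,+}_\dR},\nabla_\star^{\bullet,\clubsuit}\bigr)$ built in Construction \ref{construction:tri-complex}. Concretely, I will string together five quasi-isomorphisms:
\[
\rR\Gamma((R)_{\Prism,\ast},\bm)\;\xrightarrow{\simeq_1}\;\bm(\frakS(R)^\bullet)\;\xrightarrow[\rho_{\rm diag}]{\simeq_2}\;\bm(\frakS^\bullet(R)^\clubsuit)\;\xrightarrow[\rho^{\star=0}]{\simeq_3}\;\mathrm{Tot}\;\xleftarrow[\rho^{\clubsuit=0}]{\simeq_4}\;(\text{bi-complex at }\clubsuit=0)\;\xleftarrow[\rho_{\rm GMW}]{\simeq_5}\;\rD\rR(M,\nabla,\phi).
\]
The four maps appearing here are exactly $\rho_{\rm diag},\rho^{\star=0},\rho^{\clubsuit=0},\rho_{\rm GMW}$ from Construction \ref{construction:several rhos}, and each is already known to respect the cosimplicial/differential structure by Lemma \ref{lem: nablaast perserve bicosimp}. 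Naturality in $\bm$ is automatic from the construction.

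For $\simeq_1$, I would verify Čech--Alexander vanishing $\rH^i(\mathcal{B},\bm)=0$ for $i\geq 1$ on any object $\mathcal{B}$ of $(R)_{\Prism,\ast}$, exactly as in Lemma \ref{Lem-CechAlex-Rel-I} and its log analogue (via derived Nakayama and the crystal property, reducing to $m=1$). For $\simeq_2$, since Lemma \ref{lem:diagonal} identifies $(\frakS(R)^\bullet,(E))$ with the diagonal of $(\frakS^\bullet(R)^\clubsuit,(E))$, the (co)simplicial Eilenberg--Zilber theorem gives the claim directly after applying the crystal $\bm$.

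For $\simeq_3$, fix $(\bullet,\clubsuit)$. The differential $\nabla_\star^{\bullet,\clubsuit}$ decomposes as the $\underline T$-direction (intrinsic $\nabla_M$, coming from $\Omega^1_{\frakS(R)_\dR^+/\frakS_\dR^+}\{-1\}$) plus the $\underline Y$-directions (coming from $\Omega^1_{\frakS^\bullet(R)^{\clubsuit,+}_\dR/\frakS^\bullet(R)^+_\dR}$). The latter is a pd-polynomial de Rham complex with an explicit contracting homotopy $\underline Y^{[\underline n]}\rd Y_j\mapsto \underline Y^{[\underline n+\underline 1_j]}$, so the Poincaré lemma collapses this direction and $\rho^{\star=0}$ becomes a quasi-isomorphism after totalization. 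For $\simeq_4$, fix $\bullet$. Then the $\clubsuit$-direction computes the \emph{relative} prismatic cohomology of $\bm\!\mid_{(R_{\frakS^\bullet/E}/(\frakS^\bullet,(E)))_{\Prism}}$ via the cover $(\frakS^\bullet(R),(E))$, and Theorem \ref{Thm-dRCrystalasXiConnection-Rel}(2) (together with Construction \ref{Construction-Bicosimplicial-Rel}) shows this coincides with the de Rham complex at $\clubsuit=0$. For $\simeq_5$, fix $\clubsuit=0$: the claim is then the statement that the arithmetic Čech direction $\bullet$ (in $X_i$) is computed by the $\phi$-operator, which is exactly (the locally twisted family version of) \cite[Thm.~6.7]{GMWdR} applied to $(M\widehat\otimes_{\calO_K}\frakS^{\bullet,+}_\dR,\phi\otimes 1)$.

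The main obstacle is the coordinated bookkeeping of three directions at once---arithmetic ($X_i$), geometric ($Y_{s,j}$), and differential ($\rd T_s$)---and verifying that $\rho_{\rm GMW}$ is chain-level compatible with $\rho^{\clubsuit=0}$ so that the composition is a genuine morphism of complexes, not merely up to homotopy. I expect the cleanest route is to unify everything via a double dévissage: first reduce to $m=1$ by derived Nakayama applied to the short exact sequences $0\to I^{m-1}\bm/I^m\bm\to\bm/I^m\bm\to\bm/I^{m-1}\bm\to 0$, then invoke the Hodge--Tate case of \cite{MW22} (for $\ast=\emptyset$) and of the preceding Remark \ref{rem:log is same coho} (for $\ast=\log$), thus bypassing a slice-by-slice analysis of the tri-complex. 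The log case is handled by exactly the same chain after replacing $(\frakS^\bullet,(E))$, $R_{\frakS^\bullet/E}$, and $(\frakS^\bullet(R)^\clubsuit,(E))$ by their logarithmic analogues as flagged in Remark \ref{rem:log is same coho}, with the parameter $a=-E'(\pi)$ replaced by $a=-\pi E'(\pi)$.
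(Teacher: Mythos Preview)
Your proposal is correct and follows essentially the same approach as the paper: the same chain of morphisms $\rho_{\rm diag}$, $\rho^{\star=0}$, $\rho^{\clubsuit=0}$, $\rho_{\rm GMW}$ together with \v Cech--Alexander, with each quasi-isomorphism verified by a spectral-sequence reduction to a fixed index and then either Eilenberg--Zilber, the relative comparison (Theorem \ref{Thm-dRCrystalasXiConnection-Rel}(2)), or the arithmetic comparison from \cite{GMWdR}. The paper handles your ``main obstacle'' concern simply by noting that the maps are built as genuine morphisms of (bi/tri-)complexes from the outset (Lemma \ref{lem: nablaast perserve bicosimp} and Construction \ref{construction:several rhos}), so no homotopy-level ambiguity arises.
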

\begin{proof}
As noted in Rem \ref{rem:log is same coho}, we only treat the prismatic case. 
   The morphisms in Construction \ref{construction:several rhos} induce the following morphisms of complexes:
    \[\begin{split}
        \rD\rR(M,\nabla,\phi)&\xrightarrow{\rho_{\rm GMW}}{\rm Tot}(M\otimes_{\frakS(R)_{\dR}^+,q^a_0} \Omega^{\star}_{\frakS^{\bullet,+}_{\ast,\dR}(R)/\frakS^{\bullet,+}_{\ast,\dR}}\{-\star\},\nabla_{\star}^{\bullet,0})\\
        &\xrightarrow{\rho^{\clubsuit = 0}} {\rm Tot}(M\otimes_{\frakS(R)_{\dR}^+,q_0} \Omega^{\star}_{\frakS_{\ast}^{\bullet}(R)^{\clubsuit,+}_{\dR}/\frakS^{\bullet,+}_{\ast,\dR}},\nabla_{\star}^{\bullet,\clubsuit})\\
        &\xleftarrow{\rho^{\star = 0}}\bM(\frakS_{\ast}^{\bullet}(R)^{\clubsuit},(E))\\
        &\xleftarrow{\rho_{\rm diag}}\bM(\frakS(R)_{\ast}^{\bullet},(E)),
    \end{split}\]
    where ${\rm Tot}(X)$ denotes the total complex of $X$. 
    As we have a quasi-isomorphism 
    \[\bM(\frakS(R)^{\bullet},(E))\simeq\rR\Gamma((R)_{\Prism,\ast},\bM),\]
    it suffices to show that $\rho_{\rm GMW}$, $\rho^{\clubsuit = 0}$, $\rho^{\star = 0}$ and $\rho_{\rm diag}$ are all quasi-isomorphisms.

    For $\rho_{\rm GMW}$: By a standard spectral sequence argument, it suffices to show that it induces a quasi-isomorphism for each fixed $\star$. But this follows from the same argument in the proof of \cite[Prop. 6.6]{GMWdR}.

    For $\rho^{\clubsuit = 0}$ and $\rho^{\star = 0}$: As above, it suffices to show that they induce quasi-isomorphisms for each fixed $\bullet$. But this has been explained in the proof of Theorem \ref{Thm-dRCrystalasXiConnection-Rel}.

    It remains to show that $\rho_{\rm diag}$ is again a quasi-isomorphism. But this is well-known as the (cosimplicial) Eilenberg-Zilber Theorem.
\end{proof}

 \begin{proof}[Proof of Lem. \ref{lem: nablaast perserve bicosimp}]
     Note that for any fixed $\bullet$, Construction \ref{construction:tri-complex} reduces to Construction \ref{Construction-Bicosimplicial-Rel} with $(A,I) = (\frakS^{\bullet},(E))$. So we see that $\nabla^{\bullet,\clubsuit}_{\star+1}\circ\nabla^{\bullet,\clubsuit}_{\star} = 0$ and that $\nabla_{\star}^{\bullet,\clubsuit}$ commutes with $p_i^g$ and $\sigma_i^g$, by Lemma \ref{Lem-PreserveCosimplicial-Rel}.
     
     It remains to prove that $\nabla_{\star}^{\bullet,\clubsuit}$ commutes with $p_i^a$ and $\sigma_i^a$. Before this, let us recall that for any $x\in M$, we have
     \[\nabla(x) = \sum_{r=1}^d\nabla_r(x)\otimes \frac{\dlog T_i}{E(u)}.\]
     Then for any $x\in M$ and for any $i\geq 0$, we have that
     \[\sigma_i^a(\nabla(x))  = \sum_{r=1}^d\sigma_i^a(\nabla_r(x))\otimes \frac{\dlog T_i}{\sigma^a_i(E(u))} = \sum_{r=1}^d\nabla_r(x)\otimes \frac{\dlog T_i}{E(u)} = \nabla(x).\]
     For the face maps, when $i\geq 1$, we have
     \[p_i^a(\nabla(x)) = \sum_{r=1}^dp_i^a(\nabla_r(x))\otimes \frac{\dlog T_i}{p_i^a(E(u))}
     = \sum_{r=1}^d\nabla_r(x)\otimes \frac{\dlog T_i}{E(u)} = \nabla(x)\]
     and when $ i = 0$, we have
     \[\begin{split}p_0^a(\nabla(x))
     =& \sum_{r=1}^dp_0^a(\nabla_r(x))\otimes \frac{\dlog T_i}{p_0^a(E(u))}\\
     =&\sum_{r=1}^d(1+aX_1)^{\phi}\nabla_r(x)\otimes \frac{\dlog T_i}{E(u_1)}\\
     =&(1+aX_1)^{\phi-1}\sum_{r=1}^d\nabla_r(x)\otimes \frac{\dlog T_i}{E(u)}\\
     =&\sum_{r=1}^d\nabla_r((1+aX_1)^{\phi}(x))\otimes \frac{\dlog T_i}{E(u)}\\
     =& \sum_{r=1}^d\nabla_r(p_0^a(x))\otimes \frac{\dlog T_i}{E(u)}\\
     =& \nabla(p_0^a(x)),\end{split}\]
     where the last second equality follows from that $[\phi,\nabla_r] = \nabla_r$ for any $1\leq r\leq d$.

     We first check that $\nabla_{\star}^{\bullet,\clubsuit}$ commutes with $\sigma_i^a$: For any $x\in M$ and $\omega\in \Omega^{\star}_{\frakS_{\ast}^{\bullet}(R)^{\clubsuit,+}_{\dR}/\frakS^{\bullet,+}_{\ast,\dR}}$, we have
     \begin{equation*}
         \begin{split}
             \nabla_{\star}^{\bullet,\clubsuit}(\sigma_i^a(x\otimes\omega)) & = \nabla_{\star}^{\bullet,\clubsuit}(x\otimes\sigma_i^a(\omega))\\
             & = \nabla(x)\wedge\sigma_i^a(\omega)+x\otimes\rd^{\bullet,\clubsuit}(\sigma_i^a(\omega))\\
             &= \sigma_i^a(\nabla(x))\wedge\sigma_i^a(\omega)+\sigma_i^a(x)\otimes\sigma_i^a(\rd^{\bullet,\clubsuit}(\omega))\\
             & = \sigma_i^a(\nabla_{\star}^{\bullet,\clubsuit}(x\otimes\omega)).
         \end{split}
     \end{equation*}
     For the face maps, the same calculation as above shows that $\nabla_{\star}^{\bullet,\clubsuit}$ commutes with $p_i^a$ when $i\geq 1$. It remains to show it also commutes with $p_i^0$. Indeed, in this case, we have
     \begin{equation*}
         \begin{split}
             \nabla_{\star}^{\bullet,\clubsuit}(p_0^a(x\otimes\omega)) & = \nabla_{\star}^{\bullet,\clubsuit}(p^a_0(x)\otimes p_0^a(\omega))\\
             &= \nabla(p_0^a(x))\wedge p_0^a(\omega)+p_0^a(x)\otimes\rd^{\bullet,\clubsuit}(p_0^a(\omega))\\
             &= p_0^a(\nabla(x))\wedge p_0^a(\omega)+p_0^a(x)\otimes p_0^a(\rd^{\bullet,\clubsuit}(\omega))\\
             & = p_0^a(\nabla_{\star}^{\bullet,\clubsuit}(x\otimes\omega))
         \end{split}
     \end{equation*}
     as desired. The proof is completed.
 \end{proof}

 \newpage
\addtocontents{toc}{\ghblue{Geometric RH correspondence  } } 
\section{Generalities on $\bbdrplus$-local systems} \label{sec: bdr loc sys}
 Let $X$ be a smooth rigid analytic space over $K$.
In this section, we prove some generality  results (trivializations, decompletions, etc.~) on  the category $\vect(X_\proet, \bbdrplusm)$ of  \emph{$\bbdrplusm$-local systems} on the pro-\'etale site of $X$.  The results are used in next \S \ref{sec: global RH}  to construct  a $p$-adic  Riemann--Hilbert functor.

\subsection{$v$-descent of $\bbdrplus$-local systems}

  We begin with a well-known lemma.
    \begin{lem}\label{lem:projectivity}
      Let $B$ be a ring with a non-zero divisor $t$ and $B_m:=B/t^m$ for any $m\geq 1$. 
      
      \begin{enumerate}
          \item Let $M$ be a $B_m$-module. If $M/tM$ is a finite projective $B_1$-module of rank $r$ and for any $0\leq n\leq m-1$, the multiplication by $t^n$ induces an isomorphism of $B_1$-modules 
          \[M/tM\xrightarrow{\cong} t^nM/t^{n+1}M,\]
          then $M$ is a finite projective $B_m$-module of rank $r$.

          \item Assume moreover $B$ is $t$-adically complete. Let $M$ be a $B$-module such that $M/tM$ is a finite projective $B_1$-module of rank $r$ and for any $n\geq 0$, the multiplication by $t^n$ induces an isomorphism of $B_1$-modules 
          \[M/tM\xrightarrow{\cong} t^nM/t^{n+1}M,\]
          then $M$ is a finite projective $B$-module of rank $r$.
      \end{enumerate}
  \end{lem}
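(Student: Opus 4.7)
The plan is to lift the finite projective $B_1$-module $M/tM$ to a finite projective module $\tilde P$ over $B_m$ (respectively over $B$) and then to use the graded hypothesis to identify $\tilde P$ with $M$. The entire argument is formal once this lifting is in hand.

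For Item (1), the existence of $\tilde P$ will follow from the standard fact that finite projective modules lift uniquely along nilpotent thickenings: one lifts $M/tM$ successively through the square-zero extensions $B_{n+1} \twoheadrightarrow B_n$ (with kernel $t^n B_{n+1}$, which squares to zero in $B_{n+1}$), using the vanishing of $\Ext^{\geq 1}$ for projective modules. By projectivity of $\tilde P$ over $B_m$ and the surjection $M \twoheadrightarrow M/tM = \tilde P/t\tilde P$, one lifts the identity on $M/tM$ to a $B_m$-linear map $f\colon \tilde P \to M$. I then show $f$ is an isomorphism via the $t$-adic filtration. On the one hand, $\tilde P$ being a direct summand of a free $B_m$-module satisfies $t^n \tilde P/t^{n+1}\tilde P \xrightarrow{\cong} \tilde P/t\tilde P$ via multiplication by $t^n$; on the other hand, $M$ satisfies the analogous property by hypothesis. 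Since $f$ reduces to the identity modulo $t$ and commutes with multiplication by $t^n$, the induced map on each graded piece $\gr^n_t \tilde P \to \gr^n_t M$ is also the identity. A routine induction applying the five-lemma to the short exact sequences
\[0 \to t^{n} \tilde P/t^{n+1}\tilde P \to \tilde P/t^{n+1}\tilde P \to \tilde P/t^{n}\tilde P \to 0\]
and the analogous one for $M$ then shows $f$ induces an isomorphism $\tilde P/t^n \tilde P \cong M/t^n M$ for every $n$, so taking $n = m$ yields $\tilde P \cong M$ and hence $M$ is finite projective of rank $r$.

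For Item (2), I would first apply Item (1) to each quotient $M/t^n M$ (the graded hypothesis modulo $t^n$ is inherited trivially) to obtain that $(M/t^n M)_n$ is a compatible inverse system of finite projective $B_n$-modules of rank $r$. Since $B$ is $t$-adically complete, the inverse limit $\hat M := \varprojlim_n M/t^n M$ is a finite projective $B$-module of rank $r$. Alternatively, one can lift $M/tM$ directly to a finite projective $B$-module $\tilde P$ of rank $r$ (using $t$-adic completeness of $B$), construct $f\colon \tilde P \to M$ as before, and apply the same graded argument to conclude that $f$ is an isomorphism modulo $t^n$ for every $n$. It then remains to identify $M$ with $\hat M$ (equivalently, to upgrade the mod-$t^n$ isomorphisms to an honest isomorphism $\tilde P \xrightarrow{\cong} M$).

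The main obstacle will be precisely this last identification in Item (2): the graded hypothesis alone does not force $M$ to be $t$-adically separated, so \emph{a priori} the canonical map $M \to \hat M$ need not be injective or surjective. In the intended applications, $M$ arises as the section of a sheaf of $t$-adically complete modules on a pro-\'etale site and is therefore itself $t$-adically complete and separated, making the identification automatic. In a more general formulation one would either assume $M$ is $t$-adically complete at the outset, or bootstrap this from a Mittag--Leffler argument on $(M/t^n M)_n$; this is the only substantive subtlety in the proof.
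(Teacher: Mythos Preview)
Your argument for Item (1) is correct. The paper takes a slightly more direct route: it uses Zariski descent on $\Spec B_1$ to reduce to the case where $M/tM$ is finite \emph{free}, then picks any lifts $\widetilde e_1,\dots,\widetilde e_r \in M$ of a $B_1$-basis and argues by induction on $m$ that these form a $B_m$-basis of $M$. Your approach---lifting the projective module abstractly through the square-zero thickenings and comparing via the $t$-adic filtration---is a bit more conceptual and avoids the localisation step; both are standard and either is fine.

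For Item (2), you have correctly put your finger on the only real issue. The paper's entire proof of Item (2) is the sentence ``Item (2) is a consequence of Item (1) together with \cite[Lem.~1.9]{MT}'', so the separation/completeness question is being delegated to that external reference. Your concern is genuine: as literally stated (with no completeness hypothesis on $M$), Item (2) is false. For instance, take $B=\Zp$, $t=p$, and $M=\Zp\oplus\Qp$; then $M/pM\cong\Fp$ is free of rank $1$, and multiplication by $p^n$ induces an isomorphism $M/pM\xrightarrow{\cong}p^nM/p^{n+1}M$ for every $n\geq 0$, yet $M$ is not even finitely generated over $\Zp$. In every application in the paper the module $M$ is $t$-adically complete (it arises as sections of a $t$-complete sheaf or as an inverse limit of the $M/t^nM$), so the issue never arises in practice; your suggested fix of assuming $M$ $t$-adically complete is exactly what is needed, and with that hypothesis your argument goes through.
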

  \begin{proof}
      The Item (2) is a consequence of Item (1) together with \cite[Lem. 1.9]{MT}. So it is enough to prove Item (1). By Zariski descent, we may assume $M/tM$ is a finite free $B_1$-module of rank $r$ and are reduced to the case to show $M$ is a finite free $B_m$-module of rank $r$. Pick a $B_1$-basis $e_1,\dots,e_r$ of $M/tM$ and let $\widetilde e_1,\dots,\widetilde e_r$ be any lifting of them in $M$. By induction on $m$, one can conclude by showing $\widetilde e_1,\dots,\widetilde e_r$ form a $B_m$-basis of $M$.
  \end{proof}

 \begin{lem}\label{lem:kedlaya-Liu} 
     Let $Y = \Spa(U,U^+)$ be an affinoid perfectoid space over $K$. For   $* \in\{{\rm an}, \et, \proet, {\rm v}\}$, let $Y_*$ be the associated site (that is, analytic, \'etale, pro-\'etale and $v$ site). Then the rule $P\mapsto P\otimes_{\bbdrplusm(U,U^+)}\bbdrplusm$ induces an equivalence of categories:
     \[ \vect(\bbdrplusm(U,U^+)) \simeq \vect(Y_*, \bbdrplusm) \]
      A quasi-inverse is given by   taking global sections on $Y$.
 \end{lem}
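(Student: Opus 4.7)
My plan is to proceed by induction on $m$, using the $t$-adic filtration on $\bbdrplusm$ to reduce everything to the known case of $\hat{\calO}_Y$.

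For the base case $m=1$, we have $\bbdrplus_{1} = \hat{\calO}_Y$, and the stated equivalence is Scholze's theorem on vector bundles over affinoid perfectoid spaces together with its extension to the pro-\'etale and $v$-topologies (the latter being a form of the Kedlaya--Liu theorem, or equivalently a consequence of $v$-descent of vector bundles on perfectoid spaces). The essential point is that $\RGamma(Y_*, \hat{\calO}_Y) = U$ concentrated in degree $0$, for $* \in \{\mathrm{an}, \et, \proet, \mathrm{v}\}$.

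For the inductive step, assume the equivalence holds for $m-1$. Note that multiplication by $t^{m-1}$ yields a short exact sequence of sheaves on $Y_*$:
\[
0 \to \hat{\calO}_Y \xrightarrow{\,\cdot\, t^{m-1}} \bbdrplusm \to \bbdrplus_{m-1} \to 0.
\]
\textbf{Forward functor.} Given a finite projective $\bbdrplusm(U,U^+)$-module $P$, I would show that $\calP := P \otimes_{\bbdrplusm(U,U^+)} \bbdrplusm$ lies in $\vect(Y_*, \bbdrplusm)$. Since $P$ is a direct summand of a finite free module, it suffices to check that $\calP/t\calP$ is a finite projective $\hat{\calO}_Y$-module (which follows from the base case) and that multiplication by $t^n$ induces an isomorphism $\calP/t\calP \simeq t^n\calP/t^{n+1}\calP$ for each $0 \le n \le m-1$ (this is the flatness of $P$ over $\bbdrplusm(U,U^+)$ passed through $\otimes$); then Lemma \ref{lem:projectivity}(1) gives local freeness of the sheaf.

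\textbf{Quasi-inverse.} Given $\calP \in \vect(Y_*, \bbdrplusm)$, I would set $P := \rH^0(Y_*, \calP)$ and check this inverts the functor. Using the short exact sequence above and tensoring with $\calP/t^n\calP$, the vanishing $\rH^i(Y_*, \hat{\calO}_Y \otimes N) = 0$ for $i \ge 1$ and $N$ a vector bundle (which reduces to Scholze's acyclicity via local trivializations) yields short exact sequences
\[
0 \to \rH^0(Y_*, t^{m-1}\calP) \to \rH^0(Y_*, \calP) \to \rH^0(Y_*, \calP/t^{m-1}\calP) \to 0
\]
and, in particular, $P/t^{m-1}P \simeq \rH^0(Y_*, \calP/t^{m-1}\calP)$. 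By the inductive hypothesis, the latter corresponds to $\calP/t^{m-1}\calP$, while $P/tP = \rH^0(Y_*, \calP/t\calP)$ is finite projective over $U$ of the correct rank by the base case. The same isomorphism structure guaranteed by $\calP$ being a vector bundle shows $t^nP/t^{n+1}P \simeq P/tP$ for each $n$, and Lemma \ref{lem:projectivity}(1) gives that $P$ is finite projective over $\bbdrplusm(U,U^+)$. Finally, the natural map $P \otimes_{\bbdrplusm(U,U^+)} \bbdrplusm \to \calP$ reduces mod $t$ to an isomorphism (by the base case) and is compatible with the filtrations, so it is an isomorphism.

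The main obstacle is verifying the acyclicity $\rH^i(Y_*, \hat{\calO}_Y \otimes N) = 0$ for $i \ge 1$ with sufficient uniformity across the four topologies; once this is in hand, the inductive extension from $\hat{\calO}_Y$ to $\bbdrplusm$ is formal. For the $v$-site in particular one has to invoke $v$-descent of vector bundles on affinoid perfectoids, which reduces the $v$-case to the pro-\'etale case.
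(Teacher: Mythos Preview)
Your proposal is correct and follows essentially the same d\'evissage argument as the paper: induction on $m$ via the $t$-adic filtration, the base case $m=1$ from Kedlaya--Liu, and Lemma~\ref{lem:projectivity} for finite projectivity. For the obstacle you flag (uniform acyclicity across the four sites), the paper resolves it by first establishing the vanishing on $Y_{\rm v}$ via \cite[Cor.~3.5.6]{KL2} and then pushing forward along $\nu:Y_{\rm v}\to Y_*$, using that $Y_*$ admits a basis of affinoid perfectoids so that $\rR\nu_*\bbdrplusm$ is discrete.
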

 \begin{proof}  
When  $m = 1$, this is  exactly \cite[Th. 3.5.8]{KL2}; for the general case, we use induction on $m$. Denote  $B_m:=\bbdrplusm(U,U^+)$ for short.
     For any finite projective $B_m$-module $P$, we have a short exact sequence
     \[0\to P/t^{m-1}\xrightarrow{\times t} P\to P/t\to 0\]
     and thus a short exact sequence of presheaves
     \[0\to P/t^{m-1}\otimes_{B_{m-1}}\bB_{\dR,m-1}^+\xrightarrow{\times t} P\otimes_{B_m}\bbdrplusm\to P/t\otimes_{B_1}\bB_{\dR,1}^+\to 0.\]
     By inductive hypothesis, we see that both $P/t^{m-1}\otimes_{B_{m-1}}\bB_{\dR,m-1}^+$ and $P/t\otimes_{B_{1}}\bB_{\dR,1}^+$ are sheaves on $Y_*$, and thus so is $P\otimes_{B_m}\bbdrplusm$. Let $Y^{\prime} = \Spa(S,S^+)\to Y$ be a covering in $Y_*$ such that $P/t\otimes_RS$ is finite free over $S$. As $Y$ is also affinoid perfectoid, we have $P\otimes_{B_m}\bbdrplusm(S,S^+)$ is finite free over $\bbdrplusm(S,S^+)$. So we conclude $P\otimes_{B_m}\bbdrplusm$ is indeed locally finite free. 

     The full faithfulness of the functor $P\mapsto P\otimes_{B_m}\bbdrplusm$ follows as 
     \[\rH^i(Y_*,\bbdrplusm) = \left\{
       \begin{array}{rcl}
           B_m, & i = 0 \\
           0, & i\geq 1.
       \end{array}
     \right.\]
     Indeed, when $* = \rm v$, the above follows from \cite[Cor. 3.5.6]{KL2} together with the standard d\'evissage argument. For general $*$, let $\nu:Y_{\rm v}\to Y_*$ be the morphism of sites. As $Y_*$ admits a basis of affinoid perfectoids, the $\rR\nu_*\bbdrplusm$ is discrete and $\nu_*\bbdrplusm$ is exactly the corresponding $\bbdrplusm$ on $Y_*$. Therefore, we have
     \[\rH^i(Y_{\proet},\bbdrplusm) = \rH^i(Y_*,\bbdrplusm)\]
     as desired.

     More generally, by \cite[Cor. 3.5.6 and Th. 3.5.8]{KL2}, for any locally finite free $\bB_{\dR,1}^+$-module $\bM$ on $Y_{\rm v}$, we have
     \[\rH^i(Y_{*},\bM) = \left\{
       \begin{array}{rcl}
           \bM(Y), & i = 0 \\
           0, & i\geq 1
       \end{array}
     \right.\]
     for $* = \rm v$. By induction on $m$, the above always holds true for any locally finite free $\bB_{\dR,m}^+$-module on $Y_{\rm v}$. As argued in the constant sheaf case, we know the above also holds true for any $*\in \{{\rm an}, \et, \proet, {\rm v}\}$. In particular, for any locally finite free $\bB_{\dR,m}^+$-module $\bM$ on $Y_*$, we have
     a short exact sequence 
     \[0\to \bM/t^{m-1}(Y)\xrightarrow{\times t}\bM(Y)\to \bM/t(Y)\to 0\]
     and thus a short exact sequence 
     \[0\to \bM/t^{m-1}(Y)\otimes_{B_{m-1}}\bB_{\dR,m-1}^+\xrightarrow{\times t}\bM(Y)\otimes_{B_m}\bbdrplusm\to \bM/t(Y)\otimes_{B_1}\bB_{\dR,1}^+\to 0.\]
     We claim the canonical morphism
     \[\bM(Y)\otimes_{B_m}\bbdrplusm\to\bM\]
     is an isomorphism. Indeed, by induction on $m$, we are reduced to the case for $m=1$, which is proved by \cite[Th. 3.5.8]{KL2}. 
     Finally, we show that $\bM(Y)$ is a finite projective $B_m$-module. Recall when $m=1$, this again follows from \cite[Th. 3.5.8]{KL2}. Now, Lemma \ref{lem:projectivity} applies.

 \end{proof}

 Thanks to the following lemma, one can always regard $\bbdrplusm$-local systems on $X_{\proet}$ as $\bbdrplusm$-crystals on $X_{{\rm aff},\perf,{\rm v}}$.

 \begin{lem}\label{cor:kedlaya-liu}
     Let $X$ be a smooth rigid analytic space over $K$ (that is not necessarily a perfectoid). For $*\in\{\proet,{\rm v}\}$, let $X_{{\rm aff},\perf,*}$ be the full subcategory of $X_*$ consisting of affinoid perfectoids.
     \begin{enumerate}
      \item There are  equivalences of categories
      \[\vect(X_v, \bbdrplusm) \simeq \vect(X_\proet, \bbdrplusm) \simeq \vect(X_{{\rm aff},\perf,*}, \bbdrplusm).\]

         \item Let $\mu:X_{\rm v}\to X_{\proet}$ denote  the canonical morphism of sites. Let $\bm \in \vect(X_v, \bbdrplusm)$. Then  \[\rR^{\geq 1}\mu_*\bM = 0.\]
         As a consequence, if we still denote $\mu_*\bM$ by $\bM$, then there is a quasi-isomorphism
         \[\rR\Gamma(X_v,\bM)\simeq \rR\Gamma(X_{\proet},\bM).\] 
     \end{enumerate} 
 \end{lem}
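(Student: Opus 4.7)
The corollary will follow from Lemma \ref{lem:kedlaya-Liu} by a standard basis argument; the key point is that affinoid perfectoid objects form a common basis for both $X_{\proet}$ and $X_v$, and on such objects both the category of $\bbdrplusm$-local systems and the cohomology of $\bbdrplusm$ are fully controlled by Lemma \ref{lem:kedlaya-Liu}.

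First I would treat Item (2), as it is the more basic fact and will be used to give Item (1). Let $\bm \in \vect(X_v, \bbdrplusm)$. To show $\rR^{i}\mu_*\bm=0$ for $i\geq 1$, it suffices to check that the presheaf
\[U\longmapsto \rH^i(U_v, \bm|_U)\]
vanishes on a basis of $X_{\proet}$. Since affinoid perfectoids $U=\Spa(S,S^+)\in X_{\proet}$ form such a basis, and since the restriction $\bm|_U$ is a locally finite free $\bbdrplusm$-module on $U_v$, Lemma \ref{lem:kedlaya-Liu} (applied to $Y=U$ and $\ast=v$) gives $\rH^i(U_v,\bm|_U)=0$ for $i\geq 1$. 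Thus $\rR^{\geq 1}\mu_*\bm=0$, and the quasi-isomorphism $\rR\Gamma(X_v,\bm)\simeq \rR\Gamma(X_{\proet},\mu_*\bm)$ is then the Leray spectral sequence collapsing on the second page.

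Next I would prove Item (1). The restriction along the morphisms of sites
\[X_v \longrightarrow X_{\proet}\]
together with further restriction to the full subcategory $X_{{\rm aff},\perf,\ast}$ provides candidate functors
\[\vect(X_v,\bbdrplusm)\longrightarrow \vect(X_{\proet},\bbdrplusm)\longrightarrow \vect(X_{{\rm aff},\perf,\ast},\bbdrplusm).\]
Full faithfulness of both functors follows from the vanishing
\[\rH^i(U_{\ast},\bbdrplusm\mbox{-modules})=(\text{global sections})\text{ for }i=0,\ 0\text{ for }i\geq 1\]
on affinoid perfectoids (again Lemma \ref{lem:kedlaya-Liu}, applied to the internal Hom sheaf between two local systems, which is still locally finite free), combined with the fact that $X_{{\rm aff},\perf,\ast}$ is a basis of $X_{\ast}$. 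For essential surjectivity, starting from an object in $\vect(X_{{\rm aff},\perf,\ast},\bbdrplusm)$ and using Lemma \ref{lem:kedlaya-Liu} on each affinoid perfectoid chart, one produces a compatible family of finite projective modules over $\bbdrplusm(U,U^+)$; the sheaf property on $X_v$ follows since any $v$-cover can be refined by affinoid perfectoid $v$-covers, where descent along $\bbdrplusm$ is the content of Lemma \ref{lem:kedlaya-Liu}. This extends the object to a locally finite free $\bbdrplusm$-module on $X_v$, whose restriction to $X_{\proet}$ is the desired preimage.

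The main (small) obstacle is verifying that a $\bbdrplusm$-module which is locally finite free on $X_{\proet}$ is automatically locally finite free on the finer $v$-site; this is not tautological. However it reduces, via Lemma \ref{lem:kedlaya-Liu}, to the fact that over an affinoid perfectoid the two notions coincide (both being equivalent to finite projective modules over the global sections of $\bbdrplusm$), and so the issue dissolves. All other steps are essentially formal manipulations of sites with a common basis and vanishing higher cohomology.
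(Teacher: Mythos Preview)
Your proposal is correct and follows exactly the same approach as the paper: the paper's proof is the one-line observation that $X_{{\rm aff},\perf,*}$ forms a basis for $X_*$, so everything follows from Lemma~\ref{lem:kedlaya-Liu}. Your write-up simply unpacks this basis argument in detail.
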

 \begin{proof}
 Note that $X_{{\rm aff},\perf,*}$ form a base for $X_*$, thus everything follows from Lemma \ref{lem:kedlaya-Liu}.
 \end{proof}

\subsection{Local description as representations}

\begin{notation} \label{nota: small affinoid}  
Let $X = \Spa(R_K,R)$ be an affinoid space admitting a toric chart
  \[\Box: X\to \Spa(K\za T_1^{\pm 1},\dots, T_{d}^{\pm 1}\ya,\calO_K\za T_1^{\pm 1},\dots,T_d^{\pm 1}\ya).\]
    Namely, $\Box$ is a standard \'etale morphism as in \cite{LZ17}: that is,  a composition of rational localisations and finite \'etale morphisms. (Here, the notation $R$ is ``consistent" with previous sections where $R$ is always an integral ring.)
  Let $X_C:=\Spa(R_C,R_C^+)$ be the base-change of $X$ along $K\to C$. 
  Let $X_{\infty} = \Spa(\widehat R_{\infty},\widehat R_{\infty}^+)$ be the base-change of $X$ along
  \[\Spa(C\za T_1^{\pm 1/p^{\infty}},\dots, T_{d}^{\pm 1/p^{\infty}}\ya,\calO_C\za T_1^{\pm 1/p^{\infty}},\dots,T_d^{\pm 1/p^{\infty}}\ya)\to \Spa(K\za T_1^{\pm 1},\dots, T_{d}^{\pm 1}\ya,\calO_K\za T_1^{\pm 1},\dots,T_d^{\pm 1}\ya);\]
  here, we tacitly fix a system of $p$-power roots  $T_i^{1/p^n}$ of $T_i$.  
  We obtain the Galois pro-\'etale coverings $X_{\infty}\to X_C\to X$, which induces   an exact sequence of Galois groups
  \[0\to \Gamma_{\geo}\to\Gamma\to G_K\to 1.\]
  Moreover, we have that
  \[\Gamma_{\geo} = \Zp\gamma_1\oplus\cdots\oplus\Zp\gamma_d\]
  where $\gamma_i$ is determined by that for any $1\leq j\leq d$ and any $n\geq 0$, 
  \[\gamma_i(T_j^{1/p^n}) = \left\{\begin{array}{rcl}
     \zeta_{p^n}T_j^{1/p^n},  & i = j \\
      T_j^{1/p^n}, & i\neq j.
  \end{array}\right.\]
  We also have
  \[\Gamma\cong \Gamma_{\geo}\rtimes G_K\]
  such that $g\gamma_ig^{-1} = \gamma_i^{\chi(g)}$ for   $g\in G_K$ and   $1\leq i\leq d$.
\end{notation}

 \begin{notation}\label{nota:Gamma-decompositon}
 Let $[T_j^{\flat}] \in \bbdrplus(\widehat{R}_\infty)$ denotes Teichim\"uller lifting of $T_j = (T_j,T_j^{1/p},\dots)\in \widehat R_{\infty}^{\flat}$. For any $1\leq i,j\leq d$ and any $n\geq 0$, we have
 \[\gamma_i([T_j^{\flat}]^{1/p^n}) = [\epsilon^{\frac{1}{p^n}}]^{\delta_{ij}}[T_j^{\flat}]^{1/p^n}.\]
 Write
 \[ R_{\bdrplusm}:=R_K\hatotimes_K \bdrplusm.\]
 There is a $\bdrplusm$-linear embedding 
 \[ R_{\bdrplusm} \to \bbdrplusm(\widehat{R}_\infty),\]
sending $T_i$ to $[T_i^{\flat}]$ such that the image is $\Gamma$-stable.  
Note this $\Gamma$-action is not trivial on $R$, in contrast to the \emph{Galois} action in Notation \ref{nota: small affinoid}, \emph{unless $m=1$}; to distinguish the two situations, denote image of above embedding as  $\bfb_m$.
The embedding ``extends" to a $\Gamma$-equivariant (topological) decomposition
 \[\bbdrplusm(X_{\infty}) = \widehat \bigoplus_{\underline \beta\in(\bN[1/p]\cap[0,1))^d} \bfb_m \cdot [\underline T^{\flat}]^{\underline \beta}.\]
 \end{notation}

 \begin{lem}\label{lem:dR-local system as Gamma-representation}
   Let $X = \Spa(R_K,R)$ be the small affinoid in Notation \ref{nota: small affinoid}.
      The evaluation functor $\mathbb W \mapsto W=\bw(X_{\infty})$ at $X_{\infty}$ induces an equivalence 
      \[\Vect(X_\proet,\bbdrplusm)\simeq \Rep_{\Gamma}(\bbdrplus(\widehat{R}_\infty)).\]
     Moreover, for the pair $\bw, W$ above, there exists a quasi-isomorphism
      \[\rR\Gamma(X_{\proet},\bw) \simeq \rR\Gamma(\Gamma,W).\]
  \end{lem}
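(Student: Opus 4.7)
The plan is to do Galois descent along the pro-\'etale cover $X_\infty \to X$, using Lemma~\ref{cor:kedlaya-liu} on the terms of the \v{C}ech nerve to reduce the sheaf-theoretic problem to a module-theoretic one.

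First I would observe that $X_\infty \to X$ is a pro-\'etale $\Gamma$-torsor (with $\Gamma$ a profinite group, cf.~Notation~\ref{nota: small affinoid}), and that in the pro-\'etale site its \v{C}ech nerve $X_\infty^{\bullet/X}$ satisfies $X_\infty^{n/X} \cong X_\infty \times \underline{\Gamma}^n$, each of which is affinoid perfectoid. Applying Lemma~\ref{cor:kedlaya-liu} (and Lemma~\ref{lem:kedlaya-Liu}) to each of these terms, a $\bbdrplusm$-local system on $X_{\proet}$ is determined by its evaluations on the terms of the \v{C}ech nerve, and at each level this evaluation is a finite projective module over $\bbdrplusm(X_\infty \times \underline{\Gamma}^n) = \mathrm{Cont}(\Gamma^n, \bbdrplusm(\widehat R_\infty))$. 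The descent data is therefore equivalent to specifying a continuous semi-linear $\Gamma$-action on the level-$0$ module $W := \bw(X_\infty)$, yielding the functor
\[
\bw \longmapsto W \in \Rep_{\Gamma}(\bbdrplusm(\widehat R_\infty)).
\]

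Next I would construct the quasi-inverse. Given $W \in \Rep_{\Gamma}(\bbdrplusm(\widehat R_\infty))$, Lemma~\ref{lem:kedlaya-Liu} produces a $\bbdrplusm$-local system $\bw_\infty$ on $X_{\infty, \proet}$ from the underlying module; the continuous $\Gamma$-action provides a descent datum with respect to $X_\infty^{\bullet/X}$ (the cocycle condition being automatic as $\Gamma$ is a group). Fully faithfulness of pullback $\bw \mapsto \bw|_{X_\infty}$ plus $v$-sheaf property (again from Lemma~\ref{cor:kedlaya-liu}) then produces a unique $\bw$ on $X_{\proet}$ recovering $W$ after evaluation at $X_\infty$. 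A small check is needed that $\bw$ is actually a $\bbdrplusm$-local system (i.e.\ locally finite projective), but this follows from the fact that $X_\infty \to X$ is a cover in $X_\proet$ and local finite projectivity is pro-\'etale local.

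For the cohomology comparison, I would apply the \v{C}ech-to-derived spectral sequence for the cover $X_\infty \to X$:
\[
E_2^{i,j} = \Cech^i\bigl(X_\infty/X,\, \rR^j \mu_* \bw\bigr) \Longrightarrow \rH^{i+j}(X_{\proet}, \bw),
\]
where $\mu$ denotes restriction to the subsite generated by $X_\infty^{\bullet/X}$. By Lemma~\ref{cor:kedlaya-liu} applied to each of the affinoid perfectoids $X_\infty \times \underline{\Gamma}^n$, higher cohomology vanishes, so the $E_2$-page is concentrated at $j=0$ and the spectral sequence degenerates to the \v{C}ech complex
\[
\bw(X_\infty) \rightrightarrows \bw(X_\infty \times \underline{\Gamma}) \to \cdots
\]
which by Lemma~\ref{lem:kedlaya-Liu} identifies with the continuous cochain complex $\mathrm{Cont}^\bullet(\Gamma, W)$, i.e.\ with $\rR\Gamma(\Gamma, W)$.

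The main obstacle I anticipate is the careful bookkeeping of \emph{continuity}: one must verify that evaluation lands in continuous-semilinear $\Gamma$-representations (with respect to the Banach topology on $\bbdrplusm(\widehat R_\infty)$ coming from $m < \infty$), and that the descent data constructed from such a representation really glues into a pro-\'etale sheaf (rather than merely a compatible family on the nerve). Both points reduce to showing that $\bbdrplusm(X_\infty \times \underline{\Gamma}^n) = \mathrm{Cont}(\Gamma^n, \bbdrplusm(\widehat R_\infty))$, which can be checked by d\'evissage on $m$ from the perfectoid case $m=1$ using the short exact sequence $0 \to t^{m-1} \to \bbdrplusm \to \bB_{\dR, m-1}^+ \to 0$ as in the proof of Lemma~\ref{lem:kedlaya-Liu}.
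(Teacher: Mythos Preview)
Your proposal is correct and follows essentially the same approach as the paper: the paper's proof simply invokes Corollary~\ref{cor:kedlaya-liu} and defers to the argument of \cite[Lem.~3.8]{MW22} (the $m=1$ case), which is precisely the Galois-descent-along-the-\v{C}ech-nerve argument you have spelled out, together with the \v{C}ech-to-derived computation for the cohomology comparison. Your additional remarks on continuity and the d\'evissage to $m=1$ are the right way to fill in the details that the paper leaves implicit in its citation.
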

  \begin{proof} 
      Due to Corollary \ref{cor:kedlaya-liu}, the result follows from the same argument in the proof of \cite[Lem. 3.8]{MW22}, which treated $m=1$ case.
  \end{proof}

  \subsection{Local unipotent decompletion}
 
\begin{defn} Recall we use $\bfb_m$ to denote the image of $R_\bdrplusm$ under embedding in Notation \ref{nota:Gamma-decompositon}; and recall when $m=1$, $\bfb_1=R_C$ in a ``canonical" way.
\begin{enumerate}
\item Let $\Rep^{\rm uni}_{\Gamma}(R_C)=\Rep^{\rm uni}_{\Gamma}(\bfb_1)$ denote the category  of (linear) continuous representation of $\Gamma$ on finite projective $R_C$-modules such that $\Gamma_{\geo}$ acts unipotently.

\item Let $2 \leq m \leq +\infty$. 
Denote by $\Rep^{\rm 
 uni}_{\Gamma}(\bfb_m)$ the full subcategory of $\Rep_{\Gamma}(\bfb_m)$ consisting of $V$ such that $\Gamma_{\geo}$ acts on $V/tV$ unipotently; that is,  
 \[ V/tV \in \Rep^{\rm uni}_{\Gamma}(R_C).\]

\end{enumerate} 
\end{defn}

 \begin{prop}\label{prop:decompletion} 
     The functor
      \[ V \mapsto W= V\otimes_{\bfb_m}\bbdrplusm(\widehat{R}_\infty)\]
      induces an equivalence of categories
      \[\Rep^{\rm uni}_{\Gamma}(\bfb_m) \simeq \Rep_{\Gamma}(\bbdrplusm(\widehat{R}_\infty)).\]
      For the pair $V, W$ above, we have  a $G_K$-equivariant quasi-isomorphism
      \[\rR\Gamma(\Gamma_{\geo},V)\simeq \rR\Gamma(\Gamma_{\geo},W).\]
      As a consequence, we have a quasi-isomorphism
      \[\rR\Gamma(\Gamma,V)\simeq \rR\Gamma(\Gamma,W).\]
  \end{prop}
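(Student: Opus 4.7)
The plan is to argue by induction on $m$. The base case $m=1$ is the Faltings-style decompletion of continuous $\Gamma$-representations on finite projective $\widehat{R}_\infty$-modules into unipotent representations on $R_C$-modules, which we take as known (cf.~\cite[Lem.~3.8]{MW22} combined with Cor.~\ref{cor:kedlaya-liu}). For the inductive step, the strategy has three logical parts: a cohomology comparison (the heart of the argument), full faithfulness (an immediate consequence), and essential surjectivity by descent of extensions.

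For the cohomology comparison, fix $V \in \Rep^{\mathrm{uni}}_\Gamma(\bfb_m)$ and set $W = V \otimes_{\bfb_m} \bbdrplusm(\widehat{R}_\infty)$. The $\Gamma$-equivariant completed-direct-sum decomposition of Notation \ref{nota:Gamma-decompositon} yields the $\Gamma$-equivariant identification
\[ W/V \,\cong\, \widehat\bigoplus_{0 \neq \underline{\beta} \in (\bN[1/p] \cap [0,1))^d} V \otimes_{\bfb_m} \bfb_m [\underline{T}^\flat]^{\underline{\beta}}, \]
so it is enough to show $\rR\Gamma(\Gamma_{\geo},\, V \otimes_{\bfb_m} \bfb_m [\underline{T}^\flat]^{\underline{\beta}}) = 0$ for every $\underline{\beta} \neq 0$. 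Filter this summand by the finite $t$-adic filtration (recall $t$ is nilpotent in $\bfb_m$); each graded piece is isomorphic as a $\Gamma_{\geo}$-module to $(V/tV) \otimes_{R_C} R_C \cdot T^{\underline{\beta}}$. Pick $i_0$ with $\beta_{i_0} \neq 0$ and write $\beta_{i_0} = a_{i_0}/p^{n_{i_0}}$ in lowest terms; then $\gamma_{i_0}$ acts on $T^{\underline{\beta}}$ by multiplication by the nontrivial root of unity $\zeta := \zeta_{p^{n_{i_0}}}^{a_{i_0}}$ and acts on $V/tV$ by a unipotent transformation. Hence $\gamma_{i_0} - 1 = (\zeta - 1) + (\text{nilpotent})$ is invertible, so the Koszul complex computing $\rR\Gamma(\Gamma_{\geo}, -)$ is acyclic on each graded piece and therefore on the whole summand. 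Since the Koszul complex is a finite complex of continuous operators, it commutes with the completed direct sum, so the vanishing extends. The $G_K$-equivariance of the resulting quasi-isomorphism is automatic from the $\Gamma$-equivariance of the decomposition, and the $\rR\Gamma(\Gamma,-)$ statement follows from Hochschild--Serre.

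Full faithfulness is then immediate: applying the cohomology comparison to the internal Hom $V_1^\vee \otimes_{\bfb_m} V_2 \in \Rep^{\mathrm{uni}}_\Gamma(\bfb_m)$ identifies $\Hom_{\bfb_m, \Gamma}(V_1, V_2)$ with $\Hom_{\bbdrplusm(\widehat{R}_\infty), \Gamma}(W_1, W_2)$. For essential surjectivity, given $W \in \Rep_\Gamma(\bbdrplusm(\widehat{R}_\infty))$, consider
\[ 0 \to tW \to W \to W/tW \to 0, \]
where $tW \cong W/t^{m-1}W$ is a $\bB_{\dR, m-1}^+(\widehat{R}_\infty)$-module and $W/tW$ is an $\widehat{R}_\infty$-module. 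By the inductive hypothesis these descend to $V_1 \in \Rep^{\mathrm{uni}}_\Gamma(\bfb_{m-1})$ and $V_2 \in \Rep^{\mathrm{uni}}_\Gamma(R_C)$, which one views as $\bfb_m$-modules via the natural quotients $\bfb_m \onto \bfb_{m-1}$ and $\bfb_m \onto R_C$. The gluing of $V_1, V_2$ into a $\bfb_m$-module $V$ is controlled by the natural map of $\Gamma$-equivariant Ext groups
\[ \Ext^1_{\bfb_m, \Gamma}(V_2, V_1) \longrightarrow \Ext^1_{\bbdrplusm(\widehat{R}_\infty), \Gamma}(W/tW, tW), \]
which is shown to be an isomorphism by applying the cohomology comparison to the relevant internal Hom modules (after unwinding that a $\bfb_m$-linear map from the $t$-torsion object $V_2$ into $V_1$ factors through the appropriate $t^{m-2}$-torsion of $V_1$, and matching this with the parallel factorization on the $W$-side). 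Hence the extension class of $W$ descends uniquely, producing $V \in \Rep^{\mathrm{uni}}_\Gamma(\bfb_m)$ with $V \otimes_{\bfb_m} \bbdrplusm(\widehat{R}_\infty) \cong W$; the unipotency of $\Gamma_{\geo}$ on $V/tV = V_2$ is automatic.

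The main obstacle is this Ext-group descent: neither $V_1$ nor $V_2$ is projective as a $\bfb_m$-module (both are annihilated by proper powers of $t$), so the relevant internal Hom does not itself lie in $\Rep^{\mathrm{uni}}_\Gamma(\bfb_m)$ and the cohomology comparison cannot be invoked directly. The remedy is to realize the internal Hom as a sub-quotient of a genuine unipotent representation via the finite $t$-adic filtration---available because $t$ is nilpotent in $\bfb_m$---and then apply d\'evissage. The nilpotency of $t$ is precisely what makes this bookkeeping tractable and prevents the Ext groups from becoming wild.
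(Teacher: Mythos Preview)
Your cohomology comparison and full faithfulness arguments are correct and essentially equivalent to the paper's (the paper phrases the cohomology comparison as ``d\'evissage to $m=1$'', which unwinds to exactly your root-of-unity-plus-unipotent computation on each $[\underline T^\flat]^{\underline\beta}$-summand).

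Where you genuinely diverge is essential surjectivity. You invoke an Ext-descent argument, and you correctly identify its weak point: neither $V_1$ nor $V_2$ is $\bfb_m$-projective, so the comparison of $\Ext^1_{\bfb_m,\Gamma}(V_2,V_1)$ with $\Ext^1_{\bbdrplusm(\widehat R_\infty),\Gamma}(W/tW,tW)$ does not reduce to the cohomology comparison applied to a single object of $\Rep^{\rm uni}_\Gamma(\bfb_m)$. Your proposed remedy (filter the internal Hom $t$-adically and d\'evisse) can be made to work---one must track both $H^1(\Gamma,\Hom_{\bfb_m})$ and $H^0(\Gamma,\Ext^1_{\bfb_m})$ through a five-term sequence and use that each piece is a finite projective $R_C$-module, then invoke faithful flatness of $\bfb_m\hookrightarrow\bbdrplusm(\widehat R_\infty)$ to get projectivity of the resulting $V$---but this is real work that your paragraph does not carry out.

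The paper sidesteps all of this with a direct construction that exploits the $\Gamma$-equivariant \emph{splitting} $\bbdrplusm(\widehat R_\infty)=\bfb_m\oplus(\text{complement})$ from Notation~\ref{nota:Gamma-decompositon}. Given $W$, let $\widetilde V\subset W$ be the preimage of $V_1\subset W/tW$; this sits in $0\to tW\to\widetilde V\to V_1\to 0$. The splitting gives a $\Gamma$-equivariant decomposition $tW=V_2\oplus V_2'$, and one simply sets $V:=\widetilde V/V_2'$. A diagram chase shows $V\otimes_{\bfb_m}\bbdrplusm(\widehat R_\infty)\cong W$, and projectivity of $V$ follows from Lemma~\ref{lem:projectivity}. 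No Ext groups, no spectral sequences. Your approach trades this one-line construction for homological bookkeeping; the paper's buys simplicity at the cost of being less ``categorical''.
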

  \begin{proof}
When $m=1$, everything is  established in \cite[Prop. 2.16 (and Rem. 2.20)]{MW22}; that is, we have an equivalence of categories 
\[ \Rep^{\rm uni}_{\Gamma}(R_C) \xrightarrow{\simeq}\Rep_{\Gamma}(\widehat R_{\infty})\]
together with cohomology comparisons.

By Lemma \ref{lem:projectivity} (2), it suffices to treat the  $m<\infty$ cases. The functor induced by $V \mapsto W$ preserves tensor products and duals; thus to see the functor is fully faithful, it reduces to prove the the natural morphism
      \[\rR\Gamma(\Gamma,V)\to \rR\Gamma(\Gamma, W)\]
      is a quasi-isomorphism; by standard d\'evissage, this reduces to the known $m=1$ case.

It remains to use an induction argument to prove the functor is essentially surjective.
  Let $W \in \Rep_{\Gamma}(\bbdrplusm(\widehat{R}_\infty))$. Then we have a short exact sequence
      \[0\to tW\to W\to W/t\to 0.\]
      Then by the $m=1$ case, there exists an $V_1\in \Rep^{\rm uni}_{\Gamma}(R_C)$ such that $V_1\otimes_{R_C}\widehat R_{\infty}\cong W/t$ satisfying 
      \[\rR\Gamma(\Gamma,V_1)\cong \rR\Gamma(\Gamma,W/t).\]
      In particular, one can regard $V_1$ as a sub-$R_C$-module of $W/t$. Denote by $\wt{V}$ the pre-image of $V_1$ under the projection $W\to W/t$, and then we get a $\Gamma$-equivariant short exact sequence
      \[0\to tW \to \wt{V} \to V_1\to 0.\]
      By inductive hypothesis, there exists an $V_2\in \Rep^{\rm uni}_{\Gamma}(\bfb_{m-1})$ such that $tW \cong V_2\otimes_{\bfb}\bbdrplusm(\widehat{R}_\infty)$ and that
      \[\rR\Gamma(\Gamma, V_2)\cong \rR\Gamma(\Gamma,tW).\]
      As $\bfb_m$ is a direct summand of $\bbdrplus(\widehat{R}_\infty)$, we have a $\Gamma$-equivariant decomposition 
      \[tW\cong V_2\oplus V_2^{\prime}.\] 
      Put $V = \wt{V}/V_2^{\prime}$, and then we get a $\Gamma$-equivariant short exact sequence
      \[0\to V_2\to V\to V_1\to 0.\]
      By construction, the following diagram is commutative
      \[\xymatrix@C=0.5cm{
      0\ar[r]& V_2\otimes_{\bfb}\bbdrplus(\widehat{R}_\infty)\ar[rr] \ar[d]&& V\otimes_{\bfb}\bbdrplus(\widehat{R}_\infty)\ar[rr]\ar[d]&& V_1\otimes_{\bfb}\bbdrplus(\widehat{R}_\infty)\ar[r]\ar[d]& 0\\
      0\ar[r]& tW\ar[rr] &&W\ar[rr]&& W/t\ar[r]& 0.
      }\]
      As both the left and the right vertical arrows are isomorphisms, so is the middle one. Finally, as 
      \[t^{n-1}W/t^{n}W\cong W/tW\otimes_{\Zp}\Zp(n-1),\]
      we have for any $1\leq n\leq m$
      \[t^{n-1}V/t^nV\cong V_1\otimes_{\Zp}\Zp(n-1).\]
      By Lemma \ref{lem:projectivity} (1), $V$ is finite projective over $\bfb_m$ as desired, yielding that $V\in \Rep^{\rm uni}_{\Gamma}(\bfb_m)$ as desired.
     \end{proof}

 \newpage
\section{Global Riemann--Hilbert correspondence for $\bbdrplus$-local systems} \label{sec: global RH}

In \cite{LZ17}, Liu--Zhu constructs a (geometric) $p$-adic  Riemann--Hilbert functor $\mathcal{RH}$ on   $\bbdrplusm$-local systems coming from   (\'etale) \emph{$\qp$-local systems}. This section is devoted to generalizing their functor to an equivalence of categories, by working on the whole category of $\bbdrplusm$-local systems. The main theorem of this section is  Theorem \ref{thm:RH correspondence}, which says that for $X$  a smooth rigid space over $K$, we have 
 \[\Vect(\xproet,\bbdrplusm) 
  \simeq \MIC_\gk(X_{C,\et},\calO_X\widehat \otimes_K\bdrplusm).\]

\subsection{$\gk$-equivariant connections}\label{ssec:G_K-equivariant connection}


Consider the ringed space  $\calX_{m}:=(X_C, \o_{X_C, \bdrplus, m})$ in Notation \ref{nota: ringed space}. By \cite[Prop. 3.3 and Cor. 3.4]{LZ17}, this ringed space admits a good theory of vector bundles (cf. \cite[Def. 3.5]{LZ17}). In particular, any vector bundle $\calf$ on $X$ induces a vector bundle $\calf\widehat \otimes_K\bdrplus/t^m$ on $\calX_{m}$ via the ``base change'' along $K\to \bdrplus/t^m$ such that the functor $\calf\mapsto \calf\widehat \otimes_K\bdrplus/t^m$ is exact.    
  In particular,   for any $i\geq 1$, denote
  \[\Omega^i_{\calX_{m}}: = \Omega^i_{X/K}\widehat \otimes_K\bdrplus/t^m.\]
Abuse notation, and denote the composite
\[ \mathrm{d}: \calO_{\calX_{m}}\to \Omega^1_{\calX_{m}} \to \Omega^1_{\calX_{m}}\{-1\}\]
where the first map is  $\bdrplusm$-base change of the usual derivation on $\calO_X$, and the right most object is twist by $(t)^{-1}$ similar as in Notation \ref{nota: BK twist rel pris}.
As $t$ is a generator of $\zp(1)$, we have 
\[ \Omega^1_{\calX_{m}}\{-1\} =\Omega^1_{\calX_{m}}(-1). \]


  \begin{defn}\label{def:t-connection} 
  \begin{enumerate}
      \item     A flat (or integrable) \emph{$t$-connection} on $\calX_{m}$ is a vector bundle $\calE$ equipped with a $\bdrplus$-linear morphism 
      \[\nabla_{\calE}:\calE\to \calE\otimes_{\calO_{\calX_m}}\Omega^1_{\calX_{m}}(-1)\]
      satisfying Leibniz rule with respect to $\mathrm{d}$ such that $\nabla_{\calE}\circ\nabla_{\calE} = 0$. 
 Denote the category of flat connections by  $\MIC(\calX_{m})$. For a flat connection,  define the  de Rham complex  as:
      \[\rD\rR(\calE,\nabla_{\calE}): \quad \calE\xrightarrow{\nabla_{\calE}}\calE\otimes_{\calO_{\calX_m}}\Omega^1_{\calX_{m}}(-1) \xrightarrow{\nabla_{\calE}}\calE\otimes_{\calO_{\calX,m}}\Omega^2_{\calX_{m}}(-2) \to \cdots.\]

       \item A flat $t$-connection is called $\gk$-equivariant, if $\calE$ is a $\gk$-equivariant bundle and that $\nabla_{\calE}$ is $\gk$-equivariant; denote the category of such objects by   $\MIC_\gk(\calX_{m})=\MIC_\gk(X_{C,\et},\calO_X\widehat \otimes_K\bdrplusm)$. 
  \end{enumerate}   
  \end{defn}

 \begin{construction}[Local $t$-connection] \label{cons:local t-connection}
Let $X = \Spa(R_K,R)$ be an affinoid with a fixed toric chart as in Notation \ref{nota: small affinoid} and keep the notations there.  
We have 
 \[\rd: R_\bdrplusm \to \Omega^1_{R_\bdrplusm}(-1)  \cong \bigoplus_{i=1}^dR_\bdrplusm\cdot\frac{\dlog T_i}{t}.\]
Thus a $t$-connection $\cm$ on $X  $ is a finite projective $R_\bdrplusm$-module together with a connection operator
\[\nabla_\cm = \sum_{i=1}^d\nabla_i\otimes\frac{\dlog T_i}{t}: \cm\to \cm\otimes_{R_\bdrplusm}\Omega^1_{R_\bdrplusm}(-1)\]
where each $\nabla_i: \cm \to \cm$ satisfies $t$-Leibniz rule (Def \ref{def: b-conn}) with respect to $\frac{d}{d\log T_i}$ (explaining the terminology ``$t$-connection").   With this explicit form, $(\cm,\nabla_\cm)$ is flat if and only if $[\nabla_i,\nabla_j] = 0$ for all $i,j$. It is $G_K$-equivariant if and only if for any $g\in G_K$ and $i$, we have
      \[g\circ\nabla_i\circ g^{-1} = \chi(g)\nabla_i.\]      
       Note that for any $G_K$-equivariant $t$-connection $(\cm,\nabla_\cm)$, its underlying $t$-connection $\nabla_\cm$ is \emph{automatically} $t$-adically topologically nilpotent: indeed, it suffices to check it  mod $t$, then $t$-connection becomes a Higgs field, which is topologically nilpotent by \cite[Rem. 3.2]{MW22}.
\end{construction}

\begin{rem}[Relation with Liu--Zhu's filtered connection]
\label{rem: fil conn}
  A benefit of Definition \ref{def:t-connection} is that it works   for any $1\leq m \leq \infty$. When $m=\infty$ and hence relevant rings are integral domains and thus we can \emph{invert $t$}, Definition \ref{def:t-connection} is equivalent to \cite[Def. 3.6]{LZ17}.
  Indeed, as in \cite[Def. 3.5]{LZ17}, one can also form a ringed space $\calx=(X_C, \o_X\hatotimes \bdr)$. By   \cite[Def. 3.6]{LZ17}, a filtered connection on $\calx$ is a filtered vector bundle $\fil^\bullet \calf$ together with a flat connection $\nabla: \calf \to \calf \otimes_{\o_\calx} \Omega^1_\calx$  satisfying Griffiths transversality; that is, for any $n\in\bZ$,
 \[\nabla(\Fil^n\calF)\subset \Fil^{n-1}\calF\otimes_{\calO_{X}}\Omega^1_{X} = \Fil^{n-1}\calF\otimes_{\calO_{\calX^+}}\Omega^1_{\calX^+}.\]
 Put $\calE = \Fil^0\calF$ and then for any $n\in\bZ$, 
 \[\Fil^n\calF = t^n\calE.\]
 Via the identification $\Omega^1_{\calX^+} = t\Omega^1_{\calX^+}(-1)$, we see that $(\calF,\nabla)$ satisfies Griffiths transversality if and only if
 \[\nabla(\calE)\subset t^{-1}\calE\otimes_{\calO_{\calX^+}}t\Omega^1_{\calX^+}(-1) = \calE\otimes_{\calO_{\calX^+}}\Omega^1_{\calX^+}(-1).\]
 In other words, $(\calE:=\Fil^0\calF,\nabla_{\calE}:=\nabla\big|_{\calE})$ defines a $t$-connection on $\calX_{\infty} = \calX^+$ per Definition \ref{def:t-connection}.
Conversely, given a $t$-connection $\cale$ on $\calX_{\infty}=\calx^+$, let $\calf=\cale[1/t]$, and let $\fil^\bullet \calf =t^\bullet \cale$. Define
\[ \nabla_\calf: \calf \xrightarrow{\nabla_\cale \otimes 1} \calf\otimes_{\o_{\calx^+}} \Omega^1_{\calx^+}(-1) \into \calf\otimes_{\o_{\calx}} \Omega^1_{\calx}.\]
Then this is a  filtered connection per \cite[Def. 3.6]{LZ17}.
  These constructions induce  an equivalence between the two relevant categories. See \cite[Rem. 3.2]{LZ17} for relevant discussion. 
  
\end{rem}

\subsection{$\BBdRp$-local systems and $t$-connections}
 
To construct $p$-adic Riemann-Hilbert correspondence, we start by reviewing $\calO\BBdR$.

\begin{construction}\label{construction:global period sheaf}
  Let $\calO\BBdR$ be the de Rham period sheaf\footnote{The period sheaf $\calO\BBdR$ in \cite{DLLZ} is indeed the completion of the de Rham period sheaf in \cite{LZ17}, also denoted by $\calO\BBdR$, with respect to the filtration. All arguments in \cite{LZ17} still hold true when replace the period sheaf in \emph{loc.cit.} by the new one in \cite{DLLZ}. See \cite[Rem. 2.2.11]{DLLZ} for a bit more discussion.} on $X_{\proet}$ introduced in \cite[Def. 2.2.10]{DLLZ}. 
  It carries an exhaustive, complete decreasing filtration $\Fil^{\bullet}$ and a $\BBdR$-linear flat connection   
  \[\nabla_{\calO\BBdR}:\calO\BBdR\to \calO\BBdR\otimes_{\nu^{-1}(\oxet)}\Omega^1_\xet  \] 
  satisfying the Griffiths transversality: for $r\in\bZ$, we have 
  \[\nabla_{\calO\BBdR}(\Fil^r\calO\BBdR)\subset \Fil^{r-1}\calO\BBdR\otimes_{\nu^{-1}(\oxet)}\Omega^1_\xet.\]
Note 
\[\calO\BBdR\otimes_{\nu^{-1}(\oxet)}\Omega^1_\xet = \calO\BBdR\otimes_{\nu^{-1}\calO_{\calX}}\Omega^1_{\calX} \] 
where now we can twist by $-1$ on the right hand side. Abuse notations and define the following  composite:
\[ \rd:\calO\BBdR\xrightarrow{\nabla_{\calO\BBdR}} \calO\BBdR\otimes_{\nu^{-1}\calO_{\calX}}\Omega^1_{\calX} \to \calO\BBdR\otimes_{\nu^{-1}\calO_{\calX}}\Omega^1_{\calX}(-1).\]
Then Griffiths transversality of $\nabla_{\calO\BBdR}$ translates into 
  \[\rd(\Fil^r\calO\BBdR)\subset \Fil^{r}\calO\BBdR\otimes_{\nu^{-1}\calO_{\calX}}\Omega^1_{\calX}(-1).\]
  For any $-\infty\leq a<b\leq +\infty$,   define 
  \[\calO\bB_{\dR}^{[a,b]}:=\Fil^a\calO\BBdR/\Fil^b\calO\BBdR.\]
  It is easy to see that $\rd$ induces a $\BBdRp/t^{b-a}$-linear flat $t$-connection
  \[\rd:\calO\bB_{\dR}^{[a,b]}\to\calO\bB_{\dR}^{[a,b]} \otimes_{\nu^{-1}\calO_{\calX}}\Omega^1_{\calX}(-1).\] 
  In particular, if we put
  \[\calO\bC := \Gr^0\calO\BBdR = \calO\bB_{\dR}^{[0,1]},\]
  then we get the Higgs field
  \[\Theta:\calO\bC\to \calO\bC\otimes\Omega^1_X(-1)\]
  as in \cite[\S 2.1]{LZ17}.
\end{construction}

 We   have the following local description of $(\calO\bB_{\dR}^{[0,\infty]},\rd)$.

\begin{construction}[Local section of $\calO\bB_{\dR}^{[0,\infty]}$]\label{lem:local description of OBdR}
      Let $X$ be an affinoid space admitting a toric chart and $X_{\infty}\to X$ be the pro-\'etale $\Gamma$-torsor defined in Notation \ref{nota: small affinoid}; also use Notation \ref{nota:Gamma-decompositon}. 
      Define  
      \[\bB_{\dR}^+\{\underline W\}:=\BBdRp\{W_1,\dots,W_d\} = \{\sum_{\underline n\in\bN^d}a_{\underline n}\underline W^{\underline n}\in \BBdRp[[W_1,\dots,W_d]]\mid a_{\underline n} \to 0, \text{ $t$-adically, as } |\underline n|\to +\infty\}.\]
      As a special case of \cite[Cor. 2.3.17]{DLLZ}, the morphism of $\bB_{\dR}^+$-algebras
      \[\bB_{\dR}^+\{\underline W\}_{|_{X_{\infty}}}\to (\calO\bB_{\dR}^{[0,\infty]})_{|_{X_{\infty}}}\]
      sending each $W_i$ to $\frac{1}{t}\log(\frac{[T_i^{\flat}]}{T_i})$ is an isomorphism. Via this isomorphism and the identification
      \[\Omega^1_{\calX}(-1)\cong \bigoplus_{i=1}^d\calO_{\calX}\cdot\frac{\dlog T_i}{t},\]
      the $\rd$ is given by
      \[\rd = \sum_{i=1}^d-\frac{\partial}{\partial W_i}\otimes\frac{\dlog T_i}{t}.\] 
  \end{construction}

  The key property $(\calO\bB_{\dR}^{[a,b]},\rd)$ is the following Poincar\'e Lemma:
  \begin{lem}[{cf. \cite[Cor. 2.4.2(3)]{DLLZ}}]\label{lem:poincare lemma}
      For any $-\infty\leq a<b\leq +\infty$, the following complex is exact:
      \[0\to\BBdRp/t^{b-a}\xrightarrow{\times t^a}\calO\bB_{\dR}^{[a,b]}\xrightarrow{\rd}\calO\bB_{\dR}^{[a,b]}\otimes\Omega^1_{\calX}(-1)\xrightarrow{\rd}\calO\bB_{\dR}^{[a,b]}\otimes\Omega^2_{\calX}(-2)\to \cdots.\]
      In particular, when $[a,b]=[0,+\infty]$, we have the following exact sequence:
      \[0\to\BBdRp\to\calO\bB_{\dR}^{[0,\infty]}\xrightarrow{\rd}\calO\bB_{\dR}^{[0,\infty]}\otimes\Omega^1_{\calX}(-1)\xrightarrow{\rd}\calO\bB_{\dR}^{[0,\infty]}\otimes\Omega^2_{\calX}(-2)\to \cdots.\]
  \end{lem}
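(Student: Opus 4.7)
The statement is local on $\xproet$, so my plan is to reduce to a direct computation over an affinoid $X$ with a toric chart as in Notation \ref{nota: small affinoid}, working over the pro-\'etale perfectoid cover $X_\infty$. By Construction \ref{lem:local description of OBdR}, there is an identification
\[\calO\bB_\dR^{[0, +\infty]}|_{X_\infty} \cong \BBdRp\{W_1, \ldots, W_d\}\]
under which $\rd = -\sum_{i=1}^d \partial_{W_i} \otimes \frac{\dlog T_i}{t}$, so the de Rham complex becomes the Koszul complex of the commuting $\BBdRp$-linear derivations $\partial_{W_1}, \ldots, \partial_{W_d}$ on a formal power series ring. I would prove exactness of this Koszul complex by exhibiting an explicit contracting homotopy via iterated formal antidifferentiation (analogous to the classical Poincar\'e lemma), checking at each step that the $t$-adic convergence of coefficients is preserved; this yields cohomology $\BBdRp$ concentrated in degree zero and settles the case $[a, b] = [0, +\infty]$.

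For general $[a, b]$, my strategy is d\'evissage from this base case. First, when $a$ is finite, multiplication by $t^{-a}$ induces a $\BBdRp$-linear isomorphism of de Rham complexes $\calO\bB_\dR^{[a, b]} \simeq \calO\bB_\dR^{[0, b-a]}$, so one may reduce to $a = 0$. When $b$ is finite, apply the long exact sequence of cohomology attached to the short exact sequence
\[0 \to \calO\bB_\dR^{[0, +\infty]} \xrightarrow{\cdot t^b} \calO\bB_\dR^{[0, +\infty]} \to \calO\bB_\dR^{[0, b]} \to 0\]
(which remains exact after tensoring with the locally free, hence flat, sheaves $\Omega^i_{\calX}(-i)$), and invoke the already-established $[0, +\infty]$ computation to conclude. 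Finally, when $a = -\infty$, write $\calO\BBdR = \bigcup_n t^{-n}\calO\bB_\dR^{[0, +\infty]}$ and pass the finite-$a$ exactness to a filtered colimit, which preserves exactness.

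The hard part will be the Koszul exactness on the power series ring $\BBdRp\{W_1, \ldots, W_d\}$, which is the only non-formal input: although formal antidifferentiation is an obvious $\BBdRp$-linear contracting homotopy on $\BBdRp[W_1, \ldots, W_d]$, one must verify that it respects the $t$-adic convergence condition so as to descend to $\BBdRp\{W_1, \ldots, W_d\}$. Once this local step is secured, the remaining steps are routine homological d\'evissage; alternatively, one may invoke \cite[Cor. 2.4.2(3)]{DLLZ} directly, where essentially the same computation is carried out in the more general log-smooth setting.
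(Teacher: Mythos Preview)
Your proposal is correct. The paper does not give its own proof but simply cites \cite[Cor.~2.4.2(3)]{DLLZ}; your sketch---local identification with $\BBdRp\{W_1,\dots,W_d\}$ via Construction~\ref{lem:local description of OBdR}, explicit Koszul contraction by formal antidifferentiation (noting that dividing coefficients in $\BBdRp$ by integers does not disturb $t$-adic valuation), followed by d\'evissage in $[a,b]$---is exactly the standard argument underlying that reference, and you yourself note that one may invoke \cite{DLLZ} directly.
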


\begin{thm}\label{thm:RH correspondence}
    Let $\nu:X_{\proet}\to X_{C,\et}$ be the natural morphism of sites.  
    \begin{enumerate}
        \item  Let $\bW$ be a $\bbdrplusm$-local system  of rank $r$. We have 
        \begin{equation} \label{eq bdr higher coho vanish}
           \rR^i\nu_*(\bW\otimes_{\BBdRp}\calO\bB_{\dR}^{[0,\infty]}) = 0, \forall i\geq 1 
        \end{equation}
          and 
        \begin{equation}\label{eq cm is vb}
            (\calM,\nabla_{\calM}):=\nu_*(\bW\otimes_{\BBdRp}\calO\bB_{\dR}^{[0,\infty]},\id_{\bW}\otimes\rd)
        \end{equation}
                gives rise to a $G_K$-equivariant flat $t$-connection of rank $r$ on $\calX_{m}$.
        
        \item Let $\bw$ and $(\calM,\nabla_{\calM})$ be as in Item (1). Then there exists a $G_K$-equivariant quasi-isomorphism
        \[\rR\nu_*(\bW)\simeq\rD\rR(\calM,\nabla_{\calM}).\]
        As a consequence, we have 
        \[\rR\Gamma(X_{\proet},\bW)\simeq \rR\Gamma(G_K,\rR\Gamma(\calX,\rD\rR(\calM,\nabla_{\calM}))).\]

        \item For any $G_K$-equivariant flat $t$-connection $(\calM,\nabla_{\calM})$ of rank $r$ on $\calX_{m}$, define
        \[\rd_{\calM}:=\nabla_{\calM}\otimes\id+\id\otimes\Theta:\calM\otimes\calO\bB_{\dR}^{[0,\infty]}\to \calM\otimes\calO\bB_{\dR}^{[0,\infty]}\otimes\Omega^1_X(-1).\]
        Then we have $\rd_{\calM}\wedge\rd_{\calM} = 0$ and moreover, the induced de Rham complex $\rD\rR(\calM\otimes\calO\bB_{\dR}^{[0,\infty]},\rd_{\calM})$ is acyclic in degree $\geq 1$. Furthermore, let  
        $$\bW:=(\calM\otimes\calO\bB_{\dR}^{[0,\infty]})^{\rd_{\calM} = 0},$$
        then it is a $\bbdrplusm$-local system of rank $r$ on $X_{\proet}$.

        \item  The functors in Items (1) and (3) induce  an equivalence of categories
        \[\Vect(\xproet,\bbdrplusm)\simeq \MIC^{G_K}_t(\calX_{m}),\]

    \end{enumerate}
\end{thm}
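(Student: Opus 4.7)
The central tool for the entire theorem is the Poincar\'e Lemma (Lemma \ref{lem:poincare lemma}), which provides an $\calO\bB_{\dR}^{[0,\infty]}$-resolution of $\BBdRp$, hence by tensoring, of any $\bbdrplusm$-local system $\bW$:
\[
0\to\bW\to\bW\otimes\calO\bB_{\dR}^{[0,\infty]}\xrightarrow{\id\otimes\rd}\bW\otimes\calO\bB_{\dR}^{[0,\infty]}\otimes\Omega^1_{\calX}(-1)\to\cdots.
\]
My plan is to use this resolution as the backbone: once the higher $\rR^i\nu_*$ of each term vanishes, Items (1) and (2) will fall out together, since then $\rR\nu_*\bW$ will be computed by applying $\nu_*$ termwise. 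So the first target is the vanishing \eqref{eq bdr higher coho vanish} and the bundle property \eqref{eq cm is vb}.

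Both assertions are local on $X_{C,\et}$, so I would pass to a small affinoid $X=\Spa(R_K,R)$ with a toric chart and use Lemma \ref{lem:dR-local system as Gamma-representation} to translate $\bW$ into a $\Gamma$-representation $W$ over $\bbdrplusm(\widehat R_\infty)$. Combining this with Construction \ref{lem:local description of OBdR}, the sheaf $\bW\otimes\calO\bB_{\dR}^{[0,\infty]}$ becomes, locally on $X_\infty$, the Taylor power-series ring $W\{\underline W\}$ with the $\Gamma$-action that takes into account both the action on $W$ and on the formal variables. Then I would run a d\'evissage on $m$ using the short exact sequence $0\to t\bW\to\bW\to\bW/t\to 0$; the base case $m=1$ reduces to the Higgs-type computation of \cite[Thm.~3.8]{LZ17} (or equivalently the small-representation formalism of \cite{MW22}), which gives $\rR^i\nu_*=0$ in positive degree and identifies $\nu_*$ with the Higgs bundle attached to $\bW/t$. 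The inductive step uses Lemma \ref{lem:projectivity} to promote finite projectivity mod $t$ to finite projectivity over $\bdrplusm$. Granting this, the connection $\nabla_{\calM}$ is induced by the derivation on $\calO\bB_{\dR}^{[0,\infty]}$ (the $W_i$-derivatives in the local model), and the $G_K$-action is inherited formally. Item (2) is then immediate because the de Rham complex of the resolution computes $\rR\nu_*\bW$.

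For Item (3), one first checks that $\rd_\calM\wedge\rd_\calM=0$ on the nose and that $\rD\rR(\calM\otimes\calO\bB_{\dR}^{[0,\infty]},\rd_\calM)$ is a resolution of $\bW:=\ker\rd_\calM$: again this is local and follows from Poincar\'e, since locally $\calM\otimes\calO\bB_{\dR}^{[0,\infty]}\cong\calM\{\underline W\}$ and the Taylor-expansion map
\[
x\mapsto\sum_{\underline n\in\bN^d}\frac{1}{\underline n!}\,\nabla^{\underline n}(x)\,(-\underline W)^{\underline n}
\]
produces a horizontal section. Convergence of this series uses the topological nilpotency of $\nabla_{\calM}$ noted in Construction \ref{cons:local t-connection}; projectivity of $\bW$ again comes via Lemma \ref{lem:projectivity} combined with the $m=1$ case (where the Taylor construction is exactly the Liu--Zhu/\cite{MW22} Higgs-to-generalized-representation functor). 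For Item (4), the two constructions are quasi-inverses: the composite $\bW\mapsto\calM\mapsto\bW'$ is identified with $\bW$ via the inclusion of horizontal sections $\bW\into\bW\otimes\calO\bB_{\dR}^{[0,\infty]}$ (which is an equality onto the $\rd_\calM$-kernel by Poincar\'e), while the composite $\calM\mapsto\bW\mapsto\calM'$ recovers $\calM$ by the projection-formula identification $\nu_*(\bW\otimes\calO\bB_{\dR}^{[0,\infty]})=\calM\otimes_{\oxet\hatotimes\bdrplusm}\nu_*\calO\bB_{\dR}^{[0,\infty]}$ together with $\nu_*\calO\bB_{\dR}^{[0,\infty]}=\oxet\hatotimes\bdrplusm$ (a base case already inside the inductive framework).

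The main obstacle I expect is Item (3), specifically the finite projectivity of $\bW$ at mixed levels $m\ge 2$: one must show that the $\rd_\calM$-kernel, \emph{a priori} a sheaf of $\bbdrplusm$-modules on $X_\proet$, is genuinely pro-\'etale locally finite projective of the right rank. The natural plan is to produce a pro-\'etale cover on which the Taylor series defines an isomorphism onto the kernel, but making this rigorous requires controlling the $\Gamma$-action on the formal coordinates $W_i$ and checking compatibilities with the filtration by powers of $t$; the $m=1$ base case avoids this because the Higgs case is handled by the $v$-descent of \cite{KL2} already invoked in Lemma \ref{lem:kedlaya-Liu}. The inductive step will therefore rest on carefully chasing the short exact sequence $0\to t^{m-1}\bW\to\bW\to\bW/t^{m-1}\to 0$ through both functors and invoking Lemma \ref{lem:projectivity}(1) to conclude.
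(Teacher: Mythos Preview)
Your proposal is correct and follows essentially the same approach as the paper: reduce everything to the known $m=1$ case (\cite[Th.~3.3]{MW22}) by d\'evissage along the $t$-adic filtration, using Lemma~\ref{lem:projectivity} to handle the finite-projectivity of $\calM$ and of the kernel $\bW$ at each step. The paper's own proof is a terse one-liner to this effect, so your write-up is considerably more detailed than what appears there, but the strategy is identical.
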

  
\begin{proof} 
When $m=1$, this is \cite[Th. 3.3]{MW22}. The general case follows by standard d\'evissage argument (e.g, as done in \cite{LZ17} when  $\bw$ comes from a $\qp$-local system. As a small complement to \cite{LZ17}, we should show, e.g. $\cm$ constructed in Item (1) is a \emph{vector bundle}; this is a local statement, and still reduces to $m=1$ case using Lem \ref{lem:projectivity}. 
\end{proof}
 
 \subsection{Local Riemann--Hilbert correspondence: explicit formulae} \label{subsec local RH}

Let $X = \Spa(R_K,R)$ be an affinoid space admitting a toric chart as in Notation \ref{nota: small affinoid}. We have equivalences of categories:
\begin{equation} \label{eq: 2equiv}
\Rep^{\rm uni}_{\Gamma}(\bfb_m) \simeq \Rep_{\Gamma}(\bbdrplusm(\widehat{R}_\infty))   \simeq \MIC_{G_K}(R_{\bdrplusm})  
\end{equation}
where the first equivalence is Prop \ref{prop:decompletion} and the second is local version of Thm \ref{thm:RH correspondence}. The goal of this subsection is to give \emph{explicit} formulae explicating these equivalences, which will be used in \S \ref{sec: m to cm}.

 \begin{construction}\label{construction:local period sheaf}
      Define
      \[\bfb_m\{\underline W\}:= \{\sum_{\underline n\in\bN^d}a_{\underline n}\underline W^{\underline n}\in \bfb_m[[W_1,\dots,W_d]]\mid a_{\underline n} \to 0, \text{ $t$-adically, as } |\underline n|\to +\infty\}.\]
Then
\[ \bfb_m\{\underline W\} \subset \bB_{\dR}^+(X_\infty)\{\underline W\}=(\calO\bB_{\dR}^{[0,\infty]})_{|_{X_{\infty}}} \]
and is $\Gamma$-stable. Indeed, one can check  for  $1\leq i,j\leq d$ and  $g\in G_K$, we have
      \[\gamma_j(W_i) = W_i+\delta_{ij} \text{ and }g(W_i) = \chi(g)^{-1}W_i.\]
In addition, the explicit map $\rd$ in Cons \ref{lem:local description of OBdR} is ``stable" on it, in the sense it induces
\[\rd:  \bfb_m\{\underline W\}  \to  \bfb_m\{\underline W\} \otimes \Omega^1_{\calX}(-1).\] 
Furthermore, one can (easily) check the Poincar\'e Lemma for $\calO\bB_{\dR}^{[0,\infty]}$ restricts to a  Poincar\'e Lemma for $ \bfb_m\{\underline W\}$, in the sense that we have an  exact sequence
      \[0\to\bfb_m\to \bfb_m\{\underline W\} \xrightarrow{\rd}\bfb_m\{\underline W\}\otimes_{\wt R_m}\Omega^1_{\wt R_m}(-1)\to \cdots.\] 
  \end{construction}

\begin{prop} \label{prop: local RH formula}
Let $V \mapsto W \mapsto \cm$ be a group of corresponding objects in the equivalences in \eqref{eq: 2equiv}.
\begin{enumerate}
\item We have
\[ \cm=(V \otimes_{\bfb_m} \bfb_m\{\underline W\})^{\Gamma_\geo=1} \]
and $\nabla_\cm$ is induced by $\mathrm{id}_V \otimes \rd_{\bfb_m\{\underline W\}}$.

\item The projection map 
\[  \bfb_m\{\underline W\} \to \bfb_m \]
sending each $W_i$ to $0$ induces an isomorphism
\[ \cm=(V \otimes_{\bfb_m} \bfb_m\{\underline W\})^{\Gamma_\geo=1} \simeq V. \]

\item The isomorphism $\cm \simeq V$ in above item is compatible with connections and $\gk$-actions. More precisely,  we have
\begin{enumerate}
\item for each $i$, we have a commutative diagram 
\[
\begin{tikzcd}
\cm \arrow[d, "\nabla_i"] \arrow[r, "\simeq"] & V \arrow[d, "\log \gamma_i"] \\
\cm \arrow[r, "\simeq"]                       & V                           
\end{tikzcd}
\]
here $\log \gamma_i$ is well-defined since $\gamma_i$ acts on $V$ unipotently.
\item for any $g\in \gk$, we have
\[
\begin{tikzcd}
\cm \arrow[d, "g"] \arrow[r, "\simeq"] & V \arrow[d, "g"] \\
\cm \arrow[r, "\simeq"]                       & V                           
\end{tikzcd}
\] 
\end{enumerate} 
\end{enumerate}
\end{prop}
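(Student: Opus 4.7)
The plan is to prove (2) first by constructing an explicit inverse to the projection $\pi$, then deduce (1) from (2) combined with the local form of the Riemann--Hilbert functor of Theorem \ref{thm:RH correspondence} and the decompletion of Proposition \ref{prop:decompletion}, and finally verify (3) by direct computation using the explicit formulae for $\rd$ and the $\Gamma$-action on $\bfb_m\{\underline W\}$ from Construction \ref{construction:local period sheaf}.

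For (2), the key observation is that since $V \in \Rep_\Gamma^{\rm uni}(\bfb_m)$ and $m < \infty$, using $\gamma_j(t) = t$ one can show each $1 - \gamma_j$ is in fact \emph{nilpotent} (not merely topologically nilpotent) on $V$: the $t$-adic filtration $t^k V$ is $\gamma_j$-stable, each graded piece $t^k V/t^{k+1}V \cong V/tV$ is an $R_C$-module on which $1 - \gamma_j$ acts as a linear nilpotent operator by the unipotent hypothesis, and $t^m V = 0$. Consequently $N_j := \log \gamma_j = -\sum_{k\geq 1} (1-\gamma_j)^k/k$ is a well-defined $\Zp$-linear nilpotent operator on $V$; the $N_j$ mutually commute (as do the $\gamma_j$), and each commutes with every $\gamma_i$. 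Define
\[\Phi : V \longrightarrow V \otimes_{\bfb_m} \bfb_m\{\underline W\}, \qquad v \longmapsto \exp\Bigl(-\sum_i W_i N_i\Bigr)(v) = \sum_{\underline n \in \bN^d} \tfrac{(-1)^{|\underline n|}}{\underline n!}\, \underline W^{\underline n} \otimes N^{\underline n}(v),\]
a finite sum in the tensor product. Using $\gamma_j(W_i) = W_i + \delta_{ij}$ and the commutation $[\gamma_j, N^{\underline n}] = 0$, a formal exponential calculation gives $\gamma_j(\Phi(v)) = \Phi(v)$: the shift $W_i \mapsto W_i + \delta_{ij}$ inside the exponential produces a factor $\exp(-N_j) = \gamma_j^{-1}$ that cancels the $\gamma_j$ applied to $v$. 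Since clearly $\pi \circ \Phi = \id_V$ by $\underline W \mapsto 0$, it remains to check $\pi$ is injective on $\Gamma_\geo$-invariants; this follows from a Taylor expansion: any $\Gamma_\geo$-invariant $x = \sum_{\underline n} v_{\underline n} \underline W^{\underline n}$ with $v_{\underline 0} = 0$ is forced to have all $v_{\underline n} = 0$ by iteratively extracting leading $\underline W$-coefficients via $(\gamma_j - 1) x = 0$.

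For (1), by Theorem \ref{thm:RH correspondence}(1), evaluating on $X_C$ via the pro-\'etale $\Gamma_\geo$-torsor $X_\infty \to X_C$ and the local trivialization $(\calO\bB_\dR^{[0,\infty]}/t^m)|_{X_\infty} = \bbdrplusm(\widehat R_\infty)\{\underline W\}$ of Construction \ref{lem:local description of OBdR}, one has
\[\cm(X_C) = \bigl(V \otimes_{\bfb_m} \bbdrplusm(\widehat R_\infty)\{\underline W\}\bigr)^{\Gamma_\geo = 1},\]
where we substituted $W = V \otimes_{\bfb_m} \bbdrplusm(\widehat R_\infty)$ from Proposition \ref{prop:decompletion}. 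The $\Gamma$-equivariant inclusion $\bfb_m\{\underline W\} \hookrightarrow \bbdrplusm(\widehat R_\infty)\{\underline W\}$ yields a natural map $\iota : (V \otimes_{\bfb_m} \bfb_m\{\underline W\})^{\Gamma_\geo = 1} \to \cm$. The same $\Phi$-construction, applied over the larger ring $\bbdrplusm(\widehat R_\infty)\{\underline W\}$, provides a section $V \to \cm$ of the projection induced by $\underline W \mapsto 0$ that identifies $\cm$ with $V$ as underlying modules; combined with Step 1 this forces $\iota$ to be an isomorphism. The identification $\nabla_\cm = \id_V \otimes \rd$ is immediate from Constructions \ref{construction:global period sheaf} and \ref{construction:local period sheaf}. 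Part (3a) follows by direct computation using $\rd = -\sum_i (\partial/\partial W_i) \otimes (\dlog T_i/t)$: one has $\nabla_i(\Phi(v)) = -\partial_{W_i}\Phi(v) = N_i\,\Phi(v) = \Phi(N_i v) = \Phi((\log \gamma_i) v)$. Part (3b) follows from the splitting $\Gamma = \Gamma_\geo \rtimes \gk$ in Notation \ref{nota: small affinoid}: each $g \in \gk$ fixes $[T_i^\flat]$ (so acts on $\bfb_m \cong R_\bdrplusm$ compatibly with the $\gk$-action on $R_\bdrplusm$ of Construction \ref{cons:local t-connection}) and satisfies $g(W_i) = \chi(g)^{-1} W_i$, making the projection $\pi$ (sending $\underline W \mapsto 0$) $\gk$-equivariant.

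The hard part will be the $\Gamma_\geo$-invariance calculation in Step 1: one must carefully track the $\bfb_m$-semi-linearity of $\gamma_j$, which acts nontrivially on $\bfb_m$ when $m \geq 2$ via $\gamma_j([T_i^\flat]) = [\epsilon]^{\delta_{ij}}[T_i^\flat]$, through the exponential series. The key simplification is that $1 - \gamma_j$ turns out to be \emph{nilpotent} (not just topologically so) on $V$, thanks to the $\gamma_j$-stability of the $t$-adic filtration together with $t^m V = 0$; this makes $\Phi(v)$ a finite sum in $V \otimes_{\bfb_m} \bfb_m\{\underline W\}$ and permits the formal manipulation $\gamma_j \circ \exp(-\sum W_i N_i) = \exp(-\sum (W_i + \delta_{ij}) N_i) \circ \gamma_j$ without any convergence issues.
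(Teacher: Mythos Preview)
Your proposal is correct and follows essentially the same approach as the paper: the section map you call $\Phi(v) = \exp(-\sum_i W_i \log\gamma_i)(v)$ is precisely the paper's $s(v) = \prod_i \gamma_i^{-W_i}(v)$ written in exponential rather than binomial form, and the verifications of (2), (3a), (3b) proceed identically. The only minor difference is in Item~(1), where the paper invokes the argument after \cite[Lem.~6.17]{Sch13} to compare $\Gamma_\geo$-invariants over $\bfb_m\{\underline W\}$ versus $\bbdrplusm(\widehat R_\infty)\{\underline W\}$, while you deduce this directly from applying the same section map over the larger ring.
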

\begin{proof}
One can always only consider $m <\infty$ case. 
By Thm \ref{thm:RH correspondence}, we have
\[ \cm= (W \otimes_{\bbdrplusm(\widehat{R}_\infty)} \bbdrplusm(\widehat{R}_\infty)\{\underline W\})^{\Gamma_\geo=1}\]
As $t$ is nilpotent in $\bfB_m$, the algebra 
    \[\bfB_{m}\{\underline W\} = \bfB_{m}[\underline W] = \bfB_{m}[W_1,\dots,W_d]\]
    is actually a polynomial algebra over $\bfB_{m}$.   By the same argument as in the paragraph after \cite[Lem. 6.17]{Sch13}, the natural morphism
    \[\rR\Gamma(\Gamma_{\geo},V\otimes_{\bfB_m}\bfB_m\{\underline W\})\to \rR\Gamma(\Gamma_{\geo},W \otimes_{\bB_{\dR}^+(X_\infty)} \bB_{\dR}^+(X_\infty)\{\underline W\})\]
    is a quasi-isomorphism, which in particular concludes Item (1). 

Item (2). Consider the   map
\[ \cm=(V \otimes_{\bfb_m} \bfb_m\{\underline W\})^{\Gamma_\geo=1} \to  V \]
where the second map is induced by projection. 
Define a map
\[ s: V \to V \otimes_{\bfb_m} \bfb_m\{\underline W\}\]
where 
\[s(v)=(\prod_{i=1}^d \gamma_i^{-W_i})v \in V \otimes_{\bfb_m} \bfb_m\{\underline W\}  \]
where the right hand side stands for
   \[\begin{split}          & \left(\sum_{n_1,\dots,n_d\geq 0}\binom{W_1}{n_1}\cdots\binom{W_d}{n_d}(\gamma_1^{-1}-1)^{n_1}\cdots(\gamma_d^{-1}-1)^{n_d}\right)(v)\\
          =&\left(\sum_{n_1,\dots,n_d\geq 0}(-1)^{n_1+\cdots+n_d}\frac{\prod_{k_1=0}^{n_1-1}(W_1-k_1)}{n_1!}\cdots\frac{\prod_{k_d=0}^{n_d-1}(W_d-k_d)}{n_d!}(\gamma_1^{-1}-1)^{n_1}\cdots(\gamma_d^{-1}-1)^{n_d}\right)(v).\end{split}\]
This is a finite summation as $\Gamma_\geo$-action is unipotent. We claim $s(v)$ is fixed by each $\gamma_i$; it suffices to use the following formal verification:
\[ \gamma_i(s(v)) = (\prod_{i=1}^d \gamma_i^{-W_i-1})(\gamma_i(v)) =s(v)\]
which uses the fact \[\gamma_j(W_i) = W_i+\delta_{ij}.\]
Thus indeed $s$ defines a section map
\[s: \cm \to V\]
which is clearly   a bijection.

Item (3). To check compatibility of connections, it suffices to check  
\[ \nabla_i(s(v)) = s(\log \gamma_i(v)). \]
This   can be checked formally using the fact that $\nabla_i$ is induced by $-\frac{d}{d W_i}$; indeed  it suffices to consider the $i$-th component, then we check
\[ \nabla_i(\gamma_i^{-W_i})v)  
=\nabla_i(e^{-\log   \gamma_i \cdot W_i})v)  
= \log \gamma_i \cdot (e^{-\log \gamma_i \cdot W_i})v) 
=(\gamma_i^{-W_i}) (\log \gamma_i (v)) \]
To check compatibility with $\gk$-actions, it suffices to note in the expression $s(v)=(\prod_{i=1}^d \gamma_i^{-W_i})v $, each $\gamma_i^{-W_i}=e^{-\log \gamma_i \cdot W_i}$ is fixed by $\gk$. Indeed, one checks
\[ g\cdot \log \gamma_i \cdot W_i =(g\cdot \log \gamma_i g^{-1})\cdot (g W_i) =\chi(g) \cdot \log \gamma_i \cdot \chi(g)^{-1}\cdot W_i=\log \gamma_i \cdot W_i.\]
\end{proof}

 \newpage
 \addtocontents{toc}{\ghblue{Relate prismatic crystals with   RH}}

 \section{Global $\mathbbl{\Delta}_\dR^+$-crystals on the perfect site and $\bB_{\dR}^+$-local systems} \label{sec: crystal perf pris} 
 
 Let  $\frakX$ be a semi-stable formal scheme over $\calO_K$ with   rigid analytic generic fibre $X$.  Denote by $(\frakX)_{\Prism,\ast}^{\perf}$ the prismatic site of perfect prisms (resp. perfect log-prisms) for $\ast = \emptyset$ (resp. $\ast = \log$).  The main theorem of this section is the following.
 
 \begin{thm}\label{Thm-de Rham realization}
   Let $*\in \{\emptyset,\log\}$.
   There exists a natural equivalence between categories 
   \[\rL_{\ast}:\Vect((\frakX)_{\Prism,\ast}^{\perf},\Prism_{\dR,m}^+)\xrightarrow{\simeq} \Vect(\xproet,\bB_{\dR,m}^+)\]
   which preserves ranks, tensor products and duals. For any $\bM\in \Vect((\frakX)_{\Prism}^{\perf},\Prism_{\dR,m}^+)$ with associated $\bB_{\dR,m}^+$-local system $\rL(\bM)$ on $X_{\proet}$, we have a canonical quasi-isomorphism
   \[\rR\Gamma((\frakX)_{\Prism,\ast}^{\perf},\bM)\simeq \rR\Gamma(X_{\proet},\rL(\bM)).\]
 \end{thm}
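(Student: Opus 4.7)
The plan is to reduce to the known case of \cite{MW22} (which handles $\ast = \emptyset$, $m=1$, smooth) along two orthogonal directions: adding the log structure and passing from $m=1$ to general $m$. For the log reduction, I would invoke the equivalence between perfect log-prisms and perfect prisms established in \S \ref{sec log perf prism} (mentioned in Remark \ref{rem:what is new}); this makes the perfect (log-)prismatic site essentially independent of $\ast \in \{\emptyset, \log\}$, so without loss of generality we may assume $\ast = \emptyset$.

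First I would construct the site-theoretic comparison. Via Fontaine's functor $(A, I) \mapsto \Spa(A/I[1/p], A/I)^\sharp$, the perfect prisms in $(\frakX)_{\Prism}^{\perf}$ correspond to affinoid perfectoid spaces over $X = \frakX_\eta$; after shrinking to a basis, this gives an equivalence of topoi between $(\frakX)_{\Prism}^{\perf}$ and the site $X_{\mathrm{aff},\perf,v}$ of Lemma \ref{cor:kedlaya-liu}. Under this equivalence, evaluating $\Prism_{\dR,m}^+$ on a perfect prism $(A,I)$ yields $A[1/p]/I^m$, which (by the identification $A \cong W(S^\flat)$ of Fontaine's theorem, where $S$ is the corresponding perfectoid ring) is canonically $\bbdrplusm(S)$. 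So the sheaves $\Prism_{\dR,m}^+$ and $\bbdrplusm$ match under the site equivalence. Applying Lemma \ref{cor:kedlaya-liu} then yields
\[\Vect((\frakX)_{\Prism}^{\perf}, \Prism_{\dR,m}^+) \simeq \Vect(X_{\mathrm{aff},\perf,v}, \bbdrplusm) \simeq \Vect(\xproet, \bbdrplusm).\]
The functor $\rL$ is thus (essentially) $v$-sheafification composed with the tilting identification. Preservation of ranks, tensor products and duals is built into this construction.

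For the cohomology comparison, the argument is parallel to Lemma \ref{Lem-CechAlex-Rel}. Pick a cover of the final object of $\Sh((\frakX)_{\Prism}^{\perf})$ by a single perfect prism $(A_0, I_0)$ (obtained by taking a perfectoid cover of $X$); let $(A_0^\bullet, I_0^\bullet)$ be its \v Cech nerve. The key input is higher cohomology vanishing: for any $\calB \in (\frakX)_{\Prism}^{\perf}$ and any $\bM \in \Vect((\frakX)_{\Prism}^{\perf}, \Prism_{\dR,m}^+)$, one has $\rH^i(\calB, \bM) = 0$ for $i \geq 1$. This is proved by derived Nakayama d\'evissage along the filtration $\IPrism^j \bM$ down to the $m=1$ case, which is known from \cite{MW22} (or equivalently, from the Kedlaya--Liu vanishing of Lemma \ref{lem:kedlaya-Liu}). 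Then the \v Cech-to-derived spectral sequence gives
\[\rR\Gamma((\frakX)_{\Prism}^{\perf}, \bM) \simeq \bM(A_0^\bullet, I_0^\bullet),\]
while the analogous computation for $\rL(\bM) \in \Vect(\xproet, \bbdrplusm)$ against the associated perfectoid cover yields the same totalization under the site equivalence established above, giving the desired quasi-isomorphism.

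The main obstacle is the careful set-up of the site equivalence in the log case; specifically, verifying in \S \ref{sec log perf prism} that the forgetful functor from perfect log-prisms to perfect prisms is an equivalence (so that the log structure is \emph{redundant} on perfect objects over $\frakX$). Once this is done, the rest of the argument is formal and runs in parallel to Lemma \ref{Lem-CechAlex-Rel} together with the Kedlaya--Liu machinery. A minor additional point is keeping track of compatibility with the Breuil--Kisin twists implicit in $\Prism_{\dR,m}^+$ versus the Tate twists in $\bbdrplusm$, but on the perfect subsite $\IPrism$ is canonically trivialized (up to a unit) by the Fontaine map, so no genuine twist enters.
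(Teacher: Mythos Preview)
Your approach is essentially the same as the paper's: reduce $\ast=\log$ to $\ast=\emptyset$ via the appendix (Corollary \ref{cor:the same perfect site}), use the perfectoid--perfect prism correspondence to match $\Prism_{\dR,m}^+$ with $\bbdrplusm$, and compare cohomology by a \v Cech argument with the vanishing input coming from Lemma \ref{lem:kedlaya-Liu} via d\'evissage in $m$.

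Two points where the paper is more careful than your sketch. First, you assert an ``equivalence of topoi'' between $(\frakX)_{\Prism}^{\perf}$ and $X_{\mathrm{aff},\perf,v}$; the paper does not claim this. The issue is that perfect prisms $(A,I)$ with $I=(p)$ (crystalline prisms) have $A/I[1/p]=0$ and so do not correspond to affinoid perfectoids, and conversely the choice of $+$-ring on the perfectoid side is not determined by the prism alone. The paper instead writes down explicit functors $\rL$ and $\rL^{-1}$ (Construction \ref{construction:the dR-realization functor L}), setting $\rL^{-1}(\bW)(\calA)=0$ on crystalline prisms, and checks directly that they are quasi-inverse on crystals. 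Since $\Prism_{\dR,m}^+$ vanishes on crystalline prisms anyway, your shortcut is harmless at the level of crystals, but the topos-equivalence phrasing is not literally correct. Second, your ``single perfect prism $(A_0,I_0)$ covering the final object'' only works when $\frakX$ is affine; for general quasi-compact $\frakX$ the paper takes an \'etale cover $\{\frakX_i=\Spf(R_i)\}$ by small semi-stables, uses the Fontaine prisms $\calA_i=(\AAinf(X_{i,\infty}),(\xi))$ as a covering family, and identifies the resulting \v Cech complex with the one computing $\rR\Gamma(X_v,\rL(\bM))$ via $U_{\calA_J}=X_{J,\infty}$ (this is where Lemma \ref{lem:Fontaine prism is covering} and the argument that $\{\calA_i\}$ covers the final object enter). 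Your reference to \cite{MW22} for the $m=1$ case is the right template, but note that \cite{MW22} treats the smooth case; the semi-stable extension is part of what this paper supplies.
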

 According to the lemma below, we only need to work with $(\frakX)_{\Prism}^{\perf}$. 
 \begin{lem}\label{lem:the same perfect site and crystals}
     The forgetful functor $(\frakX)_{\Prism,\log}\to (\frakX)_{\Prism}$ sending each $(A,I,M)$ to $(A,I)$ induces an equivalence of categories
     \[\Vect((\frakX)_{\Prism}^{\perf},\Prism_{\dR,m}^+)\xrightarrow{\simeq}\Vect((\frakX)_{\Prism,\log}^{\perf},\Prism_{\dR,m}^+)\]
     which is compatible with cohomology in the following sense: for   $\bM\in \Vect((\frakX)_{\Prism}^{\perf},\Prism_{\dR,m}^+)$, there exists a quasi-isomorphism
 \[\rR\Gamma((\frakX)_{\Prism}^{\perf},\bM)\simeq \rR\Gamma((\frakX)_{\Prism}^{\perf,\log},\bM).\]
 \end{lem}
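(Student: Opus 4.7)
The plan is to show that the forgetful functor $u \colon (\frakX)_{\Prism,\log}^{\perf} \to (\frakX)_{\Prism}^{\perf}$ is actually an equivalence of \emph{sites} (with the flat/quasi-syntomic topologies used to define $\Sh$ on each side), at which point the equivalence of sheaf categories, of the subcategories of $\Prism_{\dR,m}^+$-crystals, and the cohomology comparison, all become formal consequences.

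The main content is the following rigidity statement: for every object $(A,I) \in (\frakX)_{\Prism}^{\perf}$ there exists a \emph{unique} log structure $M_A$ on $A$ lifting the semi-stable log structure on $A/I$ pulled back from $\frakX$, and moreover every morphism of perfect prisms over $\frakX$ automatically respects these log structures. First I will reduce to the local model: writing the chart $\Box \colon \calO_K\langle T_0,\ldots,T_r,T_{r+1}^{\pm 1},\ldots,T_d^{\pm 1}\rangle/(T_0\cdots T_r - \pi) \to R$ of Def.~\ref{Convention-Rel-Log}, \'etaleness reduces (by \cite[Lem.~2.18]{BS22}) the problem of constructing/comparing log structures to the case $R = \calO_K\langle T_0,\ldots,T_d\rangle/(T_0\cdots T_r-\pi)$. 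On a perfect prism $(A,I)$ with a map $R \to A/I$, the elements $T_0,\ldots,T_r$ admit \emph{unique} Teichmüller-type lifts to $A$ (because $A$ is the Witt vectors of its perfect tilt), which then pin down both the pre-log structure $\bN^{r+1} \to A$ and the $\delta_{\log}$-structure on it; this is exactly the content of Lemmas~\ref{lem:existence of log-structure} and \ref{lem:rigidity of log-structrue} in the perfectoid setting. Thus $u$ is essentially surjective; full faithfulness on morphisms follows from the same uniqueness applied to $A \to B$.

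Next I would verify that $u$ identifies the two Grothendieck topologies. Coverings on either side are given by $(p,I)$-completely faithfully flat maps of underlying prisms, and log structures do not enter the definition of the topology; since the log structure attached by the previous step is functorial in the underlying prism, coverings and their \v{C}ech nerves match on the two sides. It follows that $u$ induces an equivalence of topoi $\Sh((\frakX)_{\Prism,\log}^{\perf}) \simeq \Sh((\frakX)_{\Prism}^{\perf})$. The structure sheaves $\calO_{\Prism}$ and ideal sheaves $\calI_{\Prism}$ are defined by evaluation on the underlying prism, hence correspond to each other under $u$, and so do $\Prism_{\dR,m}^+ = \calO_{\Prism}[1/p]/\calI_{\Prism}^m$. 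Taking vector bundles on both sides gives the desired equivalence
\[
\Vect((\frakX)_{\Prism}^{\perf},\Prism_{\dR,m}^+)\xrightarrow{\simeq}\Vect((\frakX)_{\Prism,\log}^{\perf},\Prism_{\dR,m}^+),
\]
and the equivalence of topoi applied to a $\Prism_{\dR,m}^+$-crystal $\bM$ yields the cohomology comparison
\[
\rR\Gamma((\frakX)_{\Prism}^{\perf},\bM)\simeq \rR\Gamma((\frakX)_{\Prism,\log}^{\perf},\bM).
\]

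The main obstacle I anticipate is the rigidity claim itself, in particular the uniqueness (and functoriality) of the lifts of the boundary coordinates $T_0,\ldots,T_r$ to a perfect prism $(A,I)$ with a specified map $A/I\to R$. One has to check that two such lifts differ by a unit in a way compatible with the chart monoid $\bN^{r+1}$ and with the $\delta$-structure; the perfectness of $A$ (so that $p$-power roots of units exist) is what makes the argument go through, and this is precisely where the proof would break down if one dropped ``perf'' from the statement.
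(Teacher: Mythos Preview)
Your overall strategy matches the paper's exactly: the lemma is deduced immediately from Corollary~\ref{cor:the same perfect site}, which in turn rests on Proposition~\ref{prop:the same perfect site} asserting that the forgetful functor $(R)^{\perf}_{\Prism,\log}\to (R)^{\perf}_{\Prism}$ is an isomorphism of sites. Once that is known, the equivalence of crystal categories and the cohomology comparison are formal, just as you say.

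There is, however, a real imprecision in your middle paragraph that you should fix. You claim that the $T_i$ admit \emph{unique} Teichm\"uller-type lifts to $A$; this is false. Since $A = W((A/I)^\flat)$ but $T_i$ lives in $A/I$ rather than $(A/I)^\flat$, one must first choose a compatible system of $p$-power roots of $T_i$ in $A/I$ to get an element of the tilt, and such choices are not unique. The actual mechanism in the paper (Lemmas~\ref{lem:factorization}, \ref{lem:tilt}, \ref{lem:uniqueness of log-structure}) is that any two such lifts differ by a \emph{unit} of $A$, so the associated log structures coincide; existence of at least one lift is handled separately in Proposition~\ref{prop:the same perfect site}. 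You correctly anticipate this as the ``main obstacle'' in your final paragraph, so the gap is more in exposition than in conception; but the sentence asserting uniqueness of lifts, and the citation of Lemmas~\ref{lem:existence of log-structure} and~\ref{lem:rigidity of log-structrue} as supplying it (the latter only concerns log prisms over a fixed base log prism, not the absolute situation), should be replaced by the unit-comparison argument.
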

 \begin{proof}
    This follows from Corollary \ref{cor:the same perfect site} immediately.
 \end{proof}

 Therefore, in order to conclude Theorem \ref{Thm-de Rham realization}, it is enough to construct an equivalence of categories
 \[\rL:\Vect((\frakX)_{\Prism}^{\perf},\Prism_{\dR,m}^+)\to\Vect(X_{{\rm aff},\perf,v},\bB_{\dR,m}^+)\]
 which is compatible with cohomologies. To do so, we shall proceed as in \cite[\S5]{MW22}.

 We first deal with a special case where $\frakX = \Spf(R)$ is small semi-stable in the sense of Definition \ref{Convention-Rel-Log}.

 \begin{construction}\label{constr:Gamma group log case}
     Suppose $\frakX = \Spf(R)$ is small semi-stable with the chart 
     \[\Box: A_K^+:=\calO_K\za T_0,\dots,T_r,T_{r+1}^{\pm 1},\dots,T_d^{\pm 1}\ya/(T_0\cdots T_r-\pi)\to R\]
     and the generic fiber $X = \Spa(R_K,R)$. For any $n\geq 0$, we put 
     \[A_{C,n}^+:=\calO_K\za T_0^{1/p^n},\dots,T_r^{1/p^n},T_{r+1}^{\pm 1/p^n},\dots,T_d^{\pm 1/p^n}\ya/(T_0^{1/p^n}\cdots T_r^{1/p^n}-\pi^{1/p^n})\]
     and $A_C^+:=A_{C,0}^+$. Then $\Spa(A_{C,n}:=A_{C,n}^+[1/p],A_{C,n}^+)\to \Spa(A_K:=A_K^+[1/p],A_K^+)$ is a pro-\'etale Galois covering for any $n\geq 1$. Denote by $\Spa(\widehat A_{\infty},\widehat A_{\infty}^+)$ the affinoid perfectoid space corresponding to $\lim_n\Spa(A_{C,n},A_{C,n}^+)$. Then we have 
     \[\widehat A_{\infty}^+ = \calO_C\za T_0^{\frac{1}{p^{\infty}}},\dots,T_r^{\frac{1}{p^{\infty}}},T_{r+1}^{\pm \frac{1}{p^{\infty}}},\dots,T_d^{\pm \frac{1}{p^{\infty}}}\ya/((T_0\cdots T_r)^{\frac{1}{p^{\infty}}}-\pi^{\frac{1}{p^{\infty}}})\]
     where $((T_0\cdots T_r)^{\frac{1}{p^{\infty}}}-\pi^{\frac{1}{p^{\infty}}})$ denotes the closed ideal of $\widehat A_{\infty}^+ = (\colim_nA_{C,n}^+)^{\wedge}_p$, the $p$-adic completion of $\colim_nA_{C,n}^+$. It is generated by $\{T_0^{1/p^n}\cdots T_r^{1/p^n}-\pi^{1/p^n}\}_{n\geq 0}$. 
     
     Denote by $X_C = \Spa(R_C,R_C^+)$ and $X_{\infty} = \Spa(\widehat R_{\infty},\widehat R_{\infty}^+)$ the base-change of $X$ along 
     \[\Spa(A_C,A_C^+)\to \Spa(A_K,A_K^+) \text{ and }\Spa(\widehat A_{\infty},\widehat A_{\infty}^+)\]
     respectively. Then $X_{\infty}\to X$ is a pro-\'etale Galois covering with Galois group $\Gamma = \Gamma_{\geo}\rtimes G_K$ with
     \[\Gamma_{\geo} = \{\delta_0^{n_0}\cdots\delta_d^{n_d}\mid n_i\in\Zp \text{ such that }n_0+\cdots+n_r = 0\},\]
     where for any $0\leq i,j\leq d$, any $n\geq 0$ and any $g\in G_K$, we have 
     \[\delta_i(T_j^{1/p^n}) = \zeta_{p^n}^{\delta_{ij}}T_j^{1/p^n} \text{ and } g\delta_ig^{-1} = \delta_i^{\chi(g)}.\]
     Put $\gamma_i = \delta_0^{-1}\delta_i$ for $1\leq i\leq r$ and $\gamma_i = \delta_i$ for $r+1\leq i\leq d$. Then we have 
     \[\Gamma_{\geo}\cong \Zp\gamma_1\oplus\cdots\oplus\Zp\gamma_d.\]
     Clearly, $\Gamma_{\geo}$ is the Galois group for the pro-\'etale torsor $X_{\infty}\to X_C$. Let
     \[T_i^{\flat}:=(T_i,T_i^{1/p},T_i^{1/p^2},\dots)\in \widehat R_{\infty}^{\flat,+}.\]
     Then we have $[T_i^{\flat}]\in \AAinf(X_{\infty})$ such that $\prod_{i=0}^r[T_i^{\flat}] = [\pi^{\flat}]$. Then the morphism $\frakS^{\Box}\to \AAinf(X_{\infty})$ carrying each $T_i$ to $[T_i^{\flat}]$ (cf. Construction \ref{constr: cover of final obj rel log pris}) is compatible with $\delta$-structures and thus defines a morphism 
     \[\frakS(R)\to \AAinf(X_{\infty})\]
     compatible with $\delta$-structure as well (cf. \cite[Lem. 2.18]{BS22}). The log-structure $M$ on $\AAinf(X_{\infty})$ coming from that on $\frakS(R)$ makes $(\AAinf(X_{\infty}),(\xi),M)$ a well-defined perfect log-prism in $(R)_{\Prism,\log}$. 
 \end{construction}
 \begin{lem}\label{lem:Fontaine prism is covering}
     Suppose that $R$ is small semi-stable over $\calO_K$.
     The perfect log-prism $(\AAinf(X_{\infty}),(\xi),M)$ is a cover  of the final object of $\Sh((R)_{\Prism,\log}^{\perf})$.
 \end{lem}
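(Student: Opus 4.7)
The plan is to mirror the strategy of Lemma \ref{lem:log covering for BK}, but now in the perfect setting, where the role played there by prismatic envelopes of $\delta$-rings is replaced by perfectoid covers via the tilting equivalence. The four main steps would be as follows.

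First, using the \'etaleness of the chart
\[\Box: \calO_K\za T_0,\dots,T_r,T_{r+1}^{\pm 1},\dots,T_d^{\pm 1}\ya/(T_0\cdots T_r-\pi)\to R\]
together with \cite[Lem.~2.18]{BS22} and the rigidity of log-structures (Lem \ref{lem:rigidity of log-structrue}), I would reduce to the case where $R$ is itself the standard semi-stable algebra. Next, given an arbitrary perfect log-prism $(A,I,M_A)\in (R)_{\Prism,\log}^{\perf}$ with structure map $R\to A/I$, I would invoke Lemma \ref{lem:existence of log-structure} to lift the coordinates $T_i\in A/I$ to elements $t_i\in A$ for which $M_A$ is associated to the pre-log structure $\bN^{r+1}\to A$, $e_i\mapsto t_i$.

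The heart of the argument is constructing the cover. Set $S:=A/I$, a perfectoid ring over $\calO_C$ (possibly after enlarging $A$, since we have $\pi^{1/p^n}$ available in the target already via a map from $\AAinf(X_\infty)$). I would form a perfectoid $p$-completely faithfully flat extension $S\to S'$ by adjoining compatible systems of $p$-power roots $s_{i,n}$ of the $t_i$ for $0\leq i\leq d$, subject to the Kummer-type constraints $s_{0,n}\cdots s_{r,n}=\pi^{1/p^n}$ for all $n\geq 1$ (and $s_{i,0}=t_i$ for all $i$). Concretely, one takes the $p$-adic completion of
\[S[X_{i,n}\mid 0\leq i\leq d, n\geq 0]/(X_{i,0}-t_i,\ X_{i,n+1}^p-X_{i,n},\ X_{0,n}\cdots X_{r,n}-\pi^{1/p^n})\]
and checks it is perfectoid (the generators of the ideal form a regular sequence up to perfectoid issues). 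By the tilting correspondence, the corresponding perfect prism $(B,IB)$ covers $(A,I)$. The compatible systems $(t_i, s_{i,1}, s_{i,2},\dots)$ then yield elements of $(B/IB)^\flat$, whose Teichm\"uller lifts define a map $\AAinf(X_\infty)\to B$ carrying $[T_i^\flat]$ to these lifts. Endowing $B$ with the log-structure $N_B$ coming from $\bN^{r+1}\to B$, $e_i\mapsto t_i$, one obtains the required cover $(B,IB,N_B)\in (R)_{\Prism,\log}^{\perf}$ together with a morphism from $(\AAinf(X_\infty),(\xi),M)$.

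The main obstacle will be verifying perfectoidness of $S'$ and the compatibility of log-structures at the end: the log-structure $N_B$ induced by the $t_i$ must coincide both with the pullback of $M_A$ and with the pullback of the Fontaine log-structure $M$ on $\AAinf(X_\infty)$. Compatibility with $M_A$ follows from the rigidity statement (Lem.~\ref{lem:rigidity of log-structrue}) since the $t_i$'s lift $T_i$, while compatibility with $M$ follows from the construction: the Teichm\"uller lifts of the $s_{i,\bullet}$ automatically match the images of $[T_i^\flat]$, which are the generators of the log structure on the Fontaine side. Once these compatibilities are in place, the proof is complete.
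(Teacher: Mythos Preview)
The paper's proof is a two-line reduction: by Corollary~\ref{cor:the same perfect site} the forgetful functor induces an equivalence of sites $(R)_{\Prism,\log}^{\perf}\simeq (R)_{\Prism}^{\perf}$, so it suffices that the underlying perfect prism $(\AAinf(X_\infty),(\xi))$ covers the final object of $\Sh((R)_{\Prism}^{\perf})$, and this follows from the argument of \cite[Lem.~4.9(2)]{MW22}. All the delicate log-compatibility questions are absorbed once and for all into Proposition~\ref{prop:the same perfect site}.

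Your direct construction is in the right spirit but has two issues. First, the constraint $s_{0,n}\cdots s_{r,n}=\pi^{1/p^n}$ is circular: $\pi^{1/p^n}$ does not yet live in $S=A/I$, and your parenthetical (``available via a map from $\AAinf(X_\infty)$'') presupposes the very map you are constructing. The easy fix is to adjoin the $s_{i,n}$ freely and let their products \emph{define} a compatible root system for $\pi$. Second, and more substantively, the final log-compatibility with $M$ is not automatic. Your $N_B$ is generated by the $t_i\in A\subset B$, while the log-structure pulled back from $(\AAinf(X_\infty),M)$ is generated by the images $[s_i^\flat]$ of $[T_i^\flat]$; these agree modulo $IB$, but to conclude the two log-structures coincide you must show they are unit multiples in $B$ itself. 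That is exactly the content of Lemma~\ref{lem:uniqueness of log-structure} (via Lemmas~\ref{lem:factorization} and~\ref{lem:tilt}). So your route ends up relying on the same appendix machinery anyway, at which point invoking Corollary~\ref{cor:the same perfect site} directly is the more economical path.
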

 \begin{proof}
     By Corollary \ref{cor:the same perfect site}, it suffices to show the perfect prism $(\AAinf(X_{\infty}),(\xi))$ is a covering of the final object of $\Sh((R)_{\Prism}^{\perf})$. But this follows from the same argument in the proof of \cite[Lem. 4.9(2)]{MW22}.
 \end{proof}

 Let $(\AAinf(X_{\infty})^{\bullet},(\xi))$ be the \v Cech nerve associated to the covering $(\AAinf(X_{\infty}),(\xi))$ above. Then there exists a unique morphism of cosimplicial prisms
 \[(\AAinf(X_{\infty})^{\bullet},(\xi))\to (\rC(\Gamma^{\bullet},\AAinf(X_{\infty})),(\xi))\]
 in $(\frakX)_{\Prism}^{\perf}$, where $\rC(\Gamma^{\bullet},\AAinf(X_{\infty}))$ denotes the ring of continuous functors from $\Gamma^{\bullet}$ into $\AAinf(X_{\infty})$.

 \begin{lem}\label{lem:perfect cech nerve is ring of functions}
     The morphism $(\AAinf(X_{\infty})^{\bullet},(\xi))\to (\rC(\Gamma^{\bullet},\AAinf(X_{\infty})),(\xi))$ induces an isomorphism 
     \[\Prism_{\dR,m}^+((\AAinf(X_{\infty})^{\bullet},(\xi)))\xrightarrow{\cong}\Prism_{\dR,m}^+((\rC(\Gamma^{\bullet},\AAinf(X_{\infty})),(\xi))) = \rC(\Gamma^{\bullet},\bB_{\dR,m}^+(X_{\infty})).\]
 \end{lem}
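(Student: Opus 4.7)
The plan is to identify the \v Cech nerve on the \emph{perfect} (log-)prismatic site with the (completed) ring of continuous functions on $\Gamma^\bullet$, using the dictionary between perfect prisms and affinoid perfectoids. By Lemma \ref{lem:the same perfect site and crystals} (applied in a \v Cech-theoretic form), it suffices to treat the case $\ast=\emptyset$: the log structure is pulled back from $\frakS(R)$ and plays no role in the coproducts on the perfect site.

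First, I would use that the perfect prismatic site of $\frakX$ is equivalent, via tilting, to the site of affinoid perfectoids over $X_C$. Under this equivalence, $(\AAinf(X_\infty),(\xi))$ corresponds to the affinoid perfectoid $X_\infty$ from Construction \ref{constr:Gamma group log case}, and self-coproducts of perfect prisms correspond to fiber products of affinoid perfectoids over $X$. Thus for each $n\geq 0$ there is an identification
\[
 (\AAinf(X_\infty)^n,(\xi)) \;\cong\; (\AAinf(X_\infty\times_X\cdots\times_X X_\infty),(\xi)),
\]
where the fiber product has $(n+1)$ factors.

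Next, since $X_\infty\to X$ is a pro-\'etale Galois covering with group $\Gamma$ (a pro-finite group), there is a $\Gamma^n$-equivariant isomorphism of affinoid perfectoids
\[
 X_\infty\times_X\cdots\times_X X_\infty \;\cong\; X_\infty\times\underline{\Gamma}^n,
\]
where $\underline{\Gamma}^n$ denotes the profinite constant object. Applying $\AAinf(-)$ and then passing to the rational de Rham quotient $\prism^+_{\dR,m}=(-)[1/p]/(\xi)^m$ yields
\[
 \prism^+_{\dR,m}((\AAinf(X_\infty)^n,(\xi))) \;\cong\; C(\Gamma^n,\bB_{\dR,m}^+(X_\infty)),
\]
because $\AAinf$ sends a disjoint union of profinite copies to continuous functions, and $\bB_{\dR,m}^+(X_\infty)$ is a Banach space so its profinite ``tensor products'' with $C(\Gamma,\Zp)$ compute to continuous functions on $\Gamma$.

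Finally, I would check that under these identifications the canonical cosimplicial map from the \v Cech nerve to $C(\Gamma^\bullet,\AAinf(X_\infty))$ (constructed from the $\Gamma$-action on $X_\infty$) is precisely the identity, so the comparison map in the lemma is an isomorphism of cosimplicial rings. The main technical point is the second step: one must justify the profinite fiber product description and the passage through $\AAinf$ carefully, ensuring continuity of $\Gamma^n$-valued functions with respect to the $p$-adic Banach topology on $\bB_{\dR,m}^+(X_\infty)$. For this, I would reduce modulo $\xi$ to get the statement for $\oc$-valued functions and then lift by the fact that $\bB_{\dR,m}^+(X_\infty)$ is a successive extension of $\hat{\mathcal O}_X(X_\infty)\cong \widehat R_\infty$-modules via the filtration by powers of $\xi$. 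This mirrors the argument in \cite[Lem. 5.2]{MW22} in the smooth case, so the only new input is the compatibility with the log structure, which is automatic on the perfect site by Corollary \ref{cor:the same perfect site}.
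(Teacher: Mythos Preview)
Your approach is correct and essentially the same as the paper's, just spelled out in more detail: the paper's proof simply notes that both sides are $\xi$-complete, applies derived Nakayama to reduce to $m=1$, and then cites \cite[Lem.~4.11(3)]{MW22} (not Lem.~5.2), whose content is precisely the perfectoid/Galois-torsor identification you describe. Your preliminary reduction to $\ast=\emptyset$ is already implicit in the paper's setup (the \v Cech nerve is taken on the non-log perfect site, justified by Corollary~\ref{cor:the same perfect site}), so that paragraph is harmless but unnecessary.
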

 \begin{proof}
     Since we already have a morphism $\Prism_{\dR,m}^+((\AAinf(X_{\infty})^{\bullet},(\xi)))\to\rC(\Gamma^{\bullet},\bB_{\dR,m}^+(X_{\infty}))$, it is enough to show it is an isomorphism. As both sides are $\xi$-complete, we can use derived Nakayama Lemma to reduce to the case for $m=1$. Then the result follows from the same argument in the proof of \cite[Lem. 4.11(3)]{MW22}.
 \end{proof}
  Using this, one can prove Theorem \ref{Thm-de Rham realization} in the following special (but typical) case. The general case actually follows from the same but slightly more tedious argument.
  \begin{lem}\label{lem:dR realization local case}
      Suppose $\frakX = \Spf(R)$ is small semi-stable over $\calO_K$.
      There are equivalences of categories
      \[\Vect((\frakX)_{\Prism}^{\perf},\Prism_{\dR,m}^+) \simeq \Rep_{\Gamma}(\bbdrplusm(\widehat R_{\infty})) \simeq \Vect(X_v,\bbdrplusm) \]
      which is compatible with cohomologies.
  \end{lem}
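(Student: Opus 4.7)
The plan is to string together the ingredients just established. First, since $(\AAinf(X_\infty),(\xi),M)$ covers the final object by Lemma \ref{lem:Fontaine prism is covering}, I would apply the standard descent equivalence (\cite[Prop.~2.7]{BS23}) to get
\[\Vect((\frakX)_{\Prism}^{\perf},\Prism_{\dR,m}^+)\simeq \Strat\bigl(\Prism_{\dR,m}^+(\AAinf(X_\infty)^{\bullet},(\xi))\bigr),\]
where the right hand side denotes $(\xi)$-adically complete stratifications with respect to the \v Cech nerve. Then Lemma \ref{lem:perfect cech nerve is ring of functions} identifies this cosimplicial ring with $\rC(\Gamma^{\bullet},\bB_{\dR,m}^+(X_\infty))$. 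A stratification with respect to $\rC(\Gamma^\bullet,\bB_{\dR,m}^+(X_\infty))$ is, by unwinding the cocycle condition, nothing but a continuous semi-linear action of $\Gamma$ on a finite projective $\bB_{\dR,m}^+(X_\infty)$-module, so this step produces the first equivalence with $\Rep_\Gamma(\bB_{\dR,m}^+(X_\infty))$.

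For the second equivalence, I would invoke Lemma \ref{lem:dR-local system as Gamma-representation}, which gives $\Vect(X_\proet,\bB_{\dR,m}^+)\simeq \Rep_\Gamma(\bB_{\dR,m}^+(X_\infty))$, together with Corollary \ref{cor:kedlaya-liu} to pass between $\Vect(X_v,\bB_{\dR,m}^+)$ and $\Vect(X_\proet,\bB_{\dR,m}^+)$. Composing, one obtains the required equivalence $\Rep_\Gamma(\bB_{\dR,m}^+(X_\infty))\simeq \Vect(X_v,\bB_{\dR,m}^+)$.

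For cohomology compatibility: on the prismatic side, exactly as in the proof of Lemma \ref{Lem-CechAlex-Rel} (applied on the perfect subsite), \v Cech-to-derived spectral sequence plus the vanishing of higher cohomology on individual prisms gives
\[\rR\Gamma((\frakX)_{\Prism}^{\perf},\bm)\simeq \bm(\AAinf(X_\infty)^{\bullet},(\xi)).\]
Using Lemma \ref{lem:perfect cech nerve is ring of functions} termwise, the right hand side becomes the complex of continuous cochains $\rC(\Gamma^\bullet,W)$ computing $\rR\Gamma(\Gamma,W)$ for $W$ the corresponding $\Gamma$-representation. The second half of Lemma \ref{lem:dR-local system as Gamma-representation} together with Corollary \ref{cor:kedlaya-liu}(2) then identifies $\rR\Gamma(\Gamma,W)\simeq \rR\Gamma(X_\proet,\bw)\simeq \rR\Gamma(X_v,\bw)$.

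The main technical point to be careful about is the (rather routine but bookkeeping-heavy) verification that $\Strat(\rC(\Gamma^\bullet,\bB_{\dR,m}^+(X_\infty)))$ is really the same as $\Rep_\Gamma(\bB_{\dR,m}^+(X_\infty))$; one needs to check that the cocycle condition for the $\bB_{\dR,m}^+(X_\infty)\hat\otimes \rC(\Gamma,\bB_{\dR,m}^+(X_\infty))$-linear isomorphism corresponds exactly to a group action (and that the $\sigma_0^*=\id$ axiom corresponds to the identity element acting trivially), and that finite projectivity on both sides matches. For $m=1$ this is done in \cite[\S5]{MW22}; for general $m$ one either repeats the same argument or reduces to $m=1$ via derived Nakayama applied to the powers of $\xi$ — this last d\'evissage, combined with Lemma \ref{lem:projectivity}, should be the only subtle step.
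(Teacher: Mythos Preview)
Your proposal is correct and follows essentially the same route as the paper: descent via \cite[Prop.~2.7]{BS23} to stratifications over the \v Cech nerve, identification with $\rC(\Gamma^\bullet,\bB_{\dR,m}^+(X_\infty))$ via Lemma~\ref{lem:perfect cech nerve is ring of functions}, the Galois-descent translation $\Strat \simeq \Rep_\Gamma$, and then Lemma~\ref{lem:dR-local system as Gamma-representation} (plus Corollary~\ref{cor:kedlaya-liu}) for the pro-\'etale/$v$-side; the cohomology argument via \v Cech-to-derived and Lemma~\ref{Lem-CechAlex-Rel-I} is also the same. The paper simply says ``by Galois descent'' for the $\Strat\simeq\Rep_\Gamma$ step rather than spelling out the d\'evissage you sketch, but the content is identical.
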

  \begin{proof}
    By Lemma \ref{lem:dR-local system as Gamma-representation}, it suffices to construct an equivalence of categories
    \[\Vect((\frakX)_{\Prism}^{\perf},\Prism_{\dR,m}^+) \simeq \Rep_{\Gamma}(\bbdrplusm(\widehat R_{\infty}))\]
    which is compatible with cohomologies. By \cite[Prop. 2.7]{BS23} together with Lemma \ref{lem:perfect cech nerve is ring of functions}, we have the equivalence of categories 
    \[\Vect((\frakX)_{\Prism}^{\perf},\Prism_{\dR,m}^+)\simeq \Strat(\Prism_{\dR,m}^+((\AAinf(X_{\infty})^{\bullet},(\xi)))) = \Strat(\rC(\Gamma^{\bullet},\bB_{\dR,m}^+(X_{\infty}))).\]
    So one can conclude form the equivalence of categories
    \[\Strat(\rC(\Gamma^{\bullet},\bB_{\dR,m}^+(X_{\infty})))\simeq\Rep_{\Gamma}(\bbdrplusm(\widehat R_{\infty}))\]
    by Galois descent.
    
    For any $\bM\in \Vect((\frakX)_{\Prism}^{\perf},\Prism_{\dR,m}^+)$, by Lemma \ref{Lem-CechAlex-Rel-I} together with the \v Cech-to-derived spectral sequence, we have a quasi-isomorphism
    \[\rR\Gamma((\frakX)_{\Prism}^{\perf},\bM)\simeq \bM((\AAinf(X_{\infty})^{\bullet},(\xi)))\]
    where $\bM((\AAinf(X_{\infty})^{\bullet},(\xi)))$ stands for the total complex induced by the evaluation of $\bM$ at $(\AAinf(X_{\infty})^{\bullet},(\xi))$. As $\bM$ is a crystal, we have a quasi-isomorphism
    \[\bM((\AAinf(X_{\infty})^{\bullet},(\xi)))\simeq \rC(\Gamma^{\bullet},\bM(\AAinf(X_{\infty}),(\xi))).\]
    One can conclude by noting that the $\Gamma$-representation of $\bM$ is exactly $\bM(\AAinf(X_{\infty}),(\xi))$ while $\rR\Gamma(\Gamma,\bM(\AAinf(X_{\infty}),(\xi)))$ is represented by $\rC(\Gamma^{\bullet},\bM(\AAinf(X_{\infty}),(\xi)))$.
  \end{proof}

 \begin{construction}\label{construction:the dR-realization functor L}
   \begin{enumerate}
       \item[(1)] By \cite[Th. 3.10]{BS22}, one can assign to each $U = \Spa(S,S^+)\in X_{{\rm aff},\perf,v}$ a unique perfect prism
     \[\calA_U:=(\AAinf(U),\Ker(\theta:\AAinf(U)\to S^+)).\]
     Noticing that that 
     \[\bB_{\dR,m}^+(U) = \Prism_{\dR,m}^+(\calA_U),\]
     one can define a functor
     \[\rL:\Vect((\frakX)_{\Prism}^{\perf},\Prism_{\dR,m}^+)\to\Vect(X_{{\rm aff},\perf,v},\bB_{\dR,m}^+)\]
     such that for any $\bM\in\Vect((\frakX)_{\Prism}^{\perf},\Prism_{\dR,m}^+)$,
     \[\rL(\bM)(U):=\bM(\calA_U).\]

       \item[(2)] For any non-crystalline perfect prism $\calA = (A,I)\in (\frakX)^{\perf}_{\Prism}$ such that $I\neq (p)$, we can assign to it an affinoid perfectoid space $U_{\calA}\in X_{{\rm aff},\perf,v}$ by letting
       \[U_{\calA} = \Spa((A/I)[\frac{1}{p}],(A/I)[\frac{1}{p}]^+)\]
       where $(A/I)[\frac{1}{p}]^+$ denotes the $p$-adic completion of the integral closure of $A/I$ in $(A/I)[\frac{1}{p}]$. Noticing that 
       \[\bB_{\dR,m}^+(U_{\calA}) = \Prism_{\dR,m}^+(\calA),\]
       we can define a functor
       \[\rL^{-1}:\Vect(X_{{\rm aff},\perf,v},\bB_{\dR,m}^+)\to\Vect((\frakX)_{\Prism}^{\perf},\Prism_{\dR,m}^+)\]
       such that for any non-crystalline perfect prism $\calA$ as above,
       \[\rL^{-1}(\bW)(\calA):=\bW(U_{\calA})\]
       while for any crystalline perfect prism $\calA$, 
       \[\rL^{-1}(\bW)(\calA) = 0.\]
   \end{enumerate}
 \end{construction}
 For example, when $\frakX = \Spf(R)$ is small semi-stable and keep notations as above, we have 
 \[\calA_{X_{\infty}} = (\AAinf(X_{\infty}),(\xi)) \text{ and }U_{(\AAinf(X_{\infty}),(\xi))} = X_{\infty}\]
 and then the equivalence in Lemma \ref{lem:dR realization local case} is exactly induced by $\rL$ and $\rL^{-1}$ above.
 \begin{proof}[\textbf{Proof of Theorem \ref{Thm-de Rham realization}}]
     We proceed as in \cite[\S 5]{MW22}. The desired equivalence of categories follows by checking that $\rL$ and $\rL^{-1}$ in Construction \ref{construction:the dR-realization functor L} are the quasi-inverses of each other. We have to prove the desired cohomological comparison.

     Let us fix an \'etale covering $\{\frakX_i = \Spf(R_i)\to \frakX\}_{i\in I}$ be small semi-stables and denote by $X_{i,\infty}\to X_i$ the corresponding pro-\'etale torsors, where $X_i$ denotes the generic fiber of $\frakX$. Put $\calA_i:=(\AAinf(X_{i,\infty}),(\xi))$. We claim that $\{\calA_i\}_{i\in I}$ forms a covering of the final object of $\Sh((\frakX)_{\Prism}^{\perf})$. 
     
     Indeed, for any perfect prism $(A,I)\in (\frakX)_{\Prism}^{\perf}$ with $\overline A:=A/I$, the base-change
     \[\Spf(\overline A)\times_{\frakX}\frakX_i\]
     is affine by the separatedness of $\frakX$, which will be referred as $\Spf(\overline A_i)$. Clearly, $\Spf(\overline A_i)\to\Spf(\overline A)$ is \'etale and thus by \cite[Cor. 2.16]{CS24}, the $\overline A_i$ is perfectoid. By \cite[Th. 3.10]{BS22}, there exists a unique perfect prism $(A_i,IA_i)\in (\frakX)_{\Prism}^{\perf}$ over $(A,I)$ such that $\overline A_i = A_i/IA_i$. The quasi-compactness of $\Spf(\overline A)$ tells us there is a finite subset $J\subset I$ such that $\{\Spf(\overline A_j)\to\Spf(\overline A)\}_{j\in J}$ forms a covering of $\Spf(\overline A)$, yielding that $(A,I)\to (\prod_{j\in J}A_j,I\cdot\prod_{j\in J}A_j)$ is a covering of $(A,I)$. Note that for any $j\in J$, $(A_j,IA_j)\in (\frakX_j)_{\Prism}^{\perf}$. By Lemma \ref{lem:Fontaine prism is covering}, there exists a covering $(A_j,IA_j)\to (B_j,IB_j)$ together with a morphism $\calA_j\to (B_j,IB_j)$, yielding a covering 
     \[(A,I)\to (\prod_{j\in J}B_j,I\cdot\prod_{j\in J}B_j)\]
     of $(A,I)$ together with a morphism $\prod_{j\in J}\calA_j\to (\prod_{j\in J}B_j,I\cdot\prod_{j\in J}B_j)$ as desired.

     For any finite subset $J=\{j_1,\dots,j_r\}\subset I$, denote by $X_{J,\infty}$ the fiber product
     \[X_{j_1,\infty}\times_XX_{j_2,\infty}\times_X\cdots\times_XX_{j_r,\infty}.\]
     The same argument for the proof \cite[Lem. 5.6]{MW22} shows that the fiber product
     \[\calA_J:=\calA_{j_1}\times\cdots\times\calA_{j_r}\]
     exists in $(\frakX)_{\Prism}^{\perf}$ such that $U_{\calA_J} = X_{J,\infty}$ (cf. Construction \ref{construction:the dR-realization functor L}(2)).

     Fix an $\bM\in \Vect((\frakX)_{\Prism}^{\perf},\Prism_{\dR,m}^+)$.
     By Lemma \ref{Lem-CechAlex-Rel-I} together with the \v Cech-to-derived spectral sequence, we see that $\rR\Gamma((\frakX)_{\Prism}^{\perf},\bM)$ is computed by the \v Cech complex
     \[\check \rC(\{\calA_i\}_{i\in I},\bM)\simeq \rR\Gamma((\frakX)_{\Prism}^{\perf},\bM).\]
     By \cite[Cor. 3.5.6 and Th. 3.5.8]{KL2} together with a standard d\'evissage argument, for any affinoid perfectoid $U\in X_{v}$, we have 
     \[\rH^i(U,\rL(\bM)) = 0\]
     for any $i\geq 1$. Using \v Cech-to-derived spectral sequence again, we see that $\rR\Gamma(X_v,\rL(\bM))$ is computed by the \v Cech complex
     \[\check \rC(\{X_{i,\infty}\to X\}_{i\in I},\bM)\simeq \rR\Gamma(X_v,\rL(\bM)).\]
     As for any finite $J\subset I$, we have $U_{\calA_J} = X_{J,\infty}$, by Construction \ref{construction:the dR-realization functor L}, the two \v Cech complexes
     \[\check \rC(\{\calA_i\}_{i\in I},\bM) = \check \rC(\{X_{i,\infty}\to X\}_{i\in I},\bM)\]
     are actually same, yielding the desired quasi-isomorphism
     \[\rR\Gamma(X_v,\rL(\bM))\simeq \rR\Gamma((\frakX)_{\Prism}^{\perf},\bM).\]
     Now one can conclude by Lemmas \ref{lem:the same perfect site and crystals} and \ref{cor:kedlaya-liu}.
 \end{proof}

 \newpage  
 \section{Local absolute $\mathbbl{\Delta}_\dR^+$-crystals and $\gk$-equivariant connections}
 \label{sec: m to cm}


This section serves as an \emph{interlude}: we summarize some functors we have constructed so far, explicate their relations in Prop \ref{prop:explicit description of Res}, and point out some remaining questions in Rem \ref{rem: donotknowyet}.

  Let $\ast \in \{\emptyset, \log \}$. Let $R$ be a small smooth (resp. semi-stable) $\ok$-algebra as in Notation \ref{nota: bk prism abs pris}. Let $\fkx=\spf(R)$ with $X$ the generic fiber. For simplicity, write  
\[ \mic_\en^a(R_\gsdrm)=\MIC_\en^a(X_\et, \o_{X, \gs_\dR^+, m}),\]
\[ \MIC_{G_K}(R_{\bdrplusm})  =\MIC_{\gk}(X_{C, \et}, \o_{X_C, \bdrplus, m}).\]
  We now have diagram
\begin{equation} \label{diag many fun local}
\begin{tikzcd}
\MIC^a_{\en}(R_\gsdrm) \arrow[d, dashed] &  & {\vect(R_{\ast,\pris}, \pris_{\mathrm{dR}}^+) } \arrow[ll, "\simeq"'] \arrow[d]                  &  &                                                                    &                                          \\
\MIC_{G_K}(R_{\bdrplusm})                &  & {\vect(R^\perf_{\ast,\pris}, \pris_{\mathrm{dR}}^+) } \arrow[rr, "\simeq"] \arrow[ll, "\simeq"'] &  & \Rep_{\Gamma}(\bbdrplusm(\widehat R_{\infty})) \arrow[r, "\simeq"] & \Rep^{\rm   uni}_{\Gamma}(\bfb_m).
\end{tikzcd}
\end{equation}
Here the top row is discussed in \S \ref{sec: loc abs pris}; the equivalences in bottom row are discussed in  \S \ref{sec: bdr loc sys} and \S \ref{sec: crystal perf pris}. The right vertical arrow is induced by restriction to the subsite; it induces the left dotted arrow.

\begin{remark} \label{rem: donotknowyet}
Before we proceed to explicate the relation of objects in these categories, let us point out what we do \emph{not} know so far.
\begin{enumerate}
\item We do not know if the   vertical arrows are fully faithful;
\item We do not know if the equivalence in top row can be \emph{glued globally}, since its construction relies on local charts and in particular choice of relative Breuil--Kisin prisms.
\end{enumerate}
 Both will be solved using an \emph{analytic} arithmetic Sen theory in the following sections.
\end{remark}
 
\begin{notation} \label{nota:group of loc obj}
Consider a group of corresponding objects from above diagram \eqref{diag many fun local}
\[
\begin{tikzcd}
M     & \bm \arrow[l, mapsto] \arrow[d, mapsto]       &             &   \\
\calm & \bm^\perf \arrow[l, mapsto] \arrow[r, mapsto] & W \arrow[r, mapsto] & V.
\end{tikzcd}
\] 
For $(M,\nabla_M,\phi)$, use the basis $\frac{\dlog T_i}{E}$ for $\Omega^1_{R_\gsdrm}\{-1\}$, and write
\[\nabla_M = \sum_{i=1}^d \nabla_{M,i, E} \otimes\frac{\dlog T_i}{E}. \]
For $(\cm, \nabla_\cm, \gk)$, use the basis $\frac{\dlog T_i}{t}$ for $ \Omega^1_{R_\bdrplusm}(-1)$, and write
\[\nabla_\cm = \sum_{i=1}^d \nabla_{\cm,i, t} \otimes\frac{\dlog T_i}{t}. \]
\end{notation}

\begin{prop} \label{prop:explicit description of Res}
 Use Notation \ref{nota:group of loc obj}.
\begin{enumerate}
\item We have 
 \[W = M\otimes_{R_\gsdr}\bdrplus(\widehat R_{\infty})\]
with semi-linear $\Gamma$-action  given by the following: for $\gamma_1^{n_1}\cdots\gamma_d^{n_d}g\in \Gamma\cong \Gamma_{\geo}\rtimes G_K$ and  $x\in M$, we have     
\[  \big(\gamma_1^{n_1}\cdots\gamma_d^{n_d}g\big)(x) = \exp(\frac{t}{E }\sum_{i=1}^d n_i\nabla_{M,i,E})(\frac{E([\epsilon]^{c(g)}u)}{E(u)})^{\frac{\phi}{a}}(x)\]
or simply
\[ \underline{\gamma}^{\underline n}g(x) = \exp(\frac{t}{E} (\underline n \cdot  \underline \nabla) (\frac{g(E)}{E})^{\frac{\phi}{a}}(x). \]


 \item As a $\Gamma$-stable  subspace of $W$, we have
 \[ V= M\otimes_{R_\gsdr} R_\bdrplus \]
 with $\Gamma$-action described in previous item.

\item \label{item loggamma} We have 
\[ \cm \simeq M\otimes_{R_\gsdrm}  R_{\bdrplusm} =M\hatotimes_\gsdr \bdrplus  \] 
such that:
\begin{enumerate}
\item  the $\gk$-action on $\cm$ is determined such that for $g\in \gk, x \in M$ 
\[g(x)   =(\frac{g(E)}{E})^{\frac{\phi}{a}}(x).  \]
\item the connection $\nabla_\cm$ is $\bdrplus$-linear extension of $\nabla_M$; more precisely, for each $i$ and $x\in M$
\[\nabla_{\cm, i, t}(x) =\frac{t}{E}\cdot \nabla_{M, i,E}(x). \]
\end{enumerate} 
\end{enumerate}
\end{prop}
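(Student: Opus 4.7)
My plan is to trace each correspondence through explicit cosimplicial computations, invoking the formal identities of Lemma \ref{lem expansion identity} to manage the power-series expansions.

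The starting point is that for a prismatic crystal $\bm$, evaluation of $\bm^{\perf}$ at the Fontaine(-log) prism gives $W = \bm(\AAinf(\widehat R_\infty),(\xi),\ast)$ with its natural $\Gamma$-action. Since the relative Breuil--Kisin(-log) prism maps into the Fontaine(-log) prism via $u \mapsto [\pi^\flat]$, $T_j \mapsto [T_j^\flat]$, the crystal property identifies $W = M \hatotimes_{R_\gsdrm}\bbdrplusm(\widehat R_\infty)$ at once. To extract the $\Gamma$-action, each $\gamma \in \Gamma$ supplies two morphisms $(\gs(R)_\ast,(E),\ast) \rightrightarrows (\AAinf(\widehat R_\infty),(\xi),\ast)$ (the standard one and its $\gamma$-twist); the universal property of the self-coproduct produces a unique morphism $f_\gamma \colon (\gs(R)_\ast^1,(E),\ast) \to (\AAinf(\widehat R_\infty),(\xi),\ast)$, and the $\gamma$-action on $W$ is obtained by pulling back the stratification $\varepsilon$ of Lemma \ref{Lem-Technique-Abs} along $f_\gamma$.

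For Item (1), I compute the images of the cosimplicial generators under $f_\gamma$. For $\gamma = \gamma_1^{n_1}\cdots\gamma_d^{n_d}g$ with $g \in \gk$, direct computation gives $1 + aX_1 \mapsto g(E)/E$ (since $u_1 \mapsto g[\pi^\flat] = [\epsilon]^{c(g)}[\pi^\flat]$) and $1 - E(u)Y_{j,1} \mapsto [\epsilon]^{n_j}$ (since $T_{j,1} \mapsto [\epsilon]^{n_j}[T_j^\flat]$, while $T_{s,1} \mapsto [T_s^\flat]$ for $s \neq j$). Substituting into
$$\varepsilon = (1-E(u)\underline Y_1)^{-E(u)^{-1}\nabla_M}(1+aX_1)^{\phi_M}$$
and using $\log[\epsilon] = t$ in tandem with Lemma \ref{lem expansion identity} yields the claimed formula. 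For Item (2), this formula shows $V_0 := M \otimes_{R_\gsdr} R_{\bdrplus}$ is a $\Gamma$-stable sub-$\bfb_m$-module of $W$ (because $g(E)/E \in R_\bdrplus$ and $[\epsilon]^{n_j} \in \bdrplus$), and $\Gamma_\geo$ acts on $V_0/t V_0$ by the identity, hence unipotently. Since $V_0$ spans $W$ over $\bbdrplusm(\widehat R_\infty)$, the full faithfulness half of Proposition \ref{prop:decompletion} forces $V_0 = V$.

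For Item (3), I apply the explicit local Riemann--Hilbert formula of Proposition \ref{prop: local RH formula}. The $\Gamma_\geo$-invariant section $s(v) = (\prod_i \gamma_i^{-W_i})v$ identifies $\cm$ with $V = M \otimes_{R_\gsdr} R_\bdrplus$ as $R_\bdrplus$-modules, giving $\cm \simeq M \otimes_{R_\gsdrm} R_{\bdrplusm}$. Proposition \ref{prop: local RH formula}(3)(b) asserts that $s$ is $\gk$-equivariant, so the $\gk$-action on $\cm$ agrees with that on $V$ coming from Item (2), proving (a). Proposition \ref{prop: local RH formula}(3)(a) gives $\nabla_{\cm,i,t}\circ s = s\circ \log\gamma_i$; evaluating at $x \in M$ and using that $\gamma_i$ acts on $M$ as $\exp((t/E)\nabla_{M,i,E})$ by Item (1), we get $\log\gamma_i(x) = (t/E)\nabla_{M,i,E}(x)$, which is (b).

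The main obstacle is careful bookkeeping in the stratification pull-back, particularly tracking the formal conventions of Lemma \ref{lem expansion identity} when interpreting expressions like $(1+aX)^{\phi/a}$ (whose convergence crucially uses the $a$-smallness of $\phi$) and correctly accounting for the Breuil--Kisin twists throughout. In the semi-stable case, one must additionally verify compatibility of the cosimplicial log structures with the Fontaine log-prism, but this is automatic since the log structures become trivial after inverting $p$ in $\BBdRp$, so that all maps on the pro-\'etale side agree with the prismatic side on the level of rings.
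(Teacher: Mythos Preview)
Your proposal is correct and follows essentially the same approach as the paper. The paper phrases Item (1) via the natural morphism of cosimplicial rings $\frakS(R)^{+,\bullet}_{\dR} \to \rC(\Gamma^{\bullet},\bbdrplusm(\widehat R_{\infty}))$ (recorded in Lemma \ref{lem:perfect cech nerve is ring of functions}) rather than via individual maps $f_\gamma$, but this is the same computation: evaluating $X_1$ and $Y_{i,1}$ at $\gamma$ and substituting into the stratification formula \eqref{Equ-Stratification-Abs-I}. For Items (2) and (3) the paper likewise invokes unipotence of the $\Gamma_{\geo}$-action together with Proposition \ref{prop:decompletion}, and then Proposition \ref{prop: local RH formula}(3), exactly as you do.
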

\begin{proof}
Item (1). The morphism of prisms
\[ (\gs(R), (E), \ast) \to (\mathbb{A}_\inf(X_\infty), (E), \ast)\]
induces the isomorphism
 \[W = M\otimes_{\frakS(R)_{\dR}^+}\bdrplus(\widehat R_{\infty}).\]
The stratification $(M,\varepsilon) \in \Strat(\frakS(R)_{\dR}^{+,\bullet})$ corresponding to $(M,\nabla,\phi)$ is given by (\ref{Equ-Stratification-Abs-I}). That is, for any $x\in M$, we have
    \begin{equation} \label{eq: local strat}
 \varepsilon(x)   = (1+E\underline Y_1)^{E^{-1}\underline \nabla}(1+aX_1)^{a^{-1}\phi}(x).   
\end{equation}               
The induced stratification corresponding to $W$ hence $\bm^\perf$ in $\Strat(\rC(\Gamma^{\bullet},\bbdrplusm(\widehat R_{\infty})))$ is via the natural morphism
    \[\frakS(R)^{+,\bullet}_{\dR} \to \rC(\Gamma^{\bullet},\bbdrplusm(\widehat R_{\infty})).\]
Since we have
  \[X_1(\gamma_1^{n_1}\cdots\gamma_d^{n_d}g) = \frac{E-g(E)}{-aE},\]
 \[Y_{i,1}(\gamma_1^{n_1}\cdots\gamma_d^{n_d}g) = \frac{[T_i^{\flat}]-\gamma_i^{n_i}([T_i^{\flat}])}{-E([\pi^{\flat}])[T_i^{\flat}]} = \frac{1-[\epsilon]^{n_i}}{-E([\pi^{\flat}])}.\]
We can plug into \eqref{eq: local strat} to conclude.

Item (2), it suffices to check the action of $\Gamma_\geo$ on $M$ is unipotent, which is clear from the formula in Item (1), since $\nabla_i$ is nilpotent.

Item (3).  This is direct consequence of the explicit formula in Prop \ref{prop: local RH formula}(3). Indeed, for $\gk$-action,  Prop \ref{prop: local RH formula}(3) says that $\gk$-action on $\cm$  is the same as the one on $V$, hence is defined by the formula in Item (1). For connections, Prop \ref{prop: local RH formula}(3) says that using  the basis $\frac{\dlog T_i}{t}$ for $ \Omega^1_{R_\bdrplusm}(-1)$, $\nabla_{\cm, i,t}$ is the same as $\log \gamma_i$ on $V$; computing $\log \gamma_i$ explicitly using formula in Item (1), it is exactly $\frac{t}{E}\cdot \nabla_{M, i,E}(x)$.
\end{proof}

\newpage
\addtocontents{toc}{\ghblue{Arithmetic Sen theory}}
     
\section{Infinite dimensional Sen theory: cyclotomic and Kummer}  \label{sec: infinite Sen}
In this section, we first review  locally analytic (cyclotomic) Sen theory for \emph{infinite dimensional} $C$-representations as developed by \cite{RC22}; this theory also generalizes to $\bdrplusm$-representations. 
Using techniques from \cite{GMWHT}, we can construct a similar theory over the Kummer tower. 
The main result is Thm \ref{thm: Arith Sen Kummer}, which gives rise to a functor
\[ D_{\sen, \kinfty}:  \rep^\rella_\gk(\bdrplusm) \to \mic^\wedge(\kinfty[[E]]/E^m) \]
sending $W$ to $(D_{\Sen, \kinfty}(W), \phi_\kinfty)$; cf. Def \ref{def: a small arith} for the category on RHS.
This infinite dimensional generalization has two benefits: we can use it to treat a family version of  representations; but more crucially, we can use it to treat a   natural ``period ring" in \S \ref{sec: analytic sen kummer}, which is infinite dimensional but is relatively locally analytic.
In this section, $m<\infty$ and all representations are Banach spaces.

\subsection{Arithmetic Sen theory over cyclotomic tower}

\begin{defn}
\label{Def: rel lav rep}  
(Special case of \cite[Def. 2.3.4]{RC22}).
\begin{enumerate}
\item Let $W$ be an orthonormal Banach $C$-semilinear  representation of $\gk$. Say it is \emph{relative locally analytic}  if  there exists an orthonormal basis $\{v_i\}_{i\in I}$ (over $C$) generating an $\o_C$-lattice $W^0$ such that there is an open subgroup $G_L \subset G_K$ stabilizing $W^0$ and  $\epsilon >0$ such that the action of $G_L$ on $\{v_i/p^{\epsilon}\}_i$ is  trivial.
\footnote{Caution:  we are not saying the action on $W^0/p^{\epsilon}W^0$ is 	``trivial", a notion that would require definition.}
  We say that $\{v_i\}_{i\in I}$ is a relative locally analytic basis of $W$.  
Denote the category of such objects by $\Rep_\gk^{\rella}(C)$.

\item Say  an orthonormal  Banach $\bdrplusm$-semilinear representation $W$ of $\gk$ is relatively locally analytic if the $C$-semilinear representations $t^iW/t^{i+1}W$ are relatively locally analytic for each $i \geq 0$. Denote the category of such objects by $\rep^\rella_\gk(\bdrplusm)$.
\end{enumerate}

 \end{defn}

\begin{remark}
The notion of ``relative locally analytic representations"  in \cite{RC22} are defined in broader contexts. For a relative locally analytic $\bdrplusm$-representation above, its invariant $W^{G_\kpinfty}$ is a locally analytic representation of $\Gamma_K$  by \cite[Lem 2.1.4]{RC22} (which treats $m=1$ case).
\end{remark}

\begin{defn} (See \S \ref{subsec lav nota} for relevant notations on locally analytic vectors.) 
Let $W$ be a Banach $\bdrplusm$-semilinear representation of $\gk$. 
For each $n \geq 0$, define 
\begin{equation} 
D_{\Sen, \kmun}(W): =(W^{G_\kpinfty})^{\gamma_n\dan}.
\end{equation} 
Also define 
\begin{equation}\label{senlav}
D_{\Sen, \kpinfty}(W): =(W^{G_\kpinfty})^{\gammak\dla} =\cup_{n \geq 0} D_{\Sen, \kmun}(W).
\end{equation} 
Both spaces admit $\nabla_\gamma$-actions.
\end{defn}

\begin{theorem}[\cite{RC22}] 
\label{thm: Arith Sen cyclo}
Let $W\in \rep^\rella_\gk(\bdrplusm)$. There exists some $n_0 \geq 0$ (depending on $W$) such that if $n >n_0$, then
\[ 
D_{\Sen, \kmun}(W) \in \rep_{\gk}^\rla(\kmun[[t]]/t^m);
\]
and the natural map
\[ D_{\Sen, \kmun}(W)\hatotimes_{\kmun[[t]]} \bdrplusm \to W\] 
is an isomorphism. Let $n >n_0$, we furthermore have: 
\begin{enumerate}
\item For any $s >n$ 
\[D_{\Sen, \kmun}(W)\otimes_\kmun \kmus \simeq D_{\Sen, \kmus}(W);\]
as a consequence
\[   D_{\Sen, \kmun}(W)\otimes_\kmun \kpinfty  \simeq D_{\Sen, \kpinfty}(W). \]
\item We have cohomology comparisons
\begin{align*}
\rg(G_\kmun, W) & \simeq \rg_{\mathrm{sm}}(\Gamma_\kmun,  \rg(\nabla_\gamma,  D_{\Sen, \kmun}(W))), \\
\rg(\gk, W)   & \simeq  \rg_{\mathrm{sm}}(\Gamma_K,  \rg(\nabla_\gamma,  D_{\Sen, \kpinfty}(W))),   \\
\rg(\gk, W)\otimes_K \kpinfty  & \simeq     \rg(\nabla_\gamma,  D_{\Sen, \kpinfty}(W)).
\end{align*} 
\end{enumerate}
\end{theorem}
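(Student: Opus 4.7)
The plan is to follow the strategy of \cite{RC22}, proceeding by dévissage along the $t$-adic filtration of $\bdrplusm$ to reduce to the case $m=1$, and then using Hochschild--Serre together with the density of locally analytic vectors for the cohomology comparison.

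First, I would establish the base case $m=1$, where $W$ is a relatively locally analytic orthonormal $C$-Banach representation of $\gk$. The relatively locally analytic hypothesis gives an orthonormal basis whose $G_L$-action is highly approximable (trivial up to $p^\epsilon$), which is precisely the input needed to run a Tate--Sen formalism in the infinite-dimensional setting: averaging via Tate traces gives a normalized section, and one obtains that $D_{\Sen, \kmun}(W) = (W^{G_\kpinfty})^{\gamma_n\dan}$ is a $\kmun$-Banach module for $n$ large, on which $\nabla_\gamma$ acts, and that the natural map $D_{\Sen, \kmun}(W)\hatotimes_{\kmun} C \to W$ is an isomorphism. The compatibility as $n$ varies in Item (1) then reduces to Galois descent along the finite étale extensions $\kmus/\kmun$.

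For general $m$, I would induct on $m$ using the short exact sequence
\[
0 \to tW \to W \to W/tW \to 0,
\]
in which $tW$ is a relatively locally analytic $\bdrplus/t^{m-1}$-representation and $W/tW$ lies in $\rep_\gk^\rella(C)$. The inductive hypothesis applies to both ends, so the task is to lift: one must verify that the functors $W \mapsto (W^{G_\kpinfty})$ and $(-)^{\gamma_n\dan}$ remain exact on the class of relatively locally analytic representations, and then the isomorphism $D_{\Sen, \kmun}(W) \hatotimes_{\kmun[[t]]} \bdrplusm \to W$ follows by the five-lemma combined with a $t$-adic Nakayama argument (using Lem.~\ref{lem:projectivity}-type projectivity criteria to ensure the lifted module is projective).

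The main obstacle I expect is exactness of $D_\Sen$ under dévissage: concretely, surjectivity of
\[
D_{\Sen, \kmun}(W) \to D_{\Sen, \kmun}(W/tW).
\]
This amounts to showing vanishing of the obstruction class in $H^1(G_\kpinfty, tW)$ restricted to the locally analytic vectors for $\gamma_n$. In the relatively locally analytic setting this reduces via Tate traces and the twist $tW \cong (W/tW)(1)$ to the Tate-type vanishing $H^i_\cts(G_\kpinfty, C(j))=0$ for $j \neq 0$, combined with almost-vanishing refinements governing the $\gamma_n$-analytic filtration. This is precisely the technical heart of \cite{RC22}.

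Finally, for the cohomology comparison in Item (2), I would use the Hochschild--Serre spectral sequence attached to $1 \to G_\kpinfty \to \gk \to \Gamma_K \to 1$. The inductive description of $D_\Sen$ gives $\rg(G_\kpinfty, W) \simeq W^{G_\kpinfty}[0]$ (higher cohomology vanishes on the relatively locally analytic class), so $\rg(\gk, W) \simeq \rg_\cts(\Gamma_K, W^{G_\kpinfty})$. Replacing $W^{G_\kpinfty}$ by its locally analytic vectors $D_{\Sen, \kpinfty}(W)$ changes continuous $\Gamma_K$-cohomology into smooth $\Gamma_K$-cohomology of the Chevalley--Eilenberg complex $\rg(\nabla_\gamma, -)$, which is the comparison of Lazard between continuous and Lie algebra cohomology for $p$-adic Lie groups. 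Base-changing to $\kpinfty$ then kills the smooth $\Gamma_K$-cohomology since $\kpinfty^{\Gamma_K} = K$ becomes trivial in the completed sense, yielding the third quasi-isomorphism.
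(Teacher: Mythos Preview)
Your overall strategy—dévissage to $m=1$, then Tate--Sen formalism, then Hochschild--Serre—is correct and aligns with the paper. The paper is more compressed: for $m=1$ it observes that the triple $(C,\gk,\chi^{\mathrm{add}})$ is a \emph{strongly decomposable Sen theory} in the sense of \cite[Def.~2.2.6]{RC22}, then directly invokes \cite[Thm.~2.4.3]{RC22} to produce $D_{\Sen,K'(\mu_n)}(W)$ over an auxiliary finite extension $K'$, and Galois-descends along $K'(\mu_n)/K(\mu_n)$ to reach $D_{\Sen,\kmun}(W)$. Your version skips this auxiliary $K'$ and the appeal to the axiomatic framework, which is fine in spirit but hides exactly the place where the ``relatively locally analytic'' hypothesis gets used.

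The one genuine imprecision is in your cohomology argument. The paper applies Hochschild--Serre to the \emph{finite-index} inclusion $G_{K'}\subset\gk$ and quotes \cite[Cor.~2.5.1]{RC22} directly for $\rg(G_{K'},W)$. You instead run Hochschild--Serre along $G_{\kpinfty}\subset\gk$, which forces you to assert $\rg(G_{\kpinfty},W)\simeq W^{G_{\kpinfty}}[0]$; this Tate-type vanishing is true for relatively locally analytic representations, but it is itself one of the outputs of the Sen-theory machinery rather than an independent input. More seriously, your justification ``replacing $W^{G_{\kpinfty}}$ by its locally analytic vectors \ldots\ is the comparison of Lazard'' conflates two distinct steps: (a) the inclusion $D_{\Sen,\kpinfty}(W)\hookrightarrow W^{G_{\kpinfty}}$ induces a quasi-isomorphism on continuous $\Gamma_K$-cohomology, and (b) continuous $\Gamma_K$-cohomology of a locally analytic representation is computed by $\rg_{\mathrm{sm}}(\Gamma_K,\rg(\nabla_\gamma,-))$. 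Step~(a) is not Lazard—it is a decompletion statement specific to Sen theory (the ``non-analytic part'' of $W^{G_{\kpinfty}}$ is cohomologically trivial)—and Lazard's theorem in its classical form does not apply to infinite-dimensional $\kpinfty$-Banach modules anyway. Finally, your reason for the third comparison (``$\kpinfty^{\Gamma_K}=K$ becomes trivial in the completed sense'') is obscure; the actual mechanism is that the cohomology of $\rg(\nabla_\gamma,-)$ is a \emph{smooth} $\Gamma_K$-module over $\kpinfty$, whence Hilbert~90 gives $\rg_{\mathrm{sm}}(\Gamma_K,-)\otimes_K\kpinfty\simeq\mathrm{id}$.
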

\begin{proof}
By standard d\'evissage, it suffices to prove the $m=1$ case. We sketch the argument from \cite{RC22}.
Let $\chi^{\mathrm{add}}: \gk \to \mathbb{Z}_p^\times \xrightarrow{\log} \zp$ be the cyclotomic character composed with logarithm.
The triple 
$$(C, \gk, \chi^{\mathrm{add}})$$
 is a `` (strongly decomposable) \emph{Sen theory}" \`a la \cite[Def. 2.2.6]{RC22}. 

According to  \cite[Thm. 2.4.3]{RC22}, we can first take $G_{K'}$ (notation ``$\Pi'$" there) an open normal subgroup of $\gk$, such that there is a module
\[ D_{\sen, K'(\mu_n)}(W) \in \rep^\rla_{\Gamma_{K'}}({K'(\mu_n)})   \]
 which is the  ``$A_{H',n}$-module  $S_{H',n}(V)$"  in \cite{RC22}, such that
 \[ D_{\sen, K'(\mu_n)}(W)\hatotimes_{K'(\mu_n)} C \simeq W.\]
One obtains the module  $D_{\sen, K(\mu_n)}(W)$ by Galois descent along ${K'(\mu_n)}/{K(\mu_n)}$. 
In addition, by enlarging $n$ if necessary, \cite[Lem. 2.1.5]{RC22} guarantees that the $\gamma_n$-action on $D_{\sen, K(\mu_n)}(W)$ is in fact \emph{analytic.} 
 
 According to \cite[Cor. 2.5.1]{RC22}, we have
 \[ \rg(G_{K'}, W) \simeq \rg_{\mathrm{sm}}(\Gamma_{K'}, \rg(\nabla_\gamma, D_{\sen, K'(\mu_n)}(W)))\]
 This implies Item (2)  by Hochschild--Serre spectral sequence (and Galois descent).
 \end{proof}

\subsection{Arithmetic Sen theory over  Kummer tower}
In this subsection, we generalize the (arithmetic) Sen theory over the Kummer tower (for \emph{finite dimensional} $C$-representations) in \cite[\S 7.3]{GMWHT} to the case of (infinite dimensional) relative locally analytic representations. The idea remains the same, i.e., we want to use the formula:
\begin{equation*}
 D_{\Sen, \kinfty}(W)= (W^{G_L})^{\tau\dla, \gamma=1}.
 \end{equation*}
 The subtle issue here is that we need to take care of topological issues (particularly completed tensor products).
 
 \begin{lemma} \label{lem: fkt analytic}
     \begin{enumerate}
       \item Let $x \in \bdrpluslm$ and suppose $tx \in  (\bdrpluslm)^{\hat{G}\dan}$, then $x \in  (\bdrpluslm)^{\hat{G}\dan}$.

    \item We have $(\bdrpluslm)^{\gamma=1, \tau_n\dan}=K(\pi_n)[[E]]/E^m$; as a special case $(\bdrpluslm)^{\gamma=1, \tau\dan}=\gsdrm$.

    \item Regard $\fkt^{\pm 1}$ as an element in $\bdrplus$ hence $\bdrpluslm$. We have 
$\fkt^{-1} \in (\bdrpluslm)^{\hat{G}\dan}$.
     \end{enumerate}
 \end{lemma}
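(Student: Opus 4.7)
The plan is to prove the three parts in sequence, with Part (1) being the main technical step.

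\emph{For Part (1)}, the obstacle is that $t$ is nilpotent in $\bdrpluslm$, so one cannot divide by $t$ directly. My approach is to lift to $\bdrplusL$ where $t$ is a non-zero-divisor. Given $x \in \bdrpluslm$ with $tx \in (\bdrpluslm)^{\hatg\dan}$, I would first lift $tx$ to a (pro-)locally analytic element of $t\bdrplusL$, then divide by $t$ in $\bdrplusL$ (using that the scalar factor $g(t)/t = \chi_p(g)$ is locally analytic in $g$), and finally reduce modulo $t^m$. The lifting step relies on surjectivity of reduction modulo $t^m$ on (pro-)locally-analytic vectors, which follows from the pro-Banach decomposition $\bdrplusL = \varprojlim_m \bdrpluslm$. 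An alternative would be induction on $m$ via the short exact sequence
\[ 0 \to t^{m-1}\bdrpluslm \to \bdrpluslm \to \bdrplus_{L, m-1} \to 0, \]
but the ``top piece'' $t^{m-1}\bdrpluslm \cong \widehat L(-m+1)$ would then require separate justification, since the hypothesis $tx$ analytic places no constraint on the top-piece component of $x$.

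\emph{For Part (2)}, I would first identify $(\bdrpluslm)^{\gamma=1} = \bdrplus_\kinfty/E^m$ via Galois invariance under $\gal(L/\kinfty)$, reducing the question to computing $\tau_n$-analytic vectors inside a truncation of the Kummer de Rham ring. Then, invoking the classical (finite-dimensional) Kummer arithmetic Sen theory from \cite{GMWHT}, the $\tau_n$-analytic vectors of the residue field $\widehat{\kinfty}$ are exactly $K(\pi_n)$. Extending coefficient-wise through the $E$-adic filtration on $\bdrplus_\kinfty/E^m$—feasible because $\tau$ fixes $E$ once we have passed to $\gamma$-invariants—yields $K(\pi_n)[[E]]/E^m$. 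The special case $\tau\dan$ (taking $n=0$) gives $K[[E]]/E^m = \gsdrm$ at the residue level.

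\emph{For Part (3)}, the element $\fkt$ is the standard Kummer period element, characterized by an explicit transformation law under $\hatg$ (of the form $g(\fkt) = \fkt + c(g)$ with $c$ analytic, or an affine law of the same flavor), making $\fkt$ itself manifestly locally analytic. Verifying that $\theta(\fkt)$ is a unit gives $\fkt \in \bdrpluslm^{\times}$; the invariance of local analyticity under inversion—provable by Taylor expansion of $g \mapsto g(\fkt)^{-1}$, since $g(\fkt)$ remains a unit for $g$ near the identity—then yields $\fkt^{-1} \in (\bdrpluslm)^{\hatg\dan}$. The main obstacle I foresee is Part (1), due to the delicate interplay between nilpotency of $t$ in $\bdrpluslm$ and the need to track analyticity across the truncation $\bdrplusL \to \bdrpluslm$.
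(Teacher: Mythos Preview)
Your proposal systematically conflates ``analytic'' ($\dan$) with ``locally analytic'' ($\dla$): phrases like ``(pro-)locally analytic'', ``manifestly locally analytic'', and ``invariance of local analyticity'' appear throughout, but the lemma is about the strictly stronger notion of analytic vectors (the paper stresses this: ``analytic (not just locally analytic)''). This matters most in Part~(3). From the affine law $g(\fkt)=\fkt+c(g)$ you get $g(\fkt^{-1})=\fkt^{-1}\sum_{n\ge 0}(-c(g)\fkt^{-1})^n$, and since $\fkt^{-1}$ is a unit of norm~$1$ in $\bdrpluslm$, this series converges only for $|c(g)|<1$, i.e.\ on a proper open subgroup. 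So your argument yields $\fkt^{-1}\in(\bdrpluslm)^{\hat G\dla}$, which was already known, but not the analyticity claimed. The paper takes a completely different route: it uses the identity $\fkt^{-1}=p\lambda/t$, notes that $\lambda\in\gsdrm$ is analytic by Part~(2), and then applies Part~(1) with $x=\fkt^{-1}$ and $tx=p\lambda$. That structural identity is the missing ingredient in your approach.

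For Part~(1) your main lifting argument has the same defect you flag for the alternative. Lifting $tx$ to $t\bdrplusL$, dividing by $t$, and reducing mod $t^m$ produces an analytic $x'$ with $tx'=tx$ in $\bdrpluslm$; but then $x-x'$ lies in $t^{m-1}\bdrpluslm\simeq\hat L(m-1)$, and nothing in the hypothesis controls this top piece. So the argument does not conclude that $x$ itself is analytic. Your Part~(2) sketch is close in spirit to the paper's, though the paper's step at the residue level is simpler: since $(\hat L)^{\gamma=1,\tau\dla}=\kinfty$ consists of smooth vectors, the $\tau_n$-analytic condition there reduces to $\tau_n$-invariance, giving $K(\pi_n)$ directly.
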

 \begin{proof}
    Item (1) follows same argument as in \cite[Lem 9.13(1)]{GMWdR}.
Item (2) follows from similar argument as in \cite[Prop 9.14]{GMWdR}, which treats the $\tau\dla$-case. Indeed, it suffices to replace the fact ``$(\hatl)^{\gamma=1,\tau\dla}=\kinfty$" used there by the fact $(\hatl)^{\gamma=1,\tau_n\dan}=K(\pi_n)$. To see the later fact, note we already know these vectors are smooth vectors, hence $\tau_n\dan$ means $\tau_n=1$.
For Item (3), note it strengthens the fact $\fkt^{-1} \in (\bdrplusl)^{\hat{G}\dla}$ proved in  \cite[Lem 9.11]{GMWdR}. To prove analyticity: note Item (2) implies $\lambda$ is analytic; thus Item (1) implies  $\fkt^{-1}=p\lambda/t$ is analytic.
 \end{proof}

\begin{defn} Let $W$ be a Banach $\bdrplusm$-semi-linear representation of $\gk$. 
Define
\[  D_{\Sen, \kpin}(W) := (W_{\hat{L}})^{ \gamma=1, \tau_n\dan}, \]
\[D_{\Sen, \kinfty}(W) := (W_{\hat{L}})^{ \gamma=1, \tau\dla} =\cup_{n \geq 0} D_{\Sen, \kpin}(W) \]
which is a module over $K(\pi_n)[[E]]/E^m$ resp. $\gsdrm$.
Recall in \cite[\S 10]{GMWdR}, we defined an operator 
\[ N_\nabla: =\frac{1}{p\fkt} \nabla_\tau. \]
It acts on locally analytic representations over $\bdrpluslm$, hence also sends $D_{\Sen, \kpin}(W)$ resp. $D_{\Sen, \kinfty}(W)$ to a bigger space of analytic vectors resp. locally analytic vectors.  But the fact that $N_\nabla$ commutes with $\gal(L/\kinfty)$ quickly implies that $N_\nabla$ stabilizes the above spaces. 
Note in particular, since $\mathfrak{t}^{-1}$ is \emph{analytic} (not just locally analytic) by Lem \ref{lem: fkt analytic}, $N_\nabla$ stabilizes the space of \emph{analytic} vectors $ D_{\Sen, \kpin}(W)$. Define yet the normalization
\[ \phi_\kinfty:=\frac{E}{ u\lambda E' }\cdot N_\nabla  =\frac{E}{utE'}\cdot \nabla_\tau. \]
Then we have
\[ \phi_\kinfty:  D_{\Sen, \kpin}(W) \to  D_{\Sen, \kpin}(W),\]
\[ \phi_\kinfty:  D_{\Sen, \kinfty}(W) \to  D_{\Sen, \kinfty}(W)\] 
Note on the ring $K(\pi_n)[[E]]/E^m$ resp. $\gsdrm$, we have 
\[ \phi_\kinfty(E)=E.\]
\end{defn}

\begin{theorem}
\label{thm: Arith Sen Kummer}
Let $W\in \rep^\rella_\gk(\bdrplusm)$. There exists some $n_0 \geq 0$ (depending on $W$) such that if $n >n_0$, then 
$D_{\Sen, \kpin}(W)$ is a Banach $\kpin[[E]]/E^m$-space, such that the natural map
\[ D_{\Sen, \kpin}(W)\hatotimes_{\kpin[[E]]/E^m} \bdrplusm \to W\] 
is an isomorphism. Let $n >n_0$, we furthermore have: 
\begin{enumerate}
\item For any $s >n$ 
\[D_{\Sen, \kpin}(W)\otimes_\kpin \kpis \simeq D_{\Sen, \kpis}(W);\]
as a consequence
\[   D_{\Sen, \kpin}(W)\otimes_\kpin \kinfty  \simeq D_{\Sen, \kinfty}(W)).\]
\item We have cohomology comparisons
\begin{align*}
\rg(\gk, W)\otimes_K \kpin & \simeq \rg(\phi_\kinfty,  D_{\Sen, \kpin}(W)), \\
 \rg(\gk, W)\otimes_K \kinfty  & \simeq \rg(\phi_\kinfty,  D_{\Sen, \kinfty}(W)) .
\end{align*}

\item Consider  the case $m=1$ (i.e., $C$-representations). Under the  identification
\[D_{\Sen, \kmun}(W)\hatotimes_{\kmun} C  \simeq W \simeq D_{\Sen, \kpin}(W)\hatotimes_{\kpin} C, \]
the two $C$-linearized operators $\phi_\kpinfty$ and $\phi_\kinfty$ are the \emph{same.}
\end{enumerate}
\end{theorem}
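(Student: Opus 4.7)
My plan is to mirror Thm \ref{thm: Arith Sen cyclo} — the cyclotomic Sen theory of Rodr\'iguez Camargo \cite{RC22} — but with the Kummer tower $\kinfty/K$ replacing the cyclotomic tower, and to do so in the Banach/infinite-dimensional framework. This generalizes the finite-dimensional treatment of \cite{GMWHT, GMWdR} and rests on two inputs from Lem \ref{lem: fkt analytic}: the fact that the ``bad element'' $\mathfrak t^{-1}$ is $\hat G$-analytic over $\hat L$, and that $(\bdrpluslm)^{\gamma=1,\tau_n\dan} = \kpin[[E]]/E^m$.

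I would begin by extending scalars: set $W_{\hat L} := W\hatotimes_{\bdrplusm}\bdrpluslm$, so that $W_{\hat L}$ is a semilinear $\hat G$-representation on an orthonormal Banach $\bdrpluslm$-module. Using Lem \ref{lem: fkt analytic}, the ring $\bdrpluslm$ is relatively $\hat G$-analytic over $\bdrplusm$, so $W_{\hat L}$ inherits relative local analyticity; in particular $D_{\Sen,\kpin}(W) = (W_{\hat L})^{\gamma=1,\tau_n\dan}$ is well-defined. Applying a Kummer analogue of \cite[Thm 2.4.3]{RC22} (equivalently: first smooth-descending along the compact procyclic group $\gal(L/\kinfty)$ to a module over $\bdrpluskinftym$, then analytically decompleting along the procyclic group generated by $\tau$), I obtain for $n$ large enough a Banach $\kpin[[E]]/E^m$-module $D_{\Sen,\kpin}(W)$ with a $\bdrpluslm$-linear isomorphism
\[
 D_{\Sen,\kpin}(W)\hatotimes_{\kpin[[E]]/E^m}\bdrpluslm \xrightarrow{\sim} W_{\hat L}.
\]
Taking $G_L$-invariants and using faithfully-flat descent along $\bdrplusm\to\bdrpluslm$ (with $(\bdrpluslm)^{G_L}=\bdrplusm$) then yields the desired isomorphism with target $W$.

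Items (1)--(3) follow from the main construction. Item (1), the compatibility $D_{\Sen,\kpin}(W)\otimes_{\kpin}\kpis \simeq D_{\Sen,\kpis}(W)$, reduces to the behaviour of $\tau_n$-analytic vs.\ $\tau_s$-analytic vectors under the open Lie subgroup inclusion, exactly as in Thm \ref{thm: Arith Sen cyclo}(1). Item (2), the cohomology comparison, proceeds via Hochschild--Serre applied to $1\to G_L\to \gk \to \hat G\to 1$: the $G_L$-cohomology of $W_{\hat L}$ is trivialized by the isomorphism of Step 2, and the residual $\hat G$-cohomology splits as iterated $\gamma$-smooth and $\tau$-locally-analytic cohomology on $D_{\Sen,\kpin}(W)$; the $\tau$-direction is the $\nabla_\tau$-complex, which equals the $\phi_\kinfty$-complex since the two operators differ by the unit $\frac{utE'}{E}\in(\kpin[[E]]/E^m)^{\times}$. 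For Item (3), both $\nabla_\gamma$ on $D_{\Sen,\kmun}(W)$ (extended $C$-linearly) and $\phi_\kinfty = \frac{E}{utE'}\nabla_\tau$ on $D_{\Sen,\kpin}(W)$ define continuous $C$-linear endomorphisms of $W$; on every $\hat G$-stable finite-dimensional $C$-subspace they both recover the classical Sen operator, by the finite-dimensional case treated in \cite{GMWHT}, and density together with continuity then forces equality globally on $W$.

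The hard part will be Step 2 — adapting the decompletion machinery of \cite{RC22} to the Kummer tower in the infinite-dimensional Banach setting, with target ring $\kpin[[E]]/E^m$ (a complete DVR-thickening rather than a subring of $C$). The cleanest route seems to be to split the descent in two: a smooth descent along the procyclic $\gal(L/\kinfty)$, which is routine, followed by an honest analytic descent along $\langle\tau\rangle$; at the second step one needs vanishing of the higher $\tau$-analytic cohomology of $\bdrpluslm$ in the appropriate range, which is precisely where the analyticity of $\mathfrak t^{-1}$ (Lem \ref{lem: fkt analytic}(3)) makes the relevant period ring behave well. Once this decompletion template is in place, everything else — including the orthonormal-Banach size control, the compatibility with base change up the tower, and the cohomology comparison — follows by the same formal manipulations as in the finite-dimensional arguments of \cite[\S 9--10]{GMWdR}.
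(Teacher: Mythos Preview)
Your approach is substantively different from the paper's, and the difference matters.

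The paper does \emph{not} build a direct ``Kummer analogue of \cite[Thm.~2.4.3]{RC22}''. Instead it takes the cyclotomic theory (Thm~\ref{thm: Arith Sen cyclo}) as a black box and then \emph{transfers} from the cyclotomic to the Kummer side. Concretely: starting from $D_{\Sen,\kmun}(W)$ with its $\gamma_n$-analytic action, one forms
\[
Y \;=\; (W_{\hat L})^{\gamma_n\dan,\tau_n\dan} \;=\; D_{\Sen,\kmun}(W)\hatotimes_{\kmun}(\hat L)^{\gamma_n\dan,\tau_n\dan},
\]
and then kills $\nabla_\gamma$ on $Y$ by producing an explicit matrix $H\in\GL_I\bigl((\hat L)^{\gamma_n\dan,\tau_n\dan}\bigr)$ solving $\partial_\gamma(H)+D_\gamma H=0$, via the convergent series $H=\sum_{k\ge0}(-1)^k\Mat(\partial_\gamma^k)\,(\beta-\beta_n)^k/k!$ (the infinite-rank version of \cite[Thm.~7.12]{GMWHT}, which works precisely because the $\gamma_n$-action on $D_{\Sen,\kmun}(W)$ is \emph{analytic}, so $D_\gamma$ has entries in $\kmun$). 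Finite Galois descent along $K(\mu_n,\pi_n)/\kpin$ then yields $D_{\Sen,\kpin}(W)$, and Items~(2),(3) are read off from this picture via \cite[Thms.~7.13, 7.17]{GMWHT}.

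Your proposed ``smooth descent along the procyclic $\gal(L/\kinfty)$'' is the problematic step. This group is isomorphic to $\Gamma_K$, a genuine $p$-adic Lie group; taking smooth vectors under it does not land you over $\hat\kinfty$ (smooth vectors in $\hat L$ give $L$, not $\hat\kinfty$), and taking honest invariants $(W_{\hat L})^{\gamma=1}$ is itself a Sen-type decompletion with no a priori control on the output as an ON Banach module. So the first half of your two-step plan is not routine --- it is exactly the hard direction, and the paper circumvents it by bootstrapping off the already-proven cyclotomic decompletion rather than redoing the machinery. Your plan could perhaps be salvaged by first running RC22 along the cyclotomic tower inside $L$ and only then passing to $\gamma=1$, but at that point you have essentially rediscovered the paper's argument.

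Two smaller points. For Item~(2), the passage from $\nabla_\tau$-cohomology to $\phi_\kinfty$-cohomology is not ``multiplication by a unit in $\kpin[[E]]/E^m$'': the factor $t$ does not lie in that ring, and $\nabla_\tau$ alone need not preserve $D_{\Sen,\kpin}(W)$ --- one must first divide by $p\fkt$ (using the analyticity of $\fkt^{-1}$, Lem~\ref{lem: fkt analytic}(3)) to get $N_\nabla$, and only then normalize. For Item~(3), your density-of-finite-dimensional-$\hat G$-stable-subspaces argument is not justified for a general relatively locally analytic $W$; the paper instead compares the two operators on the common space of $(\gamma_n,\tau_n)$-analytic vectors in $W_{\hat L}$, where both live simultaneously.
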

\begin{proof}
By d\'evissage, again it suffices to treat the $m=1$ case.
By Thm. \ref{thm: Arith Sen cyclo}, we have
\[   D_{\Sen, \kmun}(W)\otimes_\kmun \kpinfty  \simeq D_{\Sen, \kpinfty}(W) \]
where $\gamma_n$-action on $D_{\Sen, \kmun}(W)$ is \emph{analytic}.
Thus
\[Y= (W_{\hat L})^{\gamma_n\dan, \tau_n\dan} = D_{\Sen, \kmun}(W)\hatotimes_\kmun (\hat L)^{\gamma_n\dan, \tau_n\dan}. \]
We claim that by enlarging $n$ if necessary, the space $Y$ is ``trivial" under $\nabla_\gamma$-action, in the sense that the natural map
\[ Y^{\nabla_\gamma=0} \hatotimes_{K(\mu_n, \pi_n)}  (\hat L)^{\gamma_n\dan, \tau_n\dan}  \to Y \]
is an isomorphism.
The idea is now similar to Step 1 in proof of \cite[Thm. 7.12]{GMWHT}. Take an ON basis $v_i$ of $D_{\Sen, \kmun}(W)$ with $i \in I$, and let
\[ D_\gamma=\Mat(\partial_\gamma) \in \GL_I(\kmun) \]
(note: it is important to use \emph{analytic} actions here so the matrix is defined over $\kmun$). We need to construct some 
\[ H \in \GL_I( (\hat L)^{\gamma_n\dan, \tau_n\dan}) \]
such that
\[ \partial_\gamma(H) +D_\gamma H=0 \]
exactly the same formula as in \cite[Thm. 7.12]{GMWHT}:
\[ H=\sum_{k \geq 0} (-1)^k \Mat(\partial_\gamma^k) \frac{(\beta-\beta_n)^k }{k!}\]
still works. 
With $Y^{\nabla_\gamma=0}$ constructed, it suffices to use Galois descent to obtain
\[  D_{\Sen, \kpin}(W): =  (Y^{\nabla_\gamma=0})^{\gal(K(\mu_n, \pi_n)/\kpin)}.\]
This concludes proof of Item (1).

The proof of Item (2) is exactly the same as \cite[Thm. 7.17]{GMWHT}, via Thm. \ref{thm: Arith Sen cyclo} and Galois descent. 

 For Item (3), it follows from the same argument as \cite[Thm. 7.13]{GMWHT}, except that in the proof, one should use the identification of \emph{analytic} vectors:
\[D_{\Sen, \kmun}(W)\hatotimes_{\kmun} (\hat L)^{\gamma_n\dan, \tau_n\dan}     \simeq D_{\Sen, \kpin}(W)\hatotimes_{\kpin} (\hat L)^{\gamma_n\dan, \tau_n\dan}  \]
(the displayed equation in proof of  \cite[Thm. 7.13]{GMWHT} cannot hold because of topological issues).
\end{proof}

 As a summary, we now have two functors
\[
\begin{tikzcd}
                                                                                                & {\mic_t^\wedge(\kpinfty[[t]]/t^m)} \\
\rep^\rella_\gk(\bdrplusm) \arrow[ru, "{D_{\Sen, \kpinfty}}"] \arrow[rd, "{D_{\Sen, \kinfty}}"] &                                    \\
                                                                                                & {\mic_E^\wedge(\kinfty[[E]]/E^m)} 
\end{tikzcd}
\]

\begin{defn} \label{def:point rel la small}
Say $W \in \rep^\rella_\gk(\bdrplusm)$ is $a$-small if $(D_{\Sen, \kinfty}(W), \phi_\kinfty)$ is $a$-small as in Def \ref{def: a small arith}.
\end{defn}

One can also use the $t$-connection $D_{\Sen, \kpinfty}(W)$ to define $a$-smallness. This would coincide with Def \ref{def:point rel la small}, as in $m=1$ case, the two operators are the same  after linearization by Thm \ref{thm: Arith Sen Kummer}(3).

 \newpage 
 
\section{Decompleted Riemann--Hilbert correspondence} \label{sec: decomp RH}

In this section,  we construct decompleted Riemann--Hilbert correspondence using arithmetic Sen theory from previous section.
The main result is Prop \ref{prop: from t-conn to enhanced Kummer-Sen}, where in particular we construct the functor
\[  \MIC_\gk(X_{C, \et}, \o_{X_C, \bdrplus, m})  \to  \MIC_\en(X_{\et}, \o_{X,  \kinfty[[E]], m}).\]
 We then use this decompleted correspondence to define the notion of $a$-small $\bbdrplusm$-local systems on the pro-\'etale site.

\begin{prop} \label{prop: global kummer sen}
Let $\calm \in \VB_\gk(X_{C, \et}, \o_{X_C, \bdrplus, m})$.
For an affinoid open $\spa(A, A^+) =U \in X$, define 
\[ D_{\sen, \kpinfty}(U):= ((\calm(U_C))^{G_\kpinfty})^{\gamma \dla }, \]
\[ D_{\sen, \kinfty}(U):= ((\calm(U_C))^{G_L})^{\gamma=1, \tau\dla }. \]
These rules define two objects
\[ D_{\sen, \kpinfty}(\cm) \in \VB(X_{\et}, \o_{X,  \kpinfty[[t]], m}), \]
\[ D_{\sen, \kinfty}(\cm) \in \VB(X_{\et}, \o_{X,  \kinfty[[E]], m}) \]
 such that both their pullbacks to $X_C$ are $\cm$; the vector bundle $ D_{\sen, \kpinfty}(\cm)$ resp.  $D_{\sen, \kinfty}(\cm)$ admits $\phi_\kpinfty$ resp. $\phi_\kinfty$-action. 
 In addition
  \[ \rg(\gk, \rg(X_C, \cm))\otimes_K \kpinfty =\rg(\phi_\kpinfty, \rg(X, D_{\sen, \kpinfty}(\cm))),\]
 \[ \rg(\gk, \rg(X_C, \cm))\otimes_K \kinfty =\rg(\phi_\kinfty, \rg(X, D_{\sen, \kinfty}(\cm))).\]
\end{prop}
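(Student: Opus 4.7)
The plan is to apply the pointwise infinite-dimensional arithmetic Sen theory of \S\ref{sec: infinite Sen} section-wise over a suitable \'etale cover of $X$, and then glue. I would first reduce to the case where $U = \spa(A,A^+)$ is a small affinoid admitting a toric chart as in Notation \ref{nota: small affinoid}, so that $U_C \to U$ is a pro-\'etale $\Gamma$-torsor with $\Gamma \cong \Gamma_{\geo}\rtimes G_K$. For such $U$, the evaluation $W := \calm(U_C)$ is a finite projective $A\hatotimes_K \bdrplusm$-module with a semi-linear, continuous $\gk$-action, and the key claim is that $W$ belongs to $\rep_{\gk}^{\rella}(\bdrplusm)$ in the sense of Def.~\ref{Def: rel lav rep} (where the ambient ``$C$'' is enlarged to $A\hatotimes_K C$; one works with the relative Sen theory of Rodr\'{\i}guez Camargo applied to this base, or alternatively one first fixes an ON basis after passing to an analytic open and extends scalars).

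To verify relative local analyticity, I would use Theorem \ref{thm:RH correspondence} to identify $\cm$ with $\bw \otimes \calO\bB_{\dR}^{[0,\infty]}$-invariants of a $\bbdrplusm$-local system $\bw$; equivalently, via Prop.~\ref{prop:decompletion}, $W$ is obtained by extension of scalars from a $\Gamma$-representation on a finite projective $\bfb_m$-module on which $\Gamma_{\geo}$ acts unipotently. The unipotency of the $\Gamma_{\geo}$-action together with the continuity of the $\gk$-action provides the orthonormal basis, the integral lattice, and the $\epsilon>0$ required by Def.~\ref{Def: rel lav rep}, so that Theorems \ref{thm: Arith Sen cyclo} and \ref{thm: Arith Sen Kummer} apply to $W$. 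This produces, for all sufficiently large $n$, finite projective Banach modules $D_{\Sen,K(\mu_n)}(W)$ over $A\hatotimes_K K(\mu_n)[[t]]/t^m$ and $D_{\Sen,K(\pi_n)}(W)$ over $A\hatotimes_K K(\pi_n)[[E]]/E^m$, equipped with the operators $\phi_{\kpinfty}$ and $\phi_{\kinfty}$ respectively, whose base changes back to $A\hatotimes_K \bdrplusm$ recover $W$.

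Next I would show that the assignments $U\mapsto D_{\Sen,\kpinfty}(U)$ and $U\mapsto D_{\Sen,\kinfty}(U)$ satisfy \'etale descent and therefore define vector bundles on $X_{\et}$ with values in the indicated sheaves of rings. Descent follows from the uniqueness of the Sen module: if $U'\to U$ is a standard \'etale morphism in $X$, then $\calm(U'_C)\cong\calm(U_C)\hatotimes_{A\hatotimes_K\bdrplusm}(A'\hatotimes_K\bdrplusm)$ by the crystal property of $\cm$, and taking the $G_L$-fixed, $\tau$-locally analytic (resp.\ $G_{\kpinfty}$-fixed, $\gamma$-locally analytic) vectors commutes with this flat base change by Theorems \ref{thm: Arith Sen cyclo} and \ref{thm: Arith Sen Kummer}(1) (the stability statement for $s>n$). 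Rank preservation and the comparison $D_{\Sen,\ast}(\cm)\hatotimes\bdrplusm\simeq\cm_{X_C}$ are then local statements reducing to the pointwise theorems. The topological nilpotence aspects of $\phi_{\kpinfty}$, $\phi_{\kinfty}$ are checked analogously on affinoid sections.

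Finally, for the cohomology comparison I would combine the pointwise statements Theorem \ref{thm: Arith Sen cyclo}(2) and Theorem \ref{thm: Arith Sen Kummer}(2) with a \v Cech-to-derived spectral sequence: choosing an \'etale cover of $X$ by small affinoids $U$ and applying the pointwise comparison to each intersection yields, after totalisation, the claimed quasi-isomorphisms
\[
\rg(\gk,\rg(X_C,\cm))\otimes_K\kpinfty \simeq \rg(\phi_{\kpinfty},\rg(X,D_{\Sen,\kpinfty}(\cm))),
\]
and the analogous statement over $\kinfty$. The main obstacle I anticipate is the first step, namely the careful verification that $\calm(U_C)$ is a \emph{relatively} locally analytic representation in the infinite-dimensional sense required by Def.~\ref{Def: rel lav rep}, including the topological compatibilities (completed tensor products, choice of ON basis and lattice) that allow the Sen modules to descend to finite-level affinoid-coefficient modules rather than merely to be defined abstractly; this is precisely the reason we needed the infinite-rank Sen theory developed in \S\ref{sec: infinite Sen}.
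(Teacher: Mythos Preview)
Your overall strategy---apply the infinite-dimensional Sen theory of \S\ref{sec: infinite Sen} on small affinoids and then glue, and deduce the cohomology comparison by a \v Cech argument---is the same as the paper's. The cohomology step is essentially identical: the paper chooses a finite affinoid cover trivializing $D_{\Sen,\kinfty}(\cm)$, writes $\rg(X_C,\cm)$ as a finite \v Cech complex of relatively locally analytic representations, and applies Theorem~\ref{thm: Arith Sen Kummer} term by term.

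For the vector-bundle assertion, the paper takes a somewhat different and more direct route than your descent argument. For $D_{\Sen,\kpinfty}(\cm)$ it simply cites \cite[Prop.~3.2, Prop.~3.4]{Pet23}. For $D_{\Sen,\kinfty}(\cm)$ it first observes that the presheaf is a sheaf because taking $\gamma$-invariants and taking $\tau$-locally analytic vectors are both left exact; then it proves local freeness by invoking \cite[Lem.~3.1]{Pet23} to produce, for some $n\gg 0$, an open cover of $X_{K(\pi_n)}$ whose base change to $X_{\hatkinfty}$ trivializes $\cm$, and finally descends via \cite[Lem.~3.1(2)]{Pet23}. By contrast, your gluing step relies on the claim that the Sen functors commute with \'etale base change $A\to A'$ in the rigid-analytic direction, which you justify by citing Theorems~\ref{thm: Arith Sen cyclo} and~\ref{thm: Arith Sen Kummer}(1); those results, however, are about base change along the arithmetic tower $K(\mu_n)\to K(\mu_s)$ (resp.~$K(\pi_n)\to K(\pi_s)$), not along $A\to A'$, so this citation does not establish what you need. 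The compatibility you want is plausible but requires a separate argument, whereas the paper's left-exactness plus Petrov-trivialization approach sidesteps this issue entirely.

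One minor point: your verification of relative local analyticity is more elaborate than necessary. Since $\cm(U_C)$ is already a finite projective module over $A\hatotimes_K\bdrplusm$ with continuous $G_K$-action, you can check the condition of Def.~\ref{Def: rel lav rep} directly by choosing an ON basis of $A$ and a module basis of $\cm(U_C)$ (after reducing mod $t$); there is no need to bring in $\Gamma_{\geo}$ or Prop.~\ref{prop:decompletion}, which concern the passage to $U_\infty$ rather than $U_C$. The paper spells this out later, in the proof of Prop.~\ref{prop: ana Sen as inverse no conn}.
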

\begin{proof}
The fact $D_{\sen, \kpinfty}(\cm) \in \VB(X_{\et}, \o_{X,  \kpinfty[[t]], m})$ is proved in \cite[Prop 3.2, Prop 3.4]{Pet23}; but the cohomology comparison is not recorded there.

We first prove $D_{\sen, \kinfty}$ defines a vector bundle. Firstly, it is a sheaf: it suffices to verify this on an  affinoid $U$ with a finite covering $U=\cup_{i \in I} U_i$; then the sheaf condition quickly follows from the fact that taking $\gamma$-invariants and taking $\tau\dla$-vectors are left exact functors.
One further needs to prove $D_{\sen, \kinfty}(\cm)$ is a vector bundle on the ringed space $(X_\et,\o_{X, K_\infty[[E]], m})$; that is, one needs to find a cover of $X$ to trivialize $M$. This follows from similar argument as the next to last paragraph of \cite[Prop. 3.2]{Pet23}.
Indeed,  \cite[Lem. 3.1(1)]{Pet23} implies there exists some $n \gg 0$ and an open cover of $X_{K(\pi_n)}$ whose base change to $X_{\hatkinfty}$  trivializes $\calm$; thus $D_{\sen, \kinfty}$ defines a vector bundle on $(X_{K(\pi_n)}, \o_{X, K_\infty[[E]], m})$ hence also  $(X_{K(\pi_n,\mu_n)}, \o_{X, K_\infty[[E]], m})$. 
This descends to  a vector bundle on  $(X_\et,\o_{X, K_\infty[[E]], m})$ by \cite[Lem. 3.1(2)]{Pet23}.

Finally, we prove the   cohomology comparisons; we only treat the case with $D_{\sen, \kinfty}(\cm)$ since the other case is similar. 
Cover $X$ by finite many  affinoids $X=\cup U_i$ which trivializes   $D_{\sen, \kinfty}(\cm)$; then   $\rg(X_C, \cm)$ is computed by a finite \v{C}ech complex $\mathcal{C}^\bullet$ where each term is   a relative locally analytic representation. Thus Thm \ref{thm: Arith Sen Kummer} implies
\[ \rg(\gk, \mathcal{C}^\bullet)\otimes_K \kinfty \simeq \rg(\phi_\kinfty, D_{\sen, \kinfty}(\mathcal{C}^\bullet)) \]
as the statement is true for each term in $\mathcal{C}^\bullet$; now note RHS is precisely a \v{C}ech complex for $\rg(\phi_\kinfty, \rg(X, D_{\sen, \kinfty}(\cm)))$. 
\end{proof}

\begin{construction} \label{cons: gk conn to enhanced conn}
    Let $\cm \in \MIC_\gk(X_{C, \et}, \o_{X_C, \bdrplus, m})$ as in Def \ref{def:t-connection}. Prop \ref{prop: global kummer sen} shows that
    \[ D_{\sen, \kinfty}(\cm) \in \VB(X_{\et}, \o_{X,  \kinfty[[E]], m}). \]
Since the connection     
      \[\nabla_{\cm}:\cm\to \cm\otimes_{\o_{\cx}}\Omega^1_{\cx}\{-1\}\]
      is $\gk$-equivariant (and continuous), it respects taking locally analytic vectors; thus it induces a connection operator 
\[\nabla: D_{\sen, \kinfty}(\cm) \to D_{\sen, \kinfty}(\cm)\otimes_{\o_{X, \kinfty[[E]]}} \Omega_{X,\kinfty[[E]]}\{-1\}. \]
Note there is also $\phi_\kinfty$-operator on $D_\kinfty=D_{\sen, \kinfty}(\cm)$. we claim  the following diagram is commutative:
  \[
    \begin{tikzcd}
D_\kinfty \arrow[d, "\phi_{\kinfty,D}"] \arrow[r, "\nabla"] & D_\kinfty\otimes_{\o_{X, \kinfty[[E]]}} \Omega_{X,\kinfty[[E]]}\{-1\} \arrow[d, "  \phi_{\kinfty, D}\otimes 1 +1\otimes \phi_{\kinfty, (E)^{-1}}\otimes1"] \\
D_\kinfty \arrow[r, "\nabla"]                   & D_\kinfty\otimes_{\o_{X, \kinfty[[E]]}} \Omega_{X,\kinfty[[E]]}\{-1\}.                             
\end{tikzcd}
\]
To check commutativity, it suffices to work locally as in set up of Notation \ref{nota: small affinoid}.
Recall the operator
 \[\nabla_{\cm}:\cm\to \cm\otimes_{\o_{\cx}}\Omega^1_{\cx}\{-1\}\]
can be written
 \[\nabla_M = \sum_{i=1}^d\nabla_i\otimes\frac{\dlog T_i}{t}\]
where  $\nabla_i$ is precisely $\log \gamma_i$ by Proposition \ref{prop: local RH formula}, which commutes with $\tau$ since $\gamma_i$ commutes with $\tau$ (see Notation \ref{nota: small affinoid}).
But once decompleted using Kummer tower as
\[\nabla: D_\kinfty \to D_\kinfty \otimes_{\o_{X, \kinfty[[E]]}} \Omega_{X,\kinfty[[E]]}\{-1\}, \]
one needs to use the Breuil--Kisin twist coordinate, and thus
 \[\nabla_{D_\kinfty} = \sum_{i=1}^d \frac{E}{t} \cdot \nabla_i \otimes  \frac{\dlog T_i}{E}.\]
We are thus reduced to prove
\[ [\phi_\kinfty, \frac{E}{t} \nabla_i] =\frac{E}{t} \nabla_i \]
which follows from the fact that $\phi_\kinfty$ commutes with $\nabla_i$ (since both $\tau$ and $\fkt$ commutes with $\gamma_i$) and the fact $\phi_\kinfty(\frac{E}{t})=\frac{E}{t}$.

\end{construction}

 \begin{prop} \label{prop: from t-conn to enhanced Kummer-Sen}
The above construction induces functors (extending those in Prop \ref{prop: global kummer sen})
 \[
 \begin{tikzcd}
                                                                    & {\MIC_\en(X_{\et}, \o_{X,  \kpinfty[[t]], m})} \\
{\MIC_\gk(X_{C, \et}, \o_{X_C, \bdrplus, m})} \arrow[ru] \arrow[rd] &                                                \\
                                                                    & {\MIC_\en(X_{\et}, \o_{X,  \kinfty[[E]], m})}. 
\end{tikzcd}
\]
In addition, we have cohomology comparisons:
\[ \rg(X_C, \rg(\gk, \DR(\calm))) \otimes_K \kinfty  \simeq \rg(X, \rg(\phi,\DR(D))). \]
 \end{prop}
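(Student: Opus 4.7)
The plan is to divide the argument into two parts: first, verifying that Construction \ref{cons: gk conn to enhanced conn} indeed lands in the category of enhanced connections; second, establishing the cohomology comparison by applying the term-by-term result of Prop \ref{prop: global kummer sen} to the de Rham complex of $\cm$ and then totalizing. Throughout, we only discuss the Kummer version; the cyclotomic version is parallel (indeed slightly easier).

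For the functor itself, Prop \ref{prop: global kummer sen} already supplies the underlying vector bundle $D=D_{\sen,\kinfty}(\cm) \in \VB(X_\et, \o_{X, \kinfty[[E]], m})$ together with the $E$-connection $\phi_{\kinfty,D}$; Construction \ref{cons: gk conn to enhanced conn} supplies the connection $\nabla_D$ obtained from the $\gk$-equivariant $\nabla_\cm$ by restricting to $\tau$-locally analytic vectors (using that $\nabla_\cm$ is continuous and $\gk$-equivariant, hence preserves the locally analytic locus). The only non-trivial structural checks are that $\nabla_D$ is integrable and topologically nilpotent, and that the enhanced-commutativity square of Def \ref{defn: abs enhanced conn} holds. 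All three can be verified \emph{locally} on a small affinoid $U=\spa(R_K,R)$ equipped with a toric chart as in Notation \ref{nota: small affinoid}. By Prop \ref{prop: local RH formula}, on such $U$ the connection $\nabla_\cm=\sum_i\nabla_i\otimes\frac{\dlog T_i}{t}$ is given by $\nabla_i=\log\gamma_i$ with $\gamma_i\in\Gamma_\geo$ acting unipotently, hence each $\nabla_i$ is nilpotent and the $\nabla_i$ commute pairwise (since the $\gamma_i$ do). After passing to Kummer-Sen and renormalizing into the Breuil--Kisin basis $\dlog T_i/E$, we obtain $\nabla_D=\sum_i\frac{E}{t}\nabla_i\otimes\frac{\dlog T_i}{E}$; the commutativity $[\phi_{\kinfty,D},\nabla_D]=\nabla_D\otimes\phi_{\kinfty,(E)^{-1}}$ from Notation \ref{nota: enhanced conn local} reduces to the two identities $\phi_\kinfty(E/t)=E/t$ and $[\phi_\kinfty,\nabla_i]=0$. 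The first is Leibniz applied to $\phi_\kinfty(E)=E$ and $\phi_\kinfty(t)=0$ (the latter because $\tau$ fixes $\epsilon$, hence $t$); the second follows because $\tau\in\gal(L/\kpinfty)$ satisfies $\chi_{\mathrm{cyc}}(\tau)=1$, so $\tau\gamma_i\tau^{-1}=\gamma_i^{\chi_{\mathrm{cyc}}(\tau)}=\gamma_i$.

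For the cohomology comparison, I will run Prop \ref{prop: global kummer sen} termwise. Consider the de Rham complex
\[
\DR(\cm):\ \cm\xrightarrow{\nabla_\cm}\cm\otimes_{\o_{\calX_m}}\Omega^1_{\calX_m}\{-1\}\to\cdots
\]
each term of which is a $\gk$-equivariant vector bundle on $(X_{C,\et},\o_{X_C,\bdrplus,m})$, hence has relative locally analytic sections on any affinoid trivializing open (this is the standard fact used in the proof of Prop \ref{prop: global kummer sen}). The functor $D_{\sen,\kinfty}$ is (by its definition via left-exact invariants) compatible with the formation of $\otimes_{\o_{\calX_m}}\Omega^i_{\calX_m}\{-i\}$: namely $D_{\sen,\kinfty}(\cm\otimes\Omega^i\{-i\})=D\otimes_{\o_{X,\kinfty[[E]]}}\Omega^i_{X,\kinfty[[E]]}\{-i\}$, once one matches the twists via the identification $\Omega^1_{\calX_m}\{-1\}=(t/E)\cdot\Omega^1_{X,\kinfty[[E]]}\{-1\}\hatotimes_{\kinfty[[E]]}\bdrplusm$ used in Cons \ref{cons: gk conn to enhanced conn}. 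Applying Prop \ref{prop: global kummer sen} to each term then gives
\[
\rg(\gk,\rg(X_C,\cm\otimes\Omega^i\{-i\}))\otimes_K\kinfty\ \simeq\ \rg(\phi_\kinfty,\rg(X,D\otimes\Omega^i\{-i\})),
\]
and these identifications are functorial in $\nabla$, hence assemble into a quasi-isomorphism of the two double complexes whose totalizations are precisely the two sides of the claimed identity.

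The only genuine point to watch is the compatibility of $D_{\sen,\kinfty}$ with the tensor by $\Omega^i_{\calX_m}\{-i\}$: the Breuil--Kisin twist $\{-i\}$ is defined relative to the prism $(\Ainf,(\xi))$ on the $\bdrplus$-side and relative to $(\gs,(E))$ on the $\kinfty[[E]]$-side, so the transition involves multiplication by $(E/t)^i$. This is not an obstacle per se, but it is the step where care with conventions is required so that the bicomplexes match on the nose; after that matching, the totalization argument and the functoriality in $i$ deliver the claim.
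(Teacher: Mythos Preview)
Your proposal is correct and follows essentially the same approach as the paper. The paper's own proof is a terse two lines (``take a finite cover that trivializes $D$, then each term in the \v{C}ech complex is a relative locally analytic representation\dots; then proceed as in Prop~\ref{prop: global kummer sen}''), which amounts to exactly your strategy of applying the term-by-term Sen comparison of Prop~\ref{prop: global kummer sen} to the de Rham complex $\DR(\calm)$; your additional verification of the enhanced-connection axioms merely recapitulates what Construction~\ref{cons: gk conn to enhanced conn} already established, and your flagged ``point to watch'' about the $(t)^{-i}$ versus $(E)^{-i}$ twist is real but, as you say, purely a matter of matching conventions.
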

 \begin{proof}

  To prove the cohomology comparison, take a finite cover that trivializes $D$, then each term in the \v{C}ech complex is a relative locally analytic representation (note $\Omega_X^1$ has trivial $\gk$-action); then  proceed as in Prop \ref{prop: global kummer sen}.

 \end{proof}

\subsection{$a$-small $\bbdrplus$-local systems} \label{sec: a small bdrplus loc sys}
 
In this subsection, we define the notion of  $a$-small $\bbdrplus$-local systems on the pro-\'etale site.

\begin{defn} \label{defn: a small proet}
    We have
\[ \vect(\xproet, \bbdrplusm) \simeq \MIC_{\gk}(X_{C, \et}, \o_{X_C, \bdrplus, m}) \to \MIC_\en(X_{\et}, \o_{X,  \kinfty[[E]], m}). \]
For corresponding objects $\bw \mapsto \cm \mapsto D$, say $\bw$ is $a$-small if $\phi_D$ is $a$-small.
\end{defn}

\begin{lemma} The following are equivalent.
\begin{enumerate}
    \item $\bw$ is $a$-small;
    \item $\lim_{n \to \infty} a^n\Pi_{i=0}^n (  \phi_\kinfty - i) \to 0 \text{ on }  D_{\sen, \kinfty}(W) $;
        \item $\lim_{n \to \infty} a^n \Pi_{i=0}^n (  \phi_\kpinfty - i) \to 0 \text{ on }  D_{\sen, \kpinfty}(W) $;
    \item $\bw_{\bar x}$ is $a$-small for each classical point $x\in X$. 
\end{enumerate} 
\end{lemma}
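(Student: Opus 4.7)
The plan is to deduce the four-way equivalence via the chain (1) $\Leftrightarrow$ (2) $\Leftrightarrow$ (3) and (1) $\Leftrightarrow$ (4). The first three equivalences are largely formal, while (4) $\Rightarrow$ (1), upgrading a pointwise condition at classical points to a statement about the operator norm on the global Banach module, is the main obstacle.

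The equivalence (1) $\Leftrightarrow$ (2) is immediate from Definition \ref{defn: a small proet} (which defines $a$-smallness of $\bw$ as $a$-smallness of $\phi_D$ on $D := D_{\sen,\kinfty}(\cm)$ in the sense of Definition \ref{def: a small arith}), once one identifies $D_{\sen,\kinfty}(\cm)$ with $D_{\sen,\kinfty}(\bw)$ (both arise as $\gamma=1,\tau\dla$-invariants on the same completed $\widehat{L}$-representation in the local chart). For (2) $\Leftrightarrow$ (3), I will use the remark in Definition \ref{def: a small arith} that $a$-smallness depends only on the mod-$E$ (respectively mod-$t$) reduction, together with the fact that $a$-smallness of an operator on an orthonormal Banach module is preserved under isometric scalar extension. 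After completed base change to $C$, both reductions canonically identify with $\bw/t\bw$, and by Theorem \ref{thm: Arith Sen Kummer}(3) the two induced $C$-linear operators $\phi_\kinfty$ and $\phi_\kpinfty$ on $\bw/t\bw$ coincide.

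For (1) $\Rightarrow$ (4), I will specialize at a classical point $x\colon \spa L \to X$ with $L/K$ finite. The Kummer--Sen construction of Proposition \ref{prop: global kummer sen} is compatible with such specialization, identifying the stalk of $D_{\sen,\kinfty}(\bw)$ at $x$ with $D_{\sen, L\cdot\kinfty}(\bw_x)$ compatibly with the operator. The $a$-smallness of $\phi$ restricts to this stalk, and applying the equivalence already established between (2) and (3) to the $G_L$-representation $\bw_x$ shows that the Sen weights of $\bw_x/t\bw_x$ lie in $\bZ + a^{-1}\frakm_{\o_C}$, which is by definition $a$-smallness of $\bw_x$.

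The main obstacle is (4) $\Rightarrow$ (1). Reducing mod $E$ (by Definition \ref{def: a small arith}'s remark), it suffices to show that the $\kinfty$-linear operator $\phi$ on $D_{\sen,\kinfty}(\bw)/E$ is $a$-small. Working locally on an affinoid $U \subset X$ where this object is a free $\o(U)\hatotimes_K\kinfty$-module of rank $r$ with Sen matrix $M$, the characteristic polynomial $P_M(T) \in (\o(U)\hatotimes\kinfty)[T]$ has, by hypothesis (4), all roots at every classical point lying in $\bZ + a^{-1}\frakm_{\o_C}$. The plan is to combine Cayley--Hamilton (reducing $a^n\prod_{i=0}^{n-1}(M-i)$ to a polynomial in $M$ of degree less than $r$ with coefficients expressed via elementary symmetric functions of the eigenvalues), quasi-compactness of $U$ (uniformly bounding the eigenvalues, and hence the possible ``integer parts'' of the Sen weights), and the maximum modulus principle on the smooth rigid $X$ together with density of classical points, to force these coefficients to tend to zero in sup-norm. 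The delicate technical point, and what makes this the core obstacle, is controlling the potential jumping of the integer part of Sen weights as $x$ varies over $U$ and ensuring that the resulting operator convergence is uniform on the whole Banach module.
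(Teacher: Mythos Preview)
Your proposal is correct and follows the same skeleton as the paper's proof: reduce to $m=1$, then (1)$\Leftrightarrow$(2) by Definition~\ref{defn: a small proet}, (2)$\Leftrightarrow$(3) via Theorem~\ref{thm: Arith Sen Kummer}(3), and the link to (4) via specialization of the global Sen operator at classical points. The only difference is one of emphasis: you single out (4)$\Rightarrow$(1) as the main obstacle and sketch a Cayley--Hamilton/quasi-compactness/maximum-modulus argument to pass from the pointwise eigenvalue condition to global operator-norm convergence, whereas the paper compresses the entire (3)$\Leftrightarrow$(4) step into a one-line appeal to specialization; your caution is reasonable, but the step is routine for a finite-rank bundle on a quasi-compact smooth rigid space, since the sup norm on a reduced affinoid is attained at a classical point (so the pointwise strict bounds on the characteristic-polynomial coefficients upgrade to global strict bounds, controlling your ``jumping integer part'' concern uniformly).
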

\begin{proof}
    It suffices to consider $m=1$ case. $(1) \Leftrightarrow (2)$ by definition. $(2) \Leftrightarrow (3)$ by Thm \ref{thm: Arith Sen Kummer}(3). $(3) \Leftrightarrow (4)$ by the fact that the global $\phi_\kpinfty$ specializes to the (cyclotomic) Sen operator on each classical point.
\end{proof}

\subsection{The case $m=\infty$}
In this subsection, we record a \emph{constancy} (of Sen weights) result for $\bbdrplus$-local systems that is essentially due to Shimizu \cite{Shi18}. It means that when $m=\infty$, $a$-smallness is easier to check; other than this, the results here do not play any further role.

 \begin{prop}
Let $X/K$ be a geometrically connected smooth rigid analytic variety. Let $\bw$ be an object in $\vect(\xproet, \bbdrplus)$ of rank $d$. Then there exists $a_1, \cdots, a_d \in \barK$ such that 
the Sen weights of $\bbw_{\bar x}/t\bbw_{\bar x}$ are $a_1, \cdots, a_d$ for any classical point $x \in X$. 
 \end{prop}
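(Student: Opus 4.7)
The plan is to reduce the problem to a constancy-of-Sen-weights statement for global Sen operators on $X_{\et}$, essentially along the lines of Shimizu \cite{Shi18}, by running $\bw$ through the infrastructure of \S\ref{sec: global RH}--\S\ref{sec: decomp RH}. First, since the Sen weights of $\bbw_{\bar x}$ depend only on $\bbw_{\bar x}/t\bbw_{\bar x}$, I would replace $\bw$ by its Hodge--Tate reduction $\bw/t\bw \in \vect(\xproet,\hat{\o}_X)$ and therefore assume $m=1$ throughout. The problem is now intrinsic to the world of generalized representations.

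Next, I would feed $\bw$ through the two functorial steps of the paper. By Thm \ref{thm:RH correspondence} one obtains a $\gk$-equivariant (nilpotent) Higgs bundle $(\calm,\nabla_\calm) \in \MIC_\gk(X_{C,\et},\o_{X_C})$ of rank $d$; by Prop \ref{prop: from t-conn to enhanced Kummer-Sen} one further obtains a Kummer--arithmetic Sen module
\[ D := D_{\sen,\kinfty}(\calm) \in \vect(X_\et,\o_X\otimes_K \kinfty) \]
equipped with a decompleted Higgs field $\nabla_D$ and a Sen operator $\phi_\kinfty$. By Thm \ref{thm: Arith Sen Kummer}(3) (applied fibrewise over a classical point $x:\spa L \to X$, with $L/K$ finite), the eigenvalues of $\bar x^*\phi_\kinfty$ coincide with the classical Sen weights of $\bbw_{\bar x}/t\bbw_{\bar x}$. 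Hence the statement reduces to showing that the characteristic polynomial
\[ P(T) := \det(T\cdot\id_D - \phi_\kinfty)\ \in\ \Gamma(X_\et,\o_X\otimes_K \kinfty)[T] \]
has coefficients lying in the constant ring $\kinfty$, i.e.\ is horizontal on $X$; then $P(T)=\prod_{i=1}^d(T-a_i)$ in $\barK[T]$ produces the required $a_1,\dots,a_d$.

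The horizontality step is where the Shimizu-type argument enters. I would exploit the commutation recorded in Cons \ref{cons: gk conn to enhanced conn}: in the $m=1$ Hodge--Tate limit the bracket $[\phi_\kinfty, (E/t)\nabla_i] = (E/t)\nabla_i$ translates to a compatibility between the Sen operator $\phi_\kinfty$ and the nilpotent Higgs field $\nabla_D$ on $D$. Passing to the induced endomorphism connection on $\End(D)$ and then to trace-like invariants (the coefficients of $P(T)$), the commutator relation produces, after a direct computation using nilpotence of $\nabla_D$, the fact that the coefficients of $P(T)$ are killed by the geometric part of the connection on $\o_X\otimes_K\kinfty$. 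Since $X$ is smooth and geometrically connected over $K$, horizontal sections of $\o_X\otimes_K\kinfty$ coincide with $\kinfty$, so the coefficients of $P(T)$ indeed land in $\kinfty$, and thence the roots land in $\barK$.

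The main obstacle will be this last step: Shimizu's original argument is written for $\qp$-local systems, where the Higgs field is constructed directly from the Hodge--Tate comparison of the underlying étale sheaf, and one must verify that the analogous horizontality computation goes through for an arbitrary pro-étale $\hat{\o}_X$-local system. The crucial point that makes the translation possible is that in our setup the Sen operator $\phi_\kinfty$ is produced \emph{globally} on $X_\et$ in Prop \ref{prop: global kummer sen}, not just fibrewise, so $P(T)$ is honestly an analytic datum on $X$ whose constancy may be tested after checking the horizontality relation; the Higgs compatibility from Cons \ref{cons: gk conn to enhanced conn} is exactly the input needed to close Shimizu's argument in this generality.
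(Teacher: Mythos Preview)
Your reduction to $m=1$ is the fatal step. While it is true that the Sen weights of $\bbw_{\bar x}$ only depend on $\bbw_{\bar x}/t\bbw_{\bar x}$, the \emph{constancy} of those weights across $X$ is not a property that can be read off from the Hodge--Tate reduction alone. Indeed, the paper explicitly remarks (right after this proposition) that Shimizu's argument cannot work for $\bw\in\vect(\xproet,\bbdrplusm)$ with $m<\infty$, and that the constancy statement is likely \emph{false} already for general $\hat{\o}_X$-local systems ($m=1$). So by passing to $\bw/t\bw$ you have placed yourself in exactly the situation where the result is expected to fail.

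The concrete reason your horizontality step breaks is this: when $m=1$, the operators $\nabla_i$ on $D$ are $\o_X$-linear Higgs fields, not connections satisfying a Leibniz rule. The commutator relation $[\phi_\kinfty,\nabla_i]=\nabla_i$ then relates eigenspaces of $\phi_\kinfty$ to one another (it shifts eigenvalues by $1$) but says nothing about how the coefficients of $P(T)=\det(T\cdot\id_D-\phi_\kinfty)$ vary with respect to the exterior derivative $d$ on $\o_X$; there is simply no link between $\nabla_i$ and $d$ once $\nabla_i$ is $\o_X$-linear. Shimizu's argument needs a genuine connection with Leibniz rule, and that is obtained only by keeping the full $\bbdrplus$-structure ($m=\infty$) and \emph{inverting $t$}: the $t$-connection $\nabla_\cm$ then becomes an honest flat connection, and the theory of formal connections \cite[\S 4.2]{Shi18} applies to pin down the eigenvalues. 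The paper's proof does precisely this: it observes that the Riemann--Hilbert and decompleted Riemann--Hilbert functors are now available for arbitrary $\bw$ (not just those arising from $\qp$-local systems), so Shimizu's proof runs verbatim at level $m=\infty$.
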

 \begin{proof}
  This is proved in \cite[Thm 1.1]{Shi18} if $\bw$ comes from a $\qp$-local system $\bbl$ in the sense that $\bw=\bbl \otimes_\qp \bbdrplus$: the reason being $p$-adic Riemann--Hilbert functor is constructed only for $\bbl$ in \cite{LZ17}.
But now   we also constructed $p$-adic Riemann--Hilbert functor for general $\bw$, thus Shimizu's strategy now works verbatim. We note that two other main ingredients for proof of \cite[Thm 1.1]{Shi18} are the decompleted Riemann--Hilbert functor  \cite[Def 2.28]{Shi18} (which now we also have for general $\bw$, even globally), and the theory of formal connections \cite[\S 4.2]{Shi18} which   works verbatim for us. 
 \end{proof}

 \begin{rem}
     To use the theory of formal connections (\cite[\S 4.2]{Shi18} mentioned in proof above), one needs to invert $t$ in the process: cf. \cite[Thm 4.5]{Shi18} where the kernel of $(tD_0-\alpha)$ is computed inside a $R((t))$-module. Thus in particular, Shimizu's proof cannot work for a $\bw \in  \vect(\xproet, \bbdrplusm)$ with $1\leq  m<\infty$. (It seems likely to be false already when $m=1$, although we do not know of any example).
 \end{rem}
 
 \begin{cor}
 Let $\bw \in \vect(\xproet, \bbdrplus)$, then it is $a$-small if and only it is so on one classical point on each connected component.
 \end{cor}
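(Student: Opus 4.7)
The forward implication is immediate from the definition of $a$-smallness. For the converse, the plan is to reduce to the geometrically connected setting and apply the previous proposition together with the $G_K$-stability of the set $\mathbb{Z} + a^{-1}\mathfrak{m}_{\mathcal{O}_C}$.

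First, since both the hypothesis and the conclusion are stated connected-component-wise, I may assume $X$ is connected and that I am given a single classical point $x_0 \in X$ with $\bw_{\bar x_0}$ $a$-small. Choose a finite Galois extension $K'/K$ large enough that the geometric connected components of $X_{K'} := X \otimes_K K'$ are already defined over $K'$, and write $X_{K'} = \sqcup_j Y_j$ with each $Y_j$ a geometrically connected smooth rigid space over $K'$. The group $\mathrm{Gal}(K'/K)$ permutes the $Y_j$, and this action is transitive because $X$ itself is connected over $K$.

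Next, apply the previous proposition to each $Y_j$ (viewed as a geometrically connected smooth rigid space over $K'$, with $\bw$ pulled back): the Sen weights of $\bw_{\bar z}/t\bw_{\bar z}$ form a multiset $A_j = \{a_1^{(j)}, \dots, a_d^{(j)}\} \subset \overline{K}$ independent of the classical point $z \in Y_j$. Sen weights are intrinsic to the $\bdrplus$-representation and unaffected by finite base change of the residue field, so if I lift $x_0$ to a classical point $z_0 \in Y_{j_0}$ of $X_{K'}$ over it, the multiset $A_{j_0}$ coincides with the Sen weights at $x_0$, and hence $A_{j_0} \subset \mathbb{Z} + a^{-1}\mathfrak{m}_{\mathcal{O}_C}$.

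Finally, observe that $a \in K$ and $\mathfrak{m}_{\mathcal{O}_C}$ is $G_K$-stable, so $\mathbb{Z} + a^{-1}\mathfrak{m}_{\mathcal{O}_C} \subset C$ is $G_K$-stable. The transitive Galois action on $\{Y_j\}$ takes $A_{j_0}$ to $A_j$ (up to reordering) by a Galois element, so $A_j \subset \mathbb{Z} + a^{-1}\mathfrak{m}_{\mathcal{O}_C}$ for every $j$. Given any classical point $y \in X$, lifting to a classical point of some $Y_j$ and invoking the previous proposition on $Y_j$ shows that the Sen weights of $\bw_{\bar y}/t\bw_{\bar y}$ lie in $\mathbb{Z} + a^{-1}\mathfrak{m}_{\mathcal{O}_C}$, i.e., $\bw_{\bar y}$ is $a$-small. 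The only nontrivial input is the previous proposition (Shimizu's constancy); the remaining argument is a straightforward reduction by Galois descent.
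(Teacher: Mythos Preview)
Your proof is correct and follows the natural route the paper leaves implicit (the paper states the corollary without proof, treating it as immediate from Shimizu's constancy proposition). The reduction to the geometrically connected case via a finite Galois extension and the transitivity of the Galois action on geometric components is exactly what is needed to bridge the gap between ``connected'' and ``geometrically connected''; the $G_K$-stability of $\bbz + a^{-1}\fkm_{\calO_C}$ then finishes the argument as you say. One small simplification: since $z_0$ and $\sigma(z_0)$ lie over the \emph{same} classical point $x_0\in X$, the multisets $A_{j_0}$ and $A_j$ are in fact equal (not merely Galois-conjugate), so you do not even need the stability observation---though invoking it does no harm.
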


\newpage
\section{Analytic Sen theory over   Kummer tower}  \label{sec: analytic sen kummer}

In \S \ref{sec: infinite Sen}, we constructed Sen theory for relative locally analytic $\bdrplusm$-representations. In this section, we show that when the representation is $a$-small in the sense of Def \ref{defn: a small proet}, then it admits a refined \emph{analytic} (not just locally analytic) Sen theory over the Kummer tower.
The main result is Prop \ref{prop: a small implies analytic}, where we construct a functor
\[ D_{\tau\dan}: \rep^{\rella, a}_\gk(\bdrplusm) \to \mic^{\wedge, a}(K[[E]]/E^m) \]
which refines $D_{\sen, \kinfty}$ in \S \ref{sec: infinite Sen} (cf. Def \ref{def: a small arith} for category on RHS).
 It turns out to be a key tool  for \emph{analytic} study of $\pris_\dR^+$-crystals.
 
 \subsection{Space of functions}
 \begin{construction} \label{cons: Space of functions}
We recall relation between analytic functions and analytic vectors.
Let $G$ be a $p$-adic Lie group acting continuously on a $\qp$-Banach representation $W$.
\begin{enumerate}
\item  Let $C^\cont(G, W)$ be the space of continuous functions; it admits a $G$-action such that for $F\in C^\cont(G, W)$  and $g\in G$, we have
\[ (gF)(h) =gF(g^{-1}h). \]
Under this action, a fixed point is an orbit function  $ F_m(h)=hm$  for some $m \in W$, and  $F \mapsto F(1)$ induces a bijection:
\[ (C^\cont(G, W))^G \simeq W. \]
Restricted to the subspace $C^\la(G, W)$ of locally analytic functions, we have
\[ (C^\la(G, W))^G \simeq W^\la. \]

\item Suppose furthermore there is a homeomorphism (of $p$-adic manifolds) $c: \bbz_p^d \to G$. Then one can define the $G$-stable subspace of analytic functions:
\[ C^\an(G, W) \subset C^\cont(G, W).\]
Then $F \mapsto F(1)$ induces a bijection:
\[ (C^\an(G, W))^G \simeq W^{G\dan}.\]
\end{enumerate} 
  \end{construction}
 
\begin{construction} \label{cons tau ana  fun}
Let $W$ be a $\qp$-Banach representation of $\hatg$. Let
\[ C^{\gamma=1, \tau\dan}(\hat{G}, W) \subset C^\cont(\hat{G}, W)\]
be the subspace of   functions such that:
\begin{enumerate} 
\item $F(h\gamma)=F(h), \forall h \in \hat{G}, \gamma \in \gal(L/\kinfty)$, and
\item $F|_{\langle{\tau} \rangle }:=F|_{\gal(L/\kpinfty)}$ is analytic (not just locally analytic).
\end{enumerate}
One checks that $C^{\gamma=1, \tau\dan}(\hat{G}, W)$ is stable under $\hat{G}$-action; indeed, let $g\in \hat{G}, F \in C^{\gamma=1, \tau\dan}(\hat{G}, W)$, then
\begin{itemize}
\item for the first condition, one checks that
\[ (gF)(h\gamma)=gF(g^{-1}h\gamma)=gF(g^{-1}h)= (gF)(h).\]
\item For the second condition, we want to check \[ F'(\tau^{x}) :=F(g^{-1} \tau^{x})\]
is an analytic function.
Suppose $g=\gamma \tau^a$ for some $\gamma\in \gal(L/\kinfty)$, then
\[ g^{-1} \tau^{x} =\tau^{-a-x\chi(\gamma)}\gamma^{-1}\]
thus $F'(\tau^{x}) =F(\tau^{-a-x\chi(\gamma)})$, and  is still analytic.
\end{itemize}

\end{construction}

\begin{remark}
Note that $\hat{G}$ (as a $p$-adic manifold) is not necessarily isomorphic to $\mathbb{Z}_p^2$ (since $\hat{G}$ could contain torsion), thus one might not be able to define the space $C^\an(\hat{G}, W)$ (which would then contain $C^{\gamma=1, \tau\dan}(\hat{G}, W)$).
\end{remark}

\begin{lemma}
Let $W$ be a $\qp$-Banach representation of $\hat{G}$, then the map $F \mapsto F(1)$ induces a bijection
\[ (  C^{\gamma=1, \tau\dan}(\hat{G}, W))^{\hat{G}}\simeq W^{\gamma=1, \tau\dan}. \]
\end{lemma}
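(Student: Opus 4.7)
The plan is to combine the general correspondence between $G$-fixed functions and orbit functions (Construction \ref{cons: Space of functions}) with the two explicit conditions defining $C^{\gamma=1,\tau\dan}(\hat G,W)$, and just check that each condition on the function side translates to the corresponding condition on $W$.

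First I would exhibit the map. Given $m\in W$, let $F_m(h):=hm$; this is the orbit function. Exactly as in Construction \ref{cons: Space of functions}, the assignment $m\mapsto F_m$ is a bijection between $W$ and $(C^\cont(\hat G,W))^{\hat G}$, with inverse $F\mapsto F(1)$. So it suffices to show that under this bijection, the subspace $W^{\gamma=1,\tau\dan}$ on the target side corresponds precisely to $(C^{\gamma=1,\tau\dan}(\hat G,W))^{\hat G}$ on the source side.

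For $m\in W$, directly compute:
\begin{itemize}
\item $F_m(h\gamma)=h\gamma m$ for $\gamma\in\gal(L/\kinfty)$, so the condition $F_m(h\gamma)=F_m(h)$ for all $h,\gamma$ is equivalent to $\gamma m=m$ for all such $\gamma$, i.e.\ $m\in W^{\gamma=1}$.
\item $F_m\big|_{\langle\tau\rangle}(\tau^x)=\tau^x m$, so this restriction is analytic in $x$ if and only if $m$ is an analytic vector for $\tau$, i.e.\ $m\in W^{\tau\dan}$.
\end{itemize}
Thus $F_m\in C^{\gamma=1,\tau\dan}(\hat G,W)$ if and only if $m\in W^{\gamma=1,\tau\dan}$. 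Since $F_m$ is automatically $\hat G$-fixed (as recorded in Construction \ref{cons: Space of functions}), this gives the required bijection.

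There is really no obstacle here; the only small point to be careful about is that the subspace $C^{\gamma=1,\tau\dan}(\hat G,W)$ is $\hat G$-stable (which is exactly what was verified in Construction \ref{cons tau ana fun}), so that taking $\hat G$-invariants inside it makes sense and is compatible with taking $\hat G$-invariants inside the ambient $C^\cont(\hat G,W)$.
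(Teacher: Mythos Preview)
Your argument is correct and is essentially the same as the paper's: the $\hat G$-fixed functions are precisely the orbit functions $F_m(h)=hm$, and the two defining conditions of $C^{\gamma=1,\tau\dan}(\hat G,W)$ translate directly into $m\in W^{\gamma=1,\tau\dan}$. The paper compresses this into a single sentence, but the content is identical.
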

\begin{proof}
A fixed point on the left hand side is an orbit function satisfying the two conditions in Cons \ref{cons tau ana  fun}. 
\end{proof}

\subsection{nearly de Rham period rings}\label{subsec: ndr period ring}

\begin{construction}
Recall some notations from \cite[\S 7]{GMWdR}.
Recall $(\ainf, (\xi))$ is a cover of the final object of $\mathrm{Sh}(\okprisperf)$  equivalently $\mathrm{Sh}((\ok)_{\pris, \log}^\perf)$. Let $(\bfa_\inf^\bullet, (\xi))$ be the corresponding \v{C}ech nerve. In \cite[Lem. 7.4]{GMWdR}, we constructed a morphism of cosimplicial prisms
 \begin{equation} \label{eqn: ainfone to fun}
  (\bfa_{\inf}^{\bullet},(\xi))\to (C(G_K^{\bullet},\Ainf),(\xi)) 
\end{equation}
which then induces an isomorphism of cosimplicial rings
 \begin{equation} \label{eqn: ainfone to fun dr} \bfB_{\dR,m}^{\bullet,+}\simeq  C(G_K^{\bullet},\bfB_{\dR,m}^+).\end{equation}
 Consider $\bullet=1$ case in \eqref{eqn: ainfone to fun}. The $\gk \otimes 1$ action on $\ainf \otimes \ainf$ induces a $\gk$-action on $\bfa_\inf^1$ hence on $\bfB_{\dR,m}^{1,+}$; then \eqref{eqn: ainfone to fun dr} for $\bullet=1$ is $\gk$-equivariant where RHS has $\gk$-action as in Construction \ref{cons: Space of functions}. As important examples:
 \begin{itemize}
     \item  For $x\in \ainf \xrightarrow{p_0}   \bfa_{\inf}^1$, it is sent to the orbit function $F_x(g)=gx$;
     \item  For $y \in \ainf \xrightarrow{p_1}   \bfa_{\inf}^1$, it is sent to the constant function $\delta_y(g)=y$.
 \end{itemize}
\end{construction}
 
 \begin{notation}[$\ast$-nearly de Rham period ring] \label{nota: ndr ring}
 We introduce certain ``nearly de Rham" (and ``nearly Hodge--Tate") period rings; these are inspired by constructions of \cite{AHLB1}, cf. Rem \ref{rem: relation ahlb} for more comments.
 \begin{enumerate}
     \item  Let $\aastninf$ be the coproduct of  $(\ainf, (\xi), \ast)$ and  $(\gs, (E), \ast)$  on the site $\okprisast$ (with $\ainf$ as the first component). 
Define
\[ \bastndrm=\pris_{\dR,m}^+(\aastninf).\]
When $m=1$, denote
\[ \bastnht=\pris_{\HT}^+(\aastninf).\]

 \item Similar to the computation of $(\gs^1_\ast, (E), \ast)$ (cf. \cite[\S 2]{GMWHT} or more generally \S \ref{sec: loc abs pris}), one  has
\[ \aastninf=(\gs\hatotimes_{W(k)} \ainf) \{ X_1\}_\delta^\wedge. \] 
Similar to the computation of $\gs^1_\ast/E$ (cf. \cite[\S 2]{GMWHT}), we have
\[ \aastninf/E \simeq \o_C\{X_1\}_\pd. \]
Thus, we have
\[ \bastndrm=\bdrplusm\{X_1\}_\pd. \]

 \item  The morphism $\bfa_{\ast-\mathrm{ninf}} \to \bfa_{\inf}^1$  induces the map
\[ \bastndrm \to \bfb^{1, +}_{\dR, m} \simeq   C(G_K,\bfB_{\dR,m}^+).\]
The map is injective, as can be checked modulo $E$. As isomorphism \eqref{eqn: ainfone to fun dr} is compatible with cosimplicial structure, we have a commutative diagram where all vertical arrows are (induced by) degeneracy maps:
\[
\begin{tikzcd}
\bastndrm \arrow[d, "\text{modulo $X_1$}"'] \arrow[r, hook] & {\bfb^{1, +}_{\dR, m}} \arrow[d, "\sigma_0"] \arrow[r] & {C(G_K,\bfB_{\dR,m}^+)} \arrow[d, "F\mapsto F(1)"] \\
\bdrplusm \arrow[r, "="]                                    & \bdrplusm \arrow[r, "="]                               & \bdrplusm                                         
\end{tikzcd}
\]  
 \end{enumerate}
  \end{notation}

\begin{notation} We introduce an element convenient for computations.
\begin{enumerate}
    \item  Consider the element in \cite[1.1.1]{Kis06}
\[ \ell_u:=\log(\frac{u}{\pi})=\log(1+\frac{u-\pi}{\pi}) =\sum_{i \geq 1} \frac{(-1)^{i-1}}{i}(\frac{u-\pi}{\pi})^i \]
which is a uniformizer of $\gs^+_\dR$. 
That is
\[\gsdr=K[[E]]=K[[\ell_u]].\]
The benefit of this uniformizer is that its $\gk$-action is easy to describe, as we have
\[ g(\ell_u)=\ell_u +c(g)t \]
where $c(g)$ is the cocycle such that $g(u)=u[\varepsilon]^{c(g)}$.

\item Thus we have
\[ \bastndrm =\bdrplusm \{ X_1 \}_\pd=\bdrplusm \{ Z_1 \}_\pd \]
where 
\[ Z_1= \frac{\ell_{u_0} -\ell_{u_1}}{\ell_{u_1}} \]
(note for computational convenience, we use $\ell_{u_1}$ as denominator). Again, its $\gk$-action is easy to describe, namely,
\begin{equation} \label{eq: g on zone}
    g(Z_1)=Z_1+c(g)\frac{t}{\ell_{u_1}}.
\end{equation} 
\end{enumerate} 
\end{notation}
 
\begin{lemma} \label{lem str of bastndrm}
\begin{enumerate}
\item $\bastndrm$ is relatively locally analytic $\bdrplusm$-representation of $\gk$  as in Def  \ref{Def: rel lav rep}.
In addition, it is $a$-small as in Def \ref{def:point rel la small}.
\item $ ((\bastndrm)^{G_L})^{\gamma=1,\tau\dla}=\kinfty[[E]]/E^m \{ X_1 \}_\pd=\kinfty[[E]]/E^m \{ Z_1 \}_\pd$.
\item $ ((\bastndrm)^{G_L})^{\gamma=1,\tau\dan}=\gsdrm  \{ X_1 \}_\pd=\gsdrm \{ Z_1 \}_\pd$.
\item We have
$ \gsdrm[0] \simeq \rg(\gk, \bastndrm)  $.
\end{enumerate} 
\end{lemma}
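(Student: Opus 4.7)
The plan is to analyze the ring $\bastndrm = \bdrplusm\{X_1\}_\pd$ by exploiting the explicit Galois action: recall that in the coproduct $\aastninf = (\gs\hatotimes_{W(k)}\ainf)\{X_1\}_\delta^\wedge$, $\gk$ acts trivially on the $\gs$-component (hence $u_0$ is fixed), and acts on the $\ainf$-component via $g(u_1) = g([\pi^\flat]) = [\epsilon]^{c(g)} u_1$. The first observation is that the element $u_1 = [\pi^\flat]$ is fixed by $G_L$ (since $L\supset K_\infty$ contains all $\pi_n$) and by $\gamma \in \gal(L/K_\infty)$, and that the action of $\tau$ on $u_1$ is given by $\tau^x(u_1) = u_1[\epsilon]^x = u_1 \exp(xt)$, which is \emph{polynomial} in $x$ modulo $t^m$. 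Thus $X_1 = \frac{E(u_0)-E(u_1)}{-aE(u_0)}$ lies in $((\bastndrm)^{G_L})^{\gamma=1,\tau\dan}$, and hence so does every divided power $X_1^{[n]}$. Combined with Lem.~\ref{lem: fkt analytic}(2) which gives $(\bdrpluslm)^{\gamma=1,\tau\dan} = \gsdrm$ and $(\bdrpluslm)^{\gamma=1,\tau\dla} = \kinfty[[E]]/E^m$, this yields the inclusions ``$\supseteq$'' in Items (2) and (3).

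For the reverse inclusions and for the relative local analyticity of $\bastndrm$ (Item (1), first part), I would argue as follows. On each graded piece $t^i\bastndrm/t^{i+1}\bastndrm \cong C\{X_1\}_\pd \cdot t^i$, the $\gk$-action on $X_1$ is \emph{trivial} modulo $t$ (since $\tau(X_1) - X_1$ lies in $t\cdot\bdrplusm$), so a relative locally analytic basis of the $C$-representation $C(i)$ given by \cite{RC22} extends to one for the graded piece; filtering by powers of $t$ then shows $\bastndrm \in \rep^\rella_\gk(\bdrplusm)$. Having verified relative local analyticity, Thm.~\ref{thm: Arith Sen Kummer}(1) applies: for $n$ large enough, $D_{\Sen,\kpin}(\bastndrm)$ is a Banach $\kpin[[E]]/E^m$-module whose completed base change to $\bdrplusm$ recovers $\bastndrm$. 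On the other hand, the inclusion $K(\pi_n)[[E]]/E^m\{X_1\}_\pd \subset D_{\Sen,\kpin}(\bastndrm)$ also base-changes to an isomorphism with $\bastndrm = \bdrplusm\{X_1\}_\pd$. A standard faithful flatness argument (comparing the two completed base changes of submodules) gives equality $D_{\Sen,\kpin}(\bastndrm) = K(\pi_n)[[E]]/E^m\{X_1\}_\pd$, and taking the union over $n$ together with Thm.~\ref{thm: Arith Sen Kummer}(1) yields Items (2) and (3).

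For the $a$-smallness in Item (1), I would compute $\phi_\kinfty$ explicitly. Using $\nabla_\tau(u_1) = u_1 t$ and $\nabla_\tau(u_0) = 0$, one finds
\[
\nabla_\tau(X_1) = \frac{E'(u_1)u_1 t}{aE(u_0)},\qquad
\phi_\kinfty(X_1) = \frac{E}{u_1 t E'(u_1)}\cdot \nabla_\tau(X_1) = \frac{E(u_1)}{aE(u_0)} = X_1 + \tfrac{1}{a},
\]
where the last equality uses the defining relation $E(u_1)=E(u_0)(1+aX_1)$; this is (up to a sign in $X_1$) the operator appearing in Lem.~\ref{coho of Sone}. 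Since $\phi_\kinfty$ acts as a derivation of $\bastndrm$ (being a scalar multiple of the Lie algebra operator $\nabla_\tau$), it satisfies $\phi_\kinfty(X_1^{[n]}) = n X_1^{[n]} + X_1^{[n-1]}/a$, and the same matrix calculation as in Lem.~\ref{coho of Sone} shows $\lim_{N\to\infty} a^N\prod_{i=0}^{N-1}(\phi_\kinfty - i) = 0$ on the Banach space $\kinfty[[E]]/E^m\{X_1\}_\pd$, giving $a$-smallness.

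Finally, Item (4) is the cohomology comparison. By Thm.~\ref{thm: Arith Sen Kummer}(2), for each $n$,
\[
\rg(\gk,\bastndrm)\otimes_K K(\pi_n) \simeq \rg(\phi_\kinfty,\, K(\pi_n)[[E]]/E^m\{X_1\}_\pd),
\]
and the right-hand side is computed by Lem.~\ref{coho of Sone} (applied with $F = K(\pi_n)$, noting that our $\phi_\kinfty$ agrees with the operator there up to the sign $X_1 \leftrightarrow -X_1$, which does not affect the cohomology) to be concentrated in degree zero, equal to $K(\pi_n)[[E]]/E^m = \gsdrm\otimes_K K(\pi_n)$. The canonical map $\gsdrm \to \bastndrm^{\gk}$ induced by the structure map of prisms $(\gs,(E),\ast) \to \aastninf$ becomes an isomorphism after $\otimes_K K(\pi_n)$ for all $n$, hence is itself an isomorphism, and the higher cohomologies vanish. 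The main technical obstacle is the explicit matching of $\phi_\kinfty$ with the operator of Lem.~\ref{coho of Sone}, which requires careful bookkeeping between the two copies of $u$ (namely $u_0$ from the Breuil--Kisin side and $u_1 = [\pi^\flat]$ from the Fontaine side) and the correct normalization of the Kummer Sen operator; once that is pinned down, the remaining steps are formal.
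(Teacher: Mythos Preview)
There is a genuine error in your argument for relative local analyticity. You claim that $\tau(X_1)-X_1$ lies in $t\cdot\bastndrm$, so that the $\gk$-action on $X_1$ becomes trivial on each graded piece. But this is false: in your notation $\tau(X_1)-X_1 = \frac{E(u_1)-\tau(E(u_1))}{-aE(u_0)}$, and while the numerator lies in $(t)$, the denominator $E(u_0)$ \emph{generates} $(t)$, so the quotient is a unit times an element of $\o_C$, not an element of $(t)$. The fix is much simpler than your graded-piece argument: the open subgroup $G_L$ fixes $[\pi^\flat]$ exactly (since $L \supset K_\infty$), hence fixes $X_1$ exactly, so the lattice $\o_C\{X_1\}_\pd$ with orthonormal basis $\{X_1^{[n]}\}$ directly verifies Def.~\ref{Def: rel lav rep} for the $m=1$ case. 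This is exactly what the paper does.

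There is also a convention mismatch worth flagging. The paper takes $\ainf$ as the first component of $\aastninf$, so $u_0 = [\pi^\flat]$ is the variable acted on by $\gk$ and $u_1$ (from $\gs$) is fixed---the opposite of your choice. With the paper's labeling, $\phi_\kinfty(X_1) = -X_1-\tfrac{1}{a}$ matches the operator of Lem.~\ref{coho of Sone} on the nose. Your computation is internally consistent and yields the correct conclusion (since $\phi_\kinfty$ is an intrinsic operator on $D_{\Sen,\kinfty}(\bastndrm)$, independent of the choice of generator), but ``the same matrix calculation as in Lem.~\ref{coho of Sone}'' is not accurate: in your basis the diagonal of $\phi_\kinfty$ is $0,1,2,\dots$, so $\prod_{i=0}^{N-1}(\phi_\kinfty-i)$ eventually annihilates each basis vector, which is a different (and in fact easier) mechanism than the factorial estimate implicit in Lem.~\ref{coho of Sone}. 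Finally, your route to the reverse inclusions in (2) and (3) via Thm.~\ref{thm: Arith Sen Kummer} plus a faithful-flatness argument is valid (the inclusion $\kpin[[E]]/E^m \hookrightarrow \bdrplusm$ splits topologically), but more indirect than the paper's: the paper works instead with the variable $Z_1$, whose Galois formula $g(Z_1) = Z_1 + c(g)\,t/\ell_{u_1}$ is affine with $\hat G$-invariant slope, allowing an immediate reduction to Lem.~\ref{lem: fkt analytic}.
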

\begin{proof}
(1). It suffices to prove $m=1$ case, that is we need to prove
 \[ \bastnht=C\{X_1\}_\pd \]
is relatively locally analytic per Def. \ref{Def: rel lav rep}: one simply take the $\gk$-stable $\o_C$ lattice $\o_C\{X_1\}_\pd$ to verify the condition there. (Note one could not take the lattice  $\o_C\{Z_1\}_\pd$ as it is not $\gk$-stable). In addition, the $a$-smallness is already verified in Lem \ref{coho of Sone}.

(2) and (3). With Lem \ref{lem: fkt analytic} in mind, it reduces to proving $Z_1$ is a $(\gamma=1, \tau\dan)$-element. But this follows from  \eqref{eq: g on zone}.

(4). It suffices to prove after tensoring $\kinfty$.
  We have
\[ \rg(\gk, \bastndrm)\otimes_K \kinfty \simeq \rg(\phi_\kinfty, \kinfty[[E]]/E^m \{ X_1 \}_\pd) \simeq \kinfty[[E]]/E^m \]
where the first comparison follows from Thm \ref{thm: Arith Sen Kummer}, and the second cohomology computation follows from Lem \ref{coho of Sone}.
\end{proof}

\begin{construction} \label{cons d tau w sub}
Let $W\in \rep^\rella_\gk(\bdrplusm)$. 
Consider $ \bastndrm  \into  C(G_K,\bfB_{\dR,m}^+)$ in Notation \ref{nota: ndr ring}, take $\gal(\barK/L)$-invariant leads to
\[  \bastndrlm  \into  C(\hat{G},\bfB_{\dR,L,m}^+ ).\]
By Lem \ref{lem str of bastndrm}, this factors through 
\[  \bastndrlm  \into  C^{\gamma=1, \tau\dan}(\hat{G},\bfB_{\dR,L,m}^+). \]
Thus, it induces an injection
\[ (W\hatotimes_\bdrplus \bastndrm)^\gk \into D_{\tau\dan}(W)\]
as well as
\begin{equation} \label{eq: ndrinjsen}
(W\hatotimes_\bdrplus \bastndrm)^\gk \otimes_K \kinfty \into D_{\tau\dan}(W)\otimes_K \kinfty \into  D_{\sen, \kinfty}(W).
\end{equation}  
\end{construction}

\begin{prop} \label{prop: a small implies analytic}
Use notations in Construction \ref{cons d tau w sub}. Suppose furthermore that  $W$ is $a$-small. We have
\begin{enumerate}
\item  $  (W\hatotimes_\bdrplus \bastndrm)^\gk = D_{\tau\dan}(W)$.
 
\item The natural map $ D_{\tau\dan}(W)\otimes_K \kinfty \to D_{\sen, \kinfty}(W)$ is an isomorphism.
 
\item $ \rg(\gk, W) \simeq \rg(\phi_\kinfty, D_{\tau\dan}(W))$.
\end{enumerate} 
\end{prop}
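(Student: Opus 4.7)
\smallskip

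The strategy is to use $\bastndrm$ as a period ring, with the $a$-smallness of $W$ being the convergence hypothesis that makes the period-ring formalism work. This parallels and extends the role of the ring $B_\en$ in \cite{AHLB1}, now promoted to the infinite-dimensional relatively locally analytic setting via \S \ref{sec: infinite Sen} and the computations of \S \ref{subsec: ndr period ring}. In broad strokes, Item (1) is proved by constructing an explicit $\gk$-invariant section; Item (2) is then formal once we combine Item (1) with Theorem \ref{thm: Arith Sen Kummer}; Item (3) follows from a Poincar\'e-style exact sequence on $\bastndrm$.

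For Item (1), injectivity is built into Construction \ref{cons d tau w sub}. For surjectivity, I would mimic Lemma \ref{lem a small solution}(2) and define
\[ s(y) := (1+aX_1)^{-\phi_\kinfty}(y) := \sum_{n\geq 0}\Big(\prod_{i=0}^{n-1}(-\phi_\kinfty - i)\Big)(y)\cdot (aX_1)^{[n]}, \qquad y\in D_{\tau\dan}(W). \]
Convergence in $W\hatotimes_\bdrplus \bastndrm$ follows from $a$-smallness in the sense of Definition \ref{def:point rel la small}. The section property ($s(y)\mapsto y$ under degeneracy $X_1\mapsto 0$) is immediate. The main content is $\gk$-invariance of $s(y)$: using the explicit cocycle action $g(Z_1)=Z_1+c(g)\,t/\ell_{u_1}$ from \eqref{eq: g on zone} (equivalently the action on $X_1$ inherited from the $\ainf$-factor of $\aastninf$) and the fact that the $\gk$-action on $D_{\sen,\kinfty}(W) = D_{\tau\dan}(W)\otimes_K\kinfty$ is governed by $\phi_\kinfty$ (the Kummer-tower analog of Proposition \ref{prop:explicit description of Res}(1)), the verification reduces to a formal manipulation of the type carried out in Construction \ref{cons: ari strat}, using the formal identities of Lemma \ref{lem expansion identity}. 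For Item (2), once Item (1) is established, $D_{\tau\dan}(W)$ is canonically a $(\bastndrm)^\gk = \gsdrm$-module by Lemma \ref{lem str of bastndrm}(4). I would show that the natural base-change map
\[ D_{\tau\dan}(W)\hatotimes_{\gsdrm}\bdrplusm \longrightarrow W \]
is an isomorphism by d\'evissage along powers of $E$ to the $m=1$ case (with Lemma \ref{lem:projectivity} controlling projectivity) combined with the explicit section $s$ from Item (1). Combining this with Theorem \ref{thm: Arith Sen Kummer} which supplies $D_{\sen,\kinfty}(W)\hatotimes_{\kinfty[[E]]/E^m}\bdrplusm \simeq W$, faithful flatness of $\bdrplusm$ over $\kinfty[[E]]/E^m$ produces the desired identification $D_{\tau\dan}(W)\otimes_K\kinfty \simeq D_{\sen,\kinfty}(W)$.

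For Item (3), the Poincar\'e-type exact sequence of Lemma \ref{coho of Sone}, base-changed to coefficients in $\bdrplusm$, yields a $\gk$-equivariant short exact sequence
\[ 0 \longrightarrow \bdrplusm \longrightarrow \bastndrm \xrightarrow{\;\phi_{\bastndrm}\;} \bastndrm \longrightarrow 0, \]
with $\phi_{\bastndrm}$ the $E(u_0)$-derivation used in Lemma \ref{coho of Sone}. Tensoring with the $\bdrplusm$-flat module $W$ and applying $\rg(\gk,-)$ gives a fiber-sequence description of $\rg(\gk,W)$ in terms of $\phi_{\bastndrm}$ acting on $\rg(\gk,W\hatotimes \bastndrm)$. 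By Item (1) the $H^0$ is $D_{\tau\dan}(W)$; the required vanishing of higher $\gk$-cohomology of $W\hatotimes \bastndrm$ I would obtain by d\'evissage along the pd-filtration on $\bastndrm$, using the degree-zero concentration statement $\rg(\gk,\bastndrm)\simeq \gsdrm$ of Lemma \ref{lem str of bastndrm}(4) together with Tate twists. Finally, that the induced operator on $D_{\tau\dan}(W)$ coincides with $\phi_\kinfty$ follows from the very definition of $\phi_\kinfty$ as the normalization $(E/(ut E'))\nabla_\tau$ combined with the identification of $\phi_{\bastndrm}$ on $X_1$ (or equivalently on $Z_1$) with the corresponding $\tau$-derivation via \eqref{eq: g on zone}.

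The principal obstacle is the $\gk$-invariance check in Item (1), which requires carefully reconciling the $c(g)$-twist of $X_1$ with the $\phi_\kinfty$-generated $\gk$-action on $y$; some of this bookkeeping is already subsumed by the formal framework of Lemma \ref{lem expansion identity} but needs to be carried out in the relative (infinite-dimensional) setting rather than the point case of \cite{AHLB1}. A secondary technical point is the vanishing $H^{\geq 1}(\gk, W\hatotimes \bastndrm)=0$ used in Item (3), which I expect to reduce by d\'evissage to the $\rg(\gk,\bastndrm)\simeq \gsdrm$ computation of Lemma \ref{lem str of bastndrm}(4).
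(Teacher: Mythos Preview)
Your route differs from the paper's, which is considerably shorter. The paper applies Theorem~\ref{thm: Arith Sen Kummer} directly to the tensor product $W\hatotimes_{\bdrplus}\bastndrm$ (which is again relatively locally analytic and $a$-small by Lemma~\ref{lem str of bastndrm}(1)), obtaining
\[
\rg(\gk, W\hatotimes_{\bdrplus}\bastndrm)\otimes_K\kinfty \;\simeq\; \rg\bigl(\phi_\kinfty,\, D_{\sen,\kinfty}(W)\hatotimes \kinfty[[E]]/E^m\{X_1\}_\pd\bigr)\;\simeq\; D_{\sen,\kinfty}(W)[0],
\]
the last step by Lemma~\ref{lem a small solution}(3). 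Concentration in degree $0$ forces both inclusions in \eqref{eq: ndrinjsen} to be equalities, giving (1) and (2) simultaneously; (3) is then immediate from (2) and Theorem~\ref{thm: Arith Sen Kummer}. No explicit section is built, no Poincar\'e sequence on $\bastndrm$ is invoked, and no separate vanishing argument is needed.

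Your plan for Item~(3) has a genuine gap. The d\'evissage along the pd-filtration in $X_1$ (or $Z_1$) does not produce Tate twists: since $g(Z_1)=Z_1+c(g)\,t/\ell_{u_1}$, the $\gk$-stable increasing filtration by $Z_1$-degree has graded pieces isomorphic to $\bdrplusm$ with \emph{trivial} twist, so after tensoring with $W$ each piece contributes $\rg(\gk,W)$, which is not concentrated in degree~$0$. Thus d\'evissage cannot yield the vanishing $\rH^{\geq 1}(\gk,W\hatotimes\bastndrm)=0$; that vanishing genuinely requires the Sen-theory computation above. Relatedly, your operator ``$\phi_{\bastndrm}$'' is problematic as a $\gk$-equivariant endomorphism of $\bastndrm$: the derivation $d/d\log E(u_0)$ of Lemma~\ref{coho of Sone} lives on $\gsdrm\{X_1\}$, and there is no canonical $\gk$-equivariant extension to $\bdrplusm\{X_1\}$ (the operator $\phi_\kinfty$ acts on the decompleted Sen module, not on $\bastndrm$ itself). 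The clean fix is to abandon the Poincar\'e-sequence argument and deduce (3) from (2) via Theorem~\ref{thm: Arith Sen Kummer}, exactly as the paper does.

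Your explicit-section approach to Item~(1) is more constructive than the paper's and may be pushed through, but the $\tau$-invariance of $s(y)$ is not a routine instance of Construction~\ref{cons: ari strat}: there one verifies a cocycle condition for $(1+aX_1)^{\phi}$ over the cosimplicial ring $\gs_{\ast,\dR,m}^{\bullet,+}$, whereas here you must match the $\tau$-action on $y$ (via the exponentiated $\nabla_\tau$, with the non-constant coefficient $utE'/E\in\bdrplusm$ relating $\nabla_\tau$ to $\phi_\kinfty$) against the additive shift $\tau(Z_1)=Z_1+t/\ell_{u_1}$ on the period-ring side. This is doable, but it does not reduce to Lemma~\ref{lem expansion identity} alone.
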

\begin{proof}
The tensor product representation $ W\hatotimes_\bdrplus \bastndrm$ is relatively locally analytic  and $a$-small (by Lem \ref{lem str of bastndrm}(1)). Thus 
\[ \rg(\gk, W\hatotimes_\bdrplus \bastndrm)\otimes_K \kinfty \simeq
\rg(\phi_\kinfty, D_{\sen, \kinfty}(W) \otimes \kinfty[[E]]/E^m\{X_1\}_\pd) \simeq D_{\sen, \kinfty}(W)[0] 
 \]
 where the first comparison follows from Thm \ref{thm: Arith Sen Kummer}, and the second cohomology computation follows from Lem \ref{lem a small solution}(3).
Thus all inclusions in \eqref{eq: ndrinjsen} are equality. This implies Items (1) and (2). Item (3) follows from Item (2).
\end{proof}

\begin{remark}[Relation with \cite{AHLB1}] \label{rem: relation ahlb} The constructions in \S\ref{subsec: ndr period ring} are strongly influenced by the work \cite{AHLB1}; we briefly  discuss the relations here. 
\begin{enumerate}
    \item When $\ast=\emptyset$ (i.e., the prismatic case) and $m=1$ (i.e., the Hodge--Tate crystal case), the ring $\bfb_\nht$ in Notation \ref{nota: ndr ring} is exactly  ``$B_\en$" in \cite[\S 3.2]{AHLB1} (the subscript ``en" there stands for ``enhanced", cf. \cite[Rem 3.10]{AHLB1}).
The important fact that $\rg(\gk, B_\en) \simeq K[0]$ is proved in \cite[Thm 3.12]{AHLB1}.  A key observation there says that there is  an embedding, cf. \cite[Cor 3.9]{AHLB1},
\[ B_\en \into B_{\sen} \]
which is $G_{K'}$-equivariant where $K'/K$ is a finite extension, and where $B_\sen$ is the period ring (for Sen theory) introduced by Colmez \cite{Colmez94Sen}.  Indeed, then the proof of \cite[Thm 3.12]{AHLB1} could be reduced to $B_\sen$-case, or use similar explicit computations as in $B_\sen$-case.


\item Compared with the proof of  \cite[Thm 3.12]{AHLB1}, our proof of Lem \ref{lem str of bastndrm} is more conceptual (using infinite dimensional Sen theory, particularly over the Kummer tower) and simpler (it reduces to Lie algebra cohomology computation which is much easier).  

\item More crucially, Prop \ref{prop: a small implies analytic} will be of central importance in following applications. Indeed, although the cohomology comparison Lem \ref{lem str of bastndrm}(3) can be deduced from that of \cite{AHLB1} using d\'evissage, the \emph{analytic} cohomology comparison in Prop \ref{prop: a small implies analytic}  does \emph{not} follow from work of \cite{AHLB1}. As we discussed in Rem \ref{rem: new Sen}: in  \cite{AHLB1}, the Sen operator can be \emph{linearly} extended over $C$; but this is not possible for $\bdrplusm$-representations when $m\geq 2$. We have to use (locally) analytic Sen theory   developed here to \emph{descend} representation to the \emph{arithmetic} level, over which  there is actual  \emph{differential equation}.
\end{enumerate} 
\end{remark}

\newpage 
\section{Global enhanced connections as analytic Sen modules} \label{sec: M to W}
 
 The main result of this section is Thm \ref{thm enhanced conn gk conn equiv}, where we  relate (global) small enhanced connections with small $\gk$-equivariant connections. As we shall see in Prop \ref{prop: ana Sen as inverse no conn}, we can recover enhanced connections as \emph{analytic Sen modules} inside $\gk$-equivariant connections. In particular, the proof depends crucially on analytic Sen theory constructed in the previous section.

 \begin{theorem} \label{thm enhanced conn gk conn equiv}
Let $X$ be a smooth quasi-compact  rigid-analytic variety. Pull-back along the morphism of ringed spaces
$$(X_{C, \et}, \o_{X_C, \bdrplus, m})  \to (X_\et,\o_{X, \gs^+_{\dR}, m})$$
induces an equivalence of categories
\begin{equation} \label{eq: functor enhanced to gk equiv}
\MIC_{\en}^{a}(X_\et,\o_{X, \gs^+_{\dR}, m})  
\simeq 
 \MIC_{\gk}^{a}(X_{C, \et}, \o_{X_C, \bdrplus, m}) . 
\end{equation}  
For   corresponding objects $M ,\cm$, we have
\[ \rg(X,  \rg(\phi,\DR(M)) ) \simeq \rg(\gk, \rg(X_C, \DR(\cm))).\] 
 \end{theorem}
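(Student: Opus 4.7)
The plan is to exhibit a quasi-inverse to the pull-back functor by combining the local computation of Section 8 (where the pull-back was made fully explicit, cf.~Prop~\ref{prop:explicit description of Res}) with the \emph{analytic} Sen theory over the Kummer tower established in Section~\ref{sec: analytic sen kummer}. The key point is that Proposition~\ref{prop: a small implies analytic} refines ordinary Kummer-Sen theory by descending $a$-small relatively locally analytic representations from $\bdrplusm$-level all the way down to the arithmetic ring $\gsdrm = K[[E]]/E^m$, which is precisely the coefficient ring for enhanced connections.

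Construction of the pull-back functor $F\colon \MIC_{\en}^{a}(X_\et,\o_{X, \gs^+_{\dR}, m}) \to \MIC_{\gk}^{a}(X_{C, \et}, \o_{X_C, \bdrplus, m})$. Given $(M,\nabla_M,\phi_M)$, define $\cm := M\otimes_{\gsdrm}\bdrplusm$. Working in a toric chart as in Notation~\ref{nota: small affinoid}, equip $\cm$ with the $\gk$-action $g(x)=(g(E)/E)^{\phi_M/a}(x)$ and with the $\bdrplusm$-linear connection $\nabla_{\cm,i,t}(x)=(t/E)\nabla_{M,i,E}(x)$, matching the formulas of Proposition~\ref{prop:explicit description of Res}(3). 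The $a$-smallness of $\phi_M$ is exactly what guarantees convergence of the series $(g(E)/E)^{\phi_M/a}$; the compatibility $[\phi_M,\nabla_{M,i,E}]=\nabla_{M,i,E}$ (Notation~\ref{nota: enhanced conn local}) translates into the semi-linearity $g\nabla_i g^{-1}=\chi(g)\nabla_i$ on $\cm$, so the output is indeed $\gk$-equivariant. By definition of $a$-smallness on the pro-\'etale side (Definition~\ref{defn: a small proet}), $F$ lands in $\MIC_{\gk}^{a}$.

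Construction of a quasi-inverse $G$. Given $\cm \in \MIC_{\gk}^{a}$, pass to the corresponding $a$-small $\bbdrplusm$-local system $\bw$ via Theorem~\ref{thm:RH correspondence}. Over any affinoid open $U \subset X$ admitting a toric chart, evaluation at $U_\infty$ yields $W \in \rep_{\gk}^{\rella}(\bdrplusm(\widehat R_\infty))$ which is $a$-small. Apply Proposition~\ref{prop: a small implies analytic} to define
\[
M(U) := \bigl(W \hatotimes_{\bdrplusm} \bastndrm \bigr)^{\gk} = D_{\tau\dan}(W),
\]
a module over $\gsdrm \hatotimes_K \calO(U)$, equipped with the operator $\phi_\kinfty$. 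The resulting assignment is a sheaf (taking analytic vectors is left-exact), and the argument of Proposition~\ref{prop: global kummer sen} --- refined by the analyticity condition, which by Proposition~\ref{prop: a small implies analytic}(2) is local on $X$ --- shows it is a vector bundle on $(X_\et,\o_{X,\gsdrm})$. The geometric connection $\nabla_\cm$ commutes with $\tau$ (since $\tau$ lies in a central subgroup relative to the geometric directions in the local setup of Notation~\ref{nota: small affinoid}), hence preserves $\tau$-analytic vectors and descends to $\nabla_M$. The commutation $[\phi_\kinfty,\nabla_{M,i,E}]=\nabla_{M,i,E}$ follows exactly as in Construction~\ref{cons: gk conn to enhanced conn}, using $\phi_\kinfty(E/t)=E/t$.

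That $F$ and $G$ are mutually inverse is a consequence of Proposition~\ref{prop: a small implies analytic}(1)-(2): the natural map $D_{\tau\dan}(W)\hatotimes_{\gsdrm}\bdrplusm \to W$ is an isomorphism, giving $FG\simeq \mathrm{id}$, while $GF\simeq\mathrm{id}$ follows by checking locally that extracting $\tau$-analytic vectors from $M\otimes_{\gsdrm}\bdrplusm$ (with the $\gk$-action through $\phi_M$) recovers $M$, again by Proposition~\ref{prop: a small implies analytic}. For the cohomology comparison, cover $X$ by finitely many affinoids trivializing $M$. On each term of the associated \v{C}ech complex we have $W \in \rep^{\rella,a}_\gk(\bdrplusm)$, to which Proposition~\ref{prop: a small implies analytic}(3) gives $\rg(\gk,W)\simeq \rg(\phi_\kinfty,D_{\tau\dan}(W))$; globalizing via the \v{C}ech-to-derived spectral sequence (as in the proof of Proposition~\ref{prop: global kummer sen} and Proposition~\ref{prop: from t-conn to enhanced Kummer-Sen}) yields the desired quasi-isomorphism
\[
\rg(\gk,\rg(X_C,\DR(\cm)))\simeq \rg(X,\rg(\phi,\DR(M))).
\]

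The main obstacle is the globalization: the analytic Sen theory of Proposition~\ref{prop: a small implies analytic} is intrinsically a result for a single Banach representation, whereas here we must verify that the presheaf $U\mapsto D_{\tau\dan}(\bw(U_\infty))$ is a vector bundle on $X_\et$ with coefficients in $\gsdrm$ rather than merely $\kinfty[[E]]/E^m$. The analyticity condition is more rigid than local analyticity, so some care is needed to show it trivializes on a cover of $X$ itself (rather than on a cover of $X_{K(\pi_n)}$); this is where we use that the $\tau$-action on analytic vectors is genuinely arithmetic (fixed by the geometric Galois group $\Gamma_\geo$ acting on $\bw$), so the resulting module is defined over $\o_X \hatotimes_K \gsdrm$ rather than requiring a Kummer extension.
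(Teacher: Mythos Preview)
Your proposal is essentially correct and follows the paper's own strategy: construct the forward functor by the explicit formulas of Proposition~\ref{prop:explicit description of Res} (the paper packages this as Construction~\ref{def:The functor F}), and construct the quasi-inverse via the analytic Sen functor $D_{\tau\dan}$ of Proposition~\ref{prop: a small implies analytic} (the paper's Proposition~\ref{prop: ana Sen as inverse no conn}).

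Two small points where your write-up drifts from the paper. First, for the quasi-inverse you pass from $\cm$ to the pro-\'etale local system $\bw$ and evaluate at $U_\infty$; but $\bw(U_\infty)$ is a $\Gamma$-representation over $\bbdrplusm(\widehat R_\infty)$, not a $\gk$-representation over the arithmetic $\bdrplusm$, so Proposition~\ref{prop: a small implies analytic} does not apply to it directly. The paper avoids this detour by applying $(-)^{G_L,\gamma=1,\tau\dan}$ straight to $\cm(U_C)$, which is already a relatively locally analytic $\gk$-representation over $\bdrplusm$ (viewing the $A$-coefficients as part of the Banach space). Second, for the cohomology comparison the paper (Proposition~\ref{prop: construct global morphism no conn}) writes down an explicit cochain map $\rg(\phi,M)\to\rg(\gk,\cm)$ via $F(\phi,g)=((g(E)/E)^{\phi}-1)/\phi$ and then verifies it is a quasi-isomorphism after $-\otimes_K\kinfty$ using Propositions~\ref{prop M to D} and~\ref{prop: from t-conn to enhanced Kummer-Sen}; your \v{C}ech-plus-Proposition~\ref{prop: a small implies analytic}(3) argument amounts to the same thing but is less explicit about the comparison map. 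Your final paragraph correctly identifies the one genuine issue (why $M$ is a vector bundle over $\gsdrm$ rather than $\kinfty[[E]]/E^m$), which the paper handles exactly by Proposition~\ref{prop: a small implies analytic}(2) together with faithfully flat descent along $K\to\kinfty$.
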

\begin{proof}
We sketch the road-map here, all details are discussed in this section. 
The functor is constructed in Cons \ref{def:The functor F}. Cohomology comparison is proved in Prop \ref{prop: construct global morphism no conn}, which formally implies full faithfulness of the functor. Finally, a quasi-inverse functor is constructed in Prop \ref{prop: ana Sen as inverse no conn}.
\end{proof}

\begin{construction} \label{def:The functor F}  
Let $(M, \nabla_M, \phi) \in \MIC_{\en}^{a}(X_\et,\o_{X, \gs^+_{\dR}, m}) $ as   in Def \ref{defn: abs enhanced conn}.  
Define
\[ \calm:= M\widehat \otimes_{\frakS_{\dR}^+} \bdrplus. \]
\begin{itemize}
\item For $g\in G_K$ and a local section $x$ of $M$, set
          \[ g(x) =(\frac{g(E)}{E})  ^{\phi}(x)\]  
          where the RHS is a local section of $\cm$ defined by
  \begin{equation} \label{eq sec 15 g action}
  (\frac{g(E)}{E})^{\phi}(x) :=\sum_{i\geq 0} (\frac{E-g(E)}{-aE})^{[i]}\prod_{r=0}^{i-1}(a\phi-ar)(x),
  \end{equation}
  which converges since $\phi$ is $a$-small.   
  Lem \ref{lem expansion identity} quickly implies this rule induces a well-defined $\bdrplus$-semi-linear $\gk$-action on $\cm$.
  
  \item            Further define   a $t$-connection on $\calm$ by base change. More precisely, locally write $\nabla_M = \sum_{i=1}^d\nabla_{M,i}\otimes\frac{\dlog T_i}{E}$ as in  Def \ref{defn: abs enhanced conn}, then locally we have $\nabla_\cm =\sum_{i=1}^d\nabla_{\cm,i}\otimes\frac{\dlog T_i}{E}$ where $\nabla_{\cm,i}=\nabla_{M,i}\otimes 1$ on $\cm=M\otimes_\gsdr \bdrplus$. This is a $t$-connection as in Def \ref{def:t-connection}, since $E$ and $t$ are both uniformizers of $\bdrplus$. 
  
  \item We claim   $\nabla_\cm$ is a $G_K$-equivariant connection as in Def \ref{def:t-connection}. Indeed, it reduces to proving for each $i$,
$ g \circ \frac{\nabla_i}{E} = \frac{\nabla_i}{E} \circ g$,  which reduces to 
\[(\frac{g(E)}{E})^{\phi-1}\circ\nabla_i=\nabla_i\circ(\frac{g(E)}{E})^{ \phi }.\]
Via the expression \eqref{eq sec 15 g action}, it suffices to prove that for any polynomial $f(\phi)$ with variable $\phi$, we have
\[ f(\phi -1) \circ \nabla_i =  \nabla_i  \circ f(\phi  ). \]
When $f(\phi)=\phi$, this is the explicit relation in Notation \ref{nota: enhanced conn local}; the general case for $f(\phi)=\phi^n$ follows by easy induction.

   
\end{itemize}
In conclusion, the above construction induces a functor
 \[  \MIC_{\en}^{a}(X_\et,\o_{X, \gs^+_{\dR}, m})  \to \MIC_t^{G_K}(X_{C, \et}, \o_{X_C, \bdrplus, m}).\]
 \end{construction}
  
  \begin{remark}
  Note in the local case, the formulae in Cons \ref{def:The functor F}  are  exactly the ones in Prop \ref{prop:explicit description of Res}\eqref{item loggamma}.
  \end{remark}

\begin{prop} \label{prop M to D}
 Let $M  \mapsto \cm$ be two corresponding objects related by the functor in Construction \ref{def:The functor F}.  Let $D_{\sen, \kinfty}(\cm) \in \MIC_\en(X_{\et}, \o_{X,  \kinfty[[E]], m})$ be the object constructed via Prop \ref{prop: from t-conn to enhanced Kummer-Sen}. 
    We have  
    \[ D_{\sen, \kinfty}(\cm)=M\otimes_\gsdr \kinfty[[E]] \]
   such that $\nabla_D=\nabla_M\otimes 1$, $\phi_D=\phi_M\otimes 1+1 \otimes \phi_{\kinfty[[E]]}$.  
  As a consequence
    \[\rg(\phi, \DR(M)) \otimes_K \kinfty \simeq \rg(\phi, \DR(D)).\] 
\end{prop}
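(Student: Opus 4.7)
The assertion is \'etale-local on $X$, so assume $\fkx = \spf R$ admits a chart as in Notation \ref{nota: bk prism abs pris} and $M$ is finite free over $R_{\gsdrm} := R \otimes_K \gsdrm$. The plan is to apply the analytic Sen theory of Proposition \ref{prop: a small implies analytic} locally to $\cm(U_C)$, for $U \subset X$ affinoid, viewed as an (infinite-rank) Banach $\bdrplusm$-representation of $\gk$. Relative local analyticity follows from the explicit formula $g(m) = (g(E)/E)^\phi(m)$ in Construction \ref{def:The functor F}: taking an $R_{\gsdr}$-basis of $M$ as an $R_C \otimes \bdrplusm$-ON basis of $\cm(U_C)$, the open subgroup $G_\kinfty \subset \gk$ acts trivially on these basis vectors because $g(E) = E$ in $\bdrplus$ for $g \in G_\kinfty$. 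Combined with the evident relative local analyticity of $R_C$ over $C$, this yields the hypothesis; the $a$-smallness condition transfers via Definition \ref{defn: a small proet}.

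The key step is to identify $D_{\tau\text{-an}}(\cm) = (\cm \hatotimes_{\bdrplusm} \bastndrm)^\gk$ (from Proposition \ref{prop: a small implies analytic}(1)) with $M$ locally. For this, consider the coproduct prism $\aastninf = (\gs, (E), *) \sqcup (\ainf, (\xi), *) \in \okprisast$ of Notation \ref{nota: ndr ring}, together with its $R$-relative base change. The crystal property of $\bm$ evaluated on this relative coproduct gives a canonical $\gk$-equivariant isomorphism
\[
M \otimes_{R_{\gsdrm}} (R \hatotimes_K \bastndrm) \;\simeq\; \cm \otimes_{R_{\bdrplusm}} (R \hatotimes_K \bastndrm),
\]
where on the LHS the $R_{\gsdrm}$-algebra structure of $R \hatotimes \bastndrm$ arises from the $\gs$-side inclusion $E \mapsto E(u)$ (a $\gk$-fixed element), so $\gk$ acts only on the $\bastndrm$-factor. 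Taking $\gk$-invariants and using the $R$-relative version of Lemma \ref{lem str of bastndrm}(4), namely $(R \hatotimes_K \bastndrm)^\gk = R \hatotimes_K \gsdrm$ (the $\gk$-action being trivial on $R$), yields $(\cm \otimes \bastndrm)^\gk = M$. Combined with Proposition \ref{prop: a small implies analytic}(2), we conclude $D_{\sen, \kinfty}(\cm) = M \otimes_K \kinfty = M \otimes_{\gsdr} \kinfty[[E]]/E^m$, as desired.

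For compatibility of operators: $\nabla_D$ is the restriction of $\nabla_\cm$ (the $\bdrplusm$-linear extension of $\nabla_M$) to the Kummer--Sen module, hence equals $\nabla_M \otimes 1$. For $\phi_D = \phi_\kinfty = (E/utE')\nabla_\tau$ acting on $m \in M$: expanding $\tau^x(m) = (\tau^x(E)/E)^\phi(m)$ via Lemma \ref{lem expansion identity}, together with the direct computation $\nabla_\tau(E) = utE'$ in $\bdrplusm$ (coming from $\tau^x(E) = E(u[\varepsilon]^x) = E(u\exp(xt))$), yields $\nabla_\tau(m) = (utE'/E)\phi_M(m)$, hence $\phi_\kinfty(m) = \phi_M(m)$. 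The Leibniz rule for $\nabla_\tau$ over the tensor product $M \otimes_K \kinfty[[E]]/E^m$ then gives $\phi_D = \phi_M \otimes 1 + 1 \otimes \phi_{\kinfty[[E]]}$. The final cohomology comparison follows by combining Proposition \ref{prop: a small implies analytic}(3) (applied locally to $\cm$) with the cohomology comparison of Proposition \ref{prop: from t-conn to enhanced Kummer-Sen}.

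The main obstacle lies in rigorously constructing the relative period module $R \hatotimes_K \bastndrm$ as (associated to) a prism in $R_{\pris, *}$, and verifying the $R$-relative version of Lemma \ref{lem str of bastndrm}(4); while the base-change argument should be essentially formal (as $\gk$ acts trivially on $R$), topological care is required in the infinite-rank Banach setting, particularly when invoking the completed tensor products in Proposition \ref{prop: a small implies analytic}.
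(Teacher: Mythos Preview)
The paper's proof is essentially one line: by definition $D_{\sen,\kinfty}(\cm)(U) = (\cm(U_C)^{G_L})^{\gamma=1,\tau\dla}$, so it suffices to check that local sections $m \in M$ are $(\gamma=1,\tau\dan)$-vectors, and this is immediate from the formula $g(m) = (g(E)/E)^{\phi}(m)$ of Construction~\ref{def:The functor F}. Indeed $g(E)=E$ for $g \in \gal(L/\kinfty)$, and $\tau$-analyticity follows from the convergent expansion \eqref{eq sec 15 g action} granted by $a$-smallness of $\phi$. This yields $M \otimes_{\gsdr} \kinfty[[E]] \subset D_{\sen,\kinfty}(\cm)$, and equality holds since both sides are vector bundles of the same rank becoming $\cm$ after completion (Proposition~\ref{prop: global kummer sen}). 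Your computation of $\phi_\kinfty|_M = \phi_M$ via $\nabla_\tau(E) = utE'$ is correct and fills in a detail the paper leaves implicit.

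Your route through Proposition~\ref{prop: a small implies analytic} and the period ring $\bastndrm$ contains a genuine gap: you invoke ``the crystal property of $\bm$'' to produce the comparison isomorphism over $\bastndrm$, but no $\bm$ exists in the hypotheses. Here $M$ is an arbitrary $a$-small enhanced connection, not one assumed to arise from a prismatic crystal---indeed, the whole purpose of \S\ref{sec: M to W} is to establish $\MIC^a_{\en} \simeq \MIC^a_{\gk}$ \emph{without} passing through crystals, precisely so that it can serve as the bridge in Theorem~\ref{thm: final main thm} allowing the local equivalence of \S\ref{sec: loc abs pris} to globalize. One could salvage your argument by writing out the isomorphism over $\bastndrm$ explicitly via $\varepsilon = (1+aX_1)^{\phi}$ (Construction~\ref{cons: ari strat}) and verifying $\gk$-equivariance directly, but this unwinds to the same one-line check the paper performs. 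The obstacle you flag---constructing a relative prism---is beside the point, and Proposition~\ref{prop: a small implies analytic} is not needed here; that result is designed for the converse direction in Proposition~\ref{prop: ana Sen as inverse no conn}.
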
 
\begin{proof}
 It suffices to prove local sections of $M$ are $(\gamma=1,\tau\dla)$-vectors. In fact, they are even  $(\gamma=1,\tau\dan)$-vectors, which follows from inspection of the formula \eqref{eq sec 15 g action}.
\end{proof}

 \begin{prop} \label{prop: construct global morphism no conn}
Let $M  \mapsto \cm$ be two corresponding objects from Construction \ref{def:The functor F}. 
There is a morphism of complex of sheaves on $X$
  \[ \rg(\phi,M) \to \rg(\gk, \cm),\]
which induces quasi-isomorphism
\[ \rg(X,  \rg(\phi,\DR(M)) ) \simeq \rg(\gk, \rg(X_C, \DR(\cm))).\] 
 \end{prop}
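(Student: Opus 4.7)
The strategy has two stages: first I would construct the morphism using the arithmetic nearly-de-Rham period ring $\bfb_{\ndr,m}$ of Notation \ref{nota: ndr ring} as a bridge, and then verify that the induced map on global sections is a quasi-isomorphism by base-changing from $K$ to $\kinfty$ and invoking the (locally) analytic Sen theory of \S \ref{sec: infinite Sen}--\S \ref{sec: analytic sen kummer}.

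For the construction of the morphism, recall from Notation \ref{nota: ndr ring} and Lemma \ref{lem str of bastndrm} that $\bfb_{\ndr,m}$ embeds naturally into $C^{\cts}(\gk, \bdrplusm)$ via its realization as the $\bullet=1$ piece of the cosimplicial Fontaine prism, and that its $\gk$-cohomology is $\gsdr$. I would first apply the sheaf-theoretic analogue of Lemma \ref{lem a small solution} (valid because $M$ is $a$-small in the sense of Notation \ref{nota: a sec 6}) to identify $\rg(\phi, M)$ with $\rg(\phi, M\hatotimes_\gsdr \bfb_{\ndr,m})$ via the natural $\phi$-equivariant embedding induced by $p_1$. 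Then the $\gk$-equivariant inclusion $M\hatotimes_\gsdr \bfb_{\ndr,m} \hookrightarrow C^{\cts}(\gk, \cm)$, obtained from $\bfb_{\ndr,m}\hookrightarrow C^{\cts}(\gk, \bdrplusm)$ and the identification $\cm = M\hatotimes_\gsdr \bdrplusm$, provides the target: the $\phi$-cohomology of the source becomes $\gk$-cohomology on the target. Composition gives the required morphism of sheaf complexes on $X$, and compatibility with $\nabla$ is immediate from Construction \ref{def:The functor F}, where the connection on $\cm$ is precisely the $\bdrplusm$-linear extension of $\nabla_M$.

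For the quasi-isomorphism on global sections, since both sides are complexes of $K$-vector spaces and the field extension $K\to\kinfty$ is faithfully exact on $K$-vector spaces, it suffices to check the claim after $\otimes_K \kinfty$. By Proposition \ref{prop M to D},
\[ \rg(X, \rg(\phi, \DR(M))) \otimes_K \kinfty \simeq \rg(X, \rg(\phi, \DR(D_{\sen, \kinfty}(\cm)))), \]
and by Proposition \ref{prop: from t-conn to enhanced Kummer-Sen},
\[ \rg(\gk, \rg(X_C, \DR(\cm))) \otimes_K \kinfty \simeq \rg(X, \rg(\phi, \DR(D_{\sen, \kinfty}(\cm)))). \]
These two identifications combine to yield the desired quasi-isomorphism, provided the bridge morphism from stage one realizes exactly this $\kinfty$-level identification.

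The principal technical obstacle is this last compatibility verification: confirming that the morphism of complexes of sheaves produced by the $\bfb_{\ndr,m}$-bridge coincides, after $\otimes_K\kinfty$, with the composite of the two Sen-theoretic identifications above. This amounts to comparing the natural embedding $M\hookrightarrow M\hatotimes_\gsdr \bfb_{\ndr,m}$ with the analytic Sen identification $D_{\sen,\kinfty}(\cm) = M\otimes_\gsdr \kinfty[[E]]/E^m$ sitting inside $((\cm\hatotimes_\bdrplus \bdrpluslm)^{G_L})^{\gamma=1,\tau\dla}$. The comparison hinges crucially on Proposition \ref{prop: a small implies analytic}, which identifies $(W\hatotimes_\bdrplus \bastndrm)^\gk$ with the $\tau$-analytic vectors of $W$ for any $a$-small relatively locally analytic representation $W$, together with the explicit local formulae of Proposition \ref{prop:explicit description of Res}. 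Once this compatibility is nailed down, the conclusion follows formally.
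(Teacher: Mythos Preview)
Your verification of the quasi-isomorphism (the second stage) matches the paper's approach exactly: both base-change along $K\to\kinfty$ and invoke Propositions~\ref{prop M to D} and~\ref{prop: from t-conn to enhanced Kummer-Sen}. The divergence is entirely in the construction of the morphism.

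The paper does not use $\bfb_{\ndr,m}$ here at all. It writes down the chain map explicitly: the degree-$0$ map is the inclusion $M\hookrightarrow\cm$, and the degree-$1$ map $M\to C(\gk,\cm)$ sends $y$ to the function $g\mapsto F(\phi,g)(y)$ with $F(\phi,g)=\bigl((\tfrac{g(E)}{E})^{\phi}-1\bigr)/\phi$. One checks directly that the square commutes against the bar differential $x\mapsto (g\mapsto (g-1)x)$, and this extends to $\DR$ termwise.

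Your alternative route via $\bfb_{\ndr,m}$ has a genuine gap. You invoke Lemma~\ref{lem a small solution} to ``identify $\rg(\phi,M)$ with $\rg(\phi, M\hatotimes_{\gsdr}\bfb_{\ndr,m})$'', but that lemma says something different: it identifies $M[0]$ (not $\rg(\phi,M)$) with the $(\phi_M\otimes 1+1\otimes\phi_{S^1})$-cohomology of $M\hatotimes_{p_0}S^1$, where $S^1=\gsdrm\{X_1\}_{\pd}$ carries the operator $\phi_{S^1}=\tfrac{d}{d\log E(u_0)}$. The ring $\bfb_{\ndr,m}=\bdrplusm\{X_1\}_{\pd}$ carries no such $\phi$-operator, precisely because $E\tfrac{d}{dE}$ does not extend over $\bdrplusm$; this is the obstruction flagged in Remark~\ref{rem: new Sen}(2). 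So the expression ``$\rg(\phi, M\hatotimes_{\gsdr}\bfb_{\ndr,m})$'' is not well-defined as stated, and the bridge you propose does not produce a map of \emph{complexes}: the embedding $\bfb_{\ndr,m}\hookrightarrow C(\gk,\bdrplusm)$ lands in a single term of the bar resolution, and you still have to supply the degree-$1$ component by hand. Once you do that, you recover the paper's formula $F(\phi,g)$, and the period-ring detour has bought nothing. The compatibility check you flag at the end then becomes unnecessary: with the explicit $F(\phi,g)$, the base-change to $\kinfty$ is immediate.
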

\begin{proof}
The push-forward of $\cm$ along $X_C \to X$ is precisely $ \rg(\gk, \cm)$. The desired morphism $ \rg(\phi,M) \to \rg(\gk, \cm)$ is
$$
\begin{tikzcd}
M \arrow[rr, "\phi"] \arrow[d, hook]  &  & M \arrow[d, "{y\mapsto (g\mapsto F(\phi,g)(y))}"] &  &                                    &  &         \\
\cm \arrow[rr, "x\mapsto (g-1)(x)"] &  & {\rC(G_K,\cm)} \arrow[rr]                       &  & {\rC(G_K^2,\cm)} \arrow[rr] &  & \cdots.
\end{tikzcd}
$$
where the right vertical arrow is the map
\[F(\phi,g) = ((\frac{g(E)}{E})^{\phi}-1)/\phi:  M \to C(\gk, \cm).\]
The above diagram, after incorporating connections, induces 
$$
\begin{tikzcd}
\DR(M) \arrow[rr, "\phi"] \arrow[d, hook]  &  & \DR(M) \arrow[d, "{y\mapsto (g\mapsto F(\phi,g)(y))}"] &  &                                    &  &         \\
\DR(\calm) \arrow[rr, "x\mapsto (g-1)(x)"] &  & {\rC(G_K,\DR(\calm))} \arrow[rr]                       &  & {\rC(G_K^2,\DR(\calm))} \arrow[rr] &  & \cdots.
\end{tikzcd}
$$
To see the induced morphism 
\[ \rg(X,  \rg(\phi,\DR(M)) ) \to \rg(\gk, \rg(X_C, \DR(\cm)))\] 
is a quasi-isomorphism, one can base change along $K \to \kinfty$, then use Prop \ref{prop M to D} and Prop \ref{prop: from t-conn to enhanced Kummer-Sen} to conclude.   
\end{proof}

\begin{prop} \label{prop: ana Sen as inverse no conn}
Let $\cm \in  \mic_{\gk}^{a}(X_{C, \et}, \o_{X_C, \bdrplus, m})$.
For an affinoid open $\spa(A, A^+) =U \in X$, define an $A \otimes_K \gs^+_{\dR,m}$-module
\[ M(U):= ((\calm(U_C))^{G_L})^{\gamma=1, \tau\dan }. \]
This rule defines an object 
\[ M \in \mic^{a}_\en(X_\et,\o_{X, \gs^+_{\dR}, m})   \]
 such that its pullback to $(X_{C, \et}, \o_{X_C, \bdrplus, m})$ is $\calm$.
\end{prop}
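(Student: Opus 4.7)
The plan is to build $M$ as the analytic descent of $\calm$ via the Kummer-tower analytic Sen theory, viewed as a family version of Proposition \ref{prop: a small implies analytic}. First, the sheaf property is immediate: the three operations $(-)^{G_L}$, $(-)^{\gamma=1}$ and $(-)^{\tau\dan}$ are all left-exact on continuous Banach $\hat G$-representations, so their composition applied to the push-forward of $\calm$ along $X_C\to X$ is a sheaf of modules on $X_\et$. The ring over which $M$ is a module is computed from Lemma \ref{lem str of bastndrm}(3) (in family form), which identifies $(A\widehat\otimes_K \bdrplusm)^{G_L,\gamma=1,\tau\dan} = A\otimes_K\gs^+_{\dR,m}$.

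Second, to establish the vector bundle property, I would argue locally: pick $U=\spa(A,A^+)$ an affinoid of $X$ admitting a toric chart as in Notation \ref{nota: small affinoid}, so that $\calm(U_C)$ is a finite projective $A\widehat\otimes_K\bdrplusm$-module carrying a continuous semi-linear $\gk$-action. The $a$-smallness hypothesis on $\calm$ upgrades $\calm(U_C)$ to a family analogue of an $a$-small relatively locally analytic representation in the sense of Definition \ref{Def: rel lav rep}. The key identification I would prove is that the natural map
\[
M(U)\otimes_K \kinfty \longrightarrow D_{\sen,\kinfty}(\calm)(U)
\]
is an isomorphism. Granting this, finite projectivity of $M(U)$ over $A\otimes_K\gs^+_{\dR,m}$ (of the same rank as $\calm$) follows by faithfully flat descent along $K\to\kinfty$, using that $D_{\sen,\kinfty}(\calm)$ is already a vector bundle on $(X_\et,\calO_{X,\kinfty[[E]],m})$ by Proposition \ref{prop: global kummer sen}.

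Third, for the enhanced connection structure, the continuity and $\gk$-equivariance of $\nabla_\calm$ ensure that it preserves subspaces cut out by group-theoretic conditions. After the Breuil--Kisin twist rescaling $\nabla_{M} := (E/t)\nabla_\calm$, the resulting operator stabilises $M$; integrability and the correct Leibniz rule are inherited from those of $\nabla_\calm$, with the same verifications as in Construction \ref{cons: gk conn to enhanced conn}. The arithmetic operator $\phi_M$ is obtained by restricting $\phi_\kinfty = \frac{E}{u\lambda E'}\nabla_\tau$, which is well-defined on $\tau$-analytic vectors (Lemma \ref{lem: fkt analytic}) and commutes with $\gamma$; the required commutation $[\phi_M,\nabla_{M,i,E}]=\nabla_{M,i,E}$ of Notation \ref{nota: enhanced conn local} follows from the same computation as in Construction \ref{cons: gk conn to enhanced conn}. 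The $a$-smallness of $\phi_M$ is then inherited from that of $\phi_\kinfty$ by the base-change identification above, and finally the pullback property $M\widehat\otimes_{\gsdr}\bdrplus \xrightarrow{\sim} \calm$ follows by $\kinfty$-base-change using the identity $M\otimes_K\kinfty = D_{\sen,\kinfty}(\calm)$ together with Theorem \ref{thm: Arith Sen Kummer}.

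\textbf{The main technical obstacle} will be the family version of Proposition \ref{prop: a small implies analytic}, i.e.\ establishing $M(U)\otimes_K\kinfty \xrightarrow{\sim} D_{\sen,\kinfty}(\calm)(U)$ together with the computation $(\calm(U_C)\widehat\otimes_\bdrplus \bastndrm)^{\gk} = M(U)$ in a family over $A$. The point case (Section \ref{sec: analytic sen kummer}) works by tensoring with $\bastndrm$, using that $\bastndrm$ is itself relatively locally analytic and $a$-small (Lemma \ref{lem str of bastndrm}), and then invoking the cohomology computation from Lemma \ref{lem a small solution}(3). In families, one must check that these same ingredients, together with the Sen theory of Section \ref{sec: infinite Sen}, adapt to Banach representations over $A\widehat\otimes_K\bdrplusm$; once the relevant finiteness and analyticity are established, the descent from $\kinfty$ to $K$ essentially reduces to the single-variable calculation $\rg(\phi_\kinfty, S^1) \simeq S^0$ of Lemma \ref{coho of Sone}.
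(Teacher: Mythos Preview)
Your approach is essentially the paper's, but you have misidentified where the work lies. You flag as the ``main technical obstacle'' the need for a \emph{family version} of Proposition~\ref{prop: a small implies analytic} over $A$. The paper's point is that no such family version is needed: the infinite-dimensional Sen theory of \S\ref{sec: infinite Sen} and \S\ref{sec: analytic sen kummer} was developed precisely so that $W=\calm(U_C)$, regarded simply as an ON Banach $\bdrplusm$-semilinear $G_K$-representation (forgetting the $A$-structure), can be fed \emph{directly} into Proposition~\ref{prop: a small implies analytic}. The only thing one must verify is that $W$ is relatively locally analytic in the sense of Definition~\ref{Def: rel lav rep}; the paper does this in one line by tensoring an ON basis of $A$ with an $A\widehat\otimes C$-basis of $W/E$. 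Once that is done, Proposition~\ref{prop: a small implies analytic}(2) gives $M(U)\otimes_K\kinfty\simeq D_{\sen,\kinfty}(W)$ immediately, and finite freeness of $M(U)$ over $A\otimes_K\gsdrm$ follows.

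Everything else you wrote (sheaf property via left-exactness, the connection and $\phi$-operator descending from Construction~\ref{cons: gk conn to enhanced conn}, $a$-smallness inherited, pullback recovering $\calm$) is fine and matches what the paper leaves implicit; the paper's proof confines itself to the vector-bundle check and treats the rest as routine.
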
 
\begin{proof}
One only needs to check that $M$ is a vector bundle.
Let $U$ be an affinoid such that $W=\cm(U_C)$ is a finite free module over $A\hatotimes \bdrplusm$ with a $\gk$-action. It is relatively locally analytic as in Def \ref{Def: rel lav rep}.
Indeed, it suffices to check the reduction $W/E$ is relatively locally analytic: to construct an ON basis satisfying condition in Def \ref{Def: rel lav rep}, simply use any ON basis of $A$, and use an $A\hatotimes C$-basis of $W$. Prop \ref{prop: a small implies analytic} then implies
\[ M(U)\otimes_K \kinfty \simeq \cm(U_C); \]
and thus $M(U)$ is   finite free over $A\hatotimes \gsdrm$. 
 \end{proof}

\newpage 
\addtocontents{toc}{\ghblue{Final theorem}}
\section{Global absolute $\mathbbl{\Delta}_\dR^+$-crystals}

We finally complete the proof of Theorem \ref{thm: intro main thm} which we copy here.  

\begin{theorem} \label{thm: final main thm} 
 Let $\mathfrak X$ be a quasi-compact smooth (resp. semi-stable) formal scheme over $\ok$, let  $\ast =\emptyset$ (resp. $\ast= \log$) accordingly; let $X$ be its rigid generic fiber.
Fix a compatible system $\pi_n$ as in Notation \ref{notafields}; these choices lead to a commutative diagram of tensor functors:
\[
\begin{tikzcd}
{ \MIC_{\en}^{a}(X_\et,\o_{X, \gs^+_{\dR}, m})} \arrow[d, hook] &  & {\vect(\fkxastpris, \pris_{\dR,m}^+) } \arrow[ll, "\simeq"'] \arrow[rr, "\simeq"] \arrow[d, hook] &  & {\vect^{a}(\xproet, \bbdrplusm)} \arrow[d, hook] \\
{\MIC_{\gk}(X_{C, \et}, \o_{X_C, \bdrplus, m})}                                     &  & {\vect(\fkxastprisperf, \pris_{\dR,m}^+) } \arrow[rr, "\simeq"] \arrow[ll, "\simeq"']        &  & { \vect(\xproet, \bbdrplusm)}.                         
\end{tikzcd}
\]
Let $\bm \in \vect(\fkxastpris, \pris_{\dR,m}^+)$, and denote the corresponding objects  by
\[
\begin{tikzcd}
M \arrow[d,mapsto]  &  & \bm \arrow[ll,mapsto] \arrow[rr,mapsto] \arrow[d,mapsto] &  & \mathbb W \arrow[d,mapsto] \\
{\calm} &  & \bm^\perf \arrow[ll,mapsto] \arrow[rr,mapsto]     &  & \mathbb W          
\end{tikzcd}
\]
then we have  $K$-linear quasi-isomorphisms of cohomology theories
\[ 
\begin{tikzcd}
{\rg(\phi, \DR(M))} \arrow[d, "\simeq"] &  & {\rg(\fkxastpris, \bm)} \arrow[ll, "\simeq"'] \arrow[rr, "\simeq"] \arrow[d, "\simeq"] &  & {\rg(\xproet, \bW)} \arrow[d, "="] \\
{\rg(\gk, \mathrm{DR}(\calm))}           &  & {\rg(\fkxastprisperf, \bm^\perf)} \arrow[ll, "\simeq"'] \arrow[rr, "\simeq"]           &  & {\rg(\xproet, \bW)}          .     
\end{tikzcd} \]
Here,   $\rg(\gk, \DR(\calm)$ be the $\gk$-cohomology of the de Rham complex associated to $(\calm, \nabla)$; $\rg(\phi, \DR(M))$ is the ``$\phi$-enhanced" de Rham complex  of $(M, \nabla)$. 
\end{theorem}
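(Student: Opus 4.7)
The plan is to follow Construction \ref{cons: roadmap}, using the analytic arithmetic Sen theory of \S\ref{sec: analytic sen kummer}--\S\ref{sec: M to W} as a bridge. All ``easy'' pieces are already in hand: the bottom row equivalences together with their cohomology comparisons are Theorem \ref{Thm-de Rham realization}; the right vertical fully faithful inclusion together with its cohomology preservation is a formal consequence of Theorem \ref{thm:RH correspondence} combined with the equivalence $\vect(\xproet,\bbdrplusm)\simeq \MIC_{\gk}(X_{C,\et},\calO_X\hatotimes_K\bdrplusm)$; the left vertical fully faithful functor is precisely Theorem \ref{thm enhanced conn gk conn equiv}; and the \emph{local} version of the top row (for $\fkx=\spf R$ with $R$ small smooth/semi-stable and a fixed chart) is Theorem \ref{thm: abs local equiv} together with Theorem \ref{thm:pris coho vs enhanced coho}. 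The essential task is therefore to construct global functors on the top row, and to verify that they restrict to the local ones already built.

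First I would construct $F_\proet$ globally as the composition $\vect(\fkxastpris,\prism_{\dR,m}^+)\to \vect(\fkxastprisperf,\prism_{\dR,m}^+)\xrightarrow{\simeq}\vect(\xproet,\bbdrplusm)$, the first arrow being restriction to the perfect subsite and the second being Theorem \ref{Thm-de Rham realization}. To see this lands in $\vect^a(\xproet,\bbdrplusm)$, I would work locally on a chart: via Proposition \ref{prop:explicit description of Res}, the associated $\hat G$-representation is described explicitly in terms of the operator $\phi$ of the local enhanced connection, and a direct computation of its Kummer-Sen module (Theorem \ref{thm: Arith Sen Kummer}, Proposition \ref{prop M to D}) identifies $\phi$ itself as the Sen operator $\phi_\kinfty$, which is $a$-small by construction.

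Next I would construct $F_\et$ globally. Composing $F_\proet$ with the equivalence of Theorem \ref{thm:RH correspondence} produces a global functor $\vect(\fkxastpris,\prism_{\dR,m}^+)\to \MIC_\gk^a(X_{C,\et},\calO_{X_C,\bdrplus,m})$, and then Proposition \ref{prop: ana Sen as inverse no conn} (the ``analytic Sen decompletion'', which is the global quasi-inverse to the functor of Theorem \ref{thm enhanced conn gk conn equiv}) produces an object of $\MIC_\en^a(X_\et,\calO_{X,\gs_\dR^+,m})$. This definition is intrinsic and requires no chart. To check it agrees locally with the chart-dependent functor of Construction \ref{Construction-Intrinsic}, I would use Proposition \ref{prop:explicit description of Res}: both constructions, starting from a local enhanced connection $(M,\nabla,\phi)$, produce the same semi-linear $\gk$-action given by $g\cdot x=(g(E)/E)^\phi(x)$ on $M\hatotimes_{\gsdr}\bdrplus$, and hence (by Proposition \ref{prop: ana Sen as inverse no conn}) the same $(\gamma=1,\tau\text{-an})$-submodule recovers $M$ itself. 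With this matching, the local Theorem \ref{thm: abs local equiv} upgrades the global $F_\et$ to an equivalence, since both source and target are Zariski stacks (the source by \cite{mathew2022faithfully}-style descent for prismatic crystals, the target tautologically as a category of vector bundles with extra structure on $X_\et$).

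Finally, for cohomology I would chase the diagram. The bottom-row cohomological comparisons are Theorem \ref{Thm-de Rham realization}; the equality on the right column is tautological; the left vertical isomorphism is Theorem \ref{thm enhanced conn gk conn equiv}. For the top row, one reduces by Zariski/\'{e}tale descent (both sides being sheaves of complexes on $X$) to the local case $\fkx=\spf R$ with a chart, where the comparison $\rg(\fkxastpris,\bm)\simeq \rg(\phi,\DR(M))$ is Theorem \ref{thm:pris coho vs enhanced coho}, and the comparison $\rg(\fkxastpris,\bm)\simeq \rg(\xproet,\bw)$ follows by passing to the perfect subsite and invoking Theorem \ref{Thm-de Rham realization} together with the cohomological invariance under passing from the full site to the perfect subsite that is implicit in the local verification. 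The main obstacle is precisely the identification of the global $F_\et$ (defined via RH plus analytic Sen) with the local one (defined via stratifications on the Breuil--Kisin cosimplicial prism); that identification hinges on the explicit matching of formulas in Proposition \ref{prop:explicit description of Res} and on the fact that the $\tau$-operator really is \emph{analytic} (not merely locally analytic) on the image of $F_\proet$, which is exactly what Proposition \ref{prop: a small implies analytic} provides.
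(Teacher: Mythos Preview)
Your proposal is correct and follows essentially the same route as the paper: construct $F_\proet$ as restriction-to-perfect composed with Theorem \ref{Thm-de Rham realization}, check $a$-smallness locally via Proposition \ref{prop:explicit description of Res} and Proposition \ref{prop M to D}, then lift to a global $F_\et$ using Theorem \ref{thm enhanced conn gk conn equiv} (analytic Sen decompletion), match with the local stratification functor, and conclude by Zariski descent together with the local results Theorem \ref{thm: abs local equiv} and Theorem \ref{thm:pris coho vs enhanced coho}. The one place where your wording is slightly loose is the phrase ``cohomological invariance under passing from the full site to the perfect subsite that is implicit in the local verification'': this invariance is not proved separately but is a \emph{consequence} of chaining the local comparison $\rg((R)_{\Prism,\ast},\bM)\simeq \DR(M,\nabla,\phi)$ with the already-established identifications $\rg(X,\rg(\phi,\DR(M)))\simeq \rg(\gk,\DR(\cm))\simeq \rg(\xproet,\bW)\simeq \rg(\fkxastprisperf,\bM^\perf)$, exactly as the paper does.
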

\begin{proof} 
At this point, we have a diagram
\[
\begin{tikzcd}
{ \MIC_{\en}^{a}(X_\et,\o_{X, \gs^+_{\dR}, m})} \arrow[d, hook] &  & {\vect(\fkxastpris, \pris_{\dR,m}^+) } \arrow[d] \arrow[lld, dashed] \arrow[rrd, dashed] &  & {\vect^{a}(\xproet, \bbdrplusm)} \arrow[d, hook] \\
{\MIC_{\gk}(X_{C, \et}, \o_{X_C, \bdrplus, m})}                         &  & {\vect(\fkxastprisperf, \pris_{\dR,m}^+) } \arrow[rr, "\simeq"] \arrow[ll, "\simeq"']    &  & { \vect(\xproet, \bbdrplusm)}                   
\end{tikzcd}
\]
where $\simeq$ resp. $\hookrightarrow$ denotes \emph{known} equivalences resp. full faithfulness. 
Here the bottom equivalences are proved in \S \ref{sec: crystal perf pris} and \S \ref{sec: global RH}. The left vertical full faithful functor is constructed in \S \ref{sec: M to W}; the right vertical arrow is obvious inclusion.  
However, the central vertical functor, which is naturally induced by restriction, is \emph{not}  yet known to be fully faithful. Nonetheless, the two dotted arrows are constructed via composition. In addition, we know the left resp. right dotted arrow lands in the subcategory $\MIC^a_{\gk}(X_{C, \et}, \o_{X_C, \bdrplus, m})$
resp. $\vect^{a}(\xproet, \bbdrplusm)$ since this can be checked locally. That is, when $\fkx=\spf R$ where $R$ is small in the sense of Notation \ref{nota: bk prism abs pris}, we know the pro-\'etale realization of a crystal is  $a$-small following explicit formula in Prop \ref{prop:explicit description of Res} and Prop \ref{prop M to D}. 
Using  Thm \ref{thm enhanced conn gk conn equiv}, we can lift the dotted arrows, and
finally have the \emph{complete} diagram as stated in the theorem:
\[
\begin{tikzcd}
{ \MIC_{\en}^{a}(X_\et,\o_{X, \gs^+_{\dR}, m})} \arrow[d, hook] &  & {\vect(\fkxastpris, \pris_{\dR,m}^+) } \arrow[d] \arrow[ll, dashed] \arrow[rr, dashed] &  & {\vect^{a}(\xproet, \bbdrplusm)} \arrow[d, hook] \\
{\MIC_{\gk}(X_{C, \et}, \o_{X_C, \bdrplus, m})}                         &  & {\vect(\fkxastprisperf, \pris_{\dR,m}^+) } \arrow[rr, "\simeq"] \arrow[ll, "\simeq"']  &  & { \vect(\xproet, \bbdrplusm)}.                   
\end{tikzcd}
\]
As we already we have Thm \ref{thm enhanced conn gk conn equiv}, it only remains   to prove:
\begin{enumerate}
    \item The functor (constructed via composition) $\vect(\fkxastpris, \pris_{\dR,m}^+) \to  \MIC_{\en}^{a}(X_\et,\o_{X, \gs^+_{\dR}, m})$ is an equivalence;
    \item and the morphism
$\rg(\fkxastpris, \bm)  \to \rg(\fkxastprisperf, \bm^\perf) \simeq \rg(X,\rg(\phi, \DR(M))) $
---where the first map is induced by restriction and the second map is constructed via Prop \ref{prop: construct global morphism no conn},--- is a quasi-isomorphism.
\end{enumerate}  
Both statements can now be checked Zariski locally since all  categories in the diagram satisfy Zariski descent. 
When $\fkx=\spf R$ where $R$ is small in the sense of Notation \ref{nota: bk prism abs pris}, the equivalence of Item (1) is proved in \S \ref{sec: loc abs pris}, and cohomology comparison in Item (2) is proved in \S \ref{sec: coho abs local}.  
\end{proof}

\begin{remark}
All the categories in Theorem \ref{thm: final main thm}  are intrinsically defined (independent of choices of $\pi$ etc.); but the left vertical arrow (cf.~ Construction \ref{def:The functor F}) depends on choice of $E$ (namely, choices of $\pi$) and its image inside $\ainf$ (namely, choices of $\pi_n$ for all $n$).
\end{remark}

\newpage

\section{Appendix: Log structures on perfect log prisms} \label{sec log perf prism}

The main result in this section is Prop \ref{prop:the same perfect site}, which proves that for $R$ a small semi-stable $\ok$-algebra, the forgetful functor induces  an isomorphism of sites $$(R)^{\perf}_{\Prism,\log}\simeq (R)^{\perf}_{\Prism};$$
namely, for a perfect prism on $R$, there exists one and only one extension to a perfect log-prism.

\begin{notation}
     Let $\frakX = \Spf(R)$ be a small semi-stable formal scheme over $\calO_K$; that is, there exists a ($p$-completely) \'etale morphism (called a chart on $\frakX$ or $R$)
  \[\Box:\calO_K\za T_0,\dots,T_r,T_{r+1}^{\pm 1},\dots,T_d^{\pm 1}\ya/(T_0\cdots T_r - \pi)\to R.\]
  Let $X = \Spa(R[\frac{1}{p}],R)$ be the generic fiber of $\frakX$ and then $\calO_X^{\times}\cap\calO_{\frakX}\to \calO_{\frakX}$ defines a log-structure on $\frakX$. Denote by $M_R \to R$ the induced log-structure on $R$ and then it is associated to the pre-log-structure
  \[P_r:=\bigoplus_{i=0}^r\bN\cdot e_i\xrightarrow{e_i\mapsto T_i,~\forall~i}R.\]
  For example, when $R = \calO_K$, we have $M_{\calO_K} = K^{\times}\cap \calO_K = \calO_K\setminus\{0\}$, which is associated to the pre-log-structure $\bN\cdot e\xrightarrow{e\mapsto \pi}\calO_K$.
\end{notation}

  \begin{lem}\label{lem:existence of log-structure}
      For any log-prism $(A,I,M)\in (R)_{\log,\Prism}$, there are elements $t_i$'s in $A$ lifting the images of $T_i$ via the natural morphism $R\to A/I$ such that the log-structure $M$ is associated to the pre-log structure \[P_r=\bigoplus_{i=0}^r\bN\cdot e_i\xrightarrow{e_i\mapsto t_i,~\forall~i}A. \]
  \end{lem}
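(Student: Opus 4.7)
The plan is to produce the $t_i$ by lifting the chart $P_r \to M_R$ along the closed immersion $\Spec(A/I) \hookrightarrow \Spf(A)$. The log-prism structure supplies a strict morphism $(\Spec(A/I), \bar M) \to (\Spec R, M_R)$, where $\bar M$ is the pullback of $M$ along $\Spec(A/I) \hookrightarrow \Spf(A)$. Composing the chart $P_r = \bigoplus_{i=0}^{r} \bN e_i \to M_R$, $e_i \mapsto T_i$, with the induced morphism of log structures yields a monoid map $\bar \alpha \colon P_r \to \bar M$, which by strictness is a chart for $\bar M$ and satisfies $\alpha_{\bar M}(\bar\alpha(e_i)) = $ image of $T_i$ in $A/I$.

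First I would establish that the underlying map of monoids $M(\Spf A) \to \bar M(\Spec A/I)$ is surjective. By definition of the pullback log structure, any section of $\bar M$ has the form $(m, \bar u)$ with $m \in M$ and $\bar u \in (A/I)^\times$. Because $A$ is $I$-adically complete and $1+I \subset A^\times$, every lift $u \in A$ of $\bar u$ is automatically a unit, and hence, viewed as an element of $M$ via $A^\times \hookrightarrow M$, the product $m \cdot u \in M$ lifts the class of $(m, \bar u)$. Using this surjectivity, I would pick for each $i$ a lift $m_i \in M$ of $\bar\alpha(e_i)$; since $P_r$ is the free commutative monoid on $\{e_0,\dots,e_r\}$, the assignment $e_i \mapsto m_i$ extends uniquely to a monoid morphism $\alpha \colon P_r \to M$ lifting $\bar\alpha$. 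Setting $t_i := \alpha_M(m_i) \in A$ produces elements reducing to the images of the $T_i$ in $A/I$, as required.

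The main step is to verify that $\alpha$ is itself a chart for $M$: that the log structure $M'$ associated to the pre-log structure $P_r \to A$, $e_i \mapsto t_i$, coincides with $M$. The factorization of this pre-log through $\alpha_M$ yields a canonical morphism of log structures $M' \to M$ over $\id_A$. By construction, its reduction modulo $I$ is the morphism from the log structure on $A/I$ associated to $P_r \to A/I$ to $\bar M$, and by strictness of $(A/I, \bar M) \to (R, M_R)$ this is an isomorphism. A standard rigidity principle for charts of fine log structures on $I$-adically complete rings then promotes this isomorphism modulo $I$ to an isomorphism $M' \xrightarrow{\sim} M$, which is exactly the desired conclusion.

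The hard part will be the rigidity step. For it one typically argues either via the vanishing of the relevant obstruction in the logarithmic cotangent complex for the $I$-complete thickening, or more elementarily by comparing equivalence classes of pairs $(n,u) \in P_r \times A^\times$ representing sections of $M'$ and $M$: every section of $M$ can be written locally as $m \cdot u$ with $\bar m = \bar\alpha(\bar n)$ for some $\bar n \in P_r$, and by the same unit-lifting trick used above (together with $I$-adic completeness of $A$) one adjusts the ambiguous unit from $A/I$ to $A$ to realize the section in the image of $M'$; injectivity of $M' \to M$ follows from the same strategy applied to the presentations of equivalences. This elementary argument, essentially an upgrade of the surjectivity $M \to \bar M$ from sections to equivalence relations, closes the proof.
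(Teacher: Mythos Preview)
Your proposal is correct and follows essentially the same approach as the paper: lift the chart elements from $\bar M$ to $M$ using $I$-completeness of $A$ (so that units in $A/I$ lift to units in $A$), produce the $t_i$'s as the images $\alpha_M(m_i)$, and then invoke rigidity of fine log structures along the thickening $\Spec(A/I)\hookrightarrow\Spf(A)$ to identify $M'$ with $M$. The only difference is cosmetic: the paper dispatches the rigidity step by a direct citation to \cite[Th.~8.36]{Ols05} rather than your sketched elementary argument or cotangent-complex reasoning.
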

  \begin{proof}
      For $R = \calO_K$, the result is exactly \cite[Lem. 5.0.10]{DL23}. The general case follows from a similar argument as follows: 
      As the log-structures on $A/I$ associated to the pre-log-structures $(M\xrightarrow{\alpha} A\to A/I)$ and $(P_r\to R\to A/I)$ coincide, there are $m_i$'s in $M$ such that $\alpha(m_i) = \overline u_iT_i$ in $A/I$ for some $\overline u_i\in (A/I)^{\times}$. As $A$ is $I$-complete, one can find units $u_i\in A^{\times}$ lifting $\overline u_i$ for all $i$. Replacing $m_i$'s by $u_i^{-1}m_i$'s, we may assume $\alpha(m_i) = T_i$ modulo $I$ for all $i$. Put $t_i:=\alpha(m_i)$ and then we obtain a pre-log-structure $P_r\xrightarrow{e_i\mapsto t_i,~\forall~i}A$ whose associated log-structure $P_r^a\to A$ factors as $P_r^a\to M\to A$ and induces the log-structure $P_r\to R$ modulo $I$. This forces that $P_r^a = M$ (cf. \cite[Th. 8.36]{Ols05}).
  \end{proof}

  \begin{lem}\label{lem:rigidity of log-structrue}
      Let $(A,I,M)\in (R)_{\Prism,\log}$, let $\overline A:=A/I$,  and let  $\overline B$ be an $\overline A$-algebra. Then the rule
      \[(B,J,M^{\prime}) \mapsto (B,J)\] 
      induces   an equivalence  of categories
      \[\{\text{$(B,J,M^{\prime})\in (R)_{\Prism,\log}$ over $(A,I,M)$ with $B/J = \overline B$}\} \simeq \{\text{$(B,J)\in (R)_{\Prism}$ over $(A,I)$ with $B/J = \overline B$\}},\]
      A quasi-inverse functor sends   $(B,J)$ to the log-prism $(B,J,M^{\prime})$ such that the log-structure $M^{\prime}$ is induced by the pre-log-structure $(M\to A\to B)$.
  \end{lem}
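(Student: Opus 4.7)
The plan is to explicitly produce a quasi-inverse to the forgetful functor and then reduce the non-trivial direction of the equivalence to the rigidity afforded by Lemma \ref{lem:existence of log-structure}. First, I would define $G:(B,J)\mapsto (B,J,M')$ where $M'$ is the log-structure on $B$ associated to the pre-log-structure $M\to A\to B$. The only subtlety in checking $G$ lands in $(R)_{\Prism,\log}$ is verifying the compatibility on $\overline B=B/J$: the log-structure induced by $M'$ via $B\to \overline B$ must coincide with the one induced from $R\to \overline B$. This follows because, by hypothesis, the analogous compatibility holds for $(A,I,M)$ (the log-structure on $\overline A$ induced by $M$ agrees with the one coming from $R\to \overline A$), and base-change along $\overline A\to \overline B$ commutes with the ``associated log-structure'' construction.

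The composition $F\circ G$ is manifestly the identity, so the work is in showing $G\circ F$ is naturally isomorphic to the identity; equivalently, any log-prism $(B,J,M'')$ over $(A,I,M)$ with $B/J=\overline B$ has $M''$ canonically isomorphic to the log-structure $M'$ induced by $M\to A\to B$. To this end, apply Lemma \ref{lem:existence of log-structure} to both $(A,I,M)$ and $(B,J,M'')$ to obtain charts $P_r\to A,\ e_i\mapsto t_i$ presenting $M$ as the associated log-structure, and $P_r\to B,\ e_i\mapsto s_i$ presenting $M''$ as the associated log-structure, with $t_i$ and $s_i$ lifting $T_i$. The morphism $M\to M''$ of log-structures over $A\to B$ sends each generator $e_i\in M$ to some element $m_i\in M''$ whose image in $B$ is the image of $t_i$ under $A\to B$. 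Since $M''$ is the log-structure associated to $P_r\to B,\ e_i\mapsto s_i$, the element $m_i$ must be of the form $s_i\cdot u_i$ for some unit $u_i\in B^\times$. Consequently the two pre-log-structures on $B$, namely $e_i\mapsto t_i\vert_B$ and $e_i\mapsto s_i$, differ only by units of $B$, and therefore have the same associated log-structure. This identifies $M''$ with $M'$ functorially.

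The main obstacle I anticipate is the step ``$m_i=s_i\cdot u_i$,'' that is, ruling out that $m_i$ equals $s_i^n u_i$ for some $n\geq 2$. This is resolved by reducing modulo $J$: in $\overline B$ the relation $t_i\vert_B=s_i^n u_i$ becomes $T_i=T_i^n\bar u_i$, and since both pre-log-structures descend from the same sharp fine chart $P_r\to R,\ e_i\mapsto T_i$ via $R\to \overline B$, the monoid map on sharpenings must be the identity, forcing $n=1$ (for $i\leq r$). For $i>r$ the element $T_i$ is already a unit, hence $t_i\vert_B$ and $s_i$ are units in $B$ and the argument is immediate. Two minor points to address along the way are that lifts of units across $J$ are units (since $J$ is contained in the Jacobson-like radical of $B$ for the $(p,J)$-adic topology), and that the construction of $M'$ in paragraph one is functorial in $(B,J)$, so that $G$ is a bona fide functor.
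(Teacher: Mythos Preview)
Your approach is different from the paper's and more elementary. The paper does not unwind charts at all: it observes that the log structure $N$ on $\overline B$ is strict over $(\overline A,M_{\overline A})$, so by \cite[Lem.~8.22 and 8.26]{Ols05} the $p$-complete log cotangent complex $\widehat{\rL}_{(N\to\overline B)/(M\to\overline A)}$ agrees with $\widehat{\rL}_{\overline B/\overline A}$, whence log liftings of $(\overline B,N)$ over $(A,M)$ are in bijection with ring liftings of $\overline B$ over $A$. Uniqueness of $M'$ then follows in one stroke, with no combinatorics on the chart.

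Your chart argument can be made to work, but two points need tightening. First, an element $m_i\in M''$ has the general shape $\big(\prod_{j=0}^r s_j^{\,n_{ij}}\big)\cdot u_i$, not just $s_i^{\,n}u_i$; the obstacle paragraph only treats the diagonal case. Second, the relation $T_i=T_i^{\,n}\bar u_i$ in the \emph{ring} $\overline B$ does not by itself force $n=1$ (for instance $T_i$ could be nilpotent or zero in $\overline B$). The deduction must take place in the characteristic monoid: the reduction map $M''/B^\times\to N/\overline B^\times$ is an isomorphism (both sides identify with the quotient of $P_r$ by those $e_j$ with $T_j\in\overline B^\times$, using that $B$ is $J$-complete), and commutativity of $M\to M''\to N$ with $M\to M_{\overline A}\to N$ then pins the image of each $e_i$ in $M''/B^\times$ to be exactly $e_i$, giving $m_i=\tilde s_i\cdot u_i$. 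Your phrase ``the monoid map on sharpenings must be the identity'' is pointing in this direction, but as written the argument is routed through $\overline B$ rather than through $\overline N$, which is where it actually holds. (The aside on $i>r$ is moot: the chart $P_r$ only involves $0\le i\le r$.)
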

  \begin{proof}
      It is clear that for any $(B,J)\in (R)_{\Prism}$ over $(A,I)$, the log-structure $M^{\prime} \to B$ corresponding to the composite $(M\to A\to B)$ makes $(B,J,M^{\prime})$ a well-defined object in $(R)_{\Prism,\log}$ over $(A,I,M)$. So it is enough to show that for any $(B,J,M^{\prime})\in(R)_{\Prism,\log}$ over $(A,I,M)$, the log-structure $M^{\prime}$ must be induced from that on $A$ as above. Equip $\overline B$ with the log-structure (denoted by $N\to B$) induced from that on $\overline A$. By \cite[Lem. 8.22 and Lem. 8.26]{Ols05}, we have a quasi-isomorphism of $p$-complete log-cotangent complexes
      \[\widehat \rL_{(N\to \overline B)/(M\to \overline A)}\simeq\widehat \rL_{\overline B/\overline A}\]
      yielding an isomorphism of categories
      \[\{\text{liftings $(M\to A)\to (N^{\prime}\to B)$ of $(M\to \overline A)\to (N\to \overline B)$}\} \xrightarrow{\cong} \{\text{liftings $A\to B$ of $\overline A\to \overline B$}\}.\]
      Then the result follows.
  \end{proof}
  \begin{cor}\label{cor:rigidity of log-structrue}
      Let $(A,I,M)\to (B,IB,N)$ be a covering in $(R)_{\Prism,\log}$, which induces a covering $(A,I)\to (B,IB)$ in $(R)_{\Prism}$. Let $(B^{\bullet},IB^{\bullet})$ be the \v Cech nerve associated to the covering $(A,I)\to (B,I)$ and $N^{\bullet}\to B^{\bullet}$ be the log-structure associated to $M\to A\to B^{\bullet}$. Then $(B^{\bullet},IB^{\bullet},N^{\bullet})$ is the \v Cech nerve associated to the covering $(A,I,M)\to (B,IB,N)$. 
  \end{cor}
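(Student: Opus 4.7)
The plan is to reduce the assertion to a direct application of Lemma \ref{lem:rigidity of log-structrue}, which identifies log-prisms over $(A,I,M)$ with underlying prisms over $(A,I)$ equipped with the induced log structure. Since the \v Cech nerve in either site is obtained by iterated self-coproducts of the covering object over the base, the key point is that this equivalence of categories is compatible with coproducts, and hence with \v Cech nerves.

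First, I would set notation: for each $n \geq 0$, let $(\widetilde B^n, I\widetilde B^n, \widetilde N^n)$ denote the $(n+1)$-fold self-coproduct of $(B,IB,N)$ over $(A,I,M)$ in the log-prismatic site $(R)_{\Prism,\log}$, so that $(\widetilde B^\bullet, I\widetilde B^\bullet, \widetilde N^\bullet)$ is the \v Cech nerve of the covering $(A,I,M)\to(B,IB,N)$. On the other hand, $(B^\bullet, IB^\bullet)$ is the \v Cech nerve of $(A,I)\to(B,IB)$ in $(R)_{\Prism}$, and $N^\bullet$ is the log structure on $B^\bullet$ associated to $M\to A\to B^\bullet$. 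I want to produce a canonical isomorphism $(\widetilde B^\bullet,I\widetilde B^\bullet,\widetilde N^\bullet)\cong (B^\bullet,IB^\bullet,N^\bullet)$ of cosimplicial log-prisms.

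The central step is the following claim: for each $n$, the underlying prism of $(\widetilde B^n, I\widetilde B^n, \widetilde N^n)$ is canonically $(B^n, IB^n)$, and the log structure $\widetilde N^n$ coincides with $N^n$. For the first part, I would apply Lemma \ref{lem:rigidity of log-structrue} with $\overline B$ ranging over $\overline B^n:=B^n/IB^n$: the lemma yields an equivalence between log-prisms over $(A,I,M)$ with reduction $\overline B^n$ and prisms over $(A,I)$ with the same reduction. Under this equivalence, the universal property of the $(n+1)$-fold self-coproduct in the log-prismatic site over $(A,I,M)$ translates literally into the universal property of the $(n+1)$-fold self-coproduct in the prismatic site over $(A,I)$, and so $(\widetilde B^n, I\widetilde B^n)\cong (B^n, IB^n)$ canonically. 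The quasi-inverse in Lemma \ref{lem:rigidity of log-structrue} moreover identifies $\widetilde N^n$ with the log structure induced from $M\to A\to B^n$, which is exactly $N^n$. The cosimplicial structure maps on $\widetilde B^\bullet$ are forced by the same universal property, so the isomorphism is automatically cosimplicial.

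The main obstacle, though a mild one, is checking that Lemma \ref{lem:rigidity of log-structrue} really applies at every level $n$: this requires knowing that $(B^n, IB^n)$ is a bona fide prism in $(R)_{\Prism}$ over $(A,I)$, and in particular that its reduction modulo $I$ is an $R$-algebra. This is part of the hypothesis that $(A,I)\to(B,IB)$ is a covering in $(R)_{\Prism}$, together with the standard fact that the \v Cech nerve of a flat cover in the prismatic site is again valued in prisms over $R$ (cf.\ the constructions in \S\ref{sec: rel pris smooth}). Once this is granted, Lemma \ref{lem:rigidity of log-structrue} does all the work, and the corollary follows.
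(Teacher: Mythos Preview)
Your proposal is correct and takes essentially the same approach as the paper, which simply says the corollary ``follows from Lemma \ref{lem:rigidity of log-structrue} by checking the universal property for defining \v Cech nerve directly.'' You have just spelled out that sentence: the lemma says the forgetful functor from log-prisms over $(A,I,M)$ to prisms over $(A,I)$ is an equivalence (with quasi-inverse the induced log structure), and equivalences preserve coproducts, hence \v Cech nerves.
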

  \begin{proof}
     It follows from Lemma \ref{lem:rigidity of log-structrue} by checking the universal property for defining \v Cech nerve directly.
  \end{proof}
  
  Now, we focus on the perfect log-prisms in $(R)_{\Prism,\log}$.

  \begin{lem}\label{lem:factorization}
   Let $(A,I)$ be a perfect prism. Then for any $x\in A$ such that $\varphi(x) = x^p(1+py)$ for some $y\in A$, the $x$ factors as 
   \[x = [a]\prod_{i=1}^{\infty}(1+p\varphi^{-i}(y))^{p^{i-1}},\]
   where $a$ is the reduction of $x$ modulo $p$.
 \end{lem}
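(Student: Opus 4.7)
The plan is to verify that the product on the right-hand side is a well-defined element of $A$, compute its Frobenius, and then use the perfectness of $(A,I)$ to deduce that any two solutions of $\varphi(z)=z^p(1+py)$ with the same reduction mod $p$ must agree.

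\textbf{Convergence.} First I would observe that for any $u\in A$ one has $(1+pu)^p\equiv 1\pmod{p^2}$, and hence inductively $(1+pu)^{p^{i-1}}\equiv 1\pmod{p^i}$. Applied to $u=\varphi^{-i}(y)$ (which makes sense since $A$ is perfect, so $\varphi$ is a bijection), this shows that the partial products of $\prod_{i\geq 1}(1+p\varphi^{-i}(y))^{p^{i-1}}$ form a $p$-adically Cauchy sequence in $A$. Since perfect prisms are classically $(p,I)$-complete and hence $p$-adically complete, the infinite product converges to a well-defined element $P\in A$ with $P\equiv 1\pmod p$.

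\textbf{Frobenius identity and mod-$p$ reduction.} Set $x':=[a]P$. A telescoping computation gives
\[\varphi(P)\;=\;\prod_{i\geq 1}(1+p\varphi^{-i+1}(y))^{p^{i-1}}\;=\;(1+py)\cdot\prod_{j\geq 1}(1+p\varphi^{-j}(y))^{p^j}\;=\;(1+py)\cdot P^p,\]
and since $\varphi([a])=[a^p]=[a]^p$, we obtain $\varphi(x')=(x')^p(1+py)$. Moreover $x'\equiv [a]\equiv a\pmod p$.

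\textbf{Uniqueness.} It remains to show that any $x\in A$ with $\varphi(x)=x^p(1+py)$ and $x\bmod p=a$ equals $x'$. Setting $\zeta:=x-x'$, one has $\zeta\in pA$. Suppose inductively $\zeta\in p^nA$, write $\zeta=p^n\eta$, and compute
\[p^n\varphi(\eta)=\varphi(x)-\varphi(x')=\bigl(x^p-(x')^p\bigr)(1+py).\]
Using the binomial expansion $x^p-(x')^p=\sum_{k=1}^p\binom{p}{k}(x')^{p-k}(p^n\eta)^k$ and the fact that $\binom{p}{k}$ is divisible by $p$ for $1\leq k\leq p-1$, every term on the right is divisible by $p^{n+1}$. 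Hence $\varphi(\eta)\in pA$. Since $\varphi$ is a bijection on $A$ (because $(A,I)$ is perfect), this forces $\eta\in pA$, i.e.\ $\zeta\in p^{n+1}A$. By $p$-adic completeness, $\zeta=0$, so $x=x'=[a]P$.

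\textbf{Main obstacle.} The only delicate point is the inductive uniqueness step: one must carefully track $p$-divisibility in the binomial expansion of $x^p-(x')^p$ and then invoke bijectivity of $\varphi$ (the characteristic feature of a perfect prism) to divide by a further power of $p$. Everything else is a formal telescoping together with the standard fact that $p$-th powers smooth out $p$-adic congruences.
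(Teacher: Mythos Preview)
Your proof is correct. The approach differs from the paper's in organization rather than in essential content, but the decomposition is worth noting.

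The paper argues directly with $x$: writing $x=[a]+px_1$, it uses the relation $\varphi(x)=x^p(1+py)$ to deduce $x_1\equiv [a]\varphi^{-1}(y)\pmod p$, hence $x=[a](1+p\varphi^{-1}(y))+p^2x_2$, and then iterates to show $x\equiv [a]\prod_{i=1}^n(1+p\varphi^{-i}(y))^{p^{i-1}}\pmod{p^{n+1}}$ for all $n$. Your argument instead separates the problem into three pieces: convergence of the product $P$, a telescoping verification that $x':=[a]P$ satisfies the same Frobenius equation, and a uniqueness lemma showing that two solutions with the same mod-$p$ reduction must coincide. The inductive estimate in your uniqueness step (binomial expansion of $x^p-(x')^p$ plus the observation that $\varphi^{-1}(pA)=pA$ because $\varphi(p)=p$) plays the same role as the paper's direct computation of the successive remainders $x_n$. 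What your organization buys is a reusable uniqueness principle and a cleaner logical flow; what the paper's buys is that one never has to introduce the auxiliary element $x'$ or argue separately for convergence, since the congruences are established for $x$ itself.
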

   Note that $[a]\prod_{i=1}^{\infty}(1+p\varphi^{-i}(y))^{p^{i-1}}$ is well-defined as $A$ is classically $p$-complete.
 \begin{proof}
   Put $y_n = \varphi^{-n}(y)$ for any $n\geq 0$. It suffices to show that for any $n\geq 1$,
   \[x \equiv [a]\prod_{i=1}^{n}(1+py_i)^{p^{i-1}}\mod p^{n+1} .\]

   We shall prover this by induction on $n$.
   Write $x = [a]+px_1$ for a unique $x_1\in A$. Then the assumption on $x$ can be restated as follows:
   \[[a]^p+p\varphi(x_1) = ([a]+px_1)^p(1+p\varphi(y_1)).\]
   So we get that 
   \[\varphi(x_1)\equiv \varphi([a]y_1) \mod p\]
   and a fortiori that 
   \[x_1 \equiv [a]y_1 \mod p.\] Therefore, we can write $x = [a](1+py_1)+p^2x_2$ for some $x_2\in A$ (which is unique as $A$ is $p$-torsion free).
   
   Suppose that $x = [a]\prod_{i=1}^n(1+py_i)^{p^{i-1}}+p^{n+1}x_{n+1}$ for some $n\geq 1$ and some $x_{n+1}\in A$. Then the assumption on $x$ says that
   \[\begin{split}
   [a]^p\prod_{i=0}^{n-1}(1+py_i)^{p^i}+p^{n+1}\varphi(x_{n+1}) &= ([a]\prod_{i=1}^n(1+py_i)^{p^{i-1}}+p^{n+1}x_{n+1})^p(1+py)\\
   & \equiv [a]^p\prod_{i=1}^n(1+py_i)^{p^i}(1+py) \mod p^{n+2}.
   \end{split}\]
   As a consequence, we have
   \[p^{n+1}x_{n+1} \equiv [a]\prod_{i=1}^n(1+py_i)^{p^{i-1}}((1+py_{n+1})^{p^n}-1)\mod p^{n+2}.\]
   In particular, we get $x \equiv [a]\prod_{i=1}^{n+1}(1+py_i)^{p^{i-1}}\mod p^{n+2}$ as desired.
 \end{proof}

 As \cite[Th. 3.10]{BS22}, we want to classify the perfect log-prisms in $(R)_{\Prism,\log}$.

 \begin{lem}\label{lem:p-power}
   Let $S$ be a $p$-torsion free perfectoid ring. Then for any $x\in S[\frac{1}{p}]$ such that $x^p\in S$, we have $x\in S$.
 \end{lem}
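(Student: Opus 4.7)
The cleanest approach is to observe that $x$ is integral over $S$ and then invoke the fact that a $p$-torsion free (integral) perfectoid ring is integrally closed in $S[\tfrac{1}{p}]$. Indeed, the hypothesis $x^p \in S$ shows that $x$ is a root of the monic polynomial $T^p - x^p \in S[T]$, hence $x$ is integral over $S$. Since $S \hookrightarrow S[\tfrac{1}{p}]$ (using $p$-torsion freeness) and $S$ is integrally closed in $S[\tfrac{1}{p}]$, we conclude $x \in S$.

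For a self-contained argument, I would first reduce by the standard "minimal denominator" trick: write $x = y/p^n$ with $y\in S$ and $n\geq 0$ minimal. If $n = 0$ we are done, so suppose $n \geq 1$, in which case $y \notin pS$ by minimality. The hypothesis $x^p \in S$ gives $y^p \in p^{np}S \subset p^p S$. It then suffices to establish the following key claim: if $y \in S$ and $y^p \in p^p S$, then $y \in pS$, which contradicts minimality.

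To prove the key claim, I would use the perfectoid structure of $S$: there exists $\varpi \in S$ with $\varpi^p = pu$ for some unit $u \in S^\times$, and (as a consequence of Frobenius being surjective on $S/p$ together with $p$-torsion freeness) Frobenius induces an isomorphism $\varphi: S/\varpi \xrightarrow{\simeq} S/\varpi^p$. The principal lemma extracted from this iso is: if $a \in S$ satisfies $a^p \in \varpi^p S$, then $a \in \varpi S$. Starting from $y^p \in p^p S = \varpi^{p^2} u^p S \subset \varpi^{p^2} S$, I would run an inductive divisibility argument: writing $y = \varpi z_1$, one has $z_1^p \in \varpi^{p(p-1)}S$; writing $z_1 = \varpi z_2$, one has $z_2^p \in \varpi^{p(p-2)}S$; and so on, peeling off one $\varpi$ at each stage until reaching $z_{p-1}^p \in \varpi^p S$, whence $z_{p-1} \in \varpi S$. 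Collecting factors yields $y \in \varpi^p S = pS$, as required.

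The main technical point is the justification of the Frobenius isomorphism $\varphi: S/\varpi \xrightarrow{\simeq} S/\varpi^p$ for $p$-torsion free perfectoid rings; this is a standard feature of the integral perfectoid theory (e.g.\ \cite[\S 3]{BMS1}), and once granted, the induction above is entirely formal. I do not anticipate any further obstacles.
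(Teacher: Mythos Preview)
Your self-contained argument is correct and uses the same key ingredient as the paper: the Frobenius isomorphism $S/\rho \xrightarrow{\cong} S/\rho^p$ for a suitable $p$-th root $\rho$ of a unit multiple of $p$ (\cite[Lem.~3.10]{BMS1}). The only organizational difference is that the paper runs the minimal-denominator trick directly in powers of $\rho$ rather than of $p$: taking $l$ minimal with $\rho^l x \in S$, if $l\geq 1$ then $\rho^l x \notin \rho S$, hence $(\rho^l x)^p = \rho^{pl}x^p \notin \rho^p S$ by Frobenius injectivity, contradicting $x^p\in S$ and $pl\geq p$. This collapses your inductive $\varpi$-peeling into a single step. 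Your first paragraph (invoking that $S$ is integrally closed in $S[1/p]$) is fine if you have an independent reference, but be aware that this fact is often \emph{deduced} from the very statement you are proving, so it risks circularity.
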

 \begin{proof}
   This is well-known and we provide a proof here for the convenience of readers.
   By \cite[Lem. 3.9]{BMS1}, there exists a $\varpi\in S^{\flat}$ such that $\varpi^{\sharp}$ is a unit multiple of $p$. Define $\rho:=\varphi^{-1}(\varpi)^{\sharp}$. Then \cite[Lem. 3.10]{BMS1} implies that the Frobenius induces an isomorphism 
   \[S/\rho S\xrightarrow{\cong}S/\rho^p S.\]
   Now given an $x\in S[\frac{1}{p}]$ such that $x^p\in S$, there exists an integer $N$ such that $\rho^Nx\in S$ and let $l$ be the smallest one satisfying the above property. It is enough to show that $l\leq 0$. Otherwise, assume that $l\geq 1$. Then we see that $\rho^lx$ is not zero in $S/\rho S$, which implies that $\rho^{pl}x^p$ does not vanish modulo $\rho^pS$. This contradicts that $l\geq 1$ as well as $x^p\in S$.
 \end{proof}
 \begin{lem}\label{lem:tilt}
     Let $S$ be a perfectoid ring with tilt $S^{\flat}$. Let $a_1,a_2\in S^{\flat}$ and $x\in S$. If $a_1^{\sharp}=a_2^{\sharp}(1+px)$ and $a_1^{\sharp}\mid p$, then there exists a unit $c\in R^{\flat}$ such that $a_1=a_2c$ and $c^{\sharp} = 1+px$.\footnote{We only proved the lemma for $S$ being absolutely integral closed in an early draft. This stronger version is due to some discussions with Heng Du.}
 \end{lem}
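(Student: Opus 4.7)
The plan is to construct the desired $c$ as a compatible system of $p$-power roots, using the tilting isomorphism $S^{\flat} \cong \varprojlim_{x \mapsto x^p} S$. Write $a_i = (a_i^{(0)}, a_i^{(1)}, a_i^{(2)}, \dots) \in S^{\flat}$ with $(a_i^{(n+1)})^p = a_i^{(n)}$ and $a_i^{\sharp} = a_i^{(0)}$, for $i = 1,2$. The element $1 + px \in S$ is a unit (since $p$ lies in a pseudo-uniformizer ideal of $S$), and the hypothesis $a_1^{\sharp} = a_2^{\sharp}(1+px)$ then forces $a_2^{\sharp}$ to divide $p$ as well. Consequently every $a_i^{(n)}$ divides $a_i^{\sharp}$, which in turn divides $p$, so each $a_i^{(n)}$ becomes invertible after inverting $p$.

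The key formal step is therefore to define, inside $S[\tfrac{1}{p}]$, the elements
\[
c_n := \frac{a_1^{(n)}}{a_2^{(n)}} \in S[\tfrac{1}{p}].
\]
A direct computation using $(a_i^{(n)})^{p^n} = a_i^{\sharp}$ yields $c_n^{p^n} = 1+px$ and $c_n^{p} = c_{n-1}$ with $c_0 = 1+px$.

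Next, I would upgrade these elements from $S[\tfrac{1}{p}]$ to $S$ by an inductive application of Lemma \ref{lem:p-power}. Namely, $c_0 \in S$ is clear; assuming $c_{n-1} \in S$, the relation $c_n^{p} = c_{n-1} \in S$ together with Lemma \ref{lem:p-power} forces $c_n \in S$. (This uses $p$-torsion freeness of $S$; if $S$ is not $p$-torsion free one first reduces to the $p$-torsion free perfectoid quotient $S/S[p^{\infty}]$, noting that all of $a_1, a_2, 1+px$ live in the image coherently under $\sharp$.) The compatible system $(c_n)_{n \geq 0}$ then defines $c \in S^{\flat}$ with $c^{\sharp} = c_0 = 1+px$, and $c$ is a unit in $S^{\flat}$ because each $c_n$ is a unit in $S$ (being a $p^n$-th root of the unit $1+px$).

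It remains to verify the equality $a_1 = a_2 c$ in $S^{\flat}$. Level-wise this reduces to checking $a_1^{(n)} = a_2^{(n)} c_n$ in $S$; this holds in $S[\tfrac{1}{p}]$ by construction of $c_n$, and descends to $S$ by the $p$-torsion freeness noted above. The main (mild) obstacle is precisely ensuring one can pass from $S[\tfrac{1}{p}]$ back to $S$; this is exactly the content of Lemma \ref{lem:p-power}, and the inductive application above is the decisive place where the perfectoidness of $S$ (encoded through the existence of $\rho$ with $\varphi:S/\rho \xrightarrow{\sim} S/\rho^p$) enters the argument.
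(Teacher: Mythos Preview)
Your argument in the $p$-torsion free case is correct and matches the paper's proof essentially verbatim: the paper defines $c=a_1/a_2$ in $S^{\flat}[1/\varpi]$ and then invokes Lemma~\ref{lem:p-power} to show each component $(\varphi^{-n}(c))^{\sharp}$ lies in $S$, which is your inductive step phrased globally.

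The gap is in the general case. Your parenthetical ``reduce to the $p$-torsion free perfectoid quotient $S/S[p^{\infty}]$'' is not enough. Even granting that the correct quotient $Q$ is perfectoid, applying the torsion-free case there only produces $c\in (Q^{\flat})^{\times}$. You still need to lift $c$ to $S^{\flat}$ and to check that the lift satisfies $a_1=a_2c$ in $S^{\flat}$ (not merely in $Q^{\flat}$); neither step is automatic, and you give no mechanism for either. The phrase ``$a_1,a_2,1+px$ live in the image coherently under $\sharp$'' only says the hypothesis descends to $Q$, not that the conclusion ascends to $S$.

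The paper resolves this with the fibre square decomposition of perfectoid rings (Bhatt's lectures): writing $\overline S=S/\sqrt{pS}$, $Q=S/S[\sqrt{pS}]$, $\overline Q=Q/\sqrt{pQ}$, one has $S\cong \overline S\times_{\overline Q}Q$. The components $c_n\in Q$ satisfy $c_n^{p^n}=1+px$, hence become $1$ in the perfect $\mathbb F_p$-algebra $\overline Q$; so $\tilde c_n:=(1,c_n)\in \overline S\times_{\overline Q}Q\cong S$ is a well-defined lift, compatible under $p$-th powers, giving $\tilde c\in S^{\flat}$. This gluing step is the missing idea in your reduction.
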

 \begin{proof}
   Let $\varpi\in S^{\flat}$ such that $\varpi^{\sharp}$ is a unit multiple of $p$. 
   
   We first assume $S$ is $p$-torsion free and hence $S^{\flat}$ is $\varpi$-torsion free. In this case, $c:=\frac{a_1}{a_2}$ is well-defined in $S^{\flat}[\frac{1}{\varpi}]$ with $c^{\sharp} = (1+px)$. By Lemma \ref{lem:p-power}, for any $n\geq 0$, $(\varphi^{-n}(c))^{\sharp}\in S$. So $c\in S^{\flat}$. Combining this with that $c^{\sharp}\in S^{\times}$, we see that $c\in (S^{\flat})^{\times}$.
   
   In general, by \cite[Lec IV, Prop. 3.2]{Bhattcolumbia}, if we put $Q = S/S[\sqrt{pS}]$, $\overline S=S/\sqrt{pS}$ and $\overline Q=Q/\sqrt{pQ}$, then there is a fibre square of perfectoid rings:
    \begin{equation*}
        \xymatrix@C=0.5cm{
         S\ar[rr]\ar[d]&&Q\ar[d]\\
         \overline S\ar[rr]&&\overline Q.}
    \end{equation*}
   By what we have proved, there exists a $c\in (Q^{\flat})^{\times}$ which is represented by the compatible sequence $\{c_n\}_{n\geq 0}$ such that $a_1=a_2c$ and $c^{\sharp}=c_0 = (1+px)$ in $Q^{\flat}$. Since $c_0$ coincides with $1$ in $\overline Q$, so are $c_n$'s. In particular, denote by $\tilde c_n$ the element $(1,c_n)\in\overline S\times_{\overline Q}Q\cong S$, then $\tilde c_0=1+px$ and $\tilde c_{n+1}^p=\tilde c_n$ for all $n$. Put $\tilde c\in S^{\flat}$ which is represented by $\{\tilde c_n\}$. Then $u=\tilde c$ is desired.
 \end{proof}

 \begin{lem}\label{lem:uniqueness of log-structure}
   For any perfect log-prisms $(A,I,M_1)$ and $(A,I,M_2)$ in $(R)_{\Prism,\log}$ with the same underlying prism, we always have 
   $(A,I,M_1)=(A,I,M_2)$.
 \end{lem}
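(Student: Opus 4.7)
The plan is to reduce the statement to the concrete assertion that certain lifts of the coordinates $T_i$ to $A$ differ only by units, and then to produce such units using the $\delta_{\log}$-structure together with Lemmas \ref{lem:factorization} and \ref{lem:tilt}. First, by Lemma \ref{lem:existence of log-structure}, for each $j \in \{1,2\}$ the log structure $M_j$ is associated to a pre-log structure $\alpha_j \colon P_r \to A$ with $\alpha_j(e_i) = t_i^{(j)}$, where $t_i^{(j)} \in A$ is a lift of (the image of) $T_i$ in $A/I$. Since $P_r$ is the free commutative monoid on $e_0, \ldots, e_r$, two such pre-log structures give the same associated log structure if and only if there exist units $u_0, \ldots, u_r \in A^\times$ such that $t_i^{(1)} = u_i\, t_i^{(2)}$ for each $i$, so producing such $u_i$ is the task.

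To do so, I will exploit the $\delta_{\log}$-structure on each $M_j$. Writing $t_i^{(j)} = \alpha_j(m_i^{(j)})$ with $m_i^{(j)} \in M_j$, the element $y_i^{(j)} := \delta_{\log}(m_i^{(j)}) \in A$ satisfies $\varphi(t_i^{(j)}) = (t_i^{(j)})^p(1 + p\, y_i^{(j)})$, which is exactly the hypothesis of Lemma \ref{lem:factorization}. That lemma then produces factorizations
\[
t_i^{(j)} = [\bar t_i^{(j)}] \cdot v_i^{(j)},
\]
where $\bar t_i^{(j)}$ is the image of $t_i^{(j)}$ in $A/p$ (which equals $A^\flat$, since $(A,I)$ is a perfect prism) and $v_i^{(j)} := \prod_{k \geq 1}(1 + p\varphi^{-k}(y_i^{(j)}))^{p^{k-1}} \in 1 + pA \subset A^\times$. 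It therefore suffices to show that $\bar t_i^{(1)}/\bar t_i^{(2)}$ is a unit in $A^\flat$: its Teichm\"uller lift will then be a unit in $A$, which combined with the $v_i^{(j)}$ yields the desired $u_i$.

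The last step will follow from Lemma \ref{lem:tilt} applied to $S := A/I$, which is perfectoid by the Bhatt--Scholze equivalence between perfect prisms and perfectoid rings (so $S^\flat \simeq A^\flat$ canonically). Under the sharp map, $(\bar t_i^{(j)})^\sharp = [\bar t_i^{(j)}] \bmod I = t_i^{(j)}(v_i^{(j)})^{-1} \bmod I = T_i \cdot w_i^{(j)}$ for some $w_i^{(j)} \in 1 + pS$; in particular $(\bar t_i^{(1)})^\sharp = (\bar t_i^{(2)})^\sharp(1 + px)$ for some $x \in S$. The divisibility hypothesis $(\bar t_i^{(j)})^\sharp \mid p$ in $S$ reduces, since $w_i^{(j)}$ is a unit, to $T_i \mid p$ in $S$; this follows from the semi-stable relation $T_0 \cdots T_r = \pi$ in $R$ together with $\pi^e = p \cdot (\text{unit})$ in $\calO_K$. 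Lemma \ref{lem:tilt} then produces $c_i \in (A^\flat)^\times$ with $\bar t_i^{(1)} = c_i\, \bar t_i^{(2)}$, and setting $u_i := [c_i]\, v_i^{(1)}\, (v_i^{(2)})^{-1} \in A^\times$ yields the equation $t_i^{(1)} = u_i\, t_i^{(2)}$.

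The main obstacle I anticipate is verifying the divisibility $(\bar t_i^{(j)})^\sharp \mid p$ needed to invoke Lemma \ref{lem:tilt}: this is precisely where the semi-stable structure of $R$ is essential, since an arbitrary $\calO_K$-algebra with $T_i \nmid p$ would not admit this step. The remainder of the argument is a careful bookkeeping of tilts, Teichm\"uller representatives, and units in $1 + pA$.
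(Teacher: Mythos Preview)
Your proposal is correct and follows essentially the same approach as the paper's proof: reduce to showing the lifts $t_i^{(1)}, t_i^{(2)}$ differ by units, use Lemma~\ref{lem:factorization} to factor each as a Teichm\"uller lift times an element of $1+pA$, and then invoke Lemma~\ref{lem:tilt} on $S=A/I$ to compare the Teichm\"uller parts. You are in fact more explicit than the paper on two points it leaves implicit: you spell out that the $\delta_{\log}$-structure supplies the hypothesis $\varphi(t_i^{(j)}) = (t_i^{(j)})^p(1+p y_i^{(j)})$ of Lemma~\ref{lem:factorization}, and you verify the divisibility $(\bar t_i^{(j)})^\sharp \mid p$ needed for Lemma~\ref{lem:tilt} via the semi-stable relation $T_0\cdots T_r=\pi$.
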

 \begin{proof}
   By Lemma \ref{lem:existence of log-structure}, for any $0\leq i\leq r$, there are liftings $t_{i1},t_{i2}\in A$ of $T_i$ such that the log structures $M_1\to A$ and $M_2\to A$ are associated to pre-log-structures $(P_r=\bigoplus_{i=0}^r\bN\cdot e_i\xrightarrow{e_i\mapsto t_{i1},~\forall~i}A)$ and $(P_r=\bigoplus_{i=0}^r\bN\cdot e_i\xrightarrow{e_i\mapsto t_{i2},~\forall~i}A)$ for $0\leq i\leq r$, respectively.
   To conclude, it suffices to show that for any $0\leq i\leq r$, there is a unit $u_i\in A^{\times}$ such that $t_{i1}=t_{i2}u_i$. 
   
   By Lemma \ref{lem:factorization}, there are $a_{i1},a_{i2}\in (A/I)^{\flat}$ and $x_{i1},x_{i2}\in A$ such that $t_{i1}=[a_{i1}](1+px_{i1})$ and $t_{i2}=[a_{i2}](1+px_{i2})$ for any $0\leq i\leq r$.
   Let $\theta:A\to A/I$ be the natural surjection. Then for all $i$, we have 
   \[a_{i1}^{\sharp}(1+p\theta(x_{i1})) = T_i = a_{i2}^{\sharp}(1+p\theta(x_{i2})).\]
   As a consequence, for any $0\leq i\leq r$, there exists a $y_i\in A/I$ such that $a_{i1}^{\sharp} = a_{i2}^{\sharp}(1+py_i)$. By Lemma \ref{lem:tilt}, we deduce that $a_{i1}$ and $a_{i2}$ differ from a unit in $(A/I)^{\flat}$. So for any $0\leq i\leq r$, there exists $u_i\in A^{\times}$ such that $t_{i1}=t_{i2}u_i$ as desired.
 \end{proof}

 \begin{prop}\label{prop:the same perfect site}
     The forgetful functor $(A,I,M)\mapsto (A,I)$ induces an isomorphism of sites $(R)^{\perf}_{\Prism,\log}\to (R)^{\perf}_{\Prism}$.
 \end{prop}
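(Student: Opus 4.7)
\medskip

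My plan is to reduce the isomorphism of sites to three assertions: (a) every object in $(R)^{\perf}_{\Prism}$ admits a (unique) enrichment to a perfect log prism over $(R)_{\Prism,\log}$; (b) every morphism of underlying perfect prisms is automatically a morphism of log prisms; and (c) the notions of covering on the two sites coincide. Given the preliminary lemmas in the section, the uniqueness half of (a) is already packaged as Lemma~\ref{lem:uniqueness of log-structure}. Thus the real content is existence, together with items (b) and (c).

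For existence, given $(A,I)\in(R)^{\perf}_{\Prism}$ with structure map $R\to A/I$, I would choose lifts $t_0,\dots,t_r\in A$ of the images of $T_0,\dots,T_r$ (this is possible because $A$ is $I$-adically complete and hence $A\to A/I$ is surjective), and declare $M$ to be the log structure associated to the pre-log structure $P_r=\bigoplus_{i=0}^r\bN\cdot e_i\xrightarrow{e_i\mapsto t_i}A$. One checks as in the proof of Lemma~\ref{lem:existence of log-structure} (using \cite[Th. 8.36]{Ols05}) that modulo $I$ this reproduces the log structure on $R$, so $(A,I,M)$ defines a log prism over $(R,M_R)$. The $\delta_{\log}$-structure is not an additional datum here: for a perfect $\delta$-ring the Frobenius is bijective, so there is a unique $\delta_{\log}$ compatible with $\delta$, determined by $\delta_{\log}(e_i)=\delta(t_i)/t_i^p$ whenever $t_i$ is a non-zero-divisor (which holds as $A$ is $p$-torsion free, $I$-torsion free and $t_i$ lifts an element of $R$). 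Combining this with Lemma~\ref{lem:uniqueness of log-structure} settles (a).

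For (b), let $f:(A,I)\to(B,IB)$ be a morphism in $(R)^{\perf}_{\Prism}$ and endow both sides with their (unique) log structures $M_A,M_B$ from part (a), represented by pre-log structures $P_r\to A$ and $P_r\to B$ sending $e_i$ to lifts $t_i^A,t_i^B$ of $T_i$. Then $f(t_i^A)$ and $t_i^B$ both lift $T_i$ inside $B/IB$, and Lemma~\ref{lem:factorization} together with Lemma~\ref{lem:tilt} (exactly as in the proof of Lemma~\ref{lem:uniqueness of log-structure}) produces units $u_i\in B^\times$ with $f(t_i^A)=t_i^B u_i$. This yields a canonical morphism of pre-log structures (and hence of log structures) making $f$ a morphism in $(R)^{\perf}_{\Prism,\log}$, and the uniqueness in Lemma~\ref{lem:uniqueness of log-structure} guarantees that no other log-prism enrichment of $f$ is possible.

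Finally, for (c), a covering in either site means a $(p,I)$-completely faithfully flat map of the underlying prisms; by (b) any such map is automatically a map of log prisms, so the coverings coincide. Combining (a), (b) and (c) gives the desired isomorphism of sites, and Corollary~\ref{cor:the same perfect site} (used implicitly in Lemma~\ref{lem:the same perfect site and crystals}) follows. The main technical obstacle is the uniqueness step already handled by Lemma~\ref{lem:uniqueness of log-structure}, whose core input is the structural fact (Lemma~\ref{lem:tilt}) that two Teichmüller-like lifts of the same element of a perfectoid ring differ by a unit in the tilt; all the other verifications are then formal.
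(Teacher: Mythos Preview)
Your decomposition into (a), (b), and (c) is sound, and your treatments of (b) and (c) are correct (indeed more explicit than what the paper writes). The gap is in your existence argument for (a).

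You propose to pick \emph{arbitrary} lifts $t_i\in A$ of the images of $T_i$ in $A/I$, form the associated log structure, and then declare $\delta_{\log}(e_i)=\delta(t_i)/t_i^p$. But for $(A,I,M)$ to be a log prism one needs $\delta_{\log}(e_i)\in A$, i.e.\ one needs $t_i^p\mid \varphi(t_i)$ in $A$. Bijectivity of Frobenius on a perfect $\delta$-ring tells you that \emph{if} such an element exists it is unique (when $t_i$ is a non-zero-divisor and $A$ is $p$-torsion-free), but it does not produce it. A concrete obstruction: take $K=\Qp$, $R=\calO_K$ with $T_0=\pi=p$, and $(A,I)=(\Ainf(\calO_C),(\xi))$. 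Then $t_0=p\in A$ is a lift of $T_0$ via $\theta$, yet $\varphi(p)=p$ while $t_0^p=p^p$, so $\varphi(t_0)/t_0^p=p^{1-p}\notin A$. Thus for this choice of lift no $\delta_{\log}$ exists. (Your parenthetical justification that $t_i$ is a non-zero-divisor is also not sufficient, but this is secondary.)

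The paper's existence argument avoids this by \emph{not} taking arbitrary lifts. Instead it uses that $A/I$ is perfectoid: by \cite[Lem.~3.9]{BMS1} one finds $\underline t_i\in(A/I)^{\flat}$ with $\underline t_i^{\sharp}\equiv T_i\bmod \rho p$, and since $T_i\mid\pi\mid p$ this congruence upgrades to $\underline t_i^{\sharp}=T_i\cdot(\text{unit})$. One then takes the Teichm\"uller lifts $[\underline t_i]\in A$ as the chart for $M$. The point is that $\delta([\underline t_i])=0$, so $\delta_{\log}(e_i)=0$ is a valid choice. The fact that $[\underline t_i]$ lifts $T_i$ only up to a unit is harmless for the associated log structure. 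Once you replace your arbitrary lifts by these Teichm\"uller lifts, your outline goes through.
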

 \begin{proof}
     By Lemma \ref{lem:uniqueness of log-structure}, it suffices to show for any $(A,I)\in (R)^{\perf}_{\Prism}$, there exists a log-structure $M\to A$ making $(A,I,M)$ a well-defined log-prism in $(R)_{\Prism,\log}$. Recall the log-structure on $\overline A = A/I$ is induced from that on $R$ and thus associated to the composite $P_r\to R\to \overline A$. Fix a $\varpi \in \overline{A}^{\flat}$ such that $\varpi^{\sharp}$ is a unit-multiple of $p$ in $\overline A$, and denote by $\rho:=\varphi^{-1}(\varpi)^{\sharp}$. By \cite[Lem. 3.9]{BMS1}, for any $0\leq i\leq r$, one can find 
     \[\underline t_i:=(t_{i0},t_{i1},\dots)\in \varprojlim_{x\mapsto x^p}\overline A =  \overline{A}^{\flat}\]
     such that $t_{i0} = \underline{t_i}^{\sharp}\equiv T_i\mod \rho p\overline A$. Noting that $\prod_{i=0}^rT_i = \pi$ divides $p$, one can find $y_i\in \overline A$ such that 
     \[t_{i0} = T_i(1+\rho y_i).\]
     Noting that $1+\rho y_i\in \overline{A}^{\times}$, one can conclude by checking the pre-log-structure $(P_r = \bigoplus_{i=0}^r\bN\cdot e_i\xrightarrow{e_i\mapsto [\underline t_i],~\forall~i}A)$ on $A$ induces a log-structure $M\to A$ as expected.
 \end{proof}
 \begin{cor}\label{cor:the same perfect site}
     Let $\frakX$ be a semi-stable formal scheme (not necessarily small affine) over $\calO_K$ with generic fiber (and the log-structure $\calM_{\frakX} = \calO_X^{\times}\cap\calO_{\frakX}\to\calO_{\frakX}$). Then the forgetful functor $(\frakX)_{\Prism,\log}^{\perf}\to (\frakX)_{\Prism}^{\perf}$ sending each $(A,I,M)$ to $(A,I)$ induces an equivalence of topoi 
     \[\Sh((\frakX)_{\Prism}^{\perf})\simeq \Sh((\frakX)_{\Prism,\log}^{\perf}).\]
 \end{cor}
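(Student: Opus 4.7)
The plan is to reduce the global statement to the small semi-stable affine case, which is Proposition \ref{prop:the same perfect site}, via a Zariski-local construction followed by a gluing argument. Since a morphism of sites that is fully faithful on underlying categories and preserves/reflects covers induces an equivalence of topoi, it suffices to prove that the forgetful functor $F:(\frakX)_{\Prism,\log}^{\perf}\to(\frakX)_{\Prism}^{\perf}$ is an equivalence of categories and compatible with the coverings (which are both defined by $(p,I)$-complete faithful flatness on the underlying prism, so the compatibility will be automatic).

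The first step is full faithfulness of $F$. Given two perfect log prisms $(A,I,M)$ and $(B,J,N)$ in $(\frakX)_{\Prism,\log}^{\perf}$ and a morphism $f:(A,I)\to(B,J)$ of the underlying prisms compatible with the maps to $\frakX$, I must show that $f$ automatically carries $M$ into $N$. This can be checked Zariski-locally on $\Spf(B/J)$: after pulling back along an affine open cover of $\frakX$ by small semi-stable $\Spf R_i$, the compatibility is exactly the uniqueness assertion of Lemma \ref{lem:uniqueness of log-structure} applied to the pull-back log structures, because both $f^*M|_{\Spf B_i}$ and $N|_{\Spf B_i}$ lift the pre-log structure $P_r\to R_i\to B_i/JB_i$ coming from a chart.

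The main step is essential surjectivity. Given $(A,I)\in(\frakX)_{\Prism}^{\perf}$, I need to produce a log structure $M\to A$ making $(A,I,M)$ a perfect log prism over $(\frakX,\calM_\frakX)$. Choose a finite (by quasi-compactness of $\Spf(A/I)$) affine open cover $\{\Spf R_i\}_{i\in I}$ of $\frakX$ by small semi-stable opens. Taking fibre products over $\frakX$ yields an affine open cover $\{\Spf\overline A_i\}$ of $\Spf(A/I)$ with each $\overline A_i$ a perfectoid $R_i$-algebra; by \cite[Thm.\ 3.10]{BS22} each $\overline A_i$ lifts uniquely to a perfect prism $(A_i,IA_i)$ over $(A,I)$, and similarly for the intersections $\Spf\overline A_{ij}$ etc. Proposition \ref{prop:the same perfect site} endows each $(A_i,IA_i)$ with a unique log structure $M_i$ compatible with the chart on $R_i$, and compatibility on intersections $(A_{ij},IA_{ij})$ follows again from Lemma \ref{lem:uniqueness of log-structure} (the two induced log structures from $M_i$ and $M_j$ agree because the global log structure $\calM_\frakX$ restricts to both charts). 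Since a log structure is Zariski-local, the $M_i$ glue to a log structure $M$ on $\Spf(A)$; this will give $(A,I,M)\in(\frakX)_{\Prism,\log}^{\perf}$ as required.

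The main obstacle will be making the gluing argument in the last paragraph rigorous: one must verify that the unique log structure constructed in the local result (via Teichm\"uller lifts of the images of the chart coordinates in the tilt) is functorial enough that the restrictions of $M_i$ and $M_j$ to the overlap $\Spf A_{ij}$ really do coincide as subsheaves of $\calO_{A_{ij}}$, rather than merely being abstractly isomorphic. This will come down to the rigidity statement in Lemma \ref{lem:tilt}: any two such Teichm\"uller lifts of the same element of $A_{ij}/IA_{ij}$ differ by a unit whose Teichm\"uller representative is forced to be $1$ modulo $I$, so the associated log structures are literally equal, not merely isomorphic. Once this is verified, compatibility with coverings is straightforward since the notion of covering in both sites is defined solely on the underlying prism.
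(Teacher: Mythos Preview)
Your proposal is correct and follows essentially the same approach as the paper: reduce to the small semi-stable affine case handled by Proposition~\ref{prop:the same perfect site}. The paper's own proof is a single sentence (``This follows from Proposition~\ref{prop:the same perfect site} as \'etale locally $\frakX$ is affine semi-stable''), so your write-up simply spells out the full-faithfulness and Zariski gluing that the paper leaves implicit.

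One minor remark: the ``main obstacle'' you flag in the final paragraph is not actually an obstacle. Lemma~\ref{lem:uniqueness of log-structure} already asserts the literal equality $(A,I,M_1)=(A,I,M_2)$ of log prisms (not merely an isomorphism), so on each overlap $\Spf A_{ij}$ the two restricted log structures coincide on the nose and glue without any further appeal to Lemma~\ref{lem:tilt}.
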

 \begin{proof}
     This follows from Proposition \ref{prop:the same perfect site} as \'etale locally $\frakX$ is   affine semi-stable.
 \end{proof}

\newpage
  
  \bibliographystyle{alpha}

\end{document}